  \renewcommand\widering[1]{\ring{#1}}
\numberwithin{equation}{section}
\theoremstyle{plain}
\newtheorem{thm}{Theorem}[section]
\newtheorem{lem}[thm]{Lemma}
\newtheorem{pro}[thm]{Proposition}
\newtheorem{cor}[thm]{Corollary}
\newtheorem{de}[thm]{Definition}
\newtheorem{rem}[thm]{Remark}
\def\R {{\Bbb R}}
\def\N {{\Bbb N}}
\def\M {{\mathcal M}}
\def\A {{\mathcal A}}
\def\E{{\mathbb E}}
\begin{document}

\title{Dimensions of random statistically self-affine Sierpinski sponges in $\mathbb R^k$}

\author{Julien Barral}
\address{Laboratoire de G\'eom\'etrie, Analyse et Applications, CNRS, UMR 7539, Universit\'e Sorbonne Paris Nord, CNRS, UMR 7539,  F-93430, Villetaneuse, France}
\email{barral@math.univ-paris13.fr}
\author{De-Jun Feng}
\address{
Department of Mathematics\\
The Chinese University of Hong Kong\\
Shatin,  Hong Kong\\
} \email{djfeng@math.cuhk.edu.hk}

\keywords{Mandelbrot measures, Hausdorff dimension, multifractals, phase transitions, large deviations, branching random walk in a random environment}
\thanks {
2010 {\it Mathematics Subject Classification}: 28A78, 28A80, 60F10, 60G42, 60G57, 60K40}

\date{}

\begin{abstract} We compute the Hausdorff dimension of any random statistically self-affine Sierpinski sponge $K\subset \R^k$ ($k\ge 2$) obtained by using some percolation process in  $[0,1]^k$. To do so, we first exhibit a Ledrappier-Young type formula for the  Hausdorff dimensions of statistically  self-affine measures supported on $K$.  This formula presents a new feature compared to its deterministic or random dynamical version. Then, we establish a variational principle expressing  $\dim_H K$ as the supremum of the Hausdorff dimensions of  statistically self-affine measures supported on $K$, and show that  the supremum is uniquely attained. The value of $\dim_H K$ is also expressed in terms of the weighted pressure function of some deterministic potential.  As a by-product, when $k=2$, we give an alternative approach to  the Hausdorff dimension of $K$,  which was first obtained by  Gatzouras and Lalley \cite{GL94}. The value of the box counting dimension of $K$  and its equality with $\dim_H K$ are also studied. We also obtain a variational formula for the Hausdorff dimensions of some orthogonal projections of $K$, and for statistically self-affine measures supported on~$K$, we establish a  dimension conservation property through these projections.
 \end{abstract}

\maketitle

\section{Introduction}

This paper deals with dimensional properties of a natural class of random statistically self-affine sets and measures in $\R^k$ ($k\ge 2$), namely random Sierpinski sponges and related Mandelbrot measures, as well as certain of their projections and related fibers and conditional measures. These random sponges can be also viewed as limit sets of some percolation process on the unit cube endowed with an $(m_1,\ldots,m_k)$-adic grid, where $m_1\ge \cdots \ge m_k\ge 2$ are integers.

Until now the Hausdorff dimension of such a set $K$ is known in the deterministic case and only when $k=2$ in the random case, while the projections of $K$ and the random Mandelbrot measures to be considered have been studied in the conformal case only. Understanding the missing cases is our goal, with Mandelbrot measures and their projections as a main tool for a variational approach to the Hausdorff dimension of $K$ and its orthogonal projections, together with a ``weighted'' version of the thermodynamic formalism on symbolic spaces.

Before coming in more details to these objects and our motivations, it seems worth giving an overview of the nature of the main results known in  dimension theory of self-affine sets. Recall that given an integer $N\ge 2$ and an iterated function system (IFS) $\{f_i\}_{1\le i\le N}$ of contractive maps of a complete metric space $X$, there exists a unique non-empty compact set $K\subset X$ such that
$$
K=\bigcup_{i=1}^N f_i(K),
$$
(see \cite{hutch}). When $X$ is a Euclidean space and $f_i$ are affine maps, due to the above equality $K$ is called self-affine. In particular, $K$ is called self-similar if  $f_i$ are all similitudes; we will not  focus on the self-similar case and refer the reader to \cite{HochICM} for a recent survey of this topic.

The first class of strictly self-affine sets that have been studied in detail are certainly Bedford-McMullen carpets in $\R^2$, also known as self-affine Sierpinski carpets. They are among the most natural classes of fractal sets having different Hausdorff and box counting dimensions. To be specific, one fixes two integers $m_1>m_2\ge 2$ and a  subset $A\subset \{0,\ldots,m_1-1\}\times \{0,\ldots, m_2-1\}$ of cardinality at least 2; the Bedford-McMullen carpet~$K$ associated with $A$  is the attractor of the system $$
S_A=\Big \{f_{a}(x_1,x_2)=\Big (\frac{x_1+a_1}{m_1},\frac{x_2+a_2}{m_2}\Big ): \; a=(a_1,a_2)\in A\Big \}$$
of contractive affine maps of the Euclidean plane; note that  by construction $K\subset [0,1]^2$.
Set $N_{i}=\#\{(a_1, a_2)\in A: \, a_2= i\}$ for all $0\le i\le m_2-1$ and $\psi:\theta\in \R_+\mapsto \log  \sum_{i=0}^{m_2-1} N_i^\theta$. Bedford and McMullen  proved independently \cite{Bed84,McMullen} that
\begin{align*}
\dim_H K= \frac{\psi(\alpha)}{\log(m_2)}, \text{ where }\alpha=\frac{\log(m_2)}{\log(m_1)},
\end{align*}
and
 \begin{align*}
  \dim_B K=\frac{\psi(1)}{\log(m_1)}+\Big (\frac{1}{\log(m_2)}-\frac{1}{\log (m_1)} \Big ) \psi(0),
\end{align*}
where $\dim_H$ and $\dim_B$ respectively stand for the Hausdorff  and the box counting dimension, and $\psi(1)$ and $\psi(0)$ are the topological entropy of $K$ and that of its projection to the $x_2$-axis respectively. Moreover, $\dim_H K=\dim_B K$ if and only if the positive $N_i$ are all equal. Note that the possible dimension gap $\dim_H K<\dim_B K$ cannot hold for self-similar sets (see~\cite{Fal0}). For a general self-affine Sierpinski sponge  $K\subset [0,1]^k$ invariant under the action of an expanding diagonal endomorphism $f$ of $\mathbb T^k$ with eigenvalues the integers $m_1>\cdots>m_k\ge 2$ (we identify $[0,1]^k$ with $\mathbb T^k$), similar formulas as in the 2-dimensional case hold. In particular, the approach to dimensional properties of compact $f$-invariant sets developed by Kenyon and Peres  \cite{KePe96} extends the results of \cite{McMullen} and establishes the following variational principle: the Hausdorff dimension of $K$ is the supremum of the Hausdorff dimensions of ergodic measures supported on $K$, i.e. the Hausdorff  and  dynamical dimensions of $K$ coincide. Moreover, the dimension of any ergodic measure is given by the  Ledrappier-Young formula
\begin{equation}\label{LY}
\dim (\mu)=\frac{1}{\log(m_1)}\cdot h_\mu(f)+\sum_{i=2}^k \Big (\frac{1}{\log(m_i)}-\frac{1}{\log (m_{i-1})} \Big) h_{\mu\circ\Pi_i^{-1}}(\Pi_i\circ f),
\end{equation}
where $\Pi_i:\; \R^k\to \R^{k-i+1}$ is the projection defined by $(x_1,\ldots,x_k)\mapsto (x_i,\ldots, x_k)$. Also, in the variational principle, the supremum is uniquely  attained at some Bernoulli product measure.

Dimension theory of self-affine sets has been developed to understand the case of more ``generic'' self-affine IFSs as well. First, given a family $\{M_i\}_{1\le i\le N}$ of linear automorphisms of  $\R^k$ to itself whose norm subordinated to the Euclidean norm on $\R^k$ is smaller that $1/2$, it was shown \cite{Fal,KaeVil10} that for $\mathcal L^{Nk}$-almost  every choice of $N$ translation vectors $v_1,\ldots, v_N$ in $\R^{k}$, the Hausdorff dimension of the attractor $K$ of the affine IFS $\{M_i x+v_i\}_{1\le i\le N}$ is the maximum of the Hausdorff dimensions of the natural projections of ergodic measures on $(\{1,\ldots,N\}^{\mathbb N^*},\sigma)$ to $K$ (here $\sigma$ stands for the  left shift operation); in addition for such a measure $\mu$, $\dim(\mu)=\min (k,\dim_L(\mu))$, where $\dim_L(\mu)$ is the so-called Lyapunov dimension of $\mu$ \cite{Fal,Sol98,Kae04,JorPolSim}.  A similar result holds for $\mathcal L^{Nk^2}$-almost every choice of contractive $\{M_i\}_{1\le i\le N}$ in some non-empty open set, with a fixed  $(v_1,\ldots, v_N)$ \cite{BaranyKaenKoi}. In these contexts one has $\dim_H K=\dim_B K=\min (k,\dim_AK)$, where $\dim_AK$ stands for the so-called affinity dimension of $K$ (note that for a Bedford-McMullen carpet, the affinity dimension equals $\frac{\psi(1)}{\log(m_2)}$ if $\psi(1)\le \log(m_2)$ and $\frac{\psi(1)}{\log(m_1)}+(\frac{1}{\log(m_2)}-\frac{1}{\log(m_1)})\log(m_2)$ otherwise, so that in this case the   previous equality between dimensions only occurs exceptionally).  If both $\{M_i\}_{1\le i\le N}$  and $(v_1,\ldots, v_N)$ are fixed,  the following stronger result appeared recently in the 2-dimensional case: if $\{M_i\}_{1\le i\le N}$ satisfies the strong irreducibility property and $\{M_i/\sqrt{|\mathrm{det}(M_i)|}\}_{1\le i\le N}$ generate a non-compact group in $GL_2(\mathbb R)$, and if the IFS  $\{f_i\}_{1\le i\le N}$ is exponentially separated, then $\dim_H K$ is the supremum of the Lyapounov dimensions of the self-affine measures supported on $K$ (it is not known whether this supremum is reached in general); here again for such a measure, the Lyapunov and Hausdorff dimensions coincide, and  $\dim_H K=\dim_B K=\dim_AK$ \cite{BaranyHochRap,HochRap}. The last two contexts make a central use of the notion of Furstenberg measure associated to a self-affine measure, whose crucial role in the subject was first pointed out in~\cite{FalKem}. There are also similar results in the case that the strong irreducibility fails but the $M_i$ cannot be simultaneously reduced  to  diagonal automorphisms \cite{FalMiao,BaranyRamsSim2,BaranyHochRap,HochRap}.

Let us come back to self-affine carpets. Their study was further developed with the introduction of Gatzouras-Lalley carpets \cite{LG92} (with  an application to the study of some non-conformal nonlinear repellers \cite{GP97}) and  Baranski carpets \cite{Baranski}. There, the linear parts are no more subject to be equal, but they are still diagonal, and it is not true in general that there is a unique  ergodic measure with maximal Hausdorff dimension \cite{KaeVil10,BF2011} (see also  \cite{KolSim} for a study of Gatzouras-Lalley type carpets when the linear parts are trigonal).  It turns out that extending the dimension theory of these carpets to the higher dimensional case raises serious difficulties in general, as it was shown in \cite{DasSimm} that the attractor may have  a Hausdorff dimension strictly larger than its dynamical dimension.

On the side of random fractal sets, one naturally meets random statistically self-affine sets. Such a set $K$ obeys almost surely an equation of the form $K(\omega)=\bigcup_{i=0}^N f^\omega_i(K_i(\omega))$, where the $f_i^\omega$ are random contractive affine maps and the sets $K_i$ are copies of $K$. Results similar to those obtained for almost all self-affine sets described above exist in the following situation: the sets $K_i$ are mutually independent and independent of the $f_i$, the linear maps of the $f_i$ are deterministic, but the translation parts are i.i.d and follow a law compactly supported and absolutely continuous with respect to $\mathcal L^d$~\cite{JorPolSim}.  Results are also known for random Sierpinski carpets. There are two natural ways to get such random sets. The first one falls in the setting of random dynamical systems. It consists in considering an ergodic dynamical system $(\Omega, \mathcal F, \mathbb P, T)$, on which is defined a random non-empty  subset $A(\omega)$ of $\{0,\ldots,m_1-1\}\times \{0,\ldots, m_2-1\}$ such that $\mathbb{E}(\# A)>1$. Then one  starts with the set of maps $S_{A(\omega)}$, and recursively, at each step $n\ge 2$  of the iterative construction of the random attractor $K(\omega)$, replace the set of contractions $S_{A(T^{n-2}(\omega))}$ by $S_{A(T^{n-1}(\omega))}$, so that the contractive maps used after $n$ iterations take the form $f_{a_0}\circ \cdots\circ f_{a_{n-1}}$, with $a_i\in S_{A(T^i(\omega)})$. By construction, $K(\omega)=\bigcup_{a\in A(\omega)} f_a(K(\sigma(\omega)))$. The Hausdorff  and box-counting dimensions of such sets and their higher dimensional versions have been determined in \cite{Kifer} (in a slightly more general setting); the situation is close to that in the deterministic one.

The other natural way to produce random statistically self-affine carpets is related to branching processes and consists in using a general percolation scheme detailed below; at the moment let us just say that one starts with a possibly empty random subset $A(\omega)$ of $\{0,\ldots,m_1-1\}\times \{0,\ldots, m_2-1\}$, and again assumes that $\mathbb{E}(\# A)>1$. Then, one constructs on the same probability space the set $A(\omega)$ and a random compact set $K(\omega)\subset \mathbb T^2$, and $m_1 \times m_2$ random  compact sets $K(a,\omega)$, $a\in \{0,\ldots,m_1-1\}\times \{0,\ldots, m_2-1\}$, so that $K(\omega)=\bigcup_{a\in A(\omega)} f_a(K(a,\omega))$, where  the $K(a)$ are independent copies of $K$, and they are also independent of $A$. The set $K$ is non-empty with positive probability.  These  random sets have been studied in~\cite{GL94}, and their self-similar versions have been investigated extensively (see e.g. \cite{GMW,Wa07,RamSi3}). Setting now $\psi(\theta)=\log \sum_{i=0}^{m_2-1} \mathbb{E}(N_i)^\theta$ and letting $t$ be  the unique point at which the convex function $\psi$ attains its minimum over $[0,1]$ if $\psi$ is not constant, and $t=1$ otherwise, one has, with probability 1, conditional on $K\neq\emptyset$,
\begin{align}
\label{B-MM}\dim_H K= \frac{\psi(\alpha)}{\log(m_2)}\text{ where $\alpha=\max \Big (t,\frac{\log(m_2)}{\log(m_1)}\Big )$},
\end{align}
and %
\begin{align}
\nonumber\dim_B K=\frac{\psi(1)}{\log(m_1)}+\Big (\frac{1}{\log(m_2)}-\frac{1}{\log(m_1)} \Big )\psi(t).
\end{align}
Moreover, $\dim_H K=\dim_B K$ iff $t=1$ or all the positive $\mathbb{E} (N_i)$ are equal (we note that the value of $\dim_H K$ was previously obtained in \cite{Pey,BenNasr,BenNasr2} in the very special case that there is an integer $b\ge 2$ such that the law of $A$ assigns equal probabilities to subsets of cardinality~$b$ and probability 0 to the other ones). It is worth pointing out that  the origin of this different formula with respect to the deterministic case comes from the possibility  that $\mathbb{E} (N_i)<1$ for some $i$, which makes the situation quite versatile with respect to the deterministic Bedford-McMullen carpets.

The approach developed in \cite{GL94} to get \eqref{B-MM} is not based on a variational principle related to a natural class of measures supported on the attractor. To determine the sharp lower and upper bounds for $\dim_H K$, the authors of \cite{GL94}  adapted the approach used by Bedford in the deterministic case: the lower bound for $\dim_H K$ is obtained by taking the maximum of two values, namely $\dim_H \Pi_2(K)$  (where $\Pi_2$ still denotes the orthogonal projection on the $x_2$-axis) and the maximum of  the lower bounds for the Hausdorff dimensions of certain random subsets of $K$. Each such subset $E$ is obtained by considering the union of almost all the fibers $\Pi_2^{-1}(\{(0,x_2)\}$ with respect to the restriction to $\Pi_2(K)$ of some Bernoulli product measure. The Hausdorff dimensions of $\Pi_2(E)$ and that of the associated fibers are controlled from bellow. This yields a lower bound for $\dim_H E$ thanks to a theorem of Marstrand. The upper bound for $\dim_H K$ is obtained by using some effective coverings of~$K$. It turns out to be delicate to transfer these methods to the higher dimensional cases. Indeed, for the lower bound, the Hausdorff dimension of the 1-dimensional fibers mentioned above is obtained thanks to statistically self-similar branching measures in random environment, the dimension of which is relatively direct to get, and it yields the dimension of the fiber.  Using this approach in the higher dimensional case $k\ge 3$, we would have to consider the restriction of Bernoulli measures to $\Pi_i(K)$ for $2\le i\le k$. Then for $i\ge 3$ we would meet the much harder problem to estimate the Hausdorff dimension of fibers which are statistically self-affine Sierpinski sponges  in a random environment in $\mathbb R^{i-1}$, a problem not less difficult than the one we consider in this paper; one would have to compute $\dim_H \Pi_i(K)$ as well, a question  that we will naturally consider.  Also, for the upper bound, extending to higher dimensions the combinatorial argument used in  \cite{GL94} to get effective coverings seems impossible. However, we will see that it is  rather direct to get the  box-counting dimension of $K$ in the higher dimensional cases by adapting the arguments of  \cite{GL94} used for the two dimensional case.

We will develop the dimension theory for statistically self-affine Sierpinski sponges in $\R^k$, for any $k\ge 2$, by studying the statistiscally self-affine measures on $K$ (which are also called Mandelbrot measures on $K$). We will prove the following Ledrappier-Young type formula: given a Mandelbrot measure $\mu$ on $K$ (see Sections~\ref{Mandmeas} and~\ref{euclidean} for the definition),
\begin{align}
\nonumber\dim (\mu)&= \frac{1}{\log(m_1)}\dim_e(\mu)+\sum_{i=2}^k \Big (\frac{1}{\log(m_i)}-\frac{1}{\log(m_{i-1})} \Big )\dim_e(\mu\circ\Pi_i^{-1})\\
\label{LYr}&= \frac{1}{\log(m_1)}\dim_e(\mu)+\Big (\frac{1}{\log(m_i)}-\frac{1}{\log(m_{i-1})} \Big )\cdot \min \big (\dim_e(\mu), h_{\nu_i}(\Pi_i\circ f)\big ),
\end{align}
where $\dim_e(\mu)$ is the dimension entropy of $\mu$, and $\nu_i$ is the Bernoulli product measure  $\mathbb{E}(\mu\circ\Pi_i^{-1})$ (see Theorem~\ref{thmdim}). The fact that $\dim_e(\mu\circ\Pi_i^{-1})= \min (\dim_e(\mu), h_{\nu_i}(\Pi_i\circ f))$ follows from our previous study of projections of Mandelbrot measures in~\cite{BF2016}. To get \eqref{LYr}, we show that $\tau_\mu$, the $L^q$-spectrum of $\mu$, is differentiable at 1 with $\tau_\mu'(1)$ equal to the right hand side of \eqref{LYr}; this implies the exact dimensionality of $\mu$, with dimension equal to $\tau_\mu'(1)$.

Optimising \eqref{LYr} yields the sharp lower bound for the Hausdorff dimension of~$K$ (see  Theorem~\ref{optim});  the supremum is uniquely attained, and the optimisation problem is non-standard; the presence of the $k-1$ minima in the sum gives rise to $k$ possible simplifications of the formula separated by what  can be thought of as $k-1$ phase transitions according to the position of $\dim_e(\mu)$ with respect to the entropies $h_{\nu_i}(\Pi_i\circ f)$, hence $k$ distinct optimisation problems must be considered, of which the optima must be compared. This study will use the thermodynamic formalism as well as a version of von Neumann's min-max theorem, and the optimal Hausdorff dimension will be expressed as the ``weighted'' pressure of some deterministic potential (see Theorem~\ref{dim K}). Our sharp  upper bound for $\dim_H K$ proves that this  maximal Hausdorff dimension  of a Mandelbrot measure supported on $K$ yields $\dim_H K$. This bound is derived from a variational principle as well, namely we optimise over uncountably many types of  coverings of $K$, each of which provides an upper bound for $\dim_H K$ (see Theorem~\ref{VPUB});  when $k=2$, this does not reduce back to the argument developed in \cite{GL94}. As a by-product, we get an alternative approach to the proof by Kenyon and Peres \cite{KP} of the sharp upper bound in the deterministic higher dimensional case. One may wonder if the approach by Kenyon and Peres, which in the deterministic case uses a uniform control of the lower local dimension of the unique Bernoulli measure of maximal Hausdorff dimension on $K$, can be extended to the random case by using the unique Mandelbrot measure of maximal Hausdorff dimension on $K$. We met an essential difficulty in trying to follow this direction, except in special cases (see the discussion at the beginning of Section~\ref{UB}).

Our result regarding the box-counting dimension of $K$ is stated in Theorem~\ref{thmdimB}. Another difference between the deterministic or random dynamical Sierpinski sponges, and the random Sierpinski  sponges studied in this paper is that for the first two classes of objects, their images by the natural projections  $\Pi_i$, $2\le i\le k$, are lower dimensional objects of the same type (e.g. Sierpinski sponges project to 2-dimensional Sierpinski carpets via $\Pi_{k-1}$),  while this is not the case for the third one. Our approach will also provide $\dim_H \Pi_i(K)$ (via a variational principle) and $\dim_B \Pi_i(K)$ for the third class of random attractors (see Section~\ref{Projections}; we note that the case of  $\Pi_k(K)$ reduces to that  of $\Pi_2(K)$ when $k=2$ and $K$ is a statistically self-similar set, a situation which is covered by \cite{DekGri,Fal}). Finally, we determine the dimension of the conditional measures associated with the successive images of any  Mandelbrot measure by the  projections $\Pi_i$ (Section~\ref{condmeas}), which for each $i$ equals $\dim (\mu)-\dim({\Pi_i}\mu)$, hence the conservation dimension formula holds.

Since using symbolic spaces to encode the Euclidean situation is necessary, we will work on such spaces and endow them with adapted ultrametrics, so that the case of  random statistically self-affine Euclidean sponges  and their projections will be reducible to a particular situation of a more general framework on symbolic spaces and their factors.

Our framework and main results are presented in the next section.

\section{Main results on self-affine symbolic spaces, and application to the Euclidean case}

We start with defining the symbolic random statistically self-affine sponges, which will be studied in this paper.

\subsection{Symbolic random statistically self-affine sponges}
\label{S-2.1}
Let us first recall the notion of self-affine symbolic space.

Let $\mathbb N$,  $\mathbb N^*$, $\mathbb R_+$ and $\mathbb R_+^*$ stand for the sets of non-negative integers, positive integers, non-negatice real numbers, and positive real numbers respectively.

Let $k\geq 2$ be an integer. Assume that $(X_i, T_i)$ ($i=1,\ldots, k$) are
 one-sided full-shift spaces over finite alphabets $\A_i$ of cardinality $\ge 2$ and such that $X_{i+1}$ is a factor of $X_i$ with a one-block
factor map $\pi_i: \;X_i\to X_{i+1}$ for $i=1,\ldots, k-1$. That is,  $\pi_i$ is extended from a map (which is also denoted by $\pi_i$ for convenience)  from  $\A_i$ to $\A_{i+1}$ by  $$
\pi_i\left((x_n)_{n=1}^\infty\right)=\left({\pi}_i(x_n)\right)_{n=1}^\infty \mbox{  for all }
(x_n)_{n=1}^\infty\in X_i.$$
 For
convenience, we use $\pi_0$ to denote the identity map on $X_1$.
Define $\Pi_i:\;X_1\to X_{i}$ by $$\Pi_i=\pi_{i-1}\circ\pi_{i-2}\circ
\cdots \circ \pi_0$$ for $i=1,\ldots,k$.
Define $\mathcal A_i^*=\bigcup_{n\ge 0}\mathcal A_i^n$, where $\mathcal A_i^0$ consists of the empty word $\epsilon$. The maps $\pi_i$ and $\Pi_i$ naturally extend to $\mathcal A_i^*$ and $\mathcal A_1^*$ respectively,  that is,  $$\pi_i(x_1\cdots x_n)=\pi_i(x_1)\cdots \pi_i(x_n),\quad \Pi_i(x_1\cdots x_n)=\Pi_i(x_1)\cdots \Pi_i(x_n).
$$

If $u\in\mathcal A_i^*$, we denote by $[u]$ the cylinder made of those elements in $X_i$ which have $u$ as prefix. If $x\in X_i$ and $n\ge 0$, we denote by $x_{|n}$ the prefix of $x$ of length $n$.  For $x, y\in X_i$, let $x\land y$ denote the longest common prefix of $x$ and $y$.

Let $\vec{\boldsymbol\gamma}=(\gamma_1,\ldots, \gamma_k)\in \R_+^*\times (\R_+)^{k-1}$. Define  an ultrametric distance $d_{\vec{\boldsymbol\gamma}}$ on $X_1$ by
\begin{equation}
\label{e-da}
d_{\vec{\boldsymbol\gamma}}(x,y)=\max\left(e^{-\frac{|\Pi_i(x)\land \Pi_i(y)|}{\gamma_1+\cdots+\gamma_{i}}}:\;1\le i\le k\right),
\end{equation}
where
$$|u\land v|=\left\{ \begin{array}{ll}
0, & \mbox{ if } u_1\neq v_1,\\
\max\{n:\; u_j=v_j\mbox{ for } 1\leq j\leq n \} & \mbox{ if } u_1=v_1\\
\end{array}
\right.
$$
for $u=(u_j)_{j=1}^\infty, v=(v_j)_{j=1}^\infty\in X_i$.

The metric space $(X_1, d_{\vec{\boldsymbol\gamma}})$ is called a {self-affine symbolic space}. It is a natural model used to characterize the geometry of  compact invariant sets on the $k$-torus under a diagonal endomorphism \cite{Bed84, McMullen, KePe96,BF2012}.

If $f$ is a measurable mapping between a measured space $(X,\mathcal T,\nu)$ and a measurable space $(Y,\mathcal S)$, the image $\nu\circ f^{-1}$ of $\nu$ will be simply denote by $f\nu$.

Now, we can define  symbolic random statistically self-affine sponges. Let $A=(c_a)_{a\in\mathcal A_1}$ be a random variable taking values in $\{0,1\}^{\mathcal A_1}$. It encodes a random subset of $\mathcal A_1$, namely $\{a\in\mathcal A_1: \; c_a=1\}$, which we identify with $A$. Suppose that $\mathbb{E}(\sum_{a\in \mathcal A_1}c_a)>1$.
 Let $(A(u))_{u\in\mathcal A_1^*}$ be a sequence of independent copies of $A$. For all $n\in\N$, let
\begin{equation}\label{Kn}
\begin{split}
K_n& =\{x\in X_1: \; c_{x_i}(x_{|i-1})=1  \mbox{ for all }1\le i\le n\}\\
&=\bigcup_{u\in \mathcal A_1^n:\  \prod_{i=1}^nc_{u_i}(u_{|i-1})=1} [u].
\end{split}
\end{equation}

Due to our assumption on $A$, with positive probability, the set
$$
K=\bigcap_{n\ge 1} K_n,
$$
is the boundary of a non-degenerate Galton Watson tree with offspring distribution given by that of the random integer $\sum_{a\in \mathcal A_1}c_a$. The set $K$ satisfies the following symbolic statistically self-affine invariance property:
$$
K=\bigcup_{a\in A} a\cdot K^a,
$$
where
\begin{equation}\label{Ka}
K^a= \bigcap_{n\ge 1}\Big (K^a_n=\bigcup_{u\in \mathcal A_1^n: \ \prod_{i=1}^nc_{u_i}(au_{|i-1})=1} [u]\Big ).
\end{equation}
We call $K$  a symbolic statistically self-affine Sierpinski sponge. The link with the Euclidean case will be made in Section~\ref{euclidean}.

\medskip

Next we recall the definition and basic properties of Mandelbrot measures on $K$, and state our result on their exact dimensionality.

\subsection{Mandelbrot measures on $K$}\label{Mandmeas}
These measures will play an essential role in finding a sharp lower bound for $\dim_H K$. Let $W=(W_a)_{a\in \mathcal A_1}$ be a non-negative random vector defined simultaneously with $A$ and such that $\{W_a>0\}\subset \{c_a=1\}$ for all $a\in\mathcal A_1$.  Let
$$
T(q)=T_W(q)=-\log \mathbb E\sum_{a\in\mathcal A_1} W_a^q,\quad q\ge 0,
$$
and suppose that $T(1)=0$, i.e. $\mathbb E(\sum_{a\in\mathcal A_1} W_a)=1$. Let $((A(u),W(u))_{u\in\mathcal A_1^*}$ be a sequence of independent copies of $(A,W)$.

For each $u, v\in \mathcal A_1^*$ let
$$
Q^u(v)=\prod_{k=1}^{|v|} W_{v_k}(u\cdot v_{|k-1}),
$$
and simply denote  $Q^\epsilon(v)$ by $Q(v)$. Due to our assumptions on $W$, for each $u\in \mathcal A_1^*$, setting $Y_n(u)=\sum_{|v|=n} Q^u(v)$ the sequence
$$(Y_n(u),\sigma (W_a(uv):a\in\mathcal A_1, v\in\mathcal A_1^*))_{n\ge 1}$$ is  a non-negative martingale. Denote its limit almost sure limit by~$Y(u)$. The mapping
$$
\mu: [u]\mapsto Q(u)Y(u)
$$
defined on the set of cylinders $\{[u]:u\in  \mathcal A_1^*\}$ extends to a unique measure $\mu$ on $(X_1,\mathcal B(X_1))$. This measure was first introduced in \cite{mandelbrot}, and is called a Mandelbrot measure. The support of $\mu$ is the set
$$
K_\mu=\bigcap_{n\ge 0}\bigcup_{u\in \mathcal A_1^n: \ Q(u)>0} [u]\subset K,
$$
where the inclusion $K_\mu\subset K$ follows from the assumption $$\{W_a(u)>0\}\subset\{c_a(u)=1\}$$ for all $u\in\mathcal A_1^*$ and all $a\in\mathcal A_1$. Moreover, if $T'(1-)>0$, then $(Y_n(u))_{n\ge 1}$ is uniformly integrable (see \cite{KP,Biggins,DuLi}), and in this case  $K_\mu$ is a symbolic statistically self-affine set as well.  Also, $K_\mu=K$ almost surely if and only if  $\{c_a(u)=1\}\setminus \{W_a>0\}$ has probability~0 (see \cite[Proposition A.1]{BF2016}). If $T'(1-)\le 0$, either $\mu=0$ almost surely (in such case one says that $\mu$ is degenerate), or
$$\mathbb P(\exists \, a\in  \A_1 \mbox { such that } W_a=1\mbox{ and } W_{a'}=0 \text{ for  }a'\neq a)=1$$
  (see \cite{KP,Biggins,DuLi} as well), and in this later case $T'(1-)=0$, $K_\mu$ is a singleton and~$\mu$ is a Dirac measure \cite{DuLi}.

The measure $\mu$ is also the weak-star limit of the sequence $(\mu_n)_{n\ge 1}$ defined by distributing uniformly (with respect to the uniform measure on $X_1$) the mass $Q(u)$ on each $u\in\mathcal A_1^n$.  It is statistically self-affine in the sense that
$$
\mu(B)=\sum_{a\in\mathcal A_1} W_a\, \mu^a(\sigma (B\cap[a]))
$$
for every Borel set $B \subset X_1$, where $\mu^a$ is the copy of $\mu$ constructed on $K^a$ with the weights $(W(au))_{u\in\mathcal A_1^*}$.

We will make a systematic use of the notion of entropy dimension of  measures on $X_i$. Given a  finite Borel measure $\nu$ on $X_i$,   the entropy dimension  of $\nu$ is defined as
\begin{equation}
\label{e-entropy}
\dim_e(\nu)=\lim_{n\to \infty} -\frac{1}{n}\sum_{u\in \mathcal A_i^n} \nu ([u])\log \nu([u]),
\end{equation}
whenever the limit exists.  If $\nu$ is $T_i$-invariant, one has  $\dim_e(\nu)=h_\nu(T_i)$, the entropy of $\nu$ with respect to $T_i$ (see e.g. \cite{Bowen1975}).

Due to the results by Kahane and Peyri\`ere \cite{KP,K87}, when a Mandelbrot measure $\mu$ is non-degenerate (that is, when $\mathbb P(\mu\neq 0)>0$), with probability 1, conditional on $\mu\neq 0$, one has
$$
\lim_{n\to\infty} \frac{\log(\mu([x_{|n}]))}{-n}=T'(1-)=-\sum_{a\in\mathcal A_1} \mathbb{E}(W_a\log W_a),\quad \text{for $\mu$-a.e. $x$}.
$$
It then follows that $\dim_e(\mu)$ exists \cite{FanLauRao} and $\dim_e(\mu)=T'(1-)$.
In particular,
\begin{equation}\label{maj dime mu}
\dim_e(\mu)\le \log \mathbb E(\# A),
\end{equation}
with equality if and only if $\mu$ is the so called branching measure, i.e. it is obtained from the random vector $(\mathbf{1}_{A}(a) /\mathbb E(\# A))_{a\in\mathcal A_1}$ (see Remark~\ref{rem0} for a justification of this uniqueness).

Before stating our first result, we present a result deduced from \cite{BF2016} about the entropy dimension of ${\Pi_i}\mu$ (in \cite{BF2016} we proved  the exact dimensionality of the projection of a Mandelbrot measure on $X_1\times X_1$ to the first factor $X_1$, in the case that $X_1$ is endowed with~$d_{\vec{\boldsymbol\gamma}}$, where ${\vec{\boldsymbol\gamma}}=((\log\#\mathcal A_1)^{-1})$; but projecting from $X_1$ to $X_i$ and considering the entropy dimension do not affect the arguments):
\begin{thm}\cite[Theorem 3.2]{BF2016}\label{projmu} Let $\mu$ be a non-degenerate Mandelbrot measure on $K$. Suppose that $T(q)>-\infty$ for some $q>1$. With probability 1, conditional on $\mu\neq 0$, for all $2\le i\le k$, one has $\dim_e({\Pi_i}\mu)=\min (\dim_e(\mu),h_{\nu_i}(T_i))$, where $\nu_i$ is the Bernoulli product measure on $X_i$ obtained as $\nu_i=\mathbb E({\Pi_i}\mu)$.
\end{thm}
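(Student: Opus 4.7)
My plan is to prove the two matching bounds $\dim_e(\Pi_i\mu)\le\min(\dim_e(\mu),h_{\nu_i}(T_i))$ and $\dim_e(\Pi_i\mu)\ge\min(\dim_e(\mu),h_{\nu_i}(T_i))$ almost surely, for each fixed $2\le i\le k$. Throughout I would work with the Peyri\`ere measure $\widetilde{\mathbb P}$ on $\Omega\times X_1$ obtained by integrating $d\mu(x)$ against $d\mathbb P(\omega)$ (conditional on $\mu\ne 0$, after normalization), on which Kahane--Peyri\`ere gives $-\tfrac{1}{n}\log\mu([x_{|n}])\to\dim_e(\mu)$ $\widetilde{\mathbb P}$-a.e., while a Birkhoff argument, exploiting that $\nu_i$ is Bernoulli with weights $q_b=\sum_{a\in\Pi_i^{-1}(b)}\mathbb E[W_a]$ and that the letters $\Pi_i(x_k)$ are distributed according to $(q_b)_b$ under $\widetilde{\mathbb P}$, yields $-\tfrac{1}{n}\log\nu_i([\Pi_i(x_{|n})])\to h_{\nu_i}(T_i)$ $\widetilde{\mathbb P}$-a.e.

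For the upper bound I would treat the two terms of the minimum separately. The inequality $\dim_e(\Pi_i\mu)\le\dim_e(\mu)$ follows from the log-sum inequality applied to the representation $\Pi_i\mu([v])=\sum_{u\in\Pi_i^{-1}(v)\cap\mathcal{A}_1^n}\mu([u])$, giving $H_n(\Pi_i\mu)\le H_n(\mu)$. The inequality $\dim_e(\Pi_i\mu)\le h_{\nu_i}(T_i)$ comes from Jensen applied to the concave map $-x\log x$: $\mathbb E[-\Pi_i\mu([v])\log\Pi_i\mu([v])]\le-\nu_i([v])\log\nu_i([v])$ for each $v\in\mathcal A_i^n$, whence $\mathbb E[H_n(\Pi_i\mu)]/n\le h_{\nu_i}(T_i)$; combining this with the a.s.\ constancy of $\dim_e(\Pi_i\mu)$ (from a zero-one law on the tail of the i.i.d.\ family $(A(u),W(u))_u$) and a Fatou argument along a convergent subsequence transforms this expectation bound into the required a.s.\ bound.

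For the lower bound, I would write
\[
\Pi_i\mu([\Pi_i(x_{|n})])=\nu_i([\Pi_i(x_{|n})])\cdot M_n(\Pi_i(x_{|n})),\qquad M_n(v):=\Pi_i\mu([v])/\nu_i([v]),
\]
with $\mathbb E[M_n(v)]=1$, and show that $\tfrac{1}{n}\log M_n(\Pi_i(x_{|n}))\to\max(0,h_{\nu_i}(T_i)-\dim_e(\mu))$ $\widetilde{\mathbb P}$-a.e. Combined with the two pointwise asymptotics above, this yields $-\tfrac{1}{n}\log\Pi_i\mu([\Pi_i(x_{|n})])\to\min(\dim_e(\mu),h_{\nu_i}(T_i))$, hence $\dim_e(\Pi_i\mu)$ equals this minimum after a dominated-convergence step to pass from the pointwise statement to the integrated one defining $\dim_e$. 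The bound $\liminf\tfrac{1}{n}\log M_n\ge h_{\nu_i}(T_i)-\dim_e(\mu)$ is immediate from $\Pi_i\mu([\Pi_i(x_{|n})])\ge\mu([x_{|n}])$. The matching limsup is the substantial step: when $\dim_e(\mu)<h_{\nu_i}(T_i)$, one needs $\limsup\tfrac{1}{n}\log M_n\le h_{\nu_i}(T_i)-\dim_e(\mu)$, equivalent to saying that the fiber sum defining $\Pi_i\mu([\Pi_i(x_{|n})])$ is dominated in log scale by its single summand $\mu([x_{|n}])$; when $\dim_e(\mu)\ge h_{\nu_i}(T_i)$, one needs $\limsup\tfrac{1}{n}\log M_n\le 0$ together with a matching $\liminf\ge 0$ to recover $h_{\nu_i}(T_i)$. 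Both are obtained via an $L^q$-moment estimate on the projected martingale $(M_n(v))_n$: the hypothesis $T(q)>-\infty$ for some $q>1$ yields, through a Biggins-type argument, the necessary bound $\sup_n\mathbb E[M_n(v)^q]<\infty$, and Markov's inequality combined with a Borel--Cantelli estimate along $\widetilde{\mathbb P}$-typical orbits gives the required control.

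The hard part is establishing the $L^q$-bound on $M_n(v)$. Unlike a classical Mandelbrot martingale, $M_n(v)$ is a sum over the fiber $\Pi_i^{-1}(v)\cap\mathcal A_1^n$ of products of weights sharing common prefixes, so the summands are not independent and the one-step decomposition does not immediately yield a standard branching random walk equation. The key is to exploit the one-block nature of $\pi_i$ and the independence of the $(W(u))_{u\in\mathcal A_1^*}$ across distinct nodes to set up a recursive equation for $M_n$ that mirrors the Mandelbrot cascade but lives on the ``quotient'' tree indexed by $\mathcal A_i^*$, and then apply a Biggins-type $L^q$-bound on this new cascade; this adaptation, absent from the unprojected case, is the crucial step and is precisely where the hypothesis $T(q)>-\infty$ for some $q>1$ enters.
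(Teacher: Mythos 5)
Your overall skeleton is reasonable and close in spirit to the argument the paper actually relies on: the paper does not reprove this statement but imports it from \cite{BF2016}, whose method is to estimate the moments $\mathbb{E}\sum_{|v|=n}\Pi_i\mu([v])^q$ on both sides of $q=1$ and deduce differentiability of the $L^q$-spectrum of $\Pi_i\mu$ at $1$. The Peyri\`ere-measure reduction, the law of large numbers for $-\frac1n\log\nu_i([\Pi_i(x_{|n})])$, and the trivial bound $\Pi_i\mu([\Pi_i(x_{|n})])\ge\mu([x_{|n}])$ are all fine. The genuine gap is your key technical claim: the uniform bound $\sup_n\mathbb{E}[M_n(v)^q]<\infty$ does not follow from $T(q)>-\infty$, because it is false outside the supercritical regime $\dim_e(\mu)>h_{\nu_i}(T_i)$ (and even there it holds only for $q$ close to $1$, not for the $q$ of the hypothesis; this is exactly the role of \cite[Proposition 5.1]{BF2016} in Section~\ref{pfcondmeas}, where the hypothesis $\dim_e(\mu)>h_{\nu_i}(T_i)$ is needed). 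Indeed, writing $\varphi(q)=\log\sum_b\nu_i([b])e^{-T_{V^{(i)}_b}(q)}$ as in Section~\ref{pfcondmeas}, one has $\mathbb{E}_{\mathbb{P}\otimes\nu_i}\big[M_n(z_{|n})^q\big]=\mathbb{E}\sum_{|v|=n}\nu_i([v])^{1-q}\Pi_i\mu([v])^q\ge e^{n\varphi(q)}$ (use $(\sum x_j)^q\ge\sum x_j^q$ over the fiber and independence along each branch), with $\varphi(1)=0$, $\varphi'(1)=h_{\nu_i}(T_i)-\dim_e(\mu)$; so in the subcritical case $\dim_e(\mu)<h_{\nu_i}(T_i)$ the $q$-th moments grow exponentially for every $q>1$, and no Biggins-type argument on a ``quotient cascade'' can make them bounded. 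But that subcritical case is precisely where your substantial step $\limsup_n\frac1n\log M_n\le h_{\nu_i}(T_i)-\dim_e(\mu)$ lives. What the Markov/Borel--Cantelli computation under the Peyri\`ere measure actually requires there is a moment estimate with the sharp exponential rate, of the type $\mathbb{E}\sum_{|v|=n}\Pi_i\mu([v])^q\le C_qe^{-n\min(T(q),\mathcal T_{\nu_i}(q))}$ for $q>1$ near $1$ (this is \cite[Corollary 5.2]{BF2016}, restated as Lemma~\ref{lemclef} here), plus either a $(q-1)^2$-correction or a companion $q<1$ estimate to cover the critical case $\dim_e(\mu)=h_{\nu_i}(T_i)$, where $\Pi_i\mu\perp\nu_i$ and uniform $L^q$-boundedness again fails. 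Proving such estimates despite the dependence of the fiber summands (recursion over the fiber structure together with von Bahr--Esseen, as in Section~\ref{sec3}) is the real content of the theorem; you correctly flag it as the crucial step but then replace it by an assertion whose conclusion is wrong in the relevant regime.

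Two secondary points. The matching bound $\liminf_n\frac1n\log M_n\ge0$ needed when $\dim_e(\mu)\ge h_{\nu_i}(T_i)$ requires no moments at all: with $\mathbb{Q}(\mathrm{d}\omega,\mathrm{d}z)=\mathbb{P}(\mathrm{d}\omega)\,\Pi_i\mu_\omega(\mathrm{d}z)$ one has $\mathbb{Q}(M_n(z_{|n})\le e^{-n\epsilon})\le e^{-n\epsilon}$ by size-biasing, and Borel--Cantelli finishes. On the other hand, your upper-bound half is weaker than stated: Jensen only gives $\mathbb{E}[H_n(\Pi_i\mu)]\le nh_{\nu_i}(T_i)$, and passing to an almost sure bound on $\limsup_n H_n(\Pi_i\mu)/n$ via ``a zero-one law plus Fatou'' is not justified (Fatou controls the liminf, and tail-triviality of this limsup is unclear since early-generation weights determine which branches survive). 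In the cited approach both the existence of $\dim_e(\Pi_i\mu)$ and its value come out simultaneously from the $L^q$-spectrum; if you keep the pointwise route you still must supply the uniform integrability behind your ``dominated-convergence step''.
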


Hence, $\dim_e(\mu)$ and $h_{\nu_i}(T_i)$ compete in the determination of the entropy dimension of the $i^{\rm th}$ projection of $\mu$.

Recall that a locally finite Borel measure $\nu$ on a metric space $(X,d)$ is said to be exact dimensional with dimension $D$ if $$\lim_{r\to 0+} \frac{\log(\nu(B(x,r)))}{\log(r)}=D$$ for $\nu$-almost every $x$.  We denote the  number $D$ by $\dim(\nu)$ and call it the dimension of~$\nu$. Our result on the exact dimensionality of the Mandelbrot measure $\mu$ on $(X_1,d_{\vec{\boldsymbol\gamma}})$ is the following.

\begin{thm}[Exact dimensionality of $\mu$]\label{thmdim} Let $\mu$ be a non-degenerate Mandelbrot measure on $K$. Suppose that $T(q)>-\infty$ for some $q>1$.
With probability 1, conditional on $\mu\neq 0$, the measure $\mu$ is exact dimensional and
$\dim (\mu)=\dim^{\vec{\boldsymbol\gamma}}_e(\mu)$, where
\begin{equation}
\label{e-LY}
\dim^{\vec{\boldsymbol\gamma}}_e(\mu):=\sum_{i=1}^{k} \gamma_i \dim_e({\Pi_i}\mu)
=\gamma_1\dim_e(\mu)+\sum_{i=2}^{k} \gamma_i \min (\dim_e(\mu), h_{\nu_i}(T_i)).
\end{equation}
\end{thm}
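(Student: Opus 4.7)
The plan is to compute the local dimension of $\mu$ at $\mu$-a.e.\ $x$, exploiting the rectangular structure of $d_{\vec{\boldsymbol\gamma}}$-balls (which amounts to showing, as mentioned in the introduction, that the $L^q$-spectrum $\tau_\mu$ is differentiable at $q=1$). The hypothesis $T(q)>-\infty$ for some $q>1$ will be used to get the $L^p$-boundedness of the branching martingales underlying $\mu$ and of their projected analogues, which drives the pointwise convergences below.

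From \eqref{e-da}, for $r=e^{-n}$ and $n_i=\lceil(\gamma_1+\cdots+\gamma_i)n\rceil$, one has
$$
B(x,r)=\bigcap_{i=1}^k\Pi_i^{-1}\bigl([\Pi_i(x)|_{n_i}]\bigr).
$$
Setting $F_i(x,n)=\bigcap_{j=1}^{i}\Pi_j^{-1}([\Pi_j(x)|_{n_j}])$ (so $F_0=X_1$ and $F_k=B(x,r)$), I would telescope
$$
-\log\mu(B(x,r))=\sum_{i=1}^{k}\log\frac{\mu(F_{i-1}(x,n))}{\mu(F_i(x,n))}.
$$
Since $n_i/n\to\gamma_1+\cdots+\gamma_i$ and $(n_i-n_{i-1})/n\to\gamma_i$, it suffices to prove that for each $i$, $\mu$-a.s.,
$$
\frac{-\log\mu(F_i(x,n))+\log\mu(F_{i-1}(x,n))}{n_i-n_{i-1}}\longrightarrow \dim_e(\Pi_i\mu)=\min\bigl(\dim_e(\mu),h_{\nu_i}(T_i)\bigr).
$$
The case $i=1$ is the Kahane--Peyri\`ere formula $-n^{-1}\log\mu([x_{|n}])\to T'(1-)=\dim_e(\mu)$, and the identification of $\dim_e(\Pi_i\mu)$ is exactly Theorem~\ref{projmu}. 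Summing with the $\gamma_i$-weights then yields the formula \eqref{e-LY}.

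The core technical step is the pointwise convergence of the conditional ratio in the displayed equation above. The natural candidate is to compare it with the unconditional ratio $\Pi_i\mu([\Pi_i(x)|_{n_i}])/\Pi_i\mu([\Pi_i(x)|_{n_{i-1}}])$, whose logarithm, divided by $n_i-n_{i-1}$, tends to $\dim_e(\Pi_i\mu)$--a pointwise strengthening of Theorem~\ref{projmu} implicit in the martingale and large-deviations arguments of \cite{BF2016}. The additional conditioning on the finer strata $\Pi_j^{-1}([\Pi_j(x)|_{n_j}])$ for $j<i$ refines the cylinder at coordinates already determined up to projection level $i-1$ but in a different fiber; by the independence of the vectors $(A(u),W(u))$ indexed by distinct nodes, the old conditioning and the new $\Pi_i$-constraint ought to be asymptotically orthogonal in the exponential scale.

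The main obstacle is precisely to make this asymptotic orthogonality quantitative and uniform in $n$: the Mandelbrot measure is \emph{not} a product across the coordinate blocks $[n_{j-1}+1,n_j]$, so one cannot factor directly. Instead one needs a quenched Shannon--McMillan--Breiman-type statement for the family of projected Mandelbrot measures that is uniform over the decreasing family of conditioning $\sigma$-algebras generated by the $F_i(x,n)$. The $L^q$-hypothesis on $W$ (which passes to every $\Pi_i\mu$ by Jensen's inequality, using the convexity of $q\mapsto q^{-1}\log T$ near $1$) supplies the required uniform martingale control via \cite{Biggins,DuLi,KP}, and closes the loop together with the projection result of \cite{BF2016}.
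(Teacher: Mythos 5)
Your telescoping decomposition of $-\log\mu(B(x,e^{-n}))$ into the conditional block ratios $\log\bigl(\mu(F_{i-1}(x,n))/\mu(F_i(x,n))\bigr)$ is a natural way to organize the problem, and the case $i=1$ (Kahane--Peyri\`ere) and the identification $\dim_e(\Pi_i\mu)=\min(\dim_e(\mu),h_{\nu_i}(T_i))$ are indeed available. But the proof has a genuine gap at exactly the point you flag as the ``main obstacle'': you never actually prove the almost-sure convergence
$$
\frac{-\log\mu(F_i(x,n))+\log\mu(F_{i-1}(x,n))}{n_i-n_{i-1}}\longrightarrow \dim_e(\Pi_i\mu)\qquad(2\le i\le k).
$$
The assertion that the conditioning by the finer strata $F_{i-1}(x,n)$ is ``asymptotically orthogonal'' to the new $\Pi_i$-constraint, and that a quenched Shannon--McMillan--Breiman statement uniform over this family of conditionings follows from the martingale results of Biggins, Durrett--Liggett, Kahane--Peyri\`ere together with the projection theorem of [BF2016], is not backed by any argument, and no such statement exists off the shelf: $\mu$ conditioned on the tube $F_{i-1}(x,n)$ is a measure in a random environment that changes with $n$, and controlling its cylinder masses pointwise, uniformly in $n$, is essentially equivalent to the Ledrappier--Young formula you are trying to prove. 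In particular the comparison with the unconditional ratio $\Pi_i\mu([\Pi_i(x)|_{n_i}])/\Pi_i\mu([\Pi_i(x)|_{n_{i-1}}])$ is precisely the step that needs a quantitative proof, and the cited $L^q$ hypothesis by itself does not supply it (also, the parenthetical claim that the moment condition ``passes to every $\Pi_i\mu$ by Jensen's inequality'' is not correct as stated; what is really needed is a moment estimate of the type $\mathbb{E}\sum_U \Pi_i\mu([U])^q\le C_q e^{-n\min(T(q),\mathcal T_{\nu_i}(q))}$, which is a nontrivial result).

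The paper avoids this pointwise conditional analysis altogether. It proves the stronger statement that $\tau_\mu'(1)$ exists and equals $\dim_e^{\vec{\boldsymbol\gamma}}(\mu)$, which gives exact dimensionality by the general $L^q$-spectrum criterion, and the technical core is an annealed moment estimate of the partition function $\mathbb{E}\bigl(\sum_{B\in\mathcal F_n}\mu(B)^q\bigr)$ for $q$ near $1$ on both sides. There the block structure of the balls is exploited only through expectations: a recursion over the coordinate blocks using the independence of the weights, superadditivity, the von Bahr--Esseen martingale-difference inequality for $q>1$ (and subadditivity plus Jensen for $q<1$), together with the moment estimates for the projected measures from [BF2016]. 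Note also that the paper must handle the delicate boundary case $T'(1)=\mathcal T'_{\nu_i}(1)$ by inserting a quadratic correction $c_0(q-1)^2$ in the exponent; your pointwise scheme has no mechanism to absorb this degeneracy. To salvage your approach you would have to prove the uniform quenched SMB statement yourself, which is a substantial piece of work, not a citation.
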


This result will follow from the stronger fact that the $L^q$-spectrum of $\mu$ is differentiable at $1$ with derivative $\dim^{\vec{\boldsymbol\gamma}}_e(\mu)$ (see Theorem \ref{thm-3.1}).

\subsection{The Hausdorff and box counting dimensions of $K$}\label{dimsK}

To state our result on $\dim_H K$, we need to recall some elements of the weighted thermodynamic formalism.

For  $1\le i\le k$ and $b\in \mathcal A_i=\Pi_i(\mathcal A_1)$, let
\begin{equation}
\label{e-Nib}
N^{(i)}_b=\# \{a\in\mathcal A_1:\; [a]\subset  \Pi_i^{-1}([b]) \mbox{ and }[a]\cap K\neq\emptyset\}.
\end{equation}
Then set $$\widetilde{\mathcal A}_i=\{b\in\mathcal A_i: \; \mathbb{E}(N^{(i)}_b)>0\}.$$
Without loss of generality we assume that $\# \widetilde{\mathcal A}_k\ge 2$. Indeed, if $\widetilde{\mathcal A}_k$ is a singleton, then $X_k$ plays no role in the geometry of $K$ since $\Pi_k(K)$ is a singleton when $K\neq\emptyset$. As a consequence,  $\# \widetilde{\mathcal A}_i\ge 2$ for all $2\le i\le k$.

 For $1\le i\le k$, let $\widetilde X_i$ denote the one-sided symbolic space over the alphabet  $\widetilde \A_i$. If $\phi:\widetilde X_i\to\mathbb R$ is a continuous function on $\widetilde X_i$,  $\vec{\boldsymbol\beta} =(\beta_{i},\beta_{i+1},\ldots, \beta_k)\in\mathbb{R}_+^*\times\mathbb R_+^{k-i}$, and $\nu\in\mathcal M(X_i,T_i)$, let
\begin{equation}
\label{e-h-beta}
h_\nu^{\vec{\boldsymbol\beta}}(T_i):=\sum_{j=i}^{k} \beta_jh_{{\Pi_{i,j}}{\nu}}(T_j),
\end{equation}
where
\begin{equation}\label{piij}
\Pi_{i,j}=\pi_{j-1}\circ
\cdots \circ \pi_i \quad\text{if }j>i
\end{equation}
 and $\Pi_{i,i}$ is the identity map of $X_i$, and define the weighted pressure function
\begin{equation}
\label{e-weighted-pressure}
P^{\vec{\boldsymbol\beta}}(\phi,T_i)=\sup\Big \{\nu(\phi)+h_\nu^{\vec{\boldsymbol\beta}}(T_i): \; \nu\in \mathcal M(\widetilde X_i,T_i)\Big \},
\end{equation}
where $\nu(\phi)=\int_{\widetilde X_i}\phi\, \mathrm{d}\nu$. It is known (\cite{BF2012}) that if $\phi$ is H\"older-continuous, then the supremum in defining $P^{\vec{\boldsymbol\beta}}(\phi,T_i)$ is reached at a unique fully supported measure $\nu_\phi$, called the equilibrium state of $\phi$ for $P^{\vec{\boldsymbol\beta}}(\phi,T_i)$. Moreover, the mapping $\theta\mapsto P^{\vec{\boldsymbol\beta}}(\theta\phi,T_i)$ is differentiable, and
\begin{equation}\label{diff}
\frac{\mathrm{d}P^{\vec{\boldsymbol\beta}}(\theta \phi,T_i))}{\mathrm{d}\theta} =\int_{\widetilde X_i}\phi(x)\,\mathrm{d}\nu_{\theta\phi}(x)=\nu_{\theta\phi}(\phi).
\end{equation}


For $2\le i\le k$  let ${\vec{\boldsymbol\gamma}^i}=(\vec{\boldsymbol\gamma}^i_j)_{i\le j\le k}=(\gamma_1+\cdots +\gamma_i,\gamma_{i+1},\ldots, \gamma_k)$ and  let $\phi_i$ be the H\"older-continuous potential defined on $\widetilde X_i$ by
\begin{equation}\label{phiktheta}
\phi_i(x)=(\gamma_1+\cdots +\gamma_i)  \log \mathbb{E} (N^{(i)}_{x_1}).
\end{equation}
For this potential and $\vec{\boldsymbol \beta}={\vec{\boldsymbol\gamma}^i}$, we set
\begin{equation}
\label{e-p-i}
P_i(\theta)=P^{{\vec{\boldsymbol\gamma}^i}}(\theta \phi_i,T_i),  \quad \theta\in \R.
\end{equation}
We also define $P_{k+1}=P_k$.

The equilibrium state of $\theta\phi_i$ for $P^{{\vec{\boldsymbol\gamma}^i}}$ will be also called the equilibrium state of $P_i$ at $\theta$.

By \eqref{diff}, we have
\begin{equation}\label{diff2}
P_i'(\theta)=(\gamma_1+\cdots +\gamma_i)\sum_{b\in\widetilde{\mathcal A}_i}\nu_{\theta\phi_i}([b])\log \mathbb{E} (N^{(i)}_{b}) \quad (\theta\in\mathbb R).
\end{equation}

We now define some parameters involved in the next statement. In order to slightly simplify the exposition, we assume that all the $\gamma_i$ are positive. The general situation will be considered in  Section~\ref{lastsection}.

Set $I=\{2,\ldots,k\}$ (introducing this convention will simplify the discussion in Section~\ref{lastsection}),
\begin{equation}\label{tildethetai}
\widetilde\theta_i= \displaystyle \frac{\gamma_1+\cdots + \gamma_{i-1}}{\gamma_1+\cdots +\gamma_{i}}\quad\text{if }2\le i\le k,
\end{equation}
and define
$$
\widetilde I=  \left\{i\in I: \,  P_i'(1)\ge 0 \right \} .
$$

Moreover, define
 \begin{equation}
\label{e-i_0}
 i_0=\left\{ \begin{array}{ll}
 \min(\widetilde I) & \mbox{ if } \widetilde I\ne \emptyset\\
 k+1 & \mbox{ if } \widetilde I=\emptyset
 \end{array}
 \right.
 \end{equation}
 and
 \begin{equation}
 \label{e-theta_0}
  \theta_{i_0}=\left\{ \begin{array}{ll}
\min\left\{\theta\in  [\widetilde\theta_{i_0},1]:\; P_{i_0}'(\theta)\ge 0 \right \} & \mbox{ if } i_0\leq k\\
1 & \mbox{ if } i_0= k+1
 \end{array}
 \right..
\end{equation}


\begin{thm}[Hausdorff dimension of $K$]\label{dim K} With probability 1, conditional on $K\neq\emptyset$,
\begin{align}
\nonumber \dim_H K&=\sup \{\dim^{\vec{\boldsymbol\gamma}}_e(\mu): \; \mu\text{ is a positive Mandelbrot measure supported on  $K$}\}\\
\label{formula1} &=\begin{cases}
P_{i_0}(\theta_{i_0}) &\text{ if } i_0\le k\\
P_{k}(1) &\text{ if }i_0=k+1
\end{cases}\\
\nonumber&=\inf\Big \{P_i(\theta): \;  i\in I, \, \widetilde\theta_i\le \theta\le 1\Big \}.
\end{align}
Moreover, there is a unique Mandelbrot measure jointly defined with $K$ at which the supremum is attained, conditional on $K\neq\emptyset$.
\end{thm}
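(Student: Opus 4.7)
The plan is to prove the chain of equalities in three pieces: the lower bound $\dim_H K \ge \sup_\mu \dim^{\vec{\boldsymbol\gamma}}_e(\mu)$, the upper bound $\dim_H K \le \inf\{P_i(\theta)\}$, and the matching of the sup and the inf through a min-max argument. The lower bound is immediate from Theorem~\ref{thmdim}: any positive Mandelbrot measure $\mu$ supported on $K$ is a.s. exact dimensional with $\dim(\mu) = \dim^{\vec{\boldsymbol\gamma}}_e(\mu)$, hence $\dim_H K \ge \dim(\mu)$. The matching upper bound will follow from the covering-type variational principle referenced as Theorem~\ref{VPUB}: for each admissible pair $(i,\theta)$ with $i \in I$ and $\tilde\theta_i \le \theta \le 1$, one exhibits a family of coverings of $K$ at scales calibrated by $\vec{\boldsymbol\gamma}^i$ whose Hausdorff cost is bounded by $P_i(\theta)$.

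The core identification
$$\sup_\mu \dim^{\vec{\boldsymbol\gamma}}_e(\mu) = \inf\{P_i(\theta) : i \in I,\ \tilde\theta_i \le \theta \le 1\}$$
is established as follows. Because $h_{\nu_j}(T_j)$ is non-increasing in $j$ (each $\pi_{j}$ is a factor map, so projections reduce entropy), for each Mandelbrot measure $\mu$ there is a unique $i \in \{2,\dots,k+1\}$ such that $h_{\nu_i}(T_i) \ge \dim_e(\mu) \ge h_{\nu_{i+1}}(T_{i+1})$ (with conventions $h_{\nu_1}(T_1)=+\infty$ and $h_{\nu_{k+1}}(T_{k+1})=0$); in that case the $k-1$ minima in the definition of $\dim^{\vec{\boldsymbol\gamma}}_e(\mu)$ collapse and
$$\dim^{\vec{\boldsymbol\gamma}}_e(\mu) = (\gamma_1+\cdots+\gamma_i)\dim_e(\mu) + \sum_{j=i+1}^k \gamma_j h_{\nu_j}(T_j),$$
which is exactly the structure of $P^{\vec{\boldsymbol\gamma}^i}(\phi_i,T_i)$. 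Rewriting $\min(a,b) = \inf_{\lambda\in[0,1]}(\lambda a + (1-\lambda)b)$ and applying a von Neumann--type min-max argument to interchange $\sup_\mu$ with the $\inf$ over the auxiliary parameters converts the supremum into an infimum over the pressures $P_i(\theta)$.

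To realise the supremum, for each admissible $(i,\theta)$ I would construct a Mandelbrot measure $\mu_{i,\theta}$ from the unique Bernoulli equilibrium state $\nu_\theta^{(i)}$ of $\theta\phi_i$ for $P^{\vec{\boldsymbol\gamma}^i}(\cdot,T_i)$ (it is Bernoulli because $\phi_i$ depends only on the first coordinate, so the weighted entropy term is maximised among measures with fixed one-letter marginal by the Bernoulli measure). The random weights will have the form
$$W_a = \frac{\nu_\theta^{(i)}([\Pi_i(a)])}{\mathbb{E}(N^{(i)}_{\Pi_i(a)})}\,\mathbf{1}_{\{a\in A,\ [a]\cap K\neq\emptyset\}},$$
possibly twisted by a further $\theta$-dependent factor along the fibres of $\Pi_i$. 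A direct computation of $T'_W(1-) = \dim_e(\mu_{i,\theta})$ and of the Bernoulli projections $\mathbb{E}(\Pi_j\mu_{i,\theta}) = \Pi_{i,j}\nu_\theta^{(i)}$ for $j \ge i$ gives $\dim^{\vec{\boldsymbol\gamma}}_e(\mu_{i,\theta}) = P_i(\theta)$, while the splitting condition $h_{\nu_i}(T_i) \ge \dim_e(\mu_{i,\theta}) \ge h_{\nu_{i+1}}(T_{i+1})$ translates, via \eqref{diff2}, precisely into $\theta \in [\tilde\theta_i,1]$. Optimising over $(i,\theta)$ using the strict convexity of each $P_i$ and the monotonicity of $P_i'$ identifies the infimum with $P_{i_0}(\theta_{i_0})$ when $\widetilde I\neq\emptyset$ and with $P_k(1)$ otherwise.

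Uniqueness of the optimal Mandelbrot measure will follow from strict convexity of each $P_i$ (yielding a unique minimiser $\theta_{i_0}$) combined with uniqueness of the associated Bernoulli equilibrium state $\nu_{\theta_{i_0}}^{(i_0)}$. The main obstacle I anticipate is the third paragraph: verifying that the constructed $W$ satisfies the moment hypothesis $T_W(q)>-\infty$ for some $q>1$ needed to invoke Theorems~\ref{projmu} and~\ref{thmdim} on $\mu_{i,\theta}$, controlling almost surely the splitting condition in the random environment (since $N^{(i)}_b$ and the survival events are random), and making the min-max interchange rigorous given that the space of admissible Mandelbrot-measure parameters is neither obviously compact nor convex.
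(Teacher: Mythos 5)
Your outline follows the same global strategy as the paper (lower bound from Theorem~\ref{thmdim} applied to an optimal Mandelbrot measure built from a weighted equilibrium state, upper bound from coverings indexed by pairs $(i,\theta)$, and a von Neumann/Ky Fan min--max to match the two), but its two hardest pillars are asserted rather than proved. First, the upper bound: saying that ``one exhibits a family of coverings whose Hausdorff cost is bounded by $P_i(\theta)$'' contains no mechanism, and this is precisely where the new work lies (the paper stresses that the Gatzouras--Lalley covering argument does not extend beyond $k=2$). The actual proof of Theorem~\ref{VPUB} needs the type-counting estimate \eqref{cardRim} for words whose empirical distributions lie near the compact set $\mathcal R_i$, the fibre bound $n_U\le N^{(i)}_{{U_i}_{|\lceil\theta\ell_i(n)\rceil}}$, and, crucially, the combinatorial Lemma~\ref{comblem} (a generalisation of the Kenyon--Peres lemma, via \cite[Lemma 5.4]{FH}) guaranteeing that \emph{every} $x\in\widetilde X_1$ admits infinitely many scales $n$ at which $\rho(x,n)$ is $\epsilon$-close to $\mathcal R_i$; this is what makes the covering work on all of $K$ rather than $\mu$-almost everywhere, and nothing in your sketch supplies it. Second, in the min--max step, Ky Fan's theorem only yields $\min_{{\bf q}\in\Delta_k}\sup_{\nu\in\mathcal B_1}\sum_i q_i g_i(\nu)$; converting this into $\inf\{P_i(\theta):\,i\in I,\ \widetilde\theta_i\le\theta\le1\}$ requires showing that the minimum can be localised on two \emph{consecutive} coordinates, which the paper gets by observing that equality of entropies of successive projections forces the factor maps to be injective on the supports, so that $P({\bf q})$ depends only on $\sum_t q_t(\gamma_1+\cdots+\gamma_t)$. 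Moreover, the compactness/convexity obstacle you flag at the end is resolved in the paper by first reducing from arbitrary Mandelbrot measures to the simplex $\mathcal B_1\simeq\mathcal P(\widetilde{\mathcal A}_1)$ via the relative-entropy bound of Remark~\ref{rem0} (if $\nu=\mathbb E(\mu)$ then $\dim_e(\mu)\le H(\nu)+\nu(\widetilde\phi)$, with equality only for the canonical weights $\widetilde W$); you leave this open.

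Your uniqueness argument also does not work as stated: $P_i$ need not be strictly convex in $\theta$ (if $\mathbb E(N^{(i)}_b)$ is constant on $\widetilde{\mathcal A}_i$ the potential $\phi_i$ is constant and $P_i$ is affine), and even a unique optimal pair $(i_0,\theta_{i_0})$ would only give uniqueness within the special family $\mu_{i,\theta}$ you construct, not among all Mandelbrot measures supported on $K$. The paper instead derives uniqueness from strict concavity on the simplex (Corollary~\ref{cor-2}(ii), uniqueness of the maximiser of $\min_i g_i$) combined with the equality case of Remark~\ref{rem0}, which pins down the random vector $W$ itself, not merely its expectation. These---the combinatorial covering lemma, the reduction of the simplex minimisation to consecutive pairs, and the equality-case argument for uniqueness---are the concrete missing pieces.
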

The Mandelbrot measure at which the supremum is attained will be specified in Section~\ref{OPT}. It is constructed from the equilibrium state of $P_{i_0}$ at $\theta_{i_0}$.


\begin{thm}[Box counting dimension of $K$]\label{thmdimB}
With probability 1, conditional on $K\neq\emptyset$,
$$
\dim_B K=\gamma_1\log \mathbb E(\# A) +\sum_{i=2}^k \gamma_i \min_{\theta\in[0,1]}\log\sum_{b\in \widetilde {\mathcal A}_i} \mathbb E(N^{(i)}_b)^\theta.
$$
\end{thm}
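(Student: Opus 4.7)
The plan is to estimate directly the number $N_n$ of approximate squares at scale $e^{-n}$ meeting $K$. Writing $n_i=\lceil(\gamma_1+\cdots+\gamma_i)n\rceil$, the ultrametric structure of $d_{\vec{\boldsymbol\gamma}}$ forces a closed ball of radius $e^{-n}$ to be a ``cylinder box'' determined by a compatible tuple $(u^{(1)},v^{(2)},\ldots,v^{(k)})$ with $u^{(1)}\in\mathcal A_1^{n_1}$ and $v^{(i)}\in\mathcal A_i^{n_i-n_{i-1}}$ (the extra letters of $\Pi_i(x)$ between depths $n_{i-1}$ and $n_i$, consistent with $\pi_{i-1}$). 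The first step is to show that $\dim_B K=\lim_n(\log N_n)/n$ almost surely on $\{K\neq\emptyset\}$, reducing the theorem to the growth of $N_n$.

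\textbf{Upper bound.} Decompose
\[
N_n=\sum_{u^{(1)}\in\mathcal A_1^{n_1}}\mathbf 1_{[u^{(1)}]\cap K\neq\emptyset}\,M(u^{(1)}),
\]
where $M(u^{(1)})$ counts the extensions $(v^{(2)},\ldots,v^{(k)})$ realised in the i.i.d.\ copy $K^{u^{(1)}}$; a further iterative decomposition of $M(u^{(1)})$ proceeds direction by direction. The main combinatorial input, for each $i\in\{2,\ldots,k\}$, is the estimate
\[
\mathbb E\bigl(\widetilde Z^{(i)}_p\bigr)\;\le\;C\Bigl(\sum_{b\in\widetilde{\mathcal A}_i}\mathbb E(N^{(i)}_b)^\theta\Bigr)^p\qquad(\theta\in[0,1]),
\]
where $\widetilde Z^{(i)}_p:=\#\{v\in\mathcal A_i^p:[v]\cap\Pi_i(K)\neq\emptyset\}$. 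It is obtained by writing $\mathbf 1_{v\in \text{level-}p\text{ of }\Pi_i(K)}\le Q_v^\theta$ with $Q_v=\#\{a\in\mathcal A_1^p:\Pi_i(a)=v,\,[a]\cap K\neq\emptyset\}$ (valid for $Q_v\in\mathbb N$ and $\theta\in[0,1]$), applying Jensen's inequality to the product form of $\mathbb E(Q_v)$, and using the independence of the families $\{A(u)\}_u$. Iterating this estimate one direction at a time inside the hierarchical decomposition of $M(u^{(1)})$, together with the independence of the sub-copies $K^{u^{(1)}}$, yields the master inequality
\[
\mathbb E(N_n)\;\le\;C'\,\mathbb E(\#A)^{n_1}\prod_{i=2}^{k}\Bigl(\sum_{b\in\widetilde{\mathcal A}_i}\mathbb E(N^{(i)}_b)^{\theta_i}\Bigr)^{n_i-n_{i-1}}\qquad\bigl((\theta_i)\in[0,1]^{k-1}\bigr).
\]
Because $n_i-n_{i-1}=\gamma_i n+O(1)$, independently optimising each $\theta_i$ gives the desired upper bound on $\tfrac{1}{n}\log\mathbb E(N_n)$; a Markov inequality plus Borel--Cantelli along an exponential subsequence (combined with the monotonicity of $n\mapsto N_n$ under refinement of approximate squares) upgrades this to an a.s.\ upper bound on $\dim_B K$.

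\textbf{Lower bound and main obstacle.} For the matching lower bound, let $\theta_i^*\in[0,1]$ realise the minimum in direction $i$. The plan is to establish the reverse inequality
\[
\mathbb E\bigl(\widetilde Z^{(i)}_p\bigr)\;\ge\;c\Bigl(\sum_{b\in\widetilde{\mathcal A}_i}\mathbb E(N^{(i)}_b)^{\theta_i^*}\Bigr)^p
\]
by splitting $\widetilde{\mathcal A}_i$ into \emph{heavy} letters ($\mathbb E(N^{(i)}_b)\ge 1$) and \emph{light} ones, and combining the elementary bound $\min(1,x)\ge x^{\theta_i^*}$ with a Galton--Watson survival estimate on the heavy sub-skeleton. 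Fed into the hierarchical decomposition, this gives $\mathbb E(N_n)\ge e^{n\cdot\text{formula}-o(n)}$, and a second-moment / branching-SLLN argument in the spirit of \cite{Biggins,KP} shows that $N_n$ concentrates around $\mathbb E(N_n)$ up to subexponential factors, providing the a.s.\ lower bound on $\dim_B K$. The principal difficulty is precisely the lower bound on $\mathbb E(\widetilde Z^{(i)}_p)$: since $\Pi_i(K)$ is not a pure Galton--Watson tree---distinct letters in $\mathcal A_1$ may share a projection in $\mathcal A_i$, so branches merge---no single survival estimate suffices. The tilt $\theta_i^*$ is precisely what balances the heavy regime (paths surviving with positive probability) against the light regime (paths contributing only through their expected multiplicity), and this Legendre-type interpolation is what forces the $\min_\theta$ expression; it is the exact point where the argument departs from the self-similar setting and extends the two-dimensional case of \cite{GL94}. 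Extending to arbitrary $k$ then amounts to carrying out this one-directional optimisation $k-1$ times in nested sub-copies, which the hierarchical decomposition above disentangles into $k-1$ independent problems.
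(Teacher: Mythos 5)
Your overall plan has the right shape (count the balls $N_n$ meeting $K$, extract $\log \mathbb{E}(\# A)$ from the first direction and $\min_{\theta\in[0,1]}\log\sum_b\mathbb{E}(N_b^{(i)})^\theta$ from direction $i$ via a heavy/light splitting and survival estimates for branching processes in varying environment), and for $k=2$ it essentially re-derives Gatzouras--Lalley. The gaps appear exactly where $k\ge 3$ differs from $k=2$. The ``master inequality'' $\mathbb{E}(N_n)\le C'\,\mathbb{E}(\#A)^{n_1}\prod_{i=2}^{k}\bigl(\sum_{b}\mathbb{E}(N_b^{(i)})^{\theta_i}\bigr)^{n_i-n_{i-1}}$ with \emph{independently} chosen $\theta_i$ does not follow from the blockwise bound $\mathbf{1}_{\{Q_v\ge 1\}}\le Q_v^{\theta}$ plus Jensen. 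That argument, applied to the total number of surviving witness chains, gives the bound only with one \emph{common} exponent $\theta$, which yields $\min_\theta\sum_i\gamma_i\psi_i(\theta)$ and not the theorem's decoupled $\sum_i\gamma_i\min_{\theta_i}\psi_i(\theta_i)$. To decouple the exponents you must push different $\theta_i$ through nested conditional expectations, and the quantity at stage $i$ is not the number of realised $U_i$ below one fixed surviving lift of $(U_1,\ldots,U_{i-1})$ but the union over \emph{all} surviving lifts; an inequality of the shape $\min(1,s\rho)\le s^{\theta_i}\rho^{\theta_{i+1}}$ is false when $\theta_i<\theta_{i+1}$, and this is exactly the configuration that must be controlled. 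So the hierarchical decomposition does not ``disentangle into $k-1$ independent problems'': the merging of branches that you acknowledge within one direction also acts \emph{across} directions, because an early block with an atypically large family of surviving lifts feeds all later blocks at once. This cross-block interaction is the genuinely new difficulty for $k\ge3$, and the sketch assumes it away.

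The lower bound has a second gap: it rests on the claim that $N_n$ concentrates around $\mathbb{E}(N_n)$ up to subexponential factors, to be obtained ``by a second-moment / branching-SLLN argument''. Already for $k=2$ the second moment of the number of surviving columns is in general exponentially larger than the squared mean, which is why Dekking--Grimmett and Gatzouras--Lalley do not argue by mean concentration but through Dekking's theorem on survival probabilities of branching processes in varying environment together with conditioning and cut-off arguments; and for $k\ge3$ the rare configurations described above can inflate $\mathbb{E}(N_n)$, so there is no a priori reason for $\frac{1}{n}\log N_n$ and $\frac1n\log\mathbb{E}(N_n)$ to have the same limit, which would make the two halves of your argument meet at the wrong value even if each were completed. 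The paper's proof avoids moments of $N_n$ altogether: it quotes the Section~4 results of \cite{GL94} (conditional almost sure growth rate of the number of surviving columns, based on Dekking's theorem \cite{Dek}) for the first nontrivial direction, and then runs an induction on $i$ in which, below each realised $(U_1,\ldots,U_{i-1})$, one fixed witness is selected and the i.i.d.\ counts $Z_{i,\ell_i(n)-\ell_{i-1}(n)}$ of realised $U_i$ below these witnesses are controlled almost surely, conditionally on non-extinction, by the same Gatzouras--Lalley reasoning. To complete your route you would have to either prove the decoupled moment bound and a genuine concentration statement (facing the cross-block interaction head on), or switch to this witness-by-witness induction.
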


Next we give a necessary and sufficient condition for $\dim_H K= \dim_B K$. Define
\begin{equation}
\label{e-psi_i}
\psi_i: \theta\in[0,1]\mapsto  \log\sum_{b\in \widetilde {\mathcal A}_i} \mathbb E(N^{(i)}_b)^\theta \quad  (2\le i\le k).
\end{equation}
For each $2\le i\le k$, denote by $\widehat\theta_i$ the point in $[0,1]$ at which $\psi_i$ reaches its minimum if $\psi_i$ is not constant (i.e. there is $b\in  \widetilde{\mathcal A}_i$ such that $\mathbb E(N^{(i)}_b)\neq 1$), and $\widehat\theta_i=0$ otherwise.

We will need the following lemma to state and prove Corollary~\ref{cor1} about the necessary and sufficient condition for the equality $\dim_H K= \dim_B K$ to hold.
\begin{lem}
 For each $2\leq i\leq k$,  $\psi_i$ takes the  value $\log \mathbb E(\# A)$ at $\theta=1$. Moreover if $2\le i\le k-1$, then $\psi_{i}\ge \psi_{i+1}$, and  $\widehat \theta_i<1$ implies $\widehat \theta_{i+1}<1$.
\end{lem}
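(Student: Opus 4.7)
The plan is to verify the three assertions in sequence, using only the definitions together with the subadditivity $(x+y)^\theta\le x^\theta+y^\theta$ valid for $x,y\ge 0$ and $\theta\in[0,1]$, and the convexity of $\psi_i$.

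For $\psi_i(1)=\log\E(\#A)$, I would observe that
$$
\sum_{b\in\widetilde{\mathcal A}_i}N_b^{(i)}=\#\{a\in\mathcal A_1:[a]\cap K\neq\emptyset\},
$$
because every such $a$ is counted exactly once through its image $\Pi_i(a)\in\widetilde{\mathcal A}_i$, while the terms with $b\notin\widetilde{\mathcal A}_i$ vanish in expectation by definition of $\widetilde{\mathcal A}_i$. The resulting quantity is the expected number of surviving first-generation cylinders, which does not depend on $i$ and which the paper identifies with $\E(\#A)$ (cf.\ the statement of Theorem~\ref{thmdimB}).

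For $\psi_i\ge\psi_{i+1}$ on $[0,1]$, I would use the factorisation
$$
N_c^{(i+1)}=\sum_{b\in\widetilde{\mathcal A}_i:\,\pi_i(b)=c}N_b^{(i)},\qquad c\in\widetilde{\mathcal A}_{i+1},
$$
which follows from $\Pi_{i+1}=\pi_i\circ\Pi_i$. Passing to expectations and applying subadditivity termwise gives
$$
\sum_{c\in\widetilde{\mathcal A}_{i+1}}\E(N_c^{(i+1)})^\theta=\sum_{c}\Big(\sum_{b:\pi_i(b)=c}\E(N_b^{(i)})\Big)^\theta\le \sum_{b\in\widetilde{\mathcal A}_i}\E(N_b^{(i)})^\theta,
$$
and taking logarithms yields the inequality.

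Finally, the implication $\widehat\theta_i<1\Rightarrow\widehat\theta_{i+1}<1$ follows by combining the previous two steps. If $\psi_i$ is not constant, then by convexity and the choice of $\widehat\theta_i$ as minimiser in $[0,1]$ one has $\psi_i(\widehat\theta_i)<\psi_i(1)$; together with $\psi_{i+1}\le\psi_i$ on $[0,1]$ and $\psi_{i+1}(1)=\psi_i(1)$ this forces $\psi_{i+1}(\widehat\theta_i)<\psi_{i+1}(1)$, so $\psi_{i+1}$ is non-constant and its minimum over $[0,1]$ is attained strictly before $1$. If $\psi_i$ is constant, then $\widehat\theta_i=0$ by convention and $\psi_{i+1}\le \psi_i\equiv\psi_{i+1}(1)$ on $[0,1]$; either $\psi_{i+1}$ is also constant (so $\widehat\theta_{i+1}=0<1$) or it dips strictly below $\psi_{i+1}(1)$ at some interior point, which by convexity again places $\widehat\theta_{i+1}$ in $[0,1)$. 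The only delicate point in the whole argument is the identification at $\theta=1$ with $\E(\#A)$, which relies on the paper's convention for counting surviving children; the comparison of the $\psi_i$ and the conclusion on the minimisers are purely algebraic.
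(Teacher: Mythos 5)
Your proposal is correct and follows essentially the same route as the paper: the identity $\mathbb{E}(N^{(i+1)}_{c})=\sum_{b\in\widetilde{\mathcal A}_i:\,\pi_i(b)=c}\mathbb{E}(N^{(i)}_b)$ gives the common value $\psi_i(1)=\log\mathbb{E}(\#A)$, subadditivity of $y\mapsto y^\theta$ gives $\psi_i\ge\psi_{i+1}$, and the statement about the minimisers follows from these two facts together with convexity (the paper phrases this last step as the contrapositive $\widehat\theta_{i+1}=1\Rightarrow\widehat\theta_i=1$, while you argue it directly with a case split, which is the same argument). The only caveat you rightly flag—the identification of $\sum_b\mathbb{E}(N_b^{(i)})$ with $\mathbb{E}(\#A)$ rather than with the expected number of first-generation cylinders meeting $K$—is a convention inherited from the paper itself, not a gap in your reasoning.
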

\begin{proof}
The first property is due to the relation $\mathbb{E}(N^{(i+1)}_{\widetilde b})=\sum_{b\in \widetilde {\mathcal A}_i: \, \pi_i(b)=\widetilde b}\mathbb{E} (N^{(i)}_b)$ for any $\widetilde b\in\widetilde{\mathcal A}_{i+1}$, and the second one is due both to this property and the subadditivity of $y\ge 0\mapsto y^\theta$.  The third property is a direct consequence of the first two properties. Indeed if $\widehat\theta_{i+1}=1$, then by the definition of $\widehat\theta_{i+1}$ and the convexity of $\psi_{i+1}$, we have for every $\theta \in [0,1)$,  $\psi_{i+1}(\theta)>\psi_{i+1}(1)=\log \mathbb E(\# A)$ and so   $$\psi_{i}(\theta)\geq \psi_{i+1}(\theta)>\log \mathbb E(\# A)=\psi_i(1),$$ which implies that $\widehat\theta_i=1$.
\end{proof}
\begin{cor} \label{cor1}
It holds that $\dim_H K= \dim_B K$ with probability 1, conditional on $K\neq\emptyset$, if and only if $\mathbb{E}(N_b^{(i)})$ does not depend on $b\in  \widetilde{\mathcal A}_i$ for all $i\in I$ such that $\widehat \theta_i<1$.
\end{cor}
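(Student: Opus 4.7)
Since $\dim_HK\le\dim_BK$ holds in general, the content of the corollary is a characterisation of equality via Theorems~\ref{dim K} and~\ref{thmdimB}. The preceding Lemma shows that $J=\{i\in I:\widehat\theta_i<1\}$ takes the form $\{i^{**},\ldots,k\}$ (with $i^{**}=k+1$ meaning $J=\emptyset$), so the hypothesis amounts to: $\mathbb E(N^{(j)}_b)\equiv c_j$ is constant in $b$ for every $j\ge i^{**}$. Under this hypothesis, $\psi_j$ is linear with $\widehat\theta_j=0$ and $\psi_j(\widehat\theta_j)=\log\#\widetilde{\mathcal A}_j$ for $j\ge i^{**}$, while for $j<i^{**}$ one has $\widehat\theta_j=1$ and $\psi_j(\widehat\theta_j)=\log\mathbb E(\#A)$. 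Together with the identity $\#\widetilde{\mathcal A}_j\cdot c_j=\mathbb E(\#A)$ (from the Lemma), this yields a clean additive expression for $\dim_BK$.

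The main analytic input is an explicit closed form for $P_i(\theta)$. Since $\phi_i$ is a one-block potential and the weighted entropy $h^{\vec{\boldsymbol\gamma}^i}_\nu(T_i)$ depends only on one-block marginals of $\nu$, the supremum in \eqref{e-weighted-pressure} is attained on Bernoulli measures. Iterating the Gibbs variational principle from level $i$ up to level $k$ (peeling off the conditional distribution at each step) yields, for $i<k$,
\[
P_i(\theta)=\alpha_k\log\sum_{b_k\in\widetilde{\mathcal A}_k}\Biggl(\sum_{b_{k-1}\to b_k}\cdots\Bigl(\sum_{b_i\to b_{i+1}}\mathbb E(N^{(i)}_{b_i})^{\theta}\Bigr)^{\alpha_i/\alpha_{i+1}}\cdots\Biggr)^{\alpha_{k-1}/\alpha_k},
\]
where $\alpha_j=\gamma_1+\cdots+\gamma_j$ and $b_j\to b_{j+1}$ means $\pi_j(b_j)=b_{j+1}$; the case $i=k$ reduces to $P_k(\theta)=\alpha_k\psi_k(\theta)$.

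For the sufficient direction, under the hypothesis the inner sums at every level $j\ge i^{**}$ become constant in their target index, and repeated use of $\mathbb E(N^{(j+1)}_b)=\sum_{b'\to b}\mathbb E(N^{(j)}_{b'})$ (from the Lemma) makes the nested sums telescope into a product. A direct computation then shows that the admissible pair $(i^*,\theta^*)=(i^{**},\widetilde\theta_{i^{**}})$ (or $(k,1)$ when $i^{**}=k+1$) satisfies $P_{i^*}(\theta^*)=\dim_BK$; moreover, a case analysis separating $i\ge i^{**}$ (where $P_i$ becomes linear in $\theta$ with non-negative slope on $[\widetilde\theta_i,1]$, so its minimum on the admissible interval is achieved at $\widetilde\theta_i$) from $i<i^{**}$ (where $\widehat\theta_i=1$ and a Jensen-type inequality applies at the inner levels) yields $P_i(\theta)\ge\dim_BK$ for every admissible pair $(i,\theta)$. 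Combined with $\dim_HK=\inf_{(i,\theta)}P_i(\theta)$ from Theorem~\ref{dim K} and the general $\dim_HK\le\dim_BK$, we obtain equality.

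For the necessary direction, suppose there exists $i^*\ge i^{**}$ with $\mathbb E(N^{(i^*)}_b)$ non-constant in $b$. Then $\psi_{i^*}$ is strictly convex and $\psi_{i^*}(\widehat\theta_{i^*})<\psi_{i^*}(1)$. Plugging the admissible pair $(i,\theta)=(i^*,\max(\widehat\theta_{i^*},\widetilde\theta_{i^*}))$ into the closed form for $P_{i^*}$, a strict Jensen inequality at the base level of the nested power-mean (comparing $\sum_{b\to\cdot}\mathbb E(N^{(i^*)}_b)^{\theta}$ with its constant-case counterpart) propagates upward through the concave operations $x\mapsto x^{\alpha_{j-1}/\alpha_j}$ and yields $P_{i^*}(\theta)<\dim_BK$, hence $\dim_HK<\dim_BK$. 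The main obstacle is the bookkeeping of the exponents $\alpha_{j-1}/\alpha_j$ across the nested formula and verifying that the admissibility $\theta\in[\widetilde\theta_i,1]$ is respected by the test pairs; both crucially use the upward-closedness of $J$ supplied by the Lemma.
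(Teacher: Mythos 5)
Your overall strategy is genuinely different from the paper's: instead of the saturation argument via Mandelbrot measures (inequality \eqref{maj dime mu} plus Proposition~\ref{prop4.1}), you compare $\dim_H K=\inf\{P_i(\theta)\}$ from Theorem~\ref{dim K} directly with the formula of Theorem~\ref{thmdimB}, using a closed nested power-mean expression for $P_i(\theta)$. That closed form is correct (for a one-block potential the supremum in \eqref{e-weighted-pressure} is attained on Bernoulli measures, and peeling conditional entropies level by level gives exactly your formula), and your sufficiency half is essentially workable: for $i\ge i^{**}$ the constant pulls out so $P_i$ is affine with nonnegative slope and the explicit computation gives $P_i(\widetilde\theta_i)\ge \dim_B K$ with equality at $i=i^{**}$ (this uses that consecutive fibres at the constant levels have constant cardinality); for $i<i^{**}$ the cleanest completion of your ``Jensen at the inner levels'' is simply to test the variational definition of $P_i$ with the Bernoulli measure of marginal $\mathbb{E}(N^{(i)}_b)/\mathbb{E}(\#A)$ and use $\psi_j'(1)\le 0$ for $j<i^{**}$ --- which is, in different clothing, the paper's branching-measure argument.

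The necessity half, however, has a genuine gap. You claim that for an \emph{arbitrary} $i^*\in J$ with $\mathbb{E}(N^{(i^*)}_b)$ non-constant, the pair $(i^*,\max(\widehat\theta_{i^*},\widetilde\theta_{i^*}))$ already satisfies $P_{i^*}<\dim_B K$, the strictness coming from a strict Jensen inequality at the base level. Both assertions fail. Take $k=3$ with $\pi_2$ injective on $\widetilde{\mathcal A}_2$ (so $\psi_2=\psi_3$ and $\widehat\theta_2=\widehat\theta_3<1$, both levels non-constant) and $\gamma_1=\gamma_3=\epsilon$, $\gamma_2=1$: then $\dim_B K=\epsilon\psi_3(1)+(1+\epsilon)\psi_3(\widehat\theta_3)\to\psi_3(\widehat\theta_3)$ as $\epsilon\to 0$, while your test value for $i^*=3$ is $P_3\bigl(\max(\widehat\theta_3,\widetilde\theta_3)\bigr)=(1+2\epsilon)\,\psi_3\bigl(\tfrac{1+\epsilon}{1+2\epsilon}\bigr)\to\psi_3(1)>\psi_3(\widehat\theta_3)$; so for small $\epsilon$ your chosen pair gives $P_{i^*}>\dim_B K$ even though $i^*=3$ is a non-constant level of $J$ (only $P_2$ dips below $\dim_B K$ here). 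Moreover, even for a well-chosen $i^*$ the advertised mechanism does not produce strictness: if $\mathbb{E}(N^{(i^*)}_b)$ is constant on each fibre of $\pi_{i^*}$ but the fibre cardinalities differ, the base-level comparison of $\sum_{b\to B}\mathbb{E}(N^{(i^*)}_b)^\theta$ with its constant counterpart is an equality, and the (true) inequality $P_{i^*}<\dim_B K$ must be extracted from the outer levels and from the terms $\gamma_j\psi_j(\widehat\theta_j)$, $j\neq i^*$, in $\dim_B K$, which your propagation never touches. So the contrapositive is not proved as written; to close it you either need a correctly quantified comparison with $i^*$ chosen as the smallest non-constant level of $J$ and the outer levels controlled, or you should fall back on the paper's short argument, where equality forces the optimal Mandelbrot measure to saturate each term in $\dim_e^{\vec{\boldsymbol\gamma}}$, hence to be the branching measure with $\nu_i=\nu_{i,\widehat\theta_i}$, giving constancy of $\mathbb{E}(N^{(i)}_b)^{\widehat\theta_i-1}$.
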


\subsection{Dimensions of projections of $\mu$ and $K$}\label{Projections}

We still assume that all the $\gamma_i$ are positive and will  discuss the general case in Section~\ref{lastsection}. In the next statement $\mu$ is assumed to be a non-denenerate Mandelbrot measure such that  $K_\mu\subset K$ almost surely.
\begin{thm}
[Dimension of ${\Pi_i}\mu$]\label{thmdim2} Let $2\le i\le k$.  Suppose that $T(q)>-\infty$ for some $q>1$.
With probability 1, conditional on $\mu\neq 0$, the measure ${\Pi_i}\mu$ is exact dimensional and
$\dim ({\Pi_i}\mu)=\dim^{{\vec{\boldsymbol\gamma}}^i}_e({\Pi_i}\mu)$, where
$$
\dim^{{\vec{\boldsymbol\gamma}}^i}_e({\Pi_i}\mu):= \sum_{j=i}^k \vec{\boldsymbol\gamma}^i_j\dim_e({\Pi_j}\mu)=\sum_{j=i}^k  \vec{\boldsymbol\gamma}^i_j   \min (\dim_e(\mu), h_{\nu_j}(T_j)).
$$
\end{thm}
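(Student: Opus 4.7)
The plan is to transfer the conclusion of Theorem~\ref{thmdim} (and the underlying differentiability at $1$ of the $L^q$-spectrum provided by Theorem~\ref{thm-3.1}) from $\mu$ on $(X_1, d_{\vec{\boldsymbol\gamma}})$ to $\Pi_i\mu$ on $(X_i, d_{\vec{\boldsymbol\gamma}^i})$. The observation that makes this natural is that $(X_i, d_{\vec{\boldsymbol\gamma}^i})$ is itself a self-affine symbolic space of the type treated in the paper, with the reduced factor chain $X_i \to X_{i+1} \to \cdots \to X_k$ of depth $k-i+1$, parameter vector $\vec{\boldsymbol\gamma}^i$, and natural projections $\Pi_{i,j}$ satisfying $\Pi_{i,j}(\Pi_i\mu) = \Pi_j\mu$ for $i \le j \le k$; in particular the entropy-dimension ingredients needed in the Ledrappier--Young-type sum are precisely those provided, under the assumption $T(q)>-\infty$ for some $q>1$, by Theorem~\ref{projmu}.

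First I would establish that $\Pi_i\mu$ satisfies, with probability $1$ conditional on $\mu\neq 0$, a statistical self-affine structure on $X_i$ that is analogous to, but not strictly that of, a Mandelbrot measure. From $\mu([au]) = W_a\,\mu^a([u])$ one gets, for $b\in\mathcal A_i$ and $w\in\mathcal A_i^*$,
$$
\Pi_i\mu([bw]) \;=\; \sum_{a\in\mathcal A_1,\,\Pi_i(a)=b} W_a\,\Pi_i\mu^a([w]),
$$
which exhibits $\Pi_i\mu$ as a sum of weighted independent copies of itself indexed by $\mathcal A_i$ with aggregated weights $W^{(i)}_b := \sum_{a:\,\Pi_i(a)=b} W_a$. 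Because several letters of $\mathcal A_1$ may project onto the same $b\in\mathcal A_i$, $\Pi_i\mu$ is not a Mandelbrot measure in the strict sense of Section~\ref{Mandmeas}, but the independence of the $\Pi_i\mu^a$ for distinct $a$ and the underlying branching structure are preserved. The hypothesis $T(q)>-\infty$ for some $q>1$ transfers to the aggregated branching through the finite convexity bound $\bigl(\sum_{a:\,\Pi_i(a)=b} W_a\bigr)^q \le (\#\mathcal A_1)^{q-1}\sum_{a}W_a^q$, which gives
$$
\mathbb E\!\sum_{b\in\mathcal A_i}(W^{(i)}_b)^q \;\le\; (\#\mathcal A_1)^{q-1}\,\mathbb E\!\sum_{a\in\mathcal A_1} W_a^q \;<\;\infty.
$$

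The main step, which is also the main obstacle, is then to rerun the $L^q$-spectrum analysis of Theorem~\ref{thm-3.1} in the space $(X_i, d_{\vec{\boldsymbol\gamma}^i})$ with $\Pi_i\mu$ in place of $\mu$. Balls of radius $e^{-n}$ in this space are anisotropic rectangles encoded by cylinders of different depths in $X_i, X_{i+1}, \ldots, X_k$, and the $\Pi_i\mu$-mass of each is controlled by the martingale and branching apparatus inherited from $\mu$ through the identity above, in the same way as in the proof of Theorem~\ref{thmdim}. Combined with the moment bound, this gives differentiability of $\tau_{\Pi_i\mu}$ at $1$ with
$$
\tau'_{\Pi_i\mu}(1)\;=\;\sum_{j=i}^k \vec{\boldsymbol\gamma}^i_j\,\dim_e\bigl(\Pi_{i,j}(\Pi_i\mu)\bigr)\;=\;\sum_{j=i}^k \vec{\boldsymbol\gamma}^i_j\,\dim_e(\Pi_j\mu),
$$
from which exact dimensionality of $\Pi_i\mu$ with $\dim(\Pi_i\mu)=\tau'_{\Pi_i\mu}(1)$ follows by the standard implication already exploited for $\mu$, and the second equality of the statement is immediate by Theorem~\ref{projmu}. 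The delicate point is to verify that, although $\Pi_i\mu$ is not a strict Mandelbrot measure on $X_i$, the finite internal fiber structure within each cylinder of $X_i$ (the several $a\in\mathcal A_1$ projecting onto the same $b\in\mathcal A_i$) does not disrupt the martingale concentration arguments underlying Theorem~\ref{thm-3.1}; this is where the finite-alphabet convexity bound and the preserved independence across branches play their role.
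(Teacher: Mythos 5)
Your plan is essentially the paper's own argument: the paper proves Theorem~\ref{thmdim2} by rerunning the moment estimates of Theorem~\ref{thm-3.1}/Proposition~\ref{pro1} for ${\Pi_i}\mu$ on the balls of $X_i$ (equipped with the induced metric), obtaining differentiability of $\tau_{{\Pi_i}\mu}$ at $1$ with derivative $\sum_{j=i}^k \vec{\boldsymbol\gamma}^i_j\dim_e({\Pi_j}\mu)$, and then identifying these entropy dimensions via Theorem~\ref{projmu}. The one point to keep in mind when executing your ``rerun'' is that the exponent governing the first block of coordinates is $\min(T(q),\mathcal T_{\nu_i}(q))$, supplied by Lemma~\ref{lemclef}, and not anything computed from your aggregated weights $W^{(i)}_b$ (whose $L^q$ bound you rightly use only for integrability); this is exactly what produces the two-case exponent $t(j_0,q,n)$ in the paper's proof, so your outline is consistent with it.
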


Next we consider $\dim_H\Pi_i(K)$. For this purpose,   let $i\in \{2,\ldots, k\}$. Define $ \overline \theta_i^i=0$ and $ \overline \theta_j^i=\widetilde \theta_j$ for $j=i+1, \ldots, k$ (cf.~\eqref{tildethetai} for the definition of $\widetilde \theta_j$). Set  $I_i=\{i,\ldots, k\}$ and define $$ \widetilde I_i=\{ j\in I_i:  \,P_j'(1)\ge 0\}.$$

 Then define  $$j_0:=j_0(i)=\left\{
\begin{array}{ll}  \min  ( \widetilde I_i) & \mbox { if } I_i\neq\emptyset\\
k+1 & \mbox{ otherwise}
\end{array}
\right.
.
$$
Also, set
\begin{equation}
\theta_{j_0}^i=\left\{ \begin{array}{ll} \min \left \{\theta\in [\overline\theta^i_{j_0},1]: P_{j_0}'(\theta)\ge 0\right \}& \mbox{  if } j_0\le k\\
1 & \mbox{ if }j_0=k+1
\end{array}
\right.
.
\end{equation}

\begin{thm}
[Hausdorff dimension of $\Pi_i(K)$]\label{dim PiK} Let $2\le i\le k$. Let $j_0=j_0(i)$ and $\theta^i_{j_0}$ be defined as above. Then, with probability 1, conditional on $K\neq\emptyset$,
\begin{align}
\nonumber\dim_H \Pi_i(K)&=\sup \{\dim^{{\vec{\boldsymbol\gamma}}^i}_e({\Pi_i}\mu): \, \mu\text{ is a positive Mandelbrot measure supported on ~$K$}\}\\
\label{formula2}&=\begin{cases}
P_{j_0}(\theta^i_{j_0})&\text{ if } j_0\le k\\
P_{j_0}(1)&\text{ if }j_0= k+1
\end{cases},
\end{align}
and the supremum is uniquely attained at a Mandelbrot measure jointly defined with $K$ if and only  if one of the following conditions is fulfilled:
 \begin{itemize}
 \item[(a)]
 ${j_0}>i$;
 \item[(b)]   $j_0=i$ and  $\theta^i_{j_0}>0$;
 \item[(c)] $j_0=i$, $\theta^i_{j_0}=0$ and  $P_{j_0}'(0)=0$.
 \end{itemize}
\end{thm}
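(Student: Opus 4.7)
The plan is to adapt the proof of Theorem \ref{dim K} to the projected system $(X_i, d_{\vec{\boldsymbol\gamma}^i})$, using Theorem \ref{thmdim2} in place of Theorem \ref{thmdim} and optimising the Ledrappier--Young type formula over the shortened index range $\{i,\ldots, k\}$. The lower bound
\[
\dim_H \Pi_i(K) \ge \sup_\mu \dim_e^{\vec{\boldsymbol\gamma}^i}(\Pi_i \mu)
\]
is immediate: for any non-degenerate Mandelbrot measure $\mu$ on $K$ with $T(q) > -\infty$ for some $q > 1$, Theorem \ref{thmdim2} asserts that $\Pi_i \mu$ is exact dimensional of dimension $\dim_e^{\vec{\boldsymbol\gamma}^i}(\Pi_i \mu)$, and it is supported on $\Pi_i(K)$.

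To evaluate this supremum, I would optimise
\[
\dim_e^{\vec{\boldsymbol\gamma}^i}(\Pi_i \mu)=\sum_{j=i}^k \vec{\boldsymbol\gamma}^i_j \min\bigl(\dim_e(\mu),\, h_{\nu_j}(T_j)\bigr)
\]
over the parameters $\dim_e(\mu)$ and the projected Bernoulli laws $\nu_j = \mathbb{E}(\Pi_j \mu)$, exactly as is done for $\dim_H K$ in Theorem \ref{dim K} but without the free $\gamma_1 \dim_e(\mu)$ summand; the absence of such a summand is precisely what forces the lower endpoint to be $\overline\theta^i_i = 0$ instead of $\widetilde \theta_i$. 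A phase-transition analysis at the thresholds $\widetilde \theta_j$ for $j > i$, together with a von Neumann min--max argument and the differentiability \eqref{diff} of the weighted pressures $P_j$, identifies the optimal pair: it is given by the equilibrium state of $P_{j_0}$ at $\theta^i_{j_0}$, from which a matching Mandelbrot measure $\mu$ can be built via the lifting procedure used in the proof of Theorem \ref{dim K}. The matching upper bound $\dim_H \Pi_i(K) \le \inf_{(j,\theta)} P_j(\theta)$ is obtained via a variational covering argument analogous to the one for $\dim_H K$: for each admissible $(j, \theta)$ with $j \ge i$ and $\theta \in [\overline\theta^i_j, 1]$, one covers $\Pi_i(K)$ by $d_{\vec{\boldsymbol\gamma}^i}$-balls realised as cylinders of mixed depths in $X_j, \ldots, X_k$, whose total $s$-cost is controlled by moments of the branching numbers $N^{(j)}_b, \ldots, N^{(k)}_b$; optimising over $(j, \theta)$ collapses to the stated formula.

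For the uniqueness claim, the optimiser $\mu$ is determined through the equilibrium state by both its projection $\Pi_i \mu$ and the value of $\dim_e(\mu)$. Case (a), $j_0 > i$, corresponds to $P_i'(1) < 0$, which forces $\dim_e(\mu) = h_{\nu_i}(T_i)$ at the optimum and so pins $\mu$ down together with the equilibrium data at level $j_0$; case (b), $j_0 = i$ with $\theta^i_{j_0} > 0$, is an interior critical point of $P_i$ where Kuhn--Tucker-type conditions determine $\mu$; case (c), $j_0 = i$, $\theta^i_{j_0} = 0$, $P_i'(0) = 0$, is the boundary situation where the unconstrained optimum is still compatible with saturation of the $\min$ at level $i$. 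Outside of these cases, that is, when $j_0 = i$, $\theta^i_{j_0} = 0$ and $P_i'(0) > 0$, the constraint $\dim_e(\mu) \ge h_{\nu_i}(T_i)$ is slack at the optimum, so any Mandelbrot measure with the correct projection attains the supremum and uniqueness fails. The main obstacle I anticipate is the upper bound: $\Pi_i(K)$ is not itself an i.i.d.\ percolation sponge in $X_i$ (distinct letters of $\mathcal A_1$ may project to the same letter of $\mathcal A_i$), so the covering argument has to be carried out in $X_1$ and pushed through $\Pi_i$ while preserving the independence used in the moment estimates.
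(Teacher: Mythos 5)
Your route is the paper's route (lower bound via Theorem~\ref{thmdim2} and a Ky Fan min--max optimisation in which the functional $\widetilde g_i(\nu)=h^{\vec{\boldsymbol\gamma}^i}_{\Pi_i\nu}(T_i)$ is adjoined to $g_i,\ldots,g_k$; upper bound via the covering scheme of Theorem~\ref{VPUB} run over pairs $(j,\theta)$ with $j\ge i$), but the one genuinely new ingredient of the upper bound is missing from your sketch, and the obstacle you flag is not where the difficulty lies. For $j>i$ and $\theta\in[\widetilde\theta_j,1]$ nothing new is needed. The new case is $j=i$ with $\theta\in[0,\widetilde\theta_i)$ --- exactly the range you must reach when $j_0(i)=i$ and $\theta^i_i<\widetilde\theta_i$ --- and there the bookkeeping of Theorem~\ref{VPUB} gives nothing: a ball $B_U$ of $X_i$ is covered by itself, so there is no count $n_U\le N^{(i)}_{{U_i}_{|\lceil\theta\ell_i(n)\rceil}}$ to exploit. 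The fix used in the paper (Falconer's device for projections of fractal percolation) is to bound the indicator of $\{B_U\cap\Pi_i(K)\neq\emptyset\}$ by $(N^{(i)}_{U_i})^{\theta}$ and then use concavity of $x\mapsto x^\theta$, namely $\mathbb{E}\big((N^{(i)}_{U_i})^{\theta}\big)\le \mathbb{E}\big(N^{(i)}_{U_i}\big)^{\theta}$, so that the expected $s$-cost is again governed by products of $\mathbb{E}(N^{(i)}_{b})^{\theta}$ and hence by $P_i(\theta)$; Lemma~\ref{comblem} is reused after extending $D_{i,\theta}$ and $\widetilde D_{i,\theta}$ of \eqref{Dit}--\eqref{Dit'} to $\theta\in[0,1]$ and redefining $\widetilde\rho_i$ in \eqref{rhoxn} as $\rho_i(x,\ell_i(n))$. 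In particular the covering is performed directly with balls of $X_i$ and no independence is ``lost through $\Pi_i$''; only this first-moment (Jensen) estimate is new. Without it your argument only reaches $\theta\ge\widetilde\theta_i$ at level $i$, i.e.\ only $\dim_H\Pi_i(K)\le\dim_H K$, which is strictly weaker precisely in the regime the theorem is about.

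Your uniqueness discussion also misstates the mechanism in case (a). Having $j_0>i$ means $P_i'(1)<0$, which corresponds to the regime where at the optimum $H(\nu)+\nu(\widetilde\phi)\le H(\Pi_i\nu)$, i.e.\ $\dim_e(\mu)\le h_{\nu_i}(T_i)$ (generically strictly), not $\dim_e(\mu)=h_{\nu_i}(T_i)$. Uniqueness in cases (a)--(c) comes from the optimum being attained through one of the strictly concave functionals $g_j$ (which depend on all of $\nu=\mathbb{E}(\mu)\in\mathcal B_1$), or, when it is attained through $\widetilde g_i$, from the Kuhn--Tucker equality $P_i'(\theta^i_i)=0$, which forces $H(\nu)+\nu(\widetilde\phi)=H(\Pi_i\nu)$ and then pins down the lift of the equilibrium state by Remarks~\ref{rem2} and~\ref{rem0}. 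Conversely, when $j_0=i$, $\theta^i_i=0$ and $P_i'(0)>0$, the optimum is governed by $\widetilde g_i$ alone, which depends only on $\Pi_i\nu$; note, however, that ``any Mandelbrot measure with the correct projection'' is not enough: one also needs $\dim_e(\mu)\ge h_{\nu_i}(T_i)$ (with $\nu_i$ the equilibrium state of $P_i$ at $0$) so that all the minima $\min(\dim_e(\mu),h_{\nu_j}(T_j))$, $j\ge i$, are realised by the entropies; it is this slack that yields infinitely many optimisers, e.g.\ by perturbing the fibre vectors while keeping $\nu_i$ fixed.
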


\begin{rem}
We deduce from Theorems~\ref{dim K} and ~\ref{dim PiK} that: \begin{itemize}
\item[(i)] if $2\le i<i_0$ then $\dim_H \Pi_{i}(K)=\dim_H K$.
\item[(ii)] $\dim_H K=\dim_H \Pi_{i_0}(K)$ if and only if $i_0\le k$ and $P_{i_0}'(\theta_{i_0})=0$, or if $i_0= k+1$. Indeed, it is clear that the condition is sufficient. Now suppose that  $\dim_H K=\dim_H \Pi_{i_0}(K)$. If $i_0\le k$  and $P_{i_0}'(\theta_{i_0})>0$ then  $\theta_{i_0}=\widetilde \theta_{i_0}>0$. Moreover,  $j_0(i_0)=i_0$ and  $P_{i_0}'(\widetilde\theta_{i_0})>0$ implies that $\theta^{i_0}_{i_0}<\widetilde\theta_{i_0}$, so $\dim_H \Pi_{i_0}(K)=P_{i_0}(\theta^{i_0}_{i_0})<\dim_H K=P_{i_0}(\widetilde\theta_{i_0})$. Thus, if $i_0\le k$ then $P_{i_0}'(\theta_{i_0})=0$.
\item[(iii)] If $2\le i<j\le k$, then \begin{itemize}
\item[ (a)] if $2\le i<j<j_0(i)$, then  $\dim_H \Pi_{j}(K)=\dim_H \Pi_{i}(K)$;
\item[(b)] $\dim_H \Pi_{j_0(i)}(K)=\dim_H \Pi_{i}(K)$ if and only if $j_0(i)\le k$ and $P_{j_0(i)}'(\theta_{j_0})=0$, or $j_0(i)=k+1$. This can be proven like (ii).
\end{itemize}
\end{itemize}
\end{rem}

\begin{thm}
[Box counting dimension of $\Pi_i(K)$]\label{dimB PiK} Let $2\le i\le k$. With probability 1, conditional on $K\neq\emptyset$,
$$
\dim_B \Pi_i(K)=\sum_{j=i}^k \vec{\boldsymbol\gamma}^i_j\min_{\theta\in[0,1]}\log\sum_{b\in \widetilde{\mathcal A}_j} \mathbb E(N^{(j)}_b)^\theta.
$$
\end{thm}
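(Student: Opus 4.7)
The proof adapts the argument used for Theorem~\ref{thmdimB}, working in the ultrametric space $(X_i, d_{\vec{\boldsymbol\gamma}^i})$, which is itself a self-affine symbolic space over the $k-i+1$ scales $(\gamma_1+\cdots+\gamma_i, \gamma_{i+1}, \ldots, \gamma_k)$. Setting $n_j = \lceil m(\gamma_1+\cdots+\gamma_j)\rceil$ for $m \ge 1$ and $i \le j \le k$, the ultrametric structure of $d_{\vec{\boldsymbol\gamma}^i}$ implies that each closed ball of radius $e^{-m}$ meeting $\Pi_i(K)$ corresponds bijectively to a compatible tuple $(u^{(i)}, \ldots, u^{(k)})$ with $u^{(j)}\in\mathcal A_j^{n_j}$ and $u^{(j)}_{|n_{j-1}} = \pi_{j-1}(u^{(j-1)})$ for $j > i$, realized as $(\Pi_{i,j}(x)_{|n_j})_{j=i}^{k}$ by some $x \in \Pi_i(K)$. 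Writing $\mathcal N_m^{(i)}$ for the number of such admissible tuples, one has $\dim_B\Pi_i(K) = \lim_m m^{-1}\log \mathcal N_m^{(i)}$ as soon as this limit exists, so the task reduces to showing the limit exists a.s.\ on $\{K\neq\emptyset\}$ and equals $\sum_{j=i}^k\vec{\boldsymbol\gamma}^i_j\min_{\theta}\psi_j(\theta)$.

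Let $Y^{(j)}_\ell := \#\{u\in\mathcal A_j^\ell : [u]\cap \Pi_j(K)\neq\emptyset\}$. A key ingredient is the one-dimensional asymptotic
\[
\lim_{\ell\to\infty}\frac{\log Y^{(j)}_\ell}{\ell}=\min_{\theta\in[0,1]}\psi_j(\theta)\quad\text{a.s. on }\{K\neq\emptyset\},
\]
for each $j\in\{i,\ldots,k\}$, which is the multi-type branching estimate already underpinning the proof of Theorem~\ref{thmdimB} (in fact, it is exactly that theorem applied to the projected attractor $\Pi_j(K)$ viewed alone). To pass from such single-level asymptotics to $\mathcal N_m^{(i)}$, I group admissible tuples by their $X_i$-component and then telescope over $j = i+1, \ldots, k$: for each $u^{(i)}$ surviving in $\Pi_i(K)$ at level $n_i$, the number of compatible $(u^{(i+1)},\ldots,u^{(k)})$-extensions is, in expectation and up to a subexponential correction, controlled by $\prod_{j=i+1}^{k}\mathbb E(Y^{(j)}_{n_j - n_{j-1}})$, thanks to the i.i.d.\ structure of the Galton-Watson subtrees attached to the $X_1$-preimages of $u^{(i)}$. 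A Markov inequality, combined with Borel-Cantelli along a geometric sequence of $m$'s and the monotonicity of $\mathcal N_m^{(i)}$ in $m$, then yields the almost-sure upper bound $\limsup_m m^{-1}\log\mathcal N_m^{(i)} \le \sum_{j=i}^k\vec{\boldsymbol\gamma}^i_j\min_\theta\psi_j(\theta)$.

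For the matching lower bound, I would fix $\epsilon>0$ and, using the a.s.\ lower estimate $Y^{(j)}_\ell\ge e^{\ell(\min_\theta\psi_j(\theta)-\epsilon)}$ for $\ell$ large, construct inductively from $j=i$ up to $j=k$ many distinct admissible tuples, exploiting the independence of subtrees rooted at different surviving $X_1$-nodes to keep the counts multiplicative. A further Borel-Cantelli argument produces $\liminf_m m^{-1}\log\mathcal N_m^{(i)} \ge \sum_{j=i}^k\vec{\boldsymbol\gamma}^i_j\min_\theta\psi_j(\theta) - \epsilon\sum_{j=i}^k\vec{\boldsymbol\gamma}^i_j$, and letting $\epsilon\downarrow 0$ finishes the proof. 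The principal technical difficulty lies in the telescoping: once one projects from $X_1$ to $X_j$, subtrees attached to distinct elements of the alive population in $\Pi_j(K)$ need not be independent (a given $X_j$-prefix may be shared by several $X_1$-siblings), so all quantitative estimates must be carried out in the underlying $X_1$-tree — where genuine independence holds — and then transferred back through $\Pi_j$. This is precisely the mechanism that produces the rate $\min_\theta\psi_j(\theta)$, via the standard Legendre-transform argument for supercritical multi-type branching with possibly subcritical projected types, in place of the naive $\log\mathbb E(\#\widetilde{\mathcal A}_j)$.
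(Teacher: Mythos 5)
Your overall route is the same as the paper's: you parametrize the balls of $(X_i,d_{\vec{\boldsymbol\gamma}^i})$ by compatible tuples, feed in a one-block counting asymptotic with rate $\min_{\theta\in[0,1]}\psi_j(\theta)$ for each $j$, and telescope over the blocks $j=i,\ldots,k$; the paper proceeds in exactly this block-by-block fashion. One attribution point: the one-block input is not ``Theorem~\ref{thmdimB} applied to $\Pi_j(K)$ viewed alone'' --- $\Pi_j(K)$ is not a sponge of the class covered by that theorem, because the offspring attached to distinct $X_j$-nodes are not independent (several $X_1$-siblings share an $X_j$-image); the correct source is the projection result of \cite{DekGri} (see also \cite{Fal,BF2016}), which is what the paper invokes for the first block, together with Dekking's theorem on survival probabilities of branching processes in random environments.

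The genuine gap is in your upper bound, at the step where you assert that for each surviving first-block word $u^{(i)}$ the number of admissible extensions is controlled in expectation by $\prod_{j=i+1}^{k}\mathbb E\bigl(Y^{(j)}_{n_j-n_{j-1}}\bigr)$ up to subexponential corrections, ``thanks to the i.i.d.\ structure of the Galton--Watson subtrees attached to the $X_1$-preimages of $u^{(i)}$''. This is precisely the point that requires a proof, and as stated it does not follow. Conditional on $u^{(i)}$ surviving, the number $M$ of alive $X_1$-preimages of $u^{(i)}$ at depth $n_i$ is random, is positively correlated with the survival event, and is typically exponentially large when the letters of $u^{(i)}$ carry $\mathbb E(N^{(i)}_b)>1$. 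The extension set of $u^{(i)}$ is the union of the extension sets of these $M$ i.i.d.\ subtrees, and the probability that a given tuple $(U_{i+1},\ldots,U_k)$ is hit by this forest can be of order $\min(1,M\,p)$ rather than of order the single-ancestor hitting probability $p$; a large population carried over from one block makes columns built from subcritical letters ($\mathbb E(N^{(j)}_b)<1$) of the next block reachable, so the count does not factorize into per-subtree quantities without an argument coupling the law of the surviving population sizes over the earlier blocks with the extension counts of the later ones. Your final remark correctly names this transfer through $\Pi_j$ as the difficulty, but no mechanism for it is supplied, and your Markov/Borel--Cantelli step rests entirely on the unproved product bound. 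The paper does not bound unions over all preimages: at each stage of its recursion it fixes one alive preimage chain per surviving tuple, so that the relevant extension counts $Z_{j,\ell_j(n)-\ell_{j-1}(n)}$ are genuinely i.i.d., and it controls their expectations and almost sure growth by transferring the two-dimensional arguments of \cite[Section 4]{GL94} (built on Dekking's theorem) to each step of the recursion; it is this machinery, not a per-subtree expectation bound, that yields the rates $\psi_j(\widehat\theta_j)$ for the full count. Your lower-bound sketch, by contrast, is essentially the paper's argument and is fine in outline.
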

\begin{cor}\label{cor2}Let $2\le i\le k$. It holds that $\dim_H \Pi_i(K)= \dim_B \Pi_i(K)$ with probability~1, conditional on $K\neq\emptyset$, if and only if either of the following three conditions hold:
\begin{enumerate}
\item $\widehat \theta_i=1$ and $\mathbb{E}(N_b^{(j)})$ does not depend on $b\in \widetilde{\mathcal A}_j$ for all $j\in I_i\setminus\{i\}$ such that $\widehat\theta_j<1$.

\item  $0<\widehat \theta_i<1$, or  $\widehat \theta_i=0$ and $\psi_i'(0)=0$. Moreover setting
$$j'_0=\max\{j\in I_i:  \mbox{ for all }j'\le j \mbox{ in }I_i, \mbox{ either } 0<\widehat \theta_{j'}<1, \mbox{ or } \widehat \theta_{j'}=0 \mbox{ and }\psi_{j'}(0)=0\},
$$
 then the follow hold:
\begin{itemize}
\item[(i)]  For all $ j\in I_i$ such that $j\le j'_0$,  for all $ b\in \widetilde{\mathcal A}_j$, $\Pi_{i,j}^{-1}(b)\cap \mathcal A_i$ is a singleton (in particular $\widehat  \theta_j=\widehat  \theta_i$);
\item[(ii)]  For all $ j\in I_i$ such that $j>j'_0$ one has $\widehat  \theta_j=0$ and $\psi_j'(0)>0$, and  $$\sum_{b'\in \widetilde{\A}_i:\; [b']\subset  \Pi_{i,j}^{-1}([b])} \mathbb{E}(N_{b'}^{(i)})^{\widehat  \theta_i}$$ does not depend on $b\in\widetilde{\mathcal A}_j$.
\end{itemize}

\item  For all $ j\in I_i$, $\widehat  \theta_j=0$,  $\psi_j'(0)>0$, and  $\#\Pi_{i,j}^{-1}( b)\cap\mathcal A_i$ does not depend on $b\in\widetilde{\mathcal A}_j$.
\end{enumerate}
\end{cor}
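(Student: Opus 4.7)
The strategy is to compare the formulas
$$
\dim_H \Pi_i(K) = P_{j_0}(\theta^i_{j_0}) \quad \text{and} \quad \dim_B \Pi_i(K) = \sum_{j=i}^k \vec{\boldsymbol\gamma}^i_j \psi_j(\widehat\theta_j)
$$
provided by Theorems~\ref{dim PiK} and~\ref{dimB PiK} (with the convention $P_{k+1}(1):=P_k(1)$ when $j_0=k+1$). Since $\dim_H\Pi_i(K)\le\dim_B\Pi_i(K)$ is automatic, the task is to pin down exactly when the weighted pressure saturates the decoupled sum of minima on the right.

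The key computation is to test the weighted variational principle~\eqref{e-weighted-pressure} against Bernoulli measures. For a probability vector $p=(p_b)_{b\in\widetilde{\mathcal A}_{j_0}}$, the associated Bernoulli measure $\nu_p$ on $\widetilde X_{j_0}$ gives
$$
P_{j_0}(\theta)\ge \theta\,\nu_p(\phi_{j_0}) + (\gamma_1+\cdots+\gamma_{j_0})H(p) + \sum_{j=j_0+1}^{k}\gamma_j H(\Pi_{j_0,j}p),
$$
and, since $\phi_{j_0}$ depends only on the first coordinate, the unique equilibrium state of $\theta^i_{j_0}\phi_{j_0}$ is itself a Bernoulli measure, so equality is realised at some $\nu_{p^\star}$. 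A standard Gibbs computation yields $\theta\,\nu_p(\phi_{j_0})+(\gamma_1+\cdots+\gamma_{j_0})H(p)\le (\gamma_1+\cdots+\gamma_{j_0})\psi_{j_0}(\theta)$, with equality iff $p_b\propto \mathbb{E}(N_b^{(j_0)})^\theta$, while separately $H(\Pi_{j_0,j}p)\le \log\#\widetilde{\mathcal A}_j=\psi_j(0)$ with equality iff $\Pi_{j_0,j}p$ is uniform. These two equality conditions are in tension, and matching them against $\psi_j(\widehat\theta_j)$ is what forces the combinatorial constraints in the corollary.

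The case analysis then tracks the three cases of the statement. In Case~(1), $\widehat\theta_i=1$: by the preceding lemma one has $\widehat\theta_j=1$ for all $j\le i$, and for $j\in I_i\setminus\{i\}$ with $\widehat\theta_j<1$, the only way the pushforward entropy terms in $P_{j_0}$ attain $\psi_j(\widehat\theta_j)$ is that $\mathbb{E}(N_b^{(j)})$ is independent of $b\in\widetilde{\mathcal A}_j$—a direct $\Pi_i$-version of Corollary~\ref{cor1}. In Case~(2), $\widehat\theta_i\in(0,1)$ (or the boundary-flat variant $\widehat\theta_i=0$, $\psi_i'(0)=0$), so $\widehat\theta_i$ is an unconstrained interior minimiser of $\psi_i$ with $\psi_i'(\widehat\theta_i)=0$, and the Gibbs choice $p^\star_b\propto \mathbb{E}(N_b^{(i)})^{\widehat\theta_i}$ gives $H(p^\star)=\psi_i(\widehat\theta_i)$; the lemma chain $\psi_i\ge\psi_{i+1}$ combined with $\widehat\theta_i<1\Rightarrow\widehat\theta_{i+1}<1$ propagates the interior-minimum property upward up to the threshold $j'_0$. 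Then condition~(i), namely that $\Pi_{i,j}^{-1}(b)\cap\mathcal A_i$ is a singleton, forces $\Pi_{i,j}p^\star$ to inherit the same entropy $\psi_i(\widehat\theta_i)=\psi_j(\widehat\theta_j)$, while condition~(ii) ensures that beyond $j'_0$ the pushforward $\Pi_{i,j}p^\star$ becomes uniform so that $H(\Pi_{i,j}p^\star)=\log\#\widetilde{\mathcal A}_j=\psi_j(0)=\psi_j(\widehat\theta_j)$. In Case~(3), all $\widehat\theta_j=0$ with strict $\psi_j'(0)>0$, which forces $j_0=k+1$ and the equilibrium becomes the branching measure; the condition $\#\Pi_{i,j}^{-1}(b)\cap\mathcal A_i$ constant in $b$ is exactly what is needed for the uniform measure on $\mathcal A_i$ to project to the uniform measure on each $\widetilde{\mathcal A}_j$.

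The principal obstacle is disentangling the interaction between the index $j_0$, which is determined by the sign of $P_j'(1)$, and the minimisers $\widehat\theta_j$ of the $\psi_j$; relating $P_j'$ at $\theta=1$ to $\psi_j'$ at $\theta=1$ via~\eqref{diff2} and understanding how an interior minimum of $\psi_i$ propagates (or fails to propagate) up the projection chain $\Pi_{i,j}$ requires careful use of the lemma preceding the corollary together with the one-symbol nature of the potentials $\phi_j$. The delicate technical point is the transition at the threshold $j'_0$ in Case~(2), where the Gibbs-equality regime switches to the uniform-pushforward regime; this is what prevents the case analysis from exploding and what makes the conditions~(1)--(3) collectively exhaustive.
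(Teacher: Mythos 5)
Your framing---compare the formulas of Theorems~\ref{dim PiK} and~\ref{dimB PiK} and decide when the variational maximum saturates $\sum_{j\ge i}\vec{\boldsymbol\gamma}^i_j\psi_j(\widehat\theta_j)$---is the right one, and your Gibbs computations are correct as far as they go, but the proposal essentially only argues the sufficiency direction and leaves the necessity direction (the real content of the corollary) unproved. The paper's argument goes through Proposition~\ref{prop4.1}: for every Mandelbrot measure and every $j\in I_i$ one has $\min(\dim_e(\mu),h_{\nu_j}(T_j))\le\psi_j(\widehat\theta_j)$, so equality of the two dimensions forces the optimizing measure to attain this bound \emph{term by term}, and the identification part of Proposition~\ref{prop4.1} then gives ${\Pi_{i,j}}\nu_i=\nu_{j,\widehat\theta_j}$ for all $j$. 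The decisive step you are missing is how condition (2)(i) is \emph{derived}: for $j\le j'_0$ one combines $\psi_i'(\widehat\theta_i)=0=\psi_j'(\widehat\theta_j)$ with ${\Pi_{i,j}}\nu_{i,\widehat\theta_i}=\nu_{j,\widehat\theta_j}$ to get
$0=\sum_{b\in\widetilde{\mathcal A}_j}\sum_{b'\in\Pi_{i,j}^{-1}(b)\cap\widetilde{\mathcal A}_i}\nu_{i,\widehat\theta_i}([b'])\log\bigl(\mathbb{E}(N_{b'}^{(i)})/\mathbb{E}(N_b^{(j)})\bigr)$,
where every logarithm is $\le 0$ because $\mathbb{E}(N_b^{(j)})=\sum_{b'\in\Pi_{i,j}^{-1}(b)}\mathbb{E}(N_{b'}^{(i)})$; full support of $\nu_{i,\widehat\theta_i}$ then forces each fibre $\Pi_{i,j}^{-1}(b)\cap\widetilde{\mathcal A}_i$ to be a singleton. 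You instead \emph{assume} the singleton condition and check it implies equality, while for necessity you only say the two equality cases are ``in tension''; moreover your two bounds control $P_{j_0}(\theta)$ by $(\gamma_1+\cdots+\gamma_{j_0})\psi_{j_0}(\theta)+\sum_{j>j_0}\gamma_j\psi_j(0)$, which does not line up term by term with $\sum_{j\ge i}\vec{\boldsymbol\gamma}^i_j\psi_j(\widehat\theta_j)$ (neither for $i\le j<j_0$ nor for $j>j_0$ with $\psi_j(\widehat\theta_j)<\psi_j(0)$), so the claimed ``matching'' is not actually set up.

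There are also two concrete errors. In Case~(2), saying that the lemma ``propagates the interior-minimum property upward up to the threshold $j'_0$'' is circular: $j'_0$ is \emph{defined} as the last index enjoying that property, nothing propagates automatically, and the singleton-fibre statement for $j\le j'_0$ is a consequence of the dimension equality, not of the lemma. In Case~(3), the claim that the hypotheses force $j_0=k+1$ and that the optimizer is the branching measure is false: deterministic sponges have $\widehat\theta_j=0$ and (generically) $\psi_j'(0)>0$ for all $j$ while $j_0(i)=i$, and the measure realizing the term-wise equality there has uniform projections $\nu_{j,0}$ on $\widetilde{\mathcal A}_j$, whereas the branching measure projects to $\nu_{j,1}$, i.e.\ with weights proportional to $\mathbb{E}(N_b^{(j)})$; the correct treatment of Case~(3) is the $\widehat\theta_i=0$, $\psi_i'(0)>0$ variant of the Case~(2) computation (with no index satisfying the interior-minimum property), as in the paper.
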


In the random case, Theorem~\ref{dim PiK}, Theorem~\ref{dimB PiK} and Corollary~\ref{cor2} are new except in the case $i=k$, which is reducible to the two dimensional case considered in   \cite{DekGri,Fal,BF2016}.

\subsection{Dimensions of conditional measures}\label{condmeas} Given a non-degenerate Mandelbrot measure $\mu$, conditional on $\mu\neq 0$, for each $2\le i\le k$, the measure $\mu$ disintegrates as the skewed product of ${\Pi_i}\mu (\mathrm{d}z)\,\mu^z(\mathrm{d}x)$, where $\mu^z$ is the conditional measure supported on $\Pi_i^{-1}(\{z\})\cap K$ for ${\Pi_i}\mu$-almost every $z$. We will prove  the exact dimensionality of the measures $\mu^z$ and the value for their dimensions.

Let us start with a consequence of \cite[Theorems 3.1 and 3.2]{BF2016}:

\begin{thm}\label{dimentr}Let $\mu$ be a non-degenerate Mandelbrot measure supported on $K$ and $2\le i\le k$. Let $\nu_i$ be the Bernoulli product measure equal to $\mathbb E({\Pi_i}\mu)$. With probability 1, conditional on $\mu\neq 0$, ${\Pi_i}\mu$ is absolutely continuous with respect to $\nu_i$ if $\dim_e(\mu)>h_{\nu_i}(T_i)$, otherwise ${\Pi_i}\mu$ and $\nu_i$ are mutually singular.
Moreover, if $T(q)>-\infty$ for some $q>1$, then for ${\Pi_i}\mu$-a.e.~$z\in \Pi_i(K)$ and $\mu^z$-a.e.~$x\in K$, $$\lim_{n\to\infty} \frac{\log (\mu^z([x_{|n}]))}{-n}=\dim_e(\mu)-\dim_e({\Pi_i}\mu),$$  which is equal to  $\dim_e(\mu)-h_{\nu_i}(T_i)$ if $\dim_e(\mu)>h_{\nu_i}(T_i)$ and 0 otherwise;  in particular
 the entropy dimension of $\mu^z$ exists and  equals $\dim_e(\mu)-\dim_e({\Pi_i}\mu)$.
\end{thm}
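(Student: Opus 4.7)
The plan is to obtain both assertions of Theorem~\ref{dimentr} from the already-invoked results of \cite{BF2016}---namely Theorems 3.1 and 3.2 there, the second of which is recalled above as Theorem~\ref{projmu}---combined with a disintegration and martingale argument. I would treat the dichotomy and the pointwise asymptotics separately.

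For the dichotomy, I would apply \cite[Theorem 3.1]{BF2016} directly to the one-block factor map $\Pi_i:X_1 \to X_i$. That theorem was stated for the projection $X_1 \times X_1 \to X_1$, but its proof only uses that $\Pi_i$ is a one-block factor map between full shift spaces, that $\Pi_i\mu$ has entropy dimension $\min(\dim_e(\mu), h_{\nu_i}(T_i))$ (Theorem~\ref{projmu}), and that $\nu_i = \mathbb{E}(\Pi_i\mu)$ is a Bernoulli product measure on $X_i$. All these inputs are available here, so the proof transfers verbatim and yields absolute continuity of $\Pi_i\mu$ with respect to $\nu_i$ when $\dim_e(\mu) > h_{\nu_i}(T_i)$, and mutual singularity otherwise.

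For the pointwise limit, I would combine three ingredients. First, by Kahane--Peyri\`ere and the exact dimensionality recalled in Section~\ref{Mandmeas},
\[
\lim_{n\to\infty} \frac{\log \mu([x_{|n}])}{-n} = T'(1-) = \dim_e(\mu) \quad \text{for $\mu$-a.e. $x$},
\]
so by disintegration this limit holds for $\Pi_i\mu$-a.e. $z$ and $\mu^z$-a.e. $x$ as well. Second, the pointwise assertion underlying Theorem~\ref{projmu}, proved in \cite{BF2016}, gives
\[
\lim_{n\to\infty} \frac{\log \Pi_i\mu([z_{|n}])}{-n} = \min(\dim_e(\mu), h_{\nu_i}(T_i)) \quad \text{for $\Pi_i\mu$-a.e. $z$}.
\]
Third, since $\Pi_i$ is a one-block factor map and $\mu^{z'}([x_{|n}]) = 0$ for $z' \notin [z_{|n}]$ whenever $z = \Pi_i(x)$, the disintegration identity reduces to
\[
\mu([x_{|n}]) = \int_{[z_{|n}]} \mu^{z'}([x_{|n}])\,\mathrm{d}\Pi_i\mu(z'),
\]
while the Radon--Nikodym/martingale identification yields
\[
\mu^z([x_{|n}]) = \lim_{m\to\infty} \frac{\mu([x_{|n}] \cap \Pi_i^{-1}([z_{|m}]))}{\Pi_i\mu([z_{|m}])} \quad \text{for $\Pi_i\mu$-a.e. $z$}.
\]
The conclusion follows from a subexponential comparison between $\mu^z([x_{|n}])$ and $\mu([x_{|n}])/\Pi_i\mu([z_{|n}])$: once this ratio is squeezed between $e^{-o(n)}$ and $e^{o(n)}$ along a set of $\mu$-measure tending to $1$, taking logarithms and dividing by $n$ gives $\lim_n \log \mu^z([x_{|n}])/(-n) = \dim_e(\mu) - \dim_e(\Pi_i\mu)$. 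Inserting the value of $\dim_e(\Pi_i\mu)$ from Theorem~\ref{projmu} produces the two displayed cases, and the existence and value of the entropy dimension of $\mu^z$ follow from the pointwise limit by bounded convergence against $\Pi_i\mu$.

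The main obstacle is precisely this subexponential comparison. The hypothesis $T(q) > -\infty$ for some $q > 1$ is essential: through $L^q$-boundedness of the Mandelbrot martingales $Y_n$ it provides uniform moment control on the cylinder masses of $\mu$ and $\Pi_i\mu$, and a Borel--Cantelli argument along the filtration generated by the $\Pi_i^{-1}([z_{|n}])$ converts this control into the required bound. This is the same technology that underpins the pointwise version of Theorem~\ref{projmu} in \cite{BF2016}, and it is the only substantive analytic input needed here beyond the disintegration formalism.
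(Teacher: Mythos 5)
Your treatment of the dichotomy (absolute continuity versus singularity) coincides with the paper's: Theorem~\ref{dimentr} is introduced there as a consequence of \cite[Theorems 3.1 and 3.2]{BF2016}, with the remark that replacing the projection $X_1\times X_1\to X_1$ by $\Pi_i:X_1\to X_i$ does not affect the arguments, and this is exactly what you argue. For the pointwise limit, however, the paper does not give a short self-contained argument either: it explicitly attributes the existence of the local entropy dimension of $\mu^z$ and the entropy conservation formula to the study carried out in \cite{FalJin} for the self-similar case (with \cite{BF2016} supplying $\dim_e(\Pi_i\mu)$). Your proposal tries to replace that input by a disintegration plus Borel--Cantelli sketch, and this is where it breaks down.

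The gap is the two-sided subexponential comparison $\mu^z([x_{|n}])=e^{o(n)}\,\mu([x_{|n}])/\Pi_i\mu([z_{|n}])$, which you assert rather than prove. One half is indeed an easy first-moment Borel--Cantelli: since $\mu([x_{|n}])=\int_{[z_{|n}]}\mu^{z'}([x_{|n}])\,\mathrm{d}\Pi_i\mu(z')$, the $(\Pi_i\mu\otimes\mu^z)$-measure of $\{\mu^z([x_{|n}])<e^{-\epsilon n}\mu([x_{|n}])/\Pi_i\mu([z_{|n}])\}$ is at most $e^{-\epsilon n}$, giving $\limsup_n -\frac1n\log\mu^z([x_{|n}])\le \dim_e(\mu)-\dim_e(\Pi_i\mu)$. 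But the opposite direction, $\mu^z([x_{|n}])\le e^{\epsilon n}\mu([x_{|n}])/\Pi_i\mu([z_{|n}])$ eventually a.e., which is what you need for the $\liminf$ inequality, does not follow from ``uniform moment control on the cylinder masses of $\mu$ and $\Pi_i\mu$'' plus a Borel--Cantelli along the filtration: Markov's inequality goes the wrong way on the bad set $\sum_u \mu^z([u])\mathbf{1}\{\mu^z([u])>e^{\epsilon n}\mu([u])/\Pi_i\mu([z_{|n}])\}$, and $\mu^z([u])$ is not pointwise dominated by $\mu([u])/\Pi_i\mu([z_{|n}])$, which is only its average over $z'\in[z_{|n}]$. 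Closing this requires control of the conditional measures themselves, e.g. $L^q$ ($q>1$) estimates of $\sum_u\mu^z([u])^q$ with respect to $\mathbb P\otimes\nu_i$ --- precisely the machinery the paper deploys in Section~\ref{pfcondmeas} for the Hausdorff-dimension analogue (Theorem~\ref{thmfibers}), via \cite[Proposition 5.1]{BF2016} applied to martingales in a random environment --- or the maximal-inequality/martingale argument of \cite{FalJin} that the paper cites for exactly this entropy statement. As written, the ``only substantive analytic input'' you mention is the step left unproven, so the core of the second assertion is missing. (A minor further point: deducing that $\dim_e(\mu^z)$ exists and equals the constant from the a.e. pointwise limit requires uniform integrability of $-\frac1n\log\mu^z([x_{|n}])$ with respect to $\mu^z$, not ``bounded convergence against $\Pi_i\mu$''; that step is standard but should be stated correctly.)
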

It is worth mentioning that the existence of the local entropy dimension for $\mu^z$ and the entropy dimension conservation formula comes from the study achieved in~\cite{FalJin} for the self-similar case, while the alternative between singularity and absolute continuity regarding ${\Pi_i}\mu$, as well as the value of $\dim_e({\Pi_i}\mu)$ and so that of $\dim_e(\mu^z)$ are obtained in \cite{BF2016}.

For the Hausdorff dimension of the conditional measures, we prove the following result:
\begin{thm}\label{thmfibers} Let $\mu$ be a non-degenerate Mandelbrot measure supported on $K$ and $2\le i\le k$. Let $\nu_i$ be the Bernoulli product measure equal to $\mathbb E({\Pi_i}\mu)$.  Suppose that $T(q)>-\infty$ for some $q>1$. With probability 1, conditional on $\mu\neq 0$:
\begin{enumerate}
\item If $\dim_e(\mu)\le h_{\nu_i}(T_i)$, then for  ${\Pi_i}\mu$-a.e.~$z\in \Pi_i(K)$, the measure $\mu^z$ is exact dimensional with Hausdorff dimension equal to 0.

\item If  $\dim_e(\mu)>h_{\nu_i}(T_i)$,  then for  ${\Pi_i}\mu$-a.e.~$z\in \Pi_i(K)$, the measure $\mu^z$ is exact dimensional with
\begin{equation}\label{dimfib}
\dim (\mu^z)=\gamma_1(\dim_e(\mu)-h_{\nu_i}(T_i))+\sum_{j=2}^{i-1}\gamma_j\big (\min(\dim_e(\mu),h_{\nu_j}(T_j))-h_{\nu_i}(T_i)\big).
\end{equation}
\item  In both the previous situations, the Hausdorff dimension conservation $$\dim(\mu)=\dim(\mu^z)+\dim({\Pi_i}\mu)$$ holds.
\end{enumerate}
\end{thm}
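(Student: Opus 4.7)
The plan is to deduce Theorem~\ref{thmfibers} from a dimension-conservation identity $\dim(\mu^z) = \dim(\mu) - \dim(\Pi_i\mu)$ (with right-hand side vanishing in case~(1)), via the exact dimensionalities of $\mu$ and $\Pi_i\mu$ established in Theorems~\ref{thmdim} and~\ref{thmdim2}. Part~(3) will then follow tautologically from parts~(1) and~(2).

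\textbf{Algebraic reformulation.} Subtracting the Ledrappier--Young type formulas and invoking Theorem~\ref{projmu} gives
\[
\dim(\mu) - \dim(\Pi_i\mu) = \sum_{j=1}^{i-1} \gamma_j \bigl(\dim_e(\Pi_j\mu) - \dim_e(\Pi_i\mu)\bigr).
\]
In case~(2), $\dim_e(\Pi_i\mu) = h_{\nu_i}(T_i)$ and $\dim_e(\Pi_j\mu) = \min(\dim_e(\mu), h_{\nu_j}(T_j))$ for $j<i$, matching~(\ref{dimfib}). In case~(1), since $\nu_i$ is a Bernoulli factor of $\nu_j$ for $j\le i$, one has $h_{\nu_j}(T_j)\ge h_{\nu_i}(T_i)\ge \dim_e(\mu)$, so all $\dim_e(\Pi_j\mu)$ for $j\le i$ coincide with $\dim_e(\mu)$ and the sum vanishes; it then suffices to show that $\mu^z$ is exact-dimensional with $\dim(\mu^z) = 0$.

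\textbf{Integral identity and averages.} Since $\Pi_i:(X_1,d_{\vec{\boldsymbol\gamma}}) \to (X_i,d_{\vec{\boldsymbol\gamma}^i})$ is $1$-Lipschitz, $\Pi_i(B(x,e^{-n})) \subset B^{(i)}(\Pi_i(x),e^{-n})$ (where $B^{(i)}$ denotes the ball in $(X_i,d_{\vec{\boldsymbol\gamma}^i})$), and the disintegration yields
\[
\mu(B(x,e^{-n})) \,=\, \int_{B^{(i)}(\Pi_i(x),e^{-n})} \mu^{z'}(B(x,e^{-n}))\, d\Pi_i\mu(z').
\]
The target pointwise asymptotic is $\lim_n -\log\mu^z(B(x,e^{-n}))/n = \dim(\mu)-\dim(\Pi_i\mu)$ for $\mu$-a.e.\ $x$, with $z=\Pi_i(x)$. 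The $\Pi_i\mu$-average of the integrand over $B^{(i)}(\Pi_i(x),e^{-n})$ equals $\mu(B(x,e^{-n}))/\Pi_i\mu(B^{(i)}(\Pi_i(x),e^{-n}))$, which by the exact dimensionalities behaves like $e^{-n(\dim\mu - \dim\Pi_i\mu + o(1))}$; the task is to transfer this average scale to the pointwise value at $z$. For the upper bound on the local dimension (i.e.\ $\mu^z(B(x,e^{-n})) \gtrsim e^{-n(\dim\mu - \dim\Pi_i\mu + \varepsilon)}$), I would apply Markov on the integral restricted to an Egorov-type set where the exact-dimensional convergences for $\mu$ and $\Pi_i\mu$ hold uniformly, giving that \emph{most} $z' \in B^{(i)}(\Pi_i(x),e^{-n})$ meet the required lower bound, and a Lebesgue-differentiation/martingale argument in $(X_i,d_{\vec{\boldsymbol\gamma}^i})$ at geometric scales would transfer the bound to the specific $z=\Pi_i(x)$.

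\textbf{Main obstacle: the lower bound on the local dimension.} A direct reverse Markov is useless since the average itself is exponentially small, so most $z'$ cannot serve as witnesses. I would instead decompose $B(x,e^{-n})$ as a union of level-$n(\gamma_1+\cdots+\gamma_k)$ cylinders $[u]$ compatible both with the ball ($u_j=x_j$ for $j\le n\gamma_1$, and $\Pi_s(u_j)=\Pi_s(x_j)$ for $n(\gamma_1+\cdots+\gamma_{s-1})<j\le n(\gamma_1+\cdots+\gamma_s)$, $s=2,\dots,k$) and with the fiber ($\Pi_i(u)=z_{|n(\gamma_1+\cdots+\gamma_k)}$), and bound $\mu^z([u])$ through the Mandelbrot-in-random-environment structure inherited by~$\mu^z$. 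A conditional Kahane--Peyri\`ere / Shannon--McMillan estimate for $\mu^z$ gives $-\log\mu^z([y_{|m}])/m \to \dim_e(\mu)-\dim_e(\Pi_i\mu)$, and a large-deviation type-count for admissible $u$ in each slab $n(\gamma_1+\cdots+\gamma_{s-1})<j\le n(\gamma_1+\cdots+\gamma_s)$, $s<i$, controlled by the entropy $h_{\nu_s}(T_s)$, yields
\[
\mu^z(B(x,e^{-n})) \le \exp\Big(-n\sum_{j=1}^{i-1}\gamma_j\bigl(\dim_e(\Pi_j\mu)-h_{\nu_i}(T_i)\bigr) + n\varepsilon\Big),
\]
which by the algebraic step matches $e^{-n(\dim(\mu)-\dim(\Pi_i\mu)-\varepsilon)}$. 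The key technical ingredient will be a differentiability-at-$1$ statement for a suitable $L^q$-spectrum of $\mu^z$, in the spirit of the approach behind Theorem~\ref{thmdim}, adapted via the conditioning techniques of~\cite{BF2016}. Case~(1) is handled by the same framework, with all entropy gaps collapsing to $0$ and thus forcing $\dim(\mu^z)=0$ exact-dimensionally.
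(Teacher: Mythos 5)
Your algebraic reformulation, the reduction of part (3) to parts (1)–(2), and the treatment of case (1) are fine and consistent with the paper (for case (1) the paper argues even more directly: since $[x_{|\ell_k(n)}]\subset B(x,e^{-n/\gamma_1})\subset [x_{|n}]$, the vanishing of the local entropy dimension of $\mu^z$ given by Theorem~\ref{dimentr} immediately forces exact dimensionality with dimension $0$, with no need of the absolute-continuity machinery — which in fact is unavailable there, since in case (1) ${\Pi_i}\mu\perp\nu_i$). The problem is case (2), where your two sketched mechanisms have genuine gaps and the actual technical core is left unproved. First, the ``Markov on an Egorov set plus Lebesgue differentiation'' transfer does not work as stated: knowing that for each $n$ most $z'\in B^{(i)}(\Pi_i(x),e^{-n})$ satisfy a lower bound on $\mu^{z'}(B(x,e^{-n}))$ says nothing about the specific $z=\Pi_i(x)$, because the exceptional set depends on $n$ and may always contain $z$, and the map $z'\mapsto \mu^{z'}(B(x,e^{-n}))$ has no continuity that would let a density-point argument pass from ``most $z'$'' to ``$z$ itself''; disintegrations are only defined a.e.\ and are not continuous in $z$. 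Second, the counting argument for the upper bound on $\mu^z(B(x,e^{-n}))$ cannot be closed with an a.e.\ Shannon–McMillan statement: the ball meets exponentially many fiber-compatible cylinders, and their $\mu^z$-masses are not controlled by the cylinder decay along the orbit of the a.e.\ point $x$; converting ``typical decay $\times$ count'' into a bound on the sum needs uniform or moment control over all these cylinders — which is exactly the $L^q$-type estimate you defer to at the end.

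That deferred estimate is precisely where all the work lies in the paper, and nothing in your proposal indicates how to obtain it. The paper proves, for $q$ in a two-sided neighbourhood of $1$, a bound of the form $\mathbb{E}_{\mathbb{P}\otimes \nu_i}\big(\sum_{B\in\mathcal F_n}\widetilde \mu^z(B)^q\big)\le C_q\exp\big(-\tfrac{n}{\gamma_1}(q-1)D+O((q-1)^2)\big)$, with $D$ the right-hand side of \eqref{dimfib}; this yields $\tau_{\mu^z}'(1)=D$ and hence exact dimensionality. Getting it requires several ingredients you do not address: the absolute continuity ${\Pi_i}\mu\ll\nu_i$ (Theorem~\ref{dimentr}, valid exactly in case (2)) so that the random measure ${\Pi_i}\mu$ in the disintegration can be replaced by the deterministic Bernoulli measure $\nu_i$ and the renormalised measure $\widetilde\mu^z=f_\omega(z)\mu^z$ can be introduced; a factorisation of the resulting partition function into blocks $S_{j,n}$ ($1\le j\le i-1$) and a tail term $R_{n,p}$ exploiting the branching/independence structure; and moment estimates for martingales in a random environment (via \cite[Proposition 5.1]{BF2016}), handled separately for $q>1$ (with the von Bahr–Esseen-type inequality) and $q<1$ (with subadditivity and Jensen), together with second-order Taylor expansions of the auxiliary pressure-like functions $\varphi_j$. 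Without carrying out (or at least concretely outlining) this moment analysis, your argument establishes the formula for $D$ but not the exact dimensionality of $\mu^z$, so the proof is incomplete at its decisive step.
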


Naturally, there is a similar result for the conditional measures of ${\Pi_i}\mu$ projected on $X_j$, $2\le i < j\le k$.

\begin{thm}\label{thmfibers2} Suppose $k\ge 3$. Let $\mu$ be a non-degenerate Mandelbrot measure supported on $K$ and $2\le i<j\le k$. Suppose that $T(q)>-\infty$ for some $q>1$. With probability 1, conditional on $\mu\neq 0$, denote by $({\Pi_{i}}\mu)^{j,z}$ the conditional measure of ${\Pi_i}\mu$ associated with the projection $\Pi_{i,j}$, and defined for ${\Pi_{j}}\mu$-almost every $z$.
\begin{enumerate}
\item If $\dim_e(\mu)\le h_{\nu_j}(T_j)$, then for  ${\Pi_{j}}\mu$-a.e.~$z\in \Pi_j(K)$, the measure $({\Pi_{i}}\mu)^{j,z}$ is exact dimensional with Hausdorff dimension equal to 0.

\item If  $\dim_e(\mu)>h_{\nu_j}(T_j)$,  then for  ${\Pi_{j}}\mu$-a.e.~$z\in \Pi_j(K)$, the measure $({\Pi_{i}}\mu)^{j,z}$ is exact dimensional with
\begin{align}\label{dimfib2}
\dim (({\Pi_{i}}\mu)^{j,z}))=\sum_{j'=i}^{j-1}\vec{\boldsymbol\gamma}^i_{j'}\big (\min(\dim_e(\mu),h_{\nu_{j'}}(T_{j'}))-h_{\nu_j}(T_j))\big).
\end{align}
\item  In both the previous situations, the Hausdorff dimension conservation $$\dim({\Pi_{i}}\mu)=\dim(({\Pi_{i}}\mu)^{j,z}))+\dim ({\Pi_{j}}\mu)$$ holds for  $\Pi_{i,j}$-a.e.~$z\in \Pi_{i,j}(K)$.
\end{enumerate}
\end{thm}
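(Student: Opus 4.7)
The plan is to mimic the proof of Theorem~\ref{thmfibers}, replacing the quadruple $(\mu,X_1,d_{\vec{\boldsymbol\gamma}},\Pi_i)$ by $(\Pi_i\mu,X_i,d_{\vec{\boldsymbol\gamma}^i},\Pi_{i,j})$. Theorem~\ref{thmdim2} says that $\Pi_i\mu$ is exact dimensional on $(X_i,d_{\vec{\boldsymbol\gamma}^i})$ with a Ledrappier--Young formula of the same shape as the one satisfied by $\mu$ on $(X_1,d_{\vec{\boldsymbol\gamma}})$, but with entropy dimension $\dim_e(\Pi_i\mu)=\min(\dim_e(\mu),h_{\nu_i}(T_i))$ playing the role of $\dim_e(\mu)$. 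Since $\Pi_{i,j}$ factors $\Pi_i\mu$ onto $\Pi_j\mu$, the conditional measures $(\Pi_i\mu)^{j,z}$ are well defined for $\Pi_j\mu$-almost every $z$, and the task is to transfer each step of the proof of Theorem~\ref{thmfibers} to this setting.

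First I would establish the analogue of Theorem~\ref{dimentr} for the pair $(\Pi_i\mu,\Pi_{i,j})$: conditional on $\mu\neq 0$, for $\Pi_j\mu$-a.e.\ $z$ and $(\Pi_i\mu)^{j,z}$-a.e.\ $x\in X_i$,
\[
\lim_{n\to\infty}\frac{\log(\Pi_i\mu)^{j,z}([x_{|n}])}{-n}=\dim_e(\Pi_i\mu)-\dim_e(\Pi_j\mu),
\]
which by Theorem~\ref{projmu} equals $\min(\dim_e(\mu),h_{\nu_i}(T_i))-\min(\dim_e(\mu),h_{\nu_j}(T_j))$. This, together with a standard upper box-dimension estimate exactly as in Part~(1) of Theorem~\ref{thmfibers}, yields Part~(1) of the statement when $\dim_e(\mu)\le h_{\nu_j}(T_j)$ (indeed, since $h_{\nu_j}(T_j)\le h_{\nu_i}(T_i)$, the difference above then vanishes).

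For Part~(2), the local dimension of $(\Pi_i\mu)^{j,z}$ at a typical $x$ in the metric $d_{\vec{\boldsymbol\gamma}^i}$ has to be computed from the ball structure on $X_i$. A ball of radius $e^{-n}$ in $(X_i,d_{\vec{\boldsymbol\gamma}^i})$ is the intersection
\[
\bigcap_{j'=i}^{k}\Pi_{i,j'}^{-1}\bigl([\Pi_{i,j'}(x)_{|n(\gamma_1+\cdots+\gamma_{j'})}]\bigr).
\]
Because $(\Pi_i\mu)^{j,z}$ is supported on $\Pi_{i,j}^{-1}(\{z\})$, the constraints for $j'\ge j$ are essentially carried by $z$; only the constraints for $i\le j'<j$ are active and contribute. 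Using exact dimensionality of each $\Pi_{j'}\mu$ (Theorem~\ref{thmdim2}) to control the cylinder masses $\Pi_{j'}\mu([\Pi_{i,j'}(x)_{|n(\gamma_1+\cdots+\gamma_{j'})}])$, and dividing by the mass $\Pi_j\mu([z_{|n(\gamma_1+\cdots+\gamma_j)}])$ (which decays at rate $h_{\nu_j}(T_j)$ under the assumption $\dim_e(\mu)>h_{\nu_j}(T_j)$), a telescoping argument of the same flavour as the one behind the Ledrappier--Young formula produces
\[
\dim((\Pi_i\mu)^{j,z})=\sum_{j'=i}^{j-1}\vec{\boldsymbol\gamma}^i_{j'}\bigl(\min(\dim_e(\mu),h_{\nu_{j'}}(T_{j'}))-h_{\nu_j}(T_j)\bigr).
\]
Exact dimensionality (matching upper and lower bounds) follows as in Theorem~\ref{thmfibers} from joint use of cover estimates and the local mass decay above, and the conservation formula in Part~(3) is then a direct arithmetic check against the formula of Theorem~\ref{thmdim2}.

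The main obstacle I anticipate is that $\Pi_i\mu$ is not itself a Mandelbrot measure but only a projection of one, so Theorems~\ref{thmdim} and~\ref{dimentr} cannot be invoked verbatim in this reduction. Setting up the cylinder-level analysis of $(\Pi_i\mu)^{j,z}$ requires the large-deviation-type control on projections of Mandelbrot measures developed in~\cite{BF2016}, in order to produce uniform-in-$n$ estimates on the measure of cylinders in $X_i$ conditioned on their projection in $X_j$. This is where most of the technical work sits, since it is exactly what allows one to pass from cylinder decay rates to ball decay rates in the anisotropic ultrametric $d_{\vec{\boldsymbol\gamma}^i}$ and to separate the contributions of the indices $i\le j'<j$ in the final formula.
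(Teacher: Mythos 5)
Your proposal is correct and is essentially the paper's own route: the authors prove Theorem~\ref{thmfibers} in Section~\ref{pfcondmeas} and explicitly leave Theorem~\ref{thmfibers2} to the reader as "very similar", and your substitution of $({\Pi_i}\mu, X_i, d_{\vec{\boldsymbol\gamma}^i}, \Pi_{i,j})$ for $(\mu, X_1, d_{\vec{\boldsymbol\gamma}}, \Pi_i)$ — with Part (1) via the entropy-dimension conservation (the analogue of Theorem~\ref{dimentr} plus the cylinder/ball sandwich), Part (2) via the \cite{BF2016}-type estimates, and Part (3) by direct arithmetic against Theorem~\ref{thmdim2} — is exactly that adaptation. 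Only note that, when carried out, the mechanism for Part (2) is not a pointwise telescoping over cylinder masses but, as in Section~\ref{pfcondmeas}, a moment estimate of $\mathbb{E}_{\mathbb{P}\otimes \nu_j}\sum_{B}\big(\widetilde{({\Pi_i}\mu)}^{j,z}(B)\big)^q$ for $q$ near $1$ (using ${\Pi_j}\mu\ll\nu_j$ and \cite[Proposition 5.1]{BF2016}), which forces the $L^q$-spectrum of the conditional measure to be differentiable at $1$ with the stated derivative.
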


\subsection{Applications to the Euclidean realisations of symbolic random statistically self-affine Sierpinski sponges}
\label{euclidean}
The link with Euclidean random sponges is the following: Given a sequence of integers $2\le m_k<  \cdots <  m_1$, if $\mathcal A_i=\prod_{j=i}^k\{0,\ldots,m_{i}-1\}$ for $1\le i\le k$, $\pi_{i}$ is the canonical projection from $\mathcal A_i$ to $\mathcal A_{i+1}$ for $1\le i\le k-1$,  $\gamma_1=1/\log(m_1)$, and $\gamma_i=1/\log(m_{i})-1/\log(m_{i-1})$, $2\le i\le k$, then the cylinders of generation $n$ of $X_1$ project naturally onto parallelepipeds of the family
$$\mathcal G_n=\left\{\prod_{i=1}^k [\ell_im_i^{-n},(\ell_i+1)m_i^{-n}]: \; 0\le \ell_i\le m_i^n-1\right\},$$ and $K$ projects on a  statistically self-affine Sierpinski sponge $\widetilde K$, also called Mandelbrot percolation set associated with $(A(u))_{u\in\mathcal A_1^*}$ in the cube $[0,1]^k$ endowed with the nested grids $(\mathcal G_n)_{n\ge 0}$.

It is direct to prove that all the results of the previous sections are valid if one replaces $K$ by $\widetilde K$, the  Mandelbrot measures by their natural projections on $\widetilde K$ (which are also called Mandelbrot measures), and $\Pi_i$ by the orthogonal projection from $\mathbb R^k$ to $\{0\}^{i-1}\times \mathbb R^{k-i+1}$.

If $\widetilde K$ is deterministic, then $i_0=2$, $\theta_2=\gamma_1/(\gamma_1+\gamma_2)$, the Mandelbrot measure of maximal Hausdorff dimension is a Bernoulli product measures, and we recover the result established by Kenyon and Peres in \cite{KP} (they work on $(\mathbb R/\mathbb Z)^k$ but it is equivalent); also, in this case the results on the dimension of conditional measures is a special case of the general result obtained by the second author on the dimension theory of self-affine measures \cite{Feng20..}. If $k=2$, the Euclidean version of Theorem~\ref{thmdim} yields the value of $\dim_H \widetilde K$ computed by Gatzouras and Lalley in \cite{GL94}.

Regarding the box counting dimension of $\widetilde K$, if $\widetilde K$ is deterministic, we just recover the result of \cite{KP}; in this case,  $\widehat \theta_i=0$ for all $2\le i\le k$. If $k=2$, we recover the result of Gatzouras and Lalley in \cite{GL94}.

\medskip

The paper is organized as follows. Section 3 is dedicated to the proof of Theorem~\ref{thmdim}, Section 4 to the proof of Theorem~\ref{dim K}, Section 5 to that of Theorem~\ref{thmdimB} and its corollary, Section 6 to those of the corresponding results for projections of Mandelbrot measures and~$K$, Section~\ref{pfcondmeas} to those on conditional measures, and the brief Section~\ref{lastsection} to the case when some $\gamma_i$ vanish.

\section{The Hausdorff dimension of $\mu$. Proof of Theorem~\ref{thmdim}}\label{sec3}

Let us start with a few definitions.

With the notation given in the introduction part, for any word $I\in\mathcal A_1^*$ and any integer $n\ge 0$, we denote by $\mu^I$ the measure defined on $X_1$ by
$$
\mu^I([J])=Q^I(J) Y(IJ) \quad \mbox{for } J\in \mathcal A_1^*
$$
and by $\mu^I_{n}$ the measure on $X_1$ obtained by distributing uniformly $Q^I(J)$ on any cylinder $J$ of the $n^{\rm th}$ generation. Also, we write $\mu_n=\mu_n^\epsilon$.

For $1\le i\le k$ and $n\in \N$,  let
$$
\ell_i(n)=\min \left\{p\in\N: \; p\ge (\gamma_1+\cdots+\gamma_i)\frac{n}{\gamma_1}\right\},
$$
and by convention set $\ell_0(n)=0$. It is easy to check that in the ultrametric space  $(X_1,d_{\vec{\boldsymbol\gamma}})$, the closed ball centered at $x$ of radius $e^{-\frac{n}{\gamma_1}}$ is given by
\begin{equation*}\label{Bxr}
B(x,e^{-\frac{n}{\gamma_1}})= \left\{y\in X_1: \; \Pi_i(y_{|\ell_i(n)})=\Pi_i({x}_{|\ell_i(n)}) \mbox{ for all } 1\leq i\leq k\right\}.
\end{equation*}
Let $\mathcal F_n$ be the partition of $X_1$ into closed balls of radius $e^{-\frac{n}{\gamma_1}}$. For any  finite Borel measure~$\nu$ on $X_1$,  the $L^q$-spectrum of $\nu$ can be defined as the concave mapping
$$
\tau_\nu:\; q\in\R\mapsto \liminf_{n\to\infty} -\frac{\gamma_1}{n}\log\sum_{B\in\mathcal F_n} \nu(B)^q,
$$
with the convention $0^q=0$.

It is known that since $(X_1,d_{\vec{\boldsymbol\gamma}})$ satisfies the Besicovich covering property, for $\nu$-almost every $x\in X_1$, one has $$\tau_\nu'(1^+)\le \underline{\dim}_{\rm{loc}}(\nu,x)\le \overline{\dim}_{\rm{loc}}(\nu,x) \le \tau_\nu'(1^-),$$ so that the existence of $\tau_\nu'(1)$ implies the exact dimensionality of $\nu$, with dimension equal to~$\tau_\nu'(1)$ (see, e.\,g., \cite{Ngai}).
Consequently, Theorem~\ref{thmdim}  follows from the following stronger result.

\begin{thm}
\label{thm-3.1}
Suppose that $T(q)>-\infty$ for some $q>1$. Conditional on $\mu\neq 0$, $\tau_\mu'(1)$ exists and equals $\dim^{\vec{\boldsymbol\gamma}}_e(\mu)$, where
$\dim^{\vec{\boldsymbol\gamma}}_e(\mu)$ is defined as in \eqref{e-LY}.
\end{thm}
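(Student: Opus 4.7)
My strategy is to establish sharp two-sided control of the partition sums $S_n(q) := \sum_{B \in \mathcal F_n} \mu(B)^q$ for $q$ in a neighborhood of $1$. Since $\tau_\mu$ is concave with $\tau_\mu(1) = 0$, it suffices to prove $\tau_\mu'(1^+) \ge D$ and $\tau_\mu'(1^-) \le D$ with $D := \dim^{\vec{\boldsymbol\gamma}}_e(\mu)$; concavity then forces $\tau_\mu'(1) = D$. Both inequalities would follow if, for every $q$ in a two-sided punctured neighborhood of $1$, one has $\tau_\mu(q) \ge D(q-1)$ almost surely on $\{\mu \ne 0\}$, i.e.\ $S_n(q) \le \exp(-D(q-1)n/\gamma_1 + o(n))$ almost surely.

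The first step is to describe a ball $B = B(x,e^{-n/\gamma_1})$ through its defining constraints: by the formula for $d_{\vec{\boldsymbol\gamma}}$, $B = [x_{|\ell_1(n)}] \cap \bigcap_{i=2}^k \Pi_i^{-1}([\Pi_i(x_{|\ell_i(n)})])$, and so $B$ is the disjoint union of the length-$\ell_k(n)$ cylinders $[u]$ with $u_{|\ell_1(n)} = x_{|\ell_1(n)}$ and $\Pi_i(u_{|\ell_i(n)}) = \Pi_i(x_{|\ell_i(n)})$ for $2\le i\le k$. Writing $I = x_{|\ell_1(n)}$, the Mandelbrot self-similarity yields $\mu(B) = Q(I) \cdot \mu^I(\sigma^{\ell_1(n)}(B))$, and the Kahane--Peyri\`ere strong law gives $-\log Q(I)/n \to \dim_e(\mu)$ $\mu$-a.s., contributing the summand $\gamma_1 \dim_e(\mu)$ of $D$ to the local dimension at scale $n/\gamma_1$. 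I then telescope the residual factor across projection levels: with $B'_i = [I] \cap \bigcap_{j=2}^i \Pi_j^{-1}([\Pi_j(x_{|\ell_j(n)})])$, one has $\log \mu(B) = \log Q(I) + \sum_{i=2}^k \log\!\bigl(\mu(B'_i)/\mu(B'_{i-1})\bigr)$. Applying Theorem~\ref{projmu} to the projection ${\Pi_i}\mu$, whose entropy dimension is $\min(\dim_e(\mu), h_{\nu_i}(T_i))$, each ratio scales $\mu$-a.s.\ like $\exp(-(\ell_i(n) - \ell_{i-1}(n)) \min(\dim_e(\mu), h_{\nu_i}(T_i)))$, which produces the summand $\gamma_i \min(\dim_e(\mu), h_{\nu_i}(T_i))$ of $D$.

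To convert these pointwise scaling estimates into the sought bound on $S_n(q)$, I would combine the classical Mandelbrot moment estimate $\mathbb E\!\left[\sum_{|u|=m} \mu([u])^q\right] = e^{-T(q)m}$ (valid for $q$ in a neighborhood of $1$ thanks to the hypothesis $T(q) > -\infty$ for some $q > 1$) with H\"older-type inequalities across the projection layers: for $q > 1$, $\mu(B)^q \le |U_B|^{q-1} \sum_{u \in U_B} \mu([u])^q$ where $U_B$ denotes the set of length-$\ell_k(n)$ cylinders inside $B$, and the reverse inequality for $q < 1$. The typical cardinality $|U_B|$ is bounded, on a set of $\mu$-mass close to $1$, by $\exp\!\big(\sum_{i=2}^k (\ell_i(n) - \ell_{i-1}(n)) h_{\nu_i}(T_i)\big)$, via Shannon--McMillan applied to the Bernoulli measures $\nu_i$ together with Theorem~\ref{projmu}. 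Summing contributions and invoking Borel--Cantelli along a countable dense subset of $q$'s produces the required bound $\tau_\mu(q) \ge D(q-1)$ on a punctured neighborhood of $1$, whence $\tau_\mu'(1) = D$ by concavity.

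The main obstacle I anticipate is the uniform-in-$n$ control of the contribution to $S_n(q)$ from atypical balls where $\mu(B)$ or $|U_B|$ deviates substantially from its generic value. A pointwise almost-sure bound on $\log\!\bigl(\mu(B'_i)/\mu(B'_{i-1})\bigr)$ is not enough: one would need tail control through a Doob-type maximal inequality applied to the Mandelbrot martingale and its projections, and would have to treat separately the absolute-continuity and mutual-singularity regimes of ${\Pi_i}\mu$ with respect to $\nu_i$ (from BF 2016), so as to control the relevant $L^q$-norms uniformly for $q$ near $1$. Only once these uniform tail bounds are in hand does the Borel--Cantelli argument close, yielding differentiability of $\tau_\mu$ at $1$ with derivative $D$.
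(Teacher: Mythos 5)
Your overall reduction has a structural flaw, and the technical core of the argument is missing. First, the target you reduce to --- $S_n(q)\le \exp(-D(q-1)n/\gamma_1+o(n))$, i.e. $\tau_\mu(q)\ge D(q-1)$, for all $q$ in a two-sided punctured neighborhood of $1$ --- is too strong: since $\tau_\mu$ is concave with $\tau_\mu(1)=0$, having $\tau_\mu(q)\ge D(q-1)$ on both sides of $1$ forces $\tau_\mu(q)=D(q-1)$ near $1$, i.e. a linear $L^q$-spectrum, which fails in general (already for $q>1$ the spectrum is governed by the strictly concave functions $T(q)$ and $\mathcal T_{\nu_i}(q)$). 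What is both sufficient and actually provable is a concave minorant $f$ with $f(1)=0$ and $f'(1)=D$; this is exactly what the paper's Proposition \ref{pro1} delivers, namely $\tau_\mu(q)\ge -(\gamma_1+\cdots+\gamma_{i_0})c_0(q-1)^2+(\gamma_1+\cdots+\gamma_{i_0})T(q)+\sum_{i>i_0}\gamma_i\mathcal T_{\nu_i}(q)$, and the quadratic correction $c_0(q-1)^2$ is unavoidable when $T'(1)=\mathcal T_{\nu_i}'(1)$ for some $i$. Second, your pointwise step misuses Theorem \ref{projmu}: that theorem gives the entropy dimension of $\Pi_i\mu$, not the almost sure exponential scaling of the conditional ratios $\mu(B_i')/\mu(B_{i-1}')$ along the nested constraints; establishing that fiberwise scaling is essentially equivalent to the exact-dimensionality statement you are trying to prove (it is the content of the later Theorem \ref{thmfibers}), so this step is circular as written.

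Third, even granting typical-scale behavior, typical-set counting cannot control $\sum_{B\in\mathcal F_n}\mu(B)^q$: the $L^q$-sum is dominated by atypical balls, and your bound on $|U_B|$ by $\exp\big(\sum_{i\ge 2}(\ell_i(n)-\ell_{i-1}(n))h_{\nu_i}(T_i)\big)$ on a set of large $\mu$-mass conflates the full combinatorial count of cylinders inside a ball with the count of typical ones, and says nothing about the remaining balls. You acknowledge this as ``the main obstacle'' and defer it to unspecified maximal inequalities and a case split between the absolutely continuous and singular regimes, but that deferred part is precisely the proof. The paper resolves it without any pointwise analysis: it estimates $\mathbb{E}\big(\sum_{B\in\mathcal F_n}\mu(B)^q\big)$ by a recursion across the projection layers, using the statistical self-affinity and independence structure, the von Bahr--Esseen inequality (Lemma \ref{martdiff}) together with superadditivity for $1<q\le 2$, subadditivity and Jensen's inequality for $0<q\le 1$, and the projection moment bound of Lemma \ref{lemclef} from \cite{BF2016}; it then converts the expectation bound into an almost sure bound valid simultaneously for all $q$ near $1$ by a Borel--Cantelli-type argument and convexity in $q$, and concludes by the tangency argument at $q=1$. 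None of that machinery appears in your proposal, so as it stands the proof is not complete.
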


Recall that for $2\le i\le k$, we defined $\nu_i$ as ${\Pi_i}\mathbb{E}(\mu)= \mathbb{E}({\Pi_i}\mu)$.
If $\nu$ is a Bernoulli product measure on $X_i$, we set
\begin{equation}
\label{e-cal-T}
\mathcal T_{\nu}(q)=-\log\sum_{b\in\mathcal A_i}\nu([b])^q\quad (q\ge 0).
\end{equation}
Then Theorem \ref{thm-3.1} follows from the following proposition.
\begin{pro}\label{pro1}
Suppose that $T(q)>-\infty$ for some $q>1$. Let $$i_0=\max\{2\le i\le k:\; T'(1)\le \mathcal T'_{\nu_i}(1)\},$$
 with the convention $\max (\emptyset)=1$. Then there exists $q_0>1$ and $c_0\ge 0$ such that for all $q\in (0,q_0]$, we have
\begin{equation}\label{Lq}
\mathbb{E}\Big (\sum_{B\in\mathcal F_n} \mu(B)^q\Big )=O\Big (\exp \Big ( \ell_{i_0}(n)(c_0(q-1)^2- T(q))-\sum_{i=i_0+1}^k (\ell_{i}(n)-\ell_{i-1}(n))\mathcal T_{\nu_i}(q)\Big )\Big )
\end{equation}
as $n\to\infty$, here we use the Landau big O notation. Moreover, $c_0$ can be taken equal to 0 if one restricts $q$ to belong to $(0,1]$ or if $T'(1)\neq \mathcal T'_{\nu_i}(1)$ for all $2\le i\le k$.
\end{pro}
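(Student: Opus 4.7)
The plan is to bound $\mathbb{E}\sum_{B\in\mathcal F_n}\mu(B)^q$ by splitting each atom at the critical intermediate scale $\ell_{i_0}(n)$. The index $i_0$ is chosen precisely so that for $i\le i_0$ the inequality $T'(1)\le\mathcal T'_{\nu_i}(1)$ makes the Mandelbrot cascade cost $T$ cheaper (at first order near $q=1$) than the Bernoulli projection cost $\mathcal T_{\nu_i}$, while the opposite strict inequality holds for $i>i_0$. The cascade estimate will be used up to level $\ell_{i_0}(n)$ and a Jensen estimate against the Bernoulli means $\nu_i$ beyond.

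Explicitly, each atom $B\in\mathcal F_n$ decomposes disjointly as $B=\bigsqcup_w(B\cap[w])$ over words $w\in\mathcal A_1^{\ell_{i_0}(n)}$ compatible with $B$ (extending its length-$n$ $\mathcal A_1$-prefix and realising its prescribed projections $\Pi_i$ at levels $2,\ldots,i_0$). By the statistical self-affinity of $\mu$, $\mu(B\cap[w])=Q(w)\,M^w(B)$ with $Q(w)$ independent of $M^w(B)$; within a given atom $B$ the distribution of $M^w(B)$ does not depend on the admissible $w$, and we denote it by $M_B$. For $q\in(0,1]$, subadditivity $(\sum x_i)^q\le\sum x_i^q$ applied to the sum over $w$, together with the independence of $Q(w)$ and $M_B$, yields
$$\mathbb{E}\sum_{B\in\mathcal F_n}\mu(B)^q\ \le\ \sum_{B}\Big(\sum_{w\in B}\mathbb{E} Q(w)^q\Big)\,\mathbb{E}(M_B^q).$$
The first factor decomposes as a product of marginal $q$-th moments of the $W_a$'s along $w$, and after summing over $B$ collapses to $e^{-\ell_{i_0}(n)T(q)}$ because the partial sums $\sum_{a:\Pi_i(a)=b}\mathbb{E} W_a^q$ still sum to $\mathbb{E}\sum_aW_a^q=e^{-T(q)}$ when the projection label $b$ varies. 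The second factor is bounded by Jensen's inequality $\mathbb{E}(M_B^q)\le(\mathbb{E} M_B)^q$; since $\mathbb{E}(Y)=1$ and $\mathbb{E}\sum_{a:\Pi_i(a)=b}W_a=\nu_i([b])$, one has $\mathbb{E} M_B=\prod_{i=i_0+1}^k\prod_{j\in(\ell_{i-1}(n),\ell_i(n)]}\nu_i([b_j])$, and summing over the prescribed projections in each range produces $\prod_{i=i_0+1}^k e^{-(\ell_i(n)-\ell_{i-1}(n))\mathcal T_{\nu_i}(q)}$. This establishes the proposition with $c_0=0$ on $q\in(0,1]$.

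For $q>1$ Jensen goes the wrong way and subadditivity fails, so both steps must be replaced by Rosenthal-type moment inequalities of the form $\mathbb{E}(\sum_iX_i)^q\le(\sum_i\mathbb{E} X_i)^q+C_q\sum_i\mathbb{E} X_i^q$ for independent non-negative $X_i$ and $q\in(1,2]$; the $q$-th moment $\mathbb{E}(Y^q)<\infty$ needed to apply them is available for $q$ in a small right neighbourhood of $1$ under the standing assumption $T(q)>-\infty$ for some $q>1$, via the Biggins--Kahane--Peyri\`ere theorem \cite{KP,Biggins,DuLi}. The leading Rosenthal terms reproduce the $q\in(0,1]$ calculation, while the remainders, after a second-order Taylor expansion of $T$ and $\mathcal T_{\nu_i}$ near $q=1$, accumulate across the $\ell_{i_0}(n)$ cascade levels into a single multiplicative factor $\exp(c_0\ell_{i_0}(n)(q-1)^2)$. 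When $T'(1)\ne\mathcal T'_{\nu_i}(1)$ for every $i\in\{2,\ldots,k\}$, the strict first-order gap at the splitting level permits a slight perturbation that absorbs the remainder without any quadratic correction, yielding $c_0=0$.

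The main obstacle I expect is the $q>1$ analysis: making the Rosenthal moment recursion uniform across the $k-i_0$ projection layers and across a common right neighbourhood $(1,q_0]$ of $1$, and constructing a single constant $c_0$ independent of $n$ that simultaneously absorbs every quadratic remainder while vanishing in the generic non-boundary case. The uniform finiteness of the $q$-th moments needed at each layer rests on a comparison between $T$ and the pressures $\mathcal T_{\nu_i}$ combined with Biggins' moment estimate for each intermediate projected Mandelbrot cascade, and the second-order Taylor bookkeeping must be done with enough care to distinguish the ``degenerate'' case $T'(1)=\mathcal T'_{\nu_{i_0}}(1)$, which is precisely what forces the quadratic correction, from the generic case where no correction is needed.
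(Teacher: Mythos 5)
Your treatment of $q\in(0,1]$ is correct and is essentially the paper's argument in compressed form: the paper runs the same computation as a conditional recursion over the $k$ blocks (subadditivity of $x\mapsto x^q$ for the first $i_0$ blocks, then Jensen against the normalizing Bernoulli weights $\nu_i([U_i])$ for the blocks $i>i_0$), whereas you perform the split at level $\ell_{i_0}(n)$ in one shot; the independence and factorization facts you use are valid, and you correctly obtain $c_0=0$ on $(0,1]$.

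The $q>1$ half, however, has a genuine gap, and you have located it but not filled it. First, in your decomposition $\mu(B)=\sum_w Q(w)M^w(B)$ the summands are \emph{not} independent: the weights $Q(w)$ for different admissible $w$ share all the random vectors attached to common prefixes, so a Rosenthal/von Bahr--Esseen inequality ``for independent non-negative $X_i$'' cannot be applied to the sum over $w$ (nor to a sum over scales) as stated. The paper handles this by a conditional recursion over the $k$ blocks only, centering the inner sums $S_{U^{(i+2)}}$ and applying von Bahr--Esseen conditionally at each of the $k$ steps, so that only a bounded factor $2^{3qk}C_q^{k-1}\mathbb{E}(Y^q)$ accumulates. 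Second, the decisive input you never supply is the moment estimate for the projected cascades, Lemma~\ref{lemclef} (from \cite[Corollary 5.2]{BF2016}): $\mathbb{E}\sum_{U\in\mathcal A_i^m}{\Pi_i}\mu_m([U])^q\le C_q e^{-m\min(T(q),\mathcal T_{\nu_i}(q))}$. It is these bounds, applied blockwise, that produce the exponents; your claim that the ``leading Rosenthal terms reproduce the $q\le1$ calculation'' leaves the remainder terms (which are $q$-th moments of projected sub-cascade masses, of the same order as the leading ones) uncontrolled. Third, your proposed origin of the quadratic correction is not viable: moment-inequality constants are absolute (of order $2^q$), they do not shrink like $e^{O((q-1)^2)}$, so letting them ``accumulate across the $\ell_{i_0}(n)$ cascade levels'' would produce a factor $C^{\ell_{i_0}(n)}$ with $C>1$ fixed, destroying the estimate. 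In the paper the term $c_0(q-1)^2\ell_{i_0}(n)$ has a completely different, deterministic source: for $2\le i\le i_0$ one compares $\min(T(q),\mathcal T_{\nu_i}(q))$ with $T(q)$ by a Taylor expansion at $q=1$, giving $\min(T(q),\mathcal T_{\nu_i}(q))\ge T(q)-c_i(q-1)^2$ near $1$, with $c_i=0$ when $T'(1)<\mathcal T'_{\nu_i}(1)$; for $i>i_0$ one has $T'(1)>\mathcal T'_{\nu_i}(1)$, so the min equals $\mathcal T_{\nu_i}(q)$ for $q$ close to $1^+$. This also gives the clean reason why $c_0$ can be taken to be $0$ in the generic case, rather than the ``slight perturbation absorbing the remainder'' you invoke. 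To complete your proof along your lines you would need both the blockwise conditional martingale argument (or an equivalent way to decouple the dependence in $w$) and the projected-measure moment estimates; without them the $q>1$ statement, and hence the differentiability of $\tau_\mu$ at $1$ from the right, is not established.
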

Assume that Proposition~\ref{pro1} holds. Then, a standard argument (see, e.g. \cite[Lemma C]{BF2016}) yields that for any fixed $q\in (0,q_0]$, the following holds almost surely:
\begin{align*}
 \limsup_{n\to\infty}\frac{\displaystyle\log \sum_{B\in\mathcal F_n} \mu(B)^q}{n}&\le \limsup_{n\to\infty}\frac{\ell_{i_0}(n)(c_0(q-1)^2- T(q))-\sum_{i=i_0+1}^k (\ell_{i}(n)-\ell_{i-1}(n))\mathcal T_{\nu_i}(q)}{n}\\
&=\frac{\gamma_1+\cdots+\gamma_{i_0}}{\gamma_1} \,(c_0(q-1)^2- T(q))-\sum_{i=i_0+1}^k \frac{\gamma_i}{\gamma_1}\,  \mathcal T_{\nu_i}(q).
\end{align*}
Then, by the convexity of the two sides as functions of $q$, the inequality holds almost surely for all $q\in (0,q_0]$. Multiplying both sides by $-\gamma_1$  yields, conditional on $\mu\neq 0$,
$$
\tau_\mu(q)\ge -(\gamma_1+\cdots+\gamma_{i_0})c_0(q-1)^2+(\gamma_1+\cdots+\gamma_{i_0}) \, T(q)+\sum_{i=i_0+1}^k \gamma_i\,  \mathcal T_{\nu_i}(q).
$$
Since both sides of the above inequality are concave functions which coincide at $q=1$ and the right hand side is differentiable at 1, we necessarily have that $\tau'_\mu(1)$ does exist and is equal to the derivative at $1$ of the right hand side, namely $$(\gamma_1+\cdots+\gamma_{i_0}) \, T'(1)+\sum_{i=i_0+1}^k \gamma_i\,  \mathcal T'_{\nu_i}(1)=\dim^{\vec{\boldsymbol\gamma}}_e(\mu).$$
Hence Proposition \ref{pro1} implies Theorem \ref{thm-3.1}.
\medskip

In the remaining part of this section, we prove  Proposition~\ref{pro1}.  We still need  two additional lemmas.

\begin{lem}\label{lemclef}Suppose that $T(q)>-\infty$ for some $q>1$. Then, for all  $q\in (1,2)$ such that $T(q)>0$, there exists a constant $C_q>0$ such that  for all $2\le i\le k$, for all  $n\ge 1$ one has
$$
\max \left(\mathbb E\sum_{U\in\mathcal A_i^n} {\Pi_i}\mu([U])^q, \; \mathbb E \sum_{U\in\mathcal A_i^n} {\Pi_i}\mu_n([U])^q\right )\le C_q e^{-n\min (T(q), \mathcal T_{\nu_i}(q))}.
$$
\end{lem}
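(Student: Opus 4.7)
\noindent\emph{Plan.} Set $r_1=e^{-T(q)}=\E[\sum_{a\in\A_1}W_a^q]$ and $r_2=e^{-\mathcal T_{\nu_i}(q)}=\sum_{b\in\A_i}\nu_i([b])^q$, so that the target bound is of the form $C_q\max(r_1,r_2)^n$. The argument rests on two applications of the classical moment inequality for sums of independent non-negative random variables: for $q\in(1,2]$ and independent $X_1,\ldots,X_N\ge 0$ with $\E[X_j^q]<\infty$,
\[
\E\Big[\big(\textstyle\sum_j X_j\big)^q\Big]\le \big(\textstyle\sum_j\E X_j\big)^q+K_q\textstyle\sum_j\E[X_j^q],
\]
obtained from von Bahr--Esseen applied to the centered sum $\sum_j(X_j-\E X_j)$ together with the pointwise inequality $(a+b)^q\le a^q+qa^{q-1}b+K_q|b|^q$ valid for $a\ge 0$, $b\in\R$.

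\emph{First application: reduce $\Pi_i\mu$ to $\Pi_i\mu_n$.} Writing $\Pi_i\mu([U])=\sum_{u\in S_U}Q(u)Y(u)$ with $S_U=\{u\in\A_1^n:\Pi_i(u)=U\}$ and $Y(u)$ i.i.d.\ copies of $Y$ independent of the weights up to generation $n$, I would apply the moment inequality conditionally on $(Q(u))_{|u|\le n}$. This gives
\[
\E[\Pi_i\mu([U])^q]\le\E[\Pi_i\mu_n([U])^q]+K_q\E(Y^q)\textstyle\sum_{u\in S_U}\E[Q(u)^q].
\]
Since $\sum_{u\in\A_1^n}\E[Q(u)^q]=r_1^n$ and $\E(Y^q)<\infty$ by Kahane--Peyri\`ere--Biggins (using $T(q)>0$), summing over $U\in\A_i^n$ reduces the first quantity to the second up to an additive $C_q r_1^n$, so it suffices to bound $S_n:=\E\sum_U\Pi_i\mu_n([U])^q$.

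\emph{Second application: one-step recursion for $S_n$.} Introduce $\widetilde W_b(u'):=\sum_{a\in\A_1,\,\Pi_i(a)=b}W_a(u')$ and split $U=U'U_n$:
\[
\Pi_i\mu_n([U])=\sum_{u'\in S_{U'}}Q(u')\,\widetilde W_{U_n}(u').
\]
Crucially, the random variables $(\widetilde W_{U_n}(u'))_{u'\in\A_1^{n-1}}$ are i.i.d.\ copies of $\widetilde W_{U_n}$ with common mean $\nu_i([U_n])$, and they are independent of $(Q(u'))_{u'}$ since the two families depend on $W$'s at disjoint generations. Applying the moment inequality conditionally on $(Q(u'))_{u'}$, taking expectations, and using $\E[\sum_{u'\in S_{U'}}Q(u')^q]=\prod_{k=1}^{n-1}b_{U'_k}(q)$ with $b_b(q):=\sum_{a:\Pi_i(a)=b}\E[W_a^q]$, I obtain
\[
\E[\Pi_i\mu_n([U])^q]\le\nu_i([U_n])^q\,\E[\Pi_i\mu_{n-1}([U'])^q]+K_q\,\E[\widetilde W_{U_n}^q]\,\textstyle\prod_{k=1}^{n-1}b_{U'_k}(q).
\]
Summing over $U\in\A_i^n$ and using $\sum_b\nu_i([b])^q=r_2$, $\sum_b b_b(q)=r_1$, and $D:=\sum_b\E[\widetilde W_b^q]<\infty$ yields the first-order linear recursion
\[
S_n\le r_2\,S_{n-1}+K_qD\,r_1^{n-1},
\]
with $S_1=D$. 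Unrolling gives $S_n\le D r_2^{n-1}+K_qD\sum_{j=0}^{n-2}r_2^j r_1^{n-1-j}\le C_q\max(r_1,r_2)^n$, which together with the first reduction establishes the lemma.

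\emph{Main obstacle.} The critical point is the \emph{direction} of the recursion. Peeling off the \emph{first} letter of $U$ would produce a recursion $S_n\le K_q r_1 S_{n-1}+(\cdots)\,r_2^{n-1}$ in which the multiplicative constant $K_q$ compounds exponentially in $n$ and so wrecks the target rate. Peeling off the \emph{last} letter makes the bare rate $r_2$ (with coefficient $1$) the multiplier of $S_{n-1}$, and it is this sharp coefficient that allows the induction to close at the right exponent. A minor degeneracy appears in the case $r_1=r_2$, where the geometric series acquires a linear factor in $n$; this can be absorbed into $C_q$ by an $\varepsilon$-perturbation of $q$, using that $\{q\in(1,2):T(q)=\mathcal T_{\nu_i}(q)\}$ is at most a discrete subset at which the bound extends by a limiting argument.
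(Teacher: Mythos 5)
Your scheme is genuinely different from the paper's treatment: the paper does not prove this lemma internally at all, it simply invokes \cite[Corollary 5.2]{BF2016} (where the case $k=2$ is handled). Your two-stage argument — first removing the factors $Y(u)$ by a conditional von Bahr--Esseen estimate (legitimate, since $T(q)>0$ gives $\E(Y^q)<\infty$ by Kahane--Peyri\`ere/Biggins and the $Y(u)$, $|u|=n$, are i.i.d.\ and independent of the generation-$<n$ weights), and then running the one-step recursion by peeling off the \emph{last} letter so that the coefficient of $S_{n-1}$ is exactly $r_2=e^{-\mathcal T_{\nu_i}(q)}$ — is correct as far as it goes, and it does yield the stated bound whenever $T(q)\neq \mathcal T_{\nu_i}(q)$.

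The gap is the critical case $T(q)=\mathcal T_{\nu_i}(q)$, which is squarely inside the scope of the lemma. There your recursion $S_n\le r_2 S_{n-1}+K_qD\,r_1^{n-1}$ only gives $S_n=O\bigl(n\,e^{-n\min(T(q),\mathcal T_{\nu_i}(q))}\bigr)$, and the proposed repair does not work. First, the set $\{q\in(1,2):T(q)=\mathcal T_{\nu_i}(q)\}$ need not be discrete: $T$ and $\mathcal T_{\nu_i}$ can coincide identically, e.g.\ when the weights $W_a$ are deterministic and $\Pi_i$ is injective on $\{a:W_a>0\}$ (then $\sum_a \E W_a^q=\sum_b\nu_i([b])^q$ for every $q$), so there may be no nearby non-critical exponent to perturb to. Second, even at an isolated crossing the "limiting argument" cannot be closed in the obvious way: for fixed weights $q\mapsto\log\sum_U m_U^q$ is convex, so interpolating between $q\pm\varepsilon$ (plus H\"older in expectation) bounds $\E\sum_U m_U^q$ by $e^{-n[\theta g(q-\varepsilon)+(1-\theta)g(q+\varepsilon)]}$ with $g=\min(T,\mathcal T_{\nu_i})$; since $g$ is concave this exponent is $\le g(q)$, i.e.\ the inherited bound is strictly weaker than the target, and optimizing $\varepsilon$ against the blow-up of the non-critical constants (which grow like $(1-r_{\min}/r_{\max})^{-1}\sim\varepsilon^{-1}$) still leaves a polynomial-in-$n$ loss. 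So as written the proof establishes a weaker statement, $C_q\,n\,e^{-n\min(T(q),\mathcal T_{\nu_i}(q))}$, in the critical case. (That weaker bound would in fact suffice for the way the lemma is used in Proposition \ref{pro1}, where only exponential rates in $n$ matter; but it is not the lemma as stated.) To close the gap you would need a sharper treatment of the critical case — for instance normalizing by $\nu_i$ and controlling the moments of the random-environment martingale $z\mapsto \Pi_i\mu_n([z_{|n}])/\nu_i([z_{|n}])$ under $\mathbb{P}\otimes\nu_i$ in the spirit of \cite[Proposition 5.1]{BF2016} (the tool the paper itself relies on), or else simply quote \cite[Corollary 5.2]{BF2016} as the authors do.
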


\begin{proof}
This is a direct consequence of \cite[Corollary 5.2]{BF2016}, in which the case $k=2$ is considered.
\end{proof}

\begin{lem}\cite{vB-E}\label{martdiff}
 Let $(L_j)_{j\geq 1}$ be a sequence of centered independent  real valued  random variables. For every finite  $I\subset\N$ and $q\in (1, 2]$,
$$\E\Big (\Big |\displaystyle\sum_{i\in I} L_{i}\Big|^q\Big ) \leq 2  \sum_{i\in I}  \E( \left|L_{i}\right|^q).$$
\end{lem}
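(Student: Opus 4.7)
The plan is to prove the inequality by induction on $n=|I|$, reducing the inductive step to a one-variable estimate. Enumerate $I=\{1,\ldots,n\}$ and put $S_j=L_1+\cdots+L_j$. The case $n=1$ is immediate. For the inductive step, it suffices to establish the following one-step bound: if $X,Y$ are independent real random variables with $Y$ centered and $q\in(1,2]$, then
$$
\E|X+Y|^q\le \E|X|^q+2\E|Y|^q.
$$
Indeed, applying this with $X=S_{n-1}$, $Y=L_n$ (which are independent by hypothesis) and then using the inductive hypothesis for $S_{n-1}$, one obtains
$$
\E|S_n|^q\le \E|L_1|^q+2\sum_{i=2}^n\E|L_i|^q\le 2\sum_{i=1}^n\E|L_i|^q,
$$
which is the claim.

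The one-step bound would be reduced to the deterministic pointwise inequality: for all real $x,y$ and every $q\in[1,2]$,
$$
|x+y|^q\le |x|^q+q\,|x|^{q-1}\mathrm{sgn}(x)\,y+2|y|^q.
$$
Granting this, conditioning on $X$ and integrating against the law of $Y$ kills the middle (linear in $y$) term because $\E Y=0$, producing exactly the one-step bound. To prove the pointwise inequality I would use homogeneity: the case $y=0$ is trivial, and for $y\ne 0$ one divides by $|y|^q$ and, after handling the sign of $y$ by the symmetry $(x,y)\mapsto(-x,-y)$, the problem reduces to verifying that
$$
g(u):=|u+1|^q-|u|^q-q\,|u|^{q-1}\mathrm{sgn}(u)\le 2 \qquad \text{for all }u\in\R.
$$
A short calculus argument (computing $g(0)=1$ and $g(-1)=q-1$, checking $g(u)\to 0$ as $u\to\pm\infty$ when $q<2$ while $g$ stabilizes at $1$ for large $|u|$ when $q=2$, and locating the critical points via $g'(u)=0$ on each smooth piece) confirms the bound; in fact the true supremum of $g$ is achieved in the limit $q\to 1^+$, where $g$ approaches $2$ on $(-1,0)$.

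The main obstacle is obtaining the sharp constant $2$ in the pointwise inequality. A crude convexity bound yields only $|x+y|^q\le 2^{q-1}(|x|^q+|y|^q)$, which when iterated $n$ times produces a constant growing with $n$ rather than staying bounded. The whole point of the refined pointwise inequality is that the linear correction $q\,|x|^{q-1}\mathrm{sgn}(x)\,y$ is the derivative at $x$ of $x\mapsto |x|^q$, hence is designed to be annihilated by the centering hypothesis $\E Y=0$; only then does the quadratic-type remainder $2|y|^q$ accumulate additively and give the universal constant $2$ in the final bound.
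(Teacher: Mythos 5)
Your proposal is correct, but note that the paper does not prove this lemma at all: it is quoted directly from von Bahr and Esseen \cite{vB-E}, so there is no internal proof to compare with. What you write is essentially a reconstruction of the classical argument: independence lets you peel off one summand at a time, the centering hypothesis kills the linear term coming from the first-order Taylor correction of $x\mapsto|x|^q$, and the constant $2$ then accumulates additively instead of the useless $2^{q-1}$-per-step from crude convexity. The induction step is sound (you need $S_{n-1}$ independent of $L_n$ and $\E L_n=0$, both given), and the reduction by homogeneity and the symmetry $(x,y)\mapsto(-x,-y)$ to the one-variable bound $g(u)=|u+1|^q-|u|^q-q|u|^{q-1}\mathrm{sgn}(u)\le 2$ is correct. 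The only thin spot is that you assert the calculus verification rather than carry it out, and it does require a small amount of care: on $(-1,0)$ the naive bound $(1-t)^q\le 1$, $qt^{q-1}\le q$ only gives $3$. One clean way to finish: on $u>0$ and on $u<-1$, concavity of $s\mapsto s^{q-1}$ (tangent-line inequality) shows $g'\le 0$, so $g\le g(0^+)=1$ and $g\le g(-1)=q-1$ respectively; on $u=-t\in(-1,0)$, use $(1-t)^q+t^q\le 1$ to get $g\le 1+\bigl(qt^{q-1}-2t^q\bigr)$, and the bracket is maximized at $t=(q-1)/2$ with value $((q-1)/2)^{q-1}\le 1$, giving $g\le 2$. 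With that supplied, your proof is complete and matches the spirit of the cited source.
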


\begin{proof}[Proof of Proposition~\ref{pro1}]

At first, note that the set of balls $\mathcal F_n$ is in bijection with the set $\prod_{i=1}^k \mathcal A_i^{\ell_i(n)-\ell_{i-1}(n)}$, since for any $x=(x_i)_{i=1}^\infty\in X_1$, if we set   $U_i=\Pi_i(x_{\ell_{i-1}(n)+1}\cdots x_{\ell_{i}(n)})$, $1\le i\le k$, and $U=(U_1,\ldots, U_k)$, then
\begin{eqnarray*}
B(x,e^{-\frac{n}{\gamma_1}})&=&\left\{y\in X_1: \;  \Pi_i(T_1^{\ell_{i-1}(n)}y)\in [U_i] \mbox{ for all } 1\le i\le k\right\}\\
&=&\bigcup_{(J_1,\ldots,J_k)\in \mathcal J_U}[J_1\cdots J_k],
\end{eqnarray*}
where
\begin{equation}\label{decomp}
\mathcal J_U:= \left\{(J_1,\ldots,J_k)\in \prod_{i=1}^{k}\A_1^{\ell_i(n)-\ell_{i-1}(n)}: \;  \Pi_i(J_i)=U_i \mbox{ for all }  1\le i\le k\right\}.
\end{equation}
For $1\le i\le k$,  set $\mathcal U^{(i)}_n= \prod_{j=i}^{k}\A_j^{\ell_j(n)-\ell_{j-1}(n)}$.  For $q\in \R_+$ we need to estimate from above the partition function
$$
Z_{q,n}:=\sum_{B\in\mathcal F_n} \mu(B)^q=\sum_{ U\in\mathcal U_{n}^{(1)}}\Big(\sum_{(J_1,J_2,\ldots,J_k)\in \mathcal J_U}\mu([J_1\cdots J_k])\Big )^q.
$$
For $1\le i\le k$ and $U^{(i)}=(U_{i},\cdots, U_k)\in \mathcal U^{(i)}_n$,  set
$$
\mathcal J_{U^{(i)}}=\left\{(J_{i},\ldots,J_k)\in \prod_{j=i}^{k}\A_1^{\ell_j(n)-\ell_{j-1}(n)}: \; \Pi_j(J_j)=U_j \mbox{ for all }i\le j\le k\right\}.
$$
Also, set $\mathcal U^{(k+1)}_n=\{\epsilon\}$ and $\mathcal J_{\epsilon}=\{\epsilon\}$.

Then, for $1\le i\le k$ and $(J_1,\ldots, J_i)\in \prod_{j=1}^i \mathcal A_1^{\ell_i(n)-\ell_{i-1}(n)}$,  define the random variable
$$
Z_{q,n}(J_1\cdots J_i)=\sum_{U^{(i+1)}\in  \mathcal U^{(i+1)}_n}\left(\sum_{(J_{i+1},\ldots,J_k)\in \mathcal J_{U^{(i+1)}}} \mu^{J_1\cdots J_{i}} ([J_{i+1}\ldots J_k])\right)^q.
$$
Notice that $Z_{q,n}(\epsilon)=Z_{q,n}$ and  $Z_{q,n}(J_1\cdots J_k)= Y(J_1\cdots J_k)^q$.

Due to the branching property associated with the measures $\mu^J$, $J\in\mathcal A_1^*$, for all $0\le i\le k-1$ we have
$$
 Z_{q,n}(J_1\cdots J_i)= \sum_{U^{(i+1)}\in  \mathcal U^{(i+1)}_n}\left(\sum_{\substack{J_{i+1}\in \mathcal A_{1}^{\ell_{i+1}(n)-\ell_{i}(n)} \\ \Pi_{i+1}(J_{i+1})=U_{i+1}}} \mu^{J_1\cdots J_i}_{\ell_{i+1}(n)-\ell_{i}(n)}([J_{i+1}]) \, S_{U^{(i+2)}}(J_1\cdots J_iJ_{i+1})\right)^q,
 $$
where $U^{(i+2)}=(U_{i+2},\ldots,U_k)\in \mathcal{U}^{(i+2)}_n$, and
$$
S_{U^{(i+2)}}(J_1\cdots J_iJ_{i+1})=\sum_{(J_{i+2},\ldots,J_k)\in \mathcal J_{U^{(i+2)}}} \mu^{J_1\cdots J_iJ_{i+1}} ([J_{i+2}\ldots J_k]).
$$
Notice that the random variables $S_{U^{(i+2)}}(J_1\cdots J_iJ_{i+1})$, where $J_{i+1}\in \mathcal A_{1}^{\ell_{i+1}(n)-\ell_{i}(n)}$ and $\Pi_i(J_{i+1})=U_{i+1}$, are independent and identically distributed, and independent of the $\sigma$-algebra generated by  the $\mu^{J_1\cdots J_i}_{\ell_{i+1}(n)-\ell_{i}(n)}(J_{i+1})$. Setting $$L(J_{i+1})=S_{U^{(i+2)}}(J_1\cdots J_{i+1})-\mathbb E(S_{U^{(i+2)}}),$$
 where $\mathbb E(S_{U^{(i+2)}})$  stands for the common value of the  $S_{U^{(i+2)}}(J_1\cdots J_{i+1})$ expectations, we have, for $q>1$:
\begin{align*}
&\mathbb{E}\left (\sum_{\substack{J_{i+1}\in \mathcal A_{i+1}^{\ell_{i+1}(n)-\ell_{i}(n)} \\ \Pi_{i+1}(J_{i+1})=U_{i+1}}} \mu^{J_1\cdots J_i}_{\ell_{i+1}(n)-\ell_{i}(n)}([J_{i+1}]) \, S_{U^{(i+2)}}(J_1\cdots J_iJ_{i+1})\right)^q\\
&\le 2^{q-1} \mathbb{E}\left (\sum_{\substack{J_{i+1}\in \mathcal A_{i+1}^{\ell_{i+1}(n)-\ell_{i}(n)} \\ \Pi_{i+1}(J_{i+1})=U_{i+1}}} \mu^{J_1\cdots J_i}_{\ell_{i+1}(n)-\ell_{i}(n)}([J_{i+1}]) \right)^q \, \mathbb E(S_{U^{(i+2)}})^q\\
&\quad\quad +2^{q-1}\mathbb{E}\left |\sum_{\substack{J_{i+1}\in \mathcal A_{i+1}^{\ell_{i+1}(n)-\ell_{i}(n)} \\ \Pi_{i+1}(J_{i+1})=U_{i+1}}} \mu^{J_1\cdots J_i}_{\ell_{i+1}(n)-\ell_{i}(n)}([J_{i+1}]) \, L(J_{i+1})\right|^q.
\end{align*}
Assuming that $q\in (1,2]$, we can apply Lemma~\ref{martdiff} to the second term conditional on the $\sigma$-algebra generated by  the $\mu^{J_1\cdots J_i}_{\ell_{i+1}(n)-\ell_{i}(n)}(J_{i+1})$ and get
\begin{align*}
&\mathbb{E}\left |\sum_{\substack{J_{i+1}\in \mathcal A_{i+1}^{\ell_{i+1}(n)-\ell_{i}(n)} \\ \Pi_{i+1}(J_{i+1})=U_{i+1}}} \mu^{J_1\cdots J_i}_{\ell_{i+1}(n)-\ell_{i}(n)}([J_{i+1}]) \, L(J_{i+1})\right|^q\\
&\le 2^{q} \mathbb{E}\left( \sum_{\substack{J_{i+1}\in \mathcal A_{i+1}^{\ell_{i+1}(n)-\ell_{i}(n)} \\ \Pi_{i+1}(J_{i+1})=U_{i+1}}} \mu^{J_1\cdots J_i}_{\ell_{i+1}(n)-\ell_{i}(n)}([J_{i+1}])^q\right )\, \mathbb{E} (|L|^q)\\
&\le 2^{q} \mathbb{E}\left( \sum_{\substack{J_{i+1}\in \mathcal A_{i+1}^{\ell_{i+1}(n)-\ell_{i}(n)} \\ \Pi_{i+1}(J_{i+1})=U_{i+1}}} \mu^{J_1\cdots J_i}_{\ell_{i+1}(n)-\ell_{i}(n)}([J_{i+1}])\right )^q\, \mathbb{E} (|L|^q) \quad \text{(using superadditivity)}\\
&=2^{q} \mathbb{E}\left ({\Pi_{i+1}}\mu^{J_1\cdots J_i}_{\ell_{i+1}(n)-\ell_{i}(n)}([U_{i+1}])^q\right )\, \mathbb{E} (|L|^q),
\end{align*}
where $\mathbb{E} (|L|^q)=\mathbb{E} (|S_{U^{(i+2)}}-\mathbb E(S_{U^{(i+2)}})|^q)\le 2^q \mathbb{E} (S_{U^{(i+2)}}^q)$, and  $\mathbb{E} (S_{U^{(i+2)}}^q)$ is the common value of the  $\mathbb E(S_{U^{(i+2)}}(J_1\cdots J_{i+1})^q)$.  Incorporating the last inequality in the previous one, we get
\begin{align*}
&\mathbb{E}\left (\sum_{\substack{J_{i+1}\in \mathcal A_{i+1}^{\ell_{i+1}(n)-\ell_{i}(n)} \\ \Pi_{i+1}(J_{i+1})=U_{i+1}}} \mu^{J_1\cdots J_i}_{\ell_{i+1}(n)-\ell_{i}(n)}(J_{i+1}) \, S_{U^{(i+2)}}(J_1\cdots J_{i+1})\right)^q\\
&\le 2^{3q} \mathbb{E}\left ({\Pi_{i+1}}\mu^{J_1\cdots J_i}_{\ell_{i+1}(n)-\ell_{i}(n)}([U_{i+1}])^q\right )\mathbb{E}\left(S_{U^{(i+2)}}^q\right).
\end{align*}
Then, taking an arbitrary element $\widetilde J_{i+1}$ in $\mathcal A_1^{\ell_{i+1}(n)-\ell_i(n)}$, we obtain
\begin{align*}
&\mathbb{E}(Z_{q,n}(J_1\cdots J_i))\\
&\le 2^{3q}  \sum_{U_{i+1}\in \mathcal A_{i+1}^{\ell_{i+1}(n)-\ell_i(n)}} \mathbb{E}\big ({\Pi_{i+1}}\mu^{J_1\cdots J_i}_{\ell_{i+1}(n)-\ell_{i}(n)}([U_{i+1}])^q\big ) \sum_{U^{(i+2)}\in  \mathcal U^{(i+2)}_n}\, \mathbb E(S_{U^{(i+2)}}(J_1\cdots J_i\widetilde J_{i+1})^q)\\
&=2^{3q}  \sum_{U_{i+1}\in \mathcal A_{i+1}^{\ell_{i+1}(n)-\ell_i(n)}} \mathbb{E}\big ({\Pi_{i+1}}\mu^{J_1\cdots J_i}_{\ell_{i+1}(n)-\ell_{i}(n)}([U_{i+1}])^q\big )\,  \mathbb{E}(Z_{q,n}(J_1\cdots J_i \widetilde J_{i+1})).
\end{align*}
Since $(\mu_p)_{p\ge 1}$ and $(\mu^{J_1\cdots J_i}_p)_{p\ge 1}$ are identically distributed this yields
\begin{align*}
&
\mathbb{E}(Z_{q,n}(J_1\cdots J_i))\\
&\le 2^{3q}  \sum_{U_{i+1}\in \mathcal A_{i+1}^{\ell_{i+1}(n)-\ell_i(n)}} \mathbb{E}\big ({\Pi_{i+1}}\mu_{\ell_{i+1}(n)-\ell_{i}(n)}([U_{i+1}])^q\big )\,  \mathbb{E}(Z_{q,n}(J_1\cdots J_i \widetilde J_{i+1}))
.
\end{align*}
It follows that
$$
\mathbb{E}(Z_{q,n})\le 2^{3qk} \, \mathbb E(Y^q) \prod_{i=1}^k \mathbb{E} \left (\sum_{U_i\in\mathcal A_i^{\ell_{i}(n)-\ell_{i-1}(n)}}{\Pi_i}\mu_{\ell_{i}(n)-\ell_{i-1}(n)}([U_i])^q\right ).
$$
Let $q_1\in (1,2]$ such that $T(q)>0$  for all $q\in (1,q_1]$ (remember that $T(1)=0$ and $T'(1)>0$). Then, for all $q\in (1,q_1]$, the previous estimate combined with  Lemma~\ref{lemclef} yields
\begin{align*}
\mathbb{E}(Z_{q,n})&\le 2^{3qk} C_q^{k-1} \mathbb{E}(Y^q)\,\mathbb{E} \Big (\sum_{U_1\in\mathcal A_1^{n}}\mu_{n}([U_1])^q\Big )\\
&\qquad\times \ \exp\Big( -\sum_{i=2}^k (\ell_i(n)-\ell_{i-1}(n))\min (T(q),\mathcal T_{\nu_i}(q))\Big )\\
&=2^{3qk} C_q^{k-1} \mathbb{E}(Y^q)\,\exp\Big( -n T(q)-\sum_{i=2}^k (\ell_i(n)-\ell_{i-1}(n))\min (T(q),\mathcal T_{\nu_i}(q))\Big ).
\end{align*}

Finally, recall that $i_0=\max\{2\le i\le k: T'(1)\le \mathcal T_{\nu_i}'(1)\}$ (note that for each $i$ the number $\mathcal T_{\nu_i}'(1)$ is the measure-theoretic entropy of $\nu_i$ so that the sequence $(\mathcal T_{\nu_i}'(1))_{1\le i\le k}$ is non-increasing). Since $T$ and the functions $\mathcal T_{\nu_i}$ are analytic near 1 and coincide at $1$, for all $2\le i\le i_0$ there exist $q_{0,i}\in (1,q_1]$ and $c_i\ge 0$ such that for all $q\in (1,q_{0,i}]$ one has $$\min (T(q),\mathcal T_{\nu_i}(q))\ge T(q)-c_i(q-1)^2,$$ with $c_i=0$ if $T'(1)< \mathcal T_{\nu_i}'(1)$. Taking $c_0=\max\{c_i:\; 2\le i\le i_0\}$ and $q_0=\min\{q_{0,i}:\; 2\le i\le i_0\}$ yields~\eqref{Lq}.

\medskip

Suppose now that $q\in (0,1]$. We start with giving general estimate of $\mathbb{E}(Z_{q,n}(J_1\cdots J_i))$. Using the subadditivity of $x\in\mathbb R_+\mapsto x^q$ we have
$$
 Z_{q,n}(J_1\cdots J_i)\le \sum_{U^{(i+1)}\in  \mathcal U^{(i+1)}_n}\sum_{\substack{J_{i+1}\in \mathcal A_{1}^{\ell_{i+1}(n)-\ell_{i}(n)} \\ \Pi_{i+1}(J_{i+1})=U_{i+1}}} \mu^{J_1\cdots J_i}_{\ell_{i+1}(n)-\ell_{i}(n)}([J_{i+1}])^q \, S_{U^{(i+2)}}(J_1\cdots J_iJ_{i+1})^q,
 $$
so
\begin{align*}
\mathbb{E}(Z_{q,n}(J_1\cdots J_i))&\le \sum_{U_{i+1}\in \mathcal A_{i+1}^{\ell_{i+1}(n)-\ell_i(n)}}\sum_{\substack{J_{i+1}\in \mathcal A_{1}^{\ell_{i+1}(n)-\ell_{i}(n)} \\ \Pi_{i+1}(J_{i+1})=U_{i+1}}}\mathbb{E} (\mu^{J_1\cdots J_i}_{\ell_{i+1}(n)-\ell_{i}(n)}(J_{i+1})^q) \\
&\qquad\qquad\qquad\qquad \cdot \mathbb{E}\Big (\sum_{U^{(i+2)}\in  \mathcal U^{(i+2)}_n}\, S_{U^{(i+2)}}(J_1\cdots J_i\widetilde J_{i+1})^q\Big )\\
&=\mathbb{E}\Big (\sum_{J_{i+1}\in \mathcal A_{1}^{\ell_{i+1}(n)-\ell_{i}(n)}}\mu^{J_1\cdots J_i}_{\ell_{i+1}(n)-\ell_{i}(n)} (J_{i+1})^q\Big ) \mathbb{E}(Z_{q,n}(J_1\cdots J_i \widetilde J_{i+1}))\\
&=\mathbb{E}\Big (\sum_{J_{i+1}\in \mathcal A_{1}^{\ell_{i+1}(n)-\ell_{i}(n)}}\mu_{\ell_{i+1}(n)-\ell_{i}(n)} (J_{i+1})^q\Big ) \mathbb{E}(Z_{q,n}(J_1\cdots J_i \widetilde J_{i+1}))\\
&=\exp(-(\ell_{i+1}(n)-\ell_{i}(n)) T(q)) \mathbb{E}(Z_{q,n}(J_1\cdots J_i \widetilde J_{i+1})).
\end{align*}

Starting from $\mathbb{E}(Z_{q,n})=\mathbb{E}(Z_{q,n}(\epsilon))$ and iterating $i_0$ times the previous estimate we get
\begin{align*}
\mathbb{E}(Z_{q,n})&\le \Big (\prod_{i=1}^{i_0} \exp(-(\ell_{i}(n)-\ell_{i-1}(n)) T(q))\Big )\,  \mathbb{E}(Z_{q,n}(\widetilde J_1\cdots\widetilde J_{i_0}))\\
&=\exp (-\ell_{i_0}(n) T(q) )\,  \mathbb{E}(Z_{q,n}(\widetilde J_1\cdots\widetilde J_{i_0})).
\end{align*}

On the other hand, setting $\widetilde J=\widetilde J_1\cdots\widetilde J_{i_0}$ and $\lambda(n)=\ell_{k-1}(n)-\ell_{i_0+1}(n)$, we can write
\begin{align*}
&Z_{q,n}(\widetilde J)\\
&= \sum_{U^{(i_0+1)}\in  \mathcal U^{(i_0+1)}_n}\left (\sum_{\substack{J'=J_{i_0+1}\cdots,J_{k-1}\\ \Pi_j(J_{j})=U_{j}, \ \forall\,  i_0+1\le j\le k-1}} \mu_{\lambda(n)}^{\widetilde J}([J']) \sum_{J_{k}:\;  \Pi_{k}(J_{k})=U_{k}}\mu^{\widetilde J J'}([J_k])\right )^q\\
&=\sum_{U^{(i_0+1)}\in  \mathcal U^{(i_0+1)}_n}\left (\sum_{\substack{J'=J_{i_0+1}\cdots J_{k-1}\\ \Pi_j(J_{j})=U_{j}, \ \forall\,  i_0+1\le j\le k-1}} \mu_{\lambda(n)}^{\widetilde J}([J'])\, \nu_k([U_k])\, X(\widetilde JJ') \right )^q,
\end{align*}
where
$$
X(\widetilde JJ')=\sum_{J_{k}:\;  \Pi_{k}(J_{k})=U_{k}}w^{\widetilde JJ'}(J_k), \text{ with }w^{\widetilde JJ'}(J_k)=\begin{cases}
\displaystyle \frac{ \mu^{\widetilde JJ'}([J_k])}{\nu_{k}([U_{k}])}&\text{if }\nu_{k}([U_{k}])>0,\\
0&\text{otherwise}
\end{cases}.
$$
We can now use the independence of the random variables $X(\widetilde JJ')$ with respect to the $\sigma$-algebra generated by the $\mu_{\lambda(n)}^{\widetilde J}([J'])$, conditioned with respect to this $\sigma$-algebra and use Jensen's inequality to get
$$
\mathbb{E} (Z_{q,n}(\widetilde J))\le \sum_{U^{(i_0+1)}\in  \mathcal U^{(i_0+1)}_n}\mathbb{E}\left (\sum_{\substack{J'=J_{i_0+1},\ldots, J_{k-1}\\ \Pi_j(J_{j})=U_{j}, \ \forall\,  i_0+1\le j\le k-1}} \mu_{\lambda(n)}^{\widetilde J}([J'])\, \nu_k([U_k])\, \mathbb{E}(X(\widetilde JJ'))\right )^q.
$$
But by construction we have $\nu_k([U_k])=\mathbb{E}\Big (\sum_{J_k:\; \Pi_{k}(J_k)=U_k} \mu^{\widetilde J J'}([J_k])\Big )$, hence $\mathbb{E}(X(\widetilde JJ'))=1$. Setting
$$
R=\sum_{(U_{i_0+1},\ldots,U_{k-1})\in \prod_{i=i_0+1}^{k-1}\mathcal A_i^{\ell_i(n)-\ell_{i-1}(n)}}\mathbb{E}\left (\sum_{\substack{J'=J_{i_0+1}\cdots J_{k-1}\\ \Pi_j(J_{j})=U_{j}, \ \forall\,  i_0+1\le j\le k-1}} \mu_{\lambda(n)}^{\widetilde J}([J'])\right )^q,
$$
this yields
$$
\mathbb{E} (Z_{q,n}(\widetilde J))\le R\cdot \Big (\sum_{U_k\in \mathcal A_k^{\ell_k(n)-\ell_{k-1}(n)}}\nu_k([U_k])^q\Big )=R\cdot \exp (-(\ell_k(n)-\ell_{k-1}(n))\mathcal T_{\nu_k}(q)).
$$

We can apply to $R$ the same type of estimate as that for $\mathbb{E} (Z_{q,n}(\widetilde J))$, the only change being that $\mu^{\widetilde JJ'}([J_k])=\mu^{\widetilde JJ_{i_0+1}\cdots J_{k-1}}([J_k])$ must be replaced by $\mu^{\widetilde JJ_{i_0+1}\cdots J_{k-2}}_{\ell_{k-1}(n)-\ell_{k-2}(n)}(J_{k-1})$, and one now must use the fact that $\nu_{k-1}=\mathbb{E}\left( {\Pi_{k-1}}\mu^{\widetilde JJ_{i_0+1}\cdots J_{k-2}}_{\ell_{k-1}(n)-\ell_{k-2}(n)}\right)$. Iterating we get
$$
\mathbb{E} (Z_{q,n}(\widetilde J))\le \exp \Big (-\sum_{i=i_0+1}^k (\ell_{i}(n)-\ell_{i-1}(n))\mathcal T_{\nu_i}(q)\Big ),
$$
and finally
$$
\mathbb{E}(Z_{q,n})\le \exp\Big  (-\ell_{i_0}(n) T(q) -\sum_{i=i_0+1}^k (\ell_{i}(n)-\ell_{i-1}(n))\mathcal T_{\nu_i}(q)\Big ).
$$
This completes the proof of the Proposition.
\end{proof}

\section{The Hausdorff dimension of $K$. Proof of Theorem~\ref{dim K}}

We have to  optimise the weighted entropy $\dim^{\vec{\boldsymbol\gamma}}_e(\mu)$ over the set of non-degenerate Mandelbrot measures $\mu$ supported on $K$; this will provide us with a sharp lower bound for $\dim_H K$. To do so, it is convenient to first relate $\dim_e(\mu)$ to $h_{\nu_i}(T_i)$ for all $1\le i\le k$, where $\nu_i=\mathbb{E}(\Pi\mu)$. This is the purpose of Section~\ref{skewed}. Then we identify  at which point the maximum of weighted entropy dimension  of Mandelbrot measures supported on $K$ is reached. This constitutes Section~\ref{OPT}. Section~\ref{LB} quickly derives the sharp lower bound for $\dim_H K$. Finally, in Section~\ref{UB} we develop a kind of variational principle to get the sharp upper bound for $\dim_H K$.

\subsection{Mandelbrot measure as a kind of skewed product and decomposition of entropy dimension}\label{skewed} Let $\mu$ be a non-degenerate Mandelbrot measure jointly constructed with $K$ and such that $K_\mu\subset K$ almost surely. As in Section~\ref{Mandmeas}, we denote by $W$ the random vector used to generate $\mu$. By construction, for any $1\le i\le k$, the measure $\nu_i=\mathbb E({\Pi_i}\mu)$ is the Bernoulli product measure on $X_i$ associated with the probability vector $p^{(i)}=(p^{(i)}_b)_{b\in \mathcal A_i}$, where
$$
p^{(i)}_b=\sum_{a\in\mathcal A_1: \; [a]\subset \Pi_i^{-1}([b])} \mathbb{E}(W_a),
$$
and one has $\nu_i={\pi_{i-1}}\nu_{i-1}$ for $i\ge2$.   We also define, for $b\in\mathcal A_i$,
$$
V_b^{(i)}=(V_{b,a}^{(i)})_{a\in\mathcal A_1:\; [a]\subset  \Pi_i^{-1}([b]) }=
\begin{cases}\nu_i([b])^{-1}(W_a)_{a\in\mathcal A_1:\; [a]\subset \Pi_i^{-1}([b]) } &\text{ if $\nu_i([b])>0$}\\
0&\text{ otherwise}
\end{cases},
$$
so that for all $a\in \mathcal A_1$, for all $1\le i\le k$,  we have  the multiplicative decomposition
$$
W_a=\nu_i(\Pi_i[a]) \cdot V^{(i)}_{\Pi_i(a),a}.
$$
For $1\le i\le k$ and $b\in \mathcal A_i$, set
\begin{equation}\label{TVi}
T_{V^{(i)}_b}(q)= -\log  \mathbb{E}\left( \sum_{a\in\mathcal A_1:\; [a]\subset \Pi_i^{-1}([b]) } (V_{b,a}^{(i)})^q\right) \quad (q\ge 0),
\end{equation}
with the conventions $0^0=0$ and $\log (0)=-\infty$. In particular
\begin{equation}\label{TVi0}
T_{V^{(i)}_b}(0)=-\log \mathbb E(N^{(i)}_b(W)),
\end{equation}
where
$$
 N^{(i)}_b(W)=\#\{a\in\mathcal A_1:\; [a]\subset  \Pi_i^{-1}([b]),\; W_a>0\}.
$$
One can check that
\begin{equation}\label{T,Ti}
e^{-T_W(q)}=\sum_{\substack{b\in\mathcal A_i\\\nu_i([b])>0}} \nu_i([b])^q e^{-T_{V^{(i)}_b}(q)},
\end{equation}
from what it follows, after differentiating at $1$, that
\begin{equation}\label{decomph}
\dim_e(\mu)=h_{\nu_i}(T_i)+\dim_e(\mu|\nu_i),
\end{equation}
where
$$
h_{\nu_i}(T_i)=-\sum_{b\in\mathcal A_i} \nu_i([b])\log\nu_i([b])
$$
is the entropy of the $T_i$-invariant measure $\nu_i$ and
$$
\dim_e(\mu|\nu_i):=\sum_{\substack{b\in\mathcal A_i\\\nu_i([b])>0}} \nu_i([b])  T'_{V^{(i)}_b}(1)=\sum_{\substack{b\in\mathcal A_i\\\nu_i([b])>0}} \nu_i([b]) \left(-\sum_{a\in\mathcal A_1:\; [a]\subset \Pi_i^{-1}([b]) } \mathbb{E} (V_{b,a}^{(i)}\log V_{b,a}^{(i)})\right )
$$
must be thought of as the relative entropy dimension of $\mu$ given $\nu_i$ whenever this number is non-negative.

Among the Mandelbrot measures supported on $K$, special ones will play a prominent role. We introduce them now.

Recall that for  $1\le i\le k$ and $b\in \mathcal A_i=\Pi_i(\mathcal A_1)$, we defined
$$
N^{(i)}_b=\# \{a\in\mathcal A_1:\; [a]\subset  \Pi_i^{-1}([b]),\; [a]\cap K\neq\emptyset\}
$$
and we also defined the set $\widetilde{\mathcal A}_i=\{b\in\mathcal A_i: \, \mathbb{E}(N^{(i)}_b)>0\}$.

For $a\in\mathcal A_1$ such that $[a]\subset  \Pi_i^{-1}([b])$
let
$$
\widetilde V^{(i)}_{b,a}= \begin{cases}
(\mathbb{E}(N^{(i)}_b))^{-1}&\text{ if } b\in\widetilde{\mathcal A}_i \text{ and }[a]\cap K\neq\emptyset\\
0&\text{ otherwise}
\end{cases}.
$$
If $\nu_i$ is a Bernoulli product measure on $X_i$, and $ W_a$ is taken equal to $\widetilde W_a=\nu_i([b]) \widetilde V^{(i)}_{b,a}$ for all $a\in\mathcal A_1$ such that $[a]\subset  \Pi_i^{-1}([b])$, and if $T_{\widetilde W}'(1)>0$, we get a new Mandelbrot measure that we denote by $\mu_{\nu_i}$. By construction, $\nu_i=\mathbb E({\Pi_i}\mu_{\nu_i})$, and
\begin{equation}\label{relatentr}
\dim_e(\mu_{\nu_i}|\nu_i)=\sum_{b\in \mathcal A_i}\nu_i([b])\log \mathbb{E}(N_b^{(i)}).
\end{equation}

The following remarks point out some properties which will play key roles in our proof of Theorem~\ref{dim K}.

\begin{rem}\label{rem0} Given a non-degenerate Mandelbrot measure $\mu$ supported on $K$ and $1\le i\le k$, for each $b\in\mathcal A_i$ such that $\nu_i([b])>0$, the function $T_{V^{(i)}_b}$ is concave, takes value 0 at 1 and due to \eqref{TVi0} one has $T_{V^{(i)}_b}(0)=-\log\mathbb E(N_b^{(i)}(W))\ge -\log\mathbb E(N_b^{(i)})$, so $T_{V^{(i)}_b}$ is bounded from below by the linear function $T_{\widetilde V^{(i)}_b}:q\mapsto (q-1)\log\mathbb E(N_b^{(i)})$. Consequently, $ T_{V^{(i)}_b}'(1)\le T_{{\widetilde V}^{(i)}_b}'(1)=\log\mathbb E(N_b^{(i)})$. It then follows from \eqref{decomph} that

\begin{eqnarray*}
T_{\widetilde W}'(1)&=& h_{\nu_i}(T_i)+\sum_{b\in \A_i:\; \nu_i([b])>0} \nu_i([b]) T_{\widetilde V^{(i)}_b}'(1)\\
&\geq&  h_{\nu_i}(T_i)+\sum_{b\in \A_i:\; \nu_i([b])>0} \nu_i([b]) T_{V^{(i)}_b}'(1)\\
&= &T_{W}'(1),
\end{eqnarray*}
with equality only if $T_{V^{(i)}_b}$ equals $T_{{\widetilde V}^{(i)}_b}$ whenever $\nu_i([b])>0$. In particular, $T_{V^{(i)}_b}$ must be affine. Remembering \eqref{TVi}, and differentiating twice shows, after an application of Cauchy-Schwarz inequality, that this implies that ${\widetilde V}^{(i)}_{b,a}$ equals $\mathbf{1}_{\R_+^*}(W_a)/\mathbb{E}(N_b^{(i)}(W))$ almost surely. Consequently, $T_{V^{(i)}_b}$ equals $T_{{\widetilde V}^{(i)}_b}$ if and only if $V^{(i)}_b=\widetilde{V}^{(i)}_b$ almost surely, for all $b$ such that $\nu_i([b])>0$, i.e. $W=\widetilde W$ almost surely. In particular, either $T_{\widetilde W}'(1)=T_{W}'(1)$ and $\mu_{\nu_i}$ is non-degenerate since $\mu_{\nu_i}=\mu$ almost surely, or $T_{\widetilde W}'(1)>T_{W}'(1)\ge 0$ so that $\mu_{\nu_i}$ is non-degenerate (recall that since $\mu$ is non-degenerate, $T_{W}'(1)= 0$ only in the case that $\mu$ is a Dirac measure).
\end{rem}

{
Recall that for $i=1,\ldots, k$, $X_i=\A_i^\N$ and $\widetilde{X}_i=(\widetilde{\A}_i)^\N$, where $$\widetilde{\A}_i:=\{b\in \A_i:\; \E(N_b^{(i)})>0\}.$$
}

\begin{rem}\label{rem1}
The reader will also check that when $\mu_{\nu_i}$ is non-degenerate, for all $1\le i'\le i-1$, denoting $\mathbb E({\Pi_{i'}}\mu_{\nu_i})$ by $\nu_{i'}$, one also  has $\widetilde W_a=\nu_{i'}(b') \widetilde V^{(i')}_{b',a}$ for all $b'\in \mathcal A_{i'}$ and $a\in \mathcal A_1$ such that $[a]\subset  \Pi_{i'}^{-1}([b])$. Consequently, $\mu_{\nu_{i'}}=\mu_{\nu_i}$ and
$$
\dim_e(\mu_{\nu_i})=h_{\nu_i}(T_i)+\sum_{b\in \mathcal A_i}\nu_i([b])\log \mathbb{E}N_b^{(i)}=h_{\nu_{i'}}(T_{i'})+\sum_{b'\in \mathcal A_{i'}}\nu_{i'}([b'])\log \mathbb{E}(N_{b'}^{(i')})
$$
(with the convention $0\times (-\infty)=0$). Also, if $\nu_i$ is fully supported on $\widetilde X_i$, then $K_{\mu_{\nu_i}}=K$ almost surely since by construction a component $\widetilde W_a$ of the random vector $\widetilde W$ associated with $\mu_{\nu_i}$ as in Section~\ref{skewed}  vanishes exactly when $a\not \in A(\omega)$.
\end{rem}

\begin{de}\label{Bi}
We denote by $\mathcal B_i$ the set of Bernoulli product measures on $X_i$ which are supported on $\widetilde X_i$. Also, if $\rho_i$ is a Bernoulli product measure defined on $\widetilde X_i$, we also denote by $\rho_i$ the Bernoulli product measure on $X_i$ defined by assigning the mass $\rho_i(B\cap\widetilde X_i)$ to any Borel subset $B$ of $X_i$.
\end{de}

\begin{rem}\label{rem2}
Another important fact, which follows from the previous remarks, is that given $1\le i\le k$ and $\rho_i\in\mathcal B_i$ such that $h_{\rho_i}(T_i)+\sum_{b\in \widetilde {\mathcal A}_i}\rho_i([b])\log \mathbb{E}N_b^{(i)}>0$, then $\mu_{\rho_i}$ is the unique Mandelbrot measure $\mu$ jointly defined with $K$ and such that if $\rho=\mathbb{E}(\mu)$, then
$h_\rho(T_1)+\sum_{b\in \widetilde {\mathcal A}_1}\rho([b])\log \mathbb{E}N_b^{(1)}=h_{\rho_i}(T_i)+\sum_{b\in \mathcal A_i}{\rho_i}([b])\log \mathbb{E}N_b^{(i)}$.

On the other hand,  it  also holds that if $\rho_i\in \mathcal B_i$, then
$$
\sup\left\{ h_\rho(T_1)+\sum_{b\in \widetilde {\mathcal A}_1}\rho([b])\log \mathbb{E}N_b^{(1)} : \rho\in \mathcal B_1,\ \Pi_i\rho=\rho_i\right\}= h_{\rho_i}(T_i)+\sum_{b\in \mathcal A_i}{\rho_i}([b])\log \mathbb{E}N_b^{(i)},
$$
and  the supremum is uniquely attained. See e.g. \cite{LeWa77} for a proof (or use \cite[Lemma 1.1]{Bowen1975} for a direct proof).
\end{rem}

\subsection{An optimisation problem}\label{OPT}
 The following result invokes notions introduced to state Theorem~\ref{dim K} and in the previous Section~\ref{skewed}.  Recall Definition~\ref{Bi}.

\begin{thm}\label{optim}
Let
\begin{align*}
M^{\vec{\boldsymbol\gamma}}=\max \{\dim^{\vec{\boldsymbol\gamma}}_e(\mu): \, \mu\text{ is a positive Mandelbrot measure supported on $K$}\}.
\end{align*}

One has
$$
M^{\vec{\boldsymbol\gamma}}=\begin{cases}
P_{i_0}(\theta_{i_0})&\text{ if } i_0\le k\\
P_k(1)&\text{ if }i_0= k+1
\end{cases},
$$
and the maximum is uniquely attained at $\mu_{\nu_{i_0}}$, where $\nu_{i_0}$ is the equilibrium state of $P_{i_0}$ at~$\theta_0$ and  $\mu_{\nu_{i_0}}$ is the Mandelbrot measure associated to $\nu_{i_0}$ as in Section~\ref{skewed}.
\end{thm}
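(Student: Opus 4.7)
The plan is to prove matching upper and lower bounds, exhibiting an explicit Mandelbrot measure $\mu^\star$ that realises $P_{i_0}(\theta_{i_0})$ (or $P_k(1)$ when $i_0=k+1$). For the upper bound I would fix any Mandelbrot $\mu$ on $K$ and any pair $(i,\theta)\in I\times[\widetilde\theta_i,1]$, and relax each $\min(\dim_e(\mu),h_{\nu_j}(T_j))$ in the Ledrappier--Young formula \eqref{e-LY} by a convex combination with weights $\lambda_j=1$ for $j<i$, $\lambda_j=0$ for $j>i$, and $\lambda_i=(\theta\alpha_i-\alpha_{i-1})/\gamma_i\in[0,1]$ for $j=i$, where $\alpha_j=\gamma_1+\cdots+\gamma_j$. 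This gives
\[
\dim^{\vec{\boldsymbol\gamma}}_e(\mu)\ \le\ \theta\alpha_i\dim_e(\mu)+(1-\theta)\alpha_i h_{\nu_i}(T_i)+\sum_{j>i}\gamma_j h_{\nu_j}(T_j),
\]
and applying Remark~\ref{rem0} in the form $\dim_e(\mu)\le h_{\nu_i}(T_i)+\nu_i(\phi_i)/\alpha_i$ collapses the right side to $\theta\nu_i(\phi_i)+h^{\vec{\boldsymbol\gamma}^i}_{\nu_i}(T_i)\le P_i(\theta)$. Taking the infimum yields $M^{\vec{\boldsymbol\gamma}}\le \inf_{(i,\theta)}P_i(\theta)$.

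\textbf{Construction of the maximiser.} Next I would take $\nu_{i_0}$ to be the unique equilibrium state of $P_{i_0}$ at $\theta_{i_0}$ (assuming $i_0\le k$) and set $\mu^\star=\mu_{\nu_{i_0}}$. By Remark~\ref{rem1}, $\nu_j:=\mathbb E(\Pi_j\mu^\star)$ is obtained from $\nu_{i_0}$ by iterated uniform lifting for $j\le i_0$, and $\dim_e(\mu^\star)=h_{\nu_j}(T_j)+E_j(\nu_j)$ where $E_j(\nu_j):=\sum_{b\in\widetilde{\mathcal A}_j}\nu_j([b])\log\mathbb E(N_b^{(j)})$; from \eqref{diff2}, $E_{i_0}(\nu_{i_0})=P'_{i_0}(\theta_{i_0})/\alpha_{i_0}\ge 0$. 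To compute $\dim^{\vec{\boldsymbol\gamma}}_e(\mu^\star)$ I would resolve each $\min(\dim_e(\mu^\star),h_{\nu_j}(T_j))$ in \eqref{e-LY}: for $j\ge i_0$, entropy monotonicity $h_{\nu_j}(T_j)\le h_{\nu_{i_0}}(T_{i_0})\le \dim_e(\mu^\star)$ forces $\min=h_{\nu_j}(T_j)$; for $j<i_0$ the claim is $E_j(\nu_j)\le 0$, giving $\min=\dim_e(\mu^\star)$. Direct substitution of these simplifications into \eqref{e-LY} collapses the expression to $P_{i_0}(\theta_{i_0})$.

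\textbf{The hard part.} The hardest step will be verifying $E_j(\nu_j)\le 0$ for $j<i_0$. When $\theta_{i_0}>\widetilde\theta_{i_0}$, minimality of $\theta_{i_0}$ forces $P'_{i_0}(\theta_{i_0})=0$, hence $\dim_e(\mu^\star)=h_{\nu_{i_0}}(T_{i_0})\le h_{\nu_j}(T_j)$ by entropy monotonicity. In the boundary case $\theta_{i_0}=\widetilde\theta_{i_0}$, I would use a \emph{uniform lift identity}: for $\rho\in\mathcal B_{j+1}$ and its canonical Bernoulli lift $\tilde\rho\in\mathcal B_j$ having conditional probabilities $\mathbb E(N_b^{(j)})/\mathbb E(N_{\pi_j(b)}^{(j+1)})$ above each $\tilde b\in\widetilde{\mathcal A}_{j+1}$, a direct computation gives
\[
\tilde\rho(\phi_j)+h^{\vec{\boldsymbol\gamma}^j}_{\tilde\rho}(T_j) = \rho(\widetilde\theta_{j+1}\phi_{j+1})+h^{\vec{\boldsymbol\gamma}^{j+1}}_\rho(T_{j+1}),
\]
and, crucially, for fixed $\rho$ the uniform lift maximises the left side over all lifts of $\rho$ (this is a KKT argument for the conditional probabilities). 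Consequently $P_j(1)=P_{j+1}(\widetilde\theta_{j+1})$ and the equilibrium state of $P_j$ at $\theta=1$ is exactly the uniform lift of that of $P_{j+1}$ at $\widetilde\theta_{j+1}$. Applied at $j=i_0-1$, this identifies $\nu_{i_0-1}$ (itself the uniform lift of $\nu_{i_0}$) as the equilibrium of $P_{i_0-1}$ at $\theta=1$, and hence $E_{i_0-1}(\nu_{i_0-1})=P'_{i_0-1}(1)/\alpha_{i_0-1}<0$ since $i_0-1\notin\widetilde I$. Finally, the identity $E_j(\nu_j)=E_{i_0}(\nu_{i_0})-[h_{\nu_j}(T_j)-h_{\nu_{i_0}}(T_{i_0})]$ combined with the monotonicity $h_{\nu_j}(T_j)\ge h_{\nu_{i_0-1}}(T_{i_0-1})$ for $j\le i_0-1$ would yield $E_j(\nu_j)\le E_{i_0-1}(\nu_{i_0-1})<0$ for all $j<i_0$.

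\textbf{Uniqueness and the degenerate case.} For uniqueness, any maximiser $\mu$ must saturate the inequality of Remark~\ref{rem0} at $i=i_0$; its equality clause forces $\mu=\mu_\rho$ with $\rho=\mathbb E(\Pi_{i_0}\mu)$, after which $\rho$ must realise $P_{i_0}(\theta_{i_0})$ and is the unique equilibrium state by H\"older continuity of $\phi_{i_0}$. The case $i_0=k+1$ is analogous with $\mu^\star=\mu_{\nu_k}$ where $\nu_k$ is the equilibrium of $P_k$ at $\theta=1$: here $P'_k(1)<0$ gives $E_k(\nu_k)<0$, which by the same identity propagates to $E_j(\nu_j)<0$ for all $j$, so every $\min$ in \eqref{e-LY} simplifies to $\dim_e(\mu^\star)$ and $\dim^{\vec{\boldsymbol\gamma}}_e(\mu^\star)=\alpha_k\dim_e(\mu^\star)=\nu_k(\phi_k)+h^{\vec{\boldsymbol\gamma}^k}_{\nu_k}(T_k)=P_k(1)$.
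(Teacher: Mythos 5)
Your proposal is correct in substance but follows a genuinely different route from the paper. The paper first reduces the optimisation to Bernoulli measures (Lemma~\ref{lem-2}), writes $M^{\vec{\boldsymbol\gamma}}=\sup_{\nu\in\mathcal B_1}\min_{1\le i\le k}g_i(\nu)$, and then invokes Ky Fan's min--max theorem (Theorem~\ref{thm-1.1} via Corollaries~\ref{cor-1}--\ref{cor-2}) to rewrite this as $\min_{\mathbf q\in\Delta_k}P(\mathbf q)$, after which a case analysis on the minimizing $\mathbf q^*$ leads to \eqref{Mgamma} and finally to $P_{i_0}(\theta_{i_0})$, with uniqueness extracted from Corollary~\ref{cor-2}(ii) together with Remark~\ref{rem0}. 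You bypass the min--max machinery entirely: your convex relaxation of the minima in \eqref{e-LY} combined with the inequality $\dim_e(\mu)\le h_{\nu_i}(T_i)+\nu_i(\widetilde\phi_i)$ of Remark~\ref{rem0} gives $M^{\vec{\boldsymbol\gamma}}\le\inf_{i,\theta}P_i(\theta)$ directly, and your lower bound computes $\dim^{\vec{\boldsymbol\gamma}}_e(\mu_{\nu_{i_0}})$ explicitly, the only delicate point being the sign $E_j(\nu_j)\le 0$ for $j<i_0$; your uniform-lift identity $P_j(1)=P_{j+1}(\widetilde\theta_{j+1})$ with matching equilibrium states is exactly the computation the paper performs at the very end of its proof (the verification that $\pi_{i_0-1}\nu_{i_0-1}=\nu_{i_0}$), and the optimality of the uniform lift is Remark~\ref{rem2}, so no new machinery is needed. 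What each buys: your route is shorter and more elementary, exhibits the maximizer immediately, and yields the third expression of Theorem~\ref{dim K} ($M^{\vec{\boldsymbol\gamma}}=\inf_{i,\theta}P_i(\theta)$) as a by-product; the paper's min--max formulation identifies the optimal phase $(i_0,\theta_{i_0})$ and the uniqueness statement systematically, and is reused almost verbatim for the projections in Theorem~\ref{dim PiK}. If you write your version up, make explicit three points you currently leave implicit: (i) non-degeneracy of $\mu_{\nu_{i_0}}$ (it holds because the equilibrium state is fully supported on $\widetilde X_{i_0}$ with $\#\widetilde{\mathcal A}_{i_0}\ge 2$ and $\nu_{i_0}(\widetilde\phi_{i_0})\ge 0$, so $T_{\widetilde W}'(1)>0$); (ii) the fact that the equilibrium states of the one-letter potentials are Bernoulli, which you use both when applying \eqref{diff2} and when identifying the equilibrium state of $P_{i_0-1}$ at $1$ with the uniform lift; and (iii) in the case $\theta_{i_0}>\widetilde\theta_{i_0}$, the justification that $P_{i_0}'(\theta_{i_0})=0$ (continuity of the derivative of the convex, differentiable map $\theta\mapsto P_{i_0}(\theta)$).
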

Before proving Theorem~\ref{optim}, we first recall a version of von Neumann's min-max theorem and establish two useful corollaries of it.

\subsubsection{\bf A generalised version of von Neumann's min-max theorem, and some applications}
In this part, we  state a generalised version of von Neumann's min-max theorem, which was proved by  Ky Fan \cite{Fan1953}.

We first introduce some definitions.
Let $f$ be a real function defined on the product  set $X\times Y$ of two arbitrary sets $X$, $Y$ (not necessarily topologized).
$f$ is said to be convex on~$X$, if for any two elements $x_1,\; x_2\in X$ and $t\in [0,1]$, there exists an element $x_0\in X$ such that $f(x_0,y)\leq tf(x_1, y)+(1-t) f(x_2,y)$ for all $y\in Y$. Similarly $f$ is said to be concave  on $Y$, if for any two elements $y_1,\; y_2\in X$ and $t\in [0,1]$, there exists an element $y_0\in Y$ such that $f(x,y_0)\geq tf(x, y_1)+(1-t) f(x,y_2)$ for all $x\in X$.

\begin{thm}\cite[Theorem 2]{Fan1953}.
\label{thm-1.1}
Let $X$ be a compact Hausdorff space and $Y$ an arbitrary set (not topologized). Let $f$ be a real-valued function on $X\times Y$ such that for every $y\in Y$, $f(x,y)$ is lower semi-continuous on $X$. If $f$ is convex on $X$ and concave on Y, then
\begin{equation}
\min_{x\in X} \sup_{y\in Y} f(x,y)=\sup_{y\in Y} \min_{x\in X} f(x,y).
\end{equation}
\end{thm}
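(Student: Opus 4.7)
The trivial inequality $\sup_{y}\min_{x} f \leq \min_{x}\sup_{y} f$ holds without any hypothesis on $f$, so all the work lies in proving the reverse. I would argue by contradiction: suppose there exists $\alpha\in\mathbb R$ with
\begin{equation*}
\sup_{y\in Y}\min_{x\in X} f(x,y) \;<\; \alpha \;<\; \min_{x\in X}\sup_{y\in Y} f(x,y),
\end{equation*}
where every minimum over $x$ is attained because $f(\cdot,y)$ is lower semi-continuous on the compact space $X$. For each $y\in Y$ the set $G_y=\{x\in X:f(x,y)>\alpha\}$ is open by lower semi-continuity, and the right-hand inequality means the family $(G_y)_{y\in Y}$ covers $X$. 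By compactness there is a finite subcover $G_{y_1},\dots,G_{y_n}$, so $\max_{1\le i\le n} f(\cdot,y_i)>\alpha$ pointwise on $X$, and consequently $\min_{x}\max_{i} f(x,y_i)\ge\alpha$.

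The whole theorem therefore reduces to the following finite-$Y$ lemma: for every $y_1,\dots,y_n\in Y$,
\begin{equation*}
\min_{x\in X}\max_{1\le i\le n} f(x,y_i) \;\le\; \sup_{y\in Y}\min_{x\in X} f(x,y).
\end{equation*}
Granting this lemma, the previous display combined with $\alpha>\sup_{y}\min_{x} f$ yields the desired contradiction, and the proof is complete.

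I would prove the lemma by induction on $n$. The base case $n=1$ is immediate. For the induction step from $n-1$ to $n$, the concavity of $f$ on $Y$ lets me select, for each $t\in[0,1]$, an element $y_t\in Y$ such that $f(\cdot,y_t)\ge t\, f(\cdot,y_{n-1})+(1-t)\,f(\cdot,y_n)$ pointwise on $X$, so in particular $\min_{x} f(x,y_t)\le \sup_{y}\min_{x} f$ uniformly in $t$. Applying the inductive hypothesis to the reduced family $\{y_1,\dots,y_{n-2},y_t\}$ bounds $\min_{x}\max\bigl(f(x,y_1),\dots,f(x,y_{n-2}),f(x,y_t)\bigr)$ by $\sup_{y}\min_{x} f$, again uniformly in $t$. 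The generalized convexity of $f$ on $X$ is then invoked to interpolate between the minimizers obtained as $t$ varies and to produce a single $x^{\ast}\in X$ at which $f(x^{\ast},y_1),\dots,f(x^{\ast},y_n)$ are simultaneously bounded by $\sup_{y}\min_{x} f$. This interpolation/continuity step is the main obstacle: because $X$ and $Y$ are only equipped with Fan's nonlinear ``convex/concave'' structure rather than affine combinations in a vector space, the classical separating-hyperplane proof of von Neumann's theorem is unavailable, and one must appeal instead to a purely topological-combinatorial argument (a KKM-type covering on the simplex $\Delta_n$, or equivalently an application of Brouwer's fixed-point theorem) to extract the required $x^{\ast}$. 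Once this step is executed, the induction closes and the theorem follows.
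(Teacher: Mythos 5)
The paper itself does not prove this statement---it is quoted directly from Ky Fan's 1953 article---so your proposal has to stand on its own. Its first half does: the inequality $\sup_y\min_x f\le\min_x\sup_y f$ is trivial, each $\min_x f(x,y)$ is attained by lower semicontinuity on the compact $X$, and the covering argument with the open sets $G_y=\{x: f(x,y)>\alpha\}$ correctly reduces the theorem to the finite lemma $\min_{x\in X}\max_{1\le i\le n}f(x,y_i)\le\sup_{y\in Y}\min_{x\in X}f(x,y)$.

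The genuine gap is that this finite lemma---which is the entire content of Fan's theorem---is never actually proved. Your induction does not close: for each $t$ the inductive hypothesis applied to $\{y_1,\dots,y_{n-2},y_t\}$ produces a minimizer depending on $t$, and you have no mechanism for merging these into a single $x^\ast$ that controls all $n$ values simultaneously; you acknowledge this and defer to ``KKM or Brouwer,'' but that deferred step is precisely the theorem. Moreover, the claim that a separating-hyperplane argument is unavailable in Fan's convex/concave-like setting is incorrect, and it is what pushes you toward unnecessary fixed-point machinery. The standard elementary route (essentially Fan's) is: if $\min_x\max_i f(x,y_i)>\alpha$, consider $S=\{s\in\mathbb{R}^n:\exists\,x\in X,\ f(x,y_i)\le s_i\ \text{for all }i\}$; Fan's ``convex on $X$'' hypothesis makes $S$ convex, and $S$ is disjoint from the open box $\{s: s_i<\alpha\ \forall i\}$, so a finite-dimensional separation yields $\lambda$ in the simplex $\Delta_n$ with $\sum_i\lambda_i f(x,y_i)\ge\alpha$ for all $x\in X$; then ``concave on $Y$,'' iterated, gives a single $y_\lambda\in Y$ with $f(x,y_\lambda)\ge\sum_i\lambda_i f(x,y_i)\ge\alpha$ for all $x$, contradicting $\alpha>\sup_y\min_x f$. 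No induction, KKM covering, or Brouwer fixed point is needed; without some such completed argument for the finite case, your proof is incomplete at its central point.
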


Now we give a specific application of Theorem \ref{thm-1.1}.
Let $Y$ be a compact convex subset of a topological vector space.  Let $\ell\in \N^*$ and $g_1,\ldots, g_\ell$ be real concave functions defined on $Y$.
Set $$\Delta_\ell=\left\{(q_1,\ldots, q_\ell)\in \R^\ell:\; q_i\geq 0 \mbox{ and }\sum_{i=1}^\ell q_i=1\right\}.$$
Define $P:\; \Delta_\ell\to \R$ by
$$
P(q_1,\ldots, q_\ell)=\sup\left\{\sum_{i=1}^\ell q_ig_i(y):\; y\in Y\right\}.
$$
Clearly $P$ is a convex function on $\Delta_\ell$.    As a consequence of Theorem \ref{thm-1.1}  we have the following.
\begin{cor}
\label{cor-1}
Under the above setting, we have
\begin{equation}
\label{e-2}
\min_{(q_1,\ldots, q_\ell)\in \Delta_\ell} P(q_1,\ldots, q_\ell)=\sup_{y\in Y} \min\{g_1(y),\ldots, g_\ell(y)\}.
\end{equation}
\end{cor}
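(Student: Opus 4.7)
The plan is to apply Theorem~\ref{thm-1.1} directly with an auxiliary bilinear-in-$q$ function. Define $f:\Delta_\ell\times Y\to\mathbb R$ by
$$
f((q_1,\ldots,q_\ell),y)=\sum_{i=1}^\ell q_i\,g_i(y).
$$
Then by construction $P(q_1,\ldots,q_\ell)=\sup_{y\in Y}f((q_1,\ldots,q_\ell),y)$, and also, since $\sum_{i=1}^\ell q_ig_i(y)$ is linear in $q$ on the simplex $\Delta_\ell$, its minimum over $\Delta_\ell$ is attained at a vertex, giving the elementary identity $\min_{q\in\Delta_\ell} f(q,y)=\min_{1\le i\le \ell} g_i(y)$ for each $y\in Y$. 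Thus the corollary reduces to the min–max equality $\min_q\sup_y f(q,y)=\sup_y\min_q f(q,y)$.

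I would then verify the four hypotheses of Theorem~\ref{thm-1.1}. First, $X=\Delta_\ell$ is a compact Hausdorff space (a compact convex subset of $\mathbb R^\ell$). Second, for every $y\in Y$ the map $q\mapsto f(q,y)$ is linear, hence continuous, hence lower semi-continuous on $\Delta_\ell$. Third, $f$ is convex on $\Delta_\ell$ in the sense of Ky Fan: given $q^{(1)},q^{(2)}\in\Delta_\ell$ and $t\in[0,1]$, take $q^{(0)}=tq^{(1)}+(1-t)q^{(2)}\in\Delta_\ell$; linearity in $q$ gives $f(q^{(0)},y)=tf(q^{(1)},y)+(1-t)f(q^{(2)},y)$ for every $y$. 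Fourth, $f$ is concave on $Y$: given $y_1,y_2\in Y$ and $t\in[0,1]$, take $y_0=ty_1+(1-t)y_2\in Y$ (using that $Y$ is convex); then concavity of each $g_i$ together with $q_i\ge 0$ yields
$$
f(q,y_0)=\sum_i q_i g_i(ty_1+(1-t)y_2)\ge t\sum_i q_ig_i(y_1)+(1-t)\sum_i q_ig_i(y_2)=tf(q,y_1)+(1-t)f(q,y_2)
$$
for every $q\in\Delta_\ell$.

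With the hypotheses in hand, Theorem~\ref{thm-1.1} gives $\min_{q\in\Delta_\ell}\sup_{y\in Y}f(q,y)=\sup_{y\in Y}\min_{q\in\Delta_\ell}f(q,y)$. Substituting the two identifications from the first paragraph yields
$$
\min_{(q_1,\ldots,q_\ell)\in\Delta_\ell}P(q_1,\ldots,q_\ell)=\sup_{y\in Y}\min\{g_1(y),\ldots,g_\ell(y)\},
$$
which is \eqref{e-2}. There is no real obstacle here; the only subtlety worth flagging is ensuring that the $\min$ on the left is genuinely attained, which is automatic because $P$, being a supremum of continuous (indeed affine) functions on the compact set $\Delta_\ell$, is lower semi-continuous there.
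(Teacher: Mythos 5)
Your proposal is correct and follows essentially the same route as the paper: the paper also defines $f(\mathbf q,y)=\sum_{i=1}^\ell q_i g_i(y)$, notes that $\min_{\mathbf q\in\Delta_\ell}f(\mathbf q,y)=\min_i g_i(y)$, and applies Theorem~\ref{thm-1.1} with $X=\Delta_\ell$. Your write-up merely spells out the verification of Ky Fan's hypotheses (linearity in $\mathbf q$, concavity in $y$ via convexity of $Y$ and $q_i\ge 0$) in more detail than the paper does.
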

\begin{proof}
Define $f:\; \Delta_\ell\times Y\to \R$ by
$$f({\bf q}, y)=\sum_{i=1}^\ell q_ig_i(y) \quad \mbox{ for } {\bf q}=(q_1,\ldots, q_\ell)\in \Delta_\ell,\; y\in Y.
$$
Clearly, $f$ is convex and continuous in ${\bf q}=(q_1,\ldots, q_\ell)$ and concave in $y$.
Notice that for every $y\in Y$, $$\min\{g_1(y),\ldots, g_\ell(y)\}=\min_{(q_1,\ldots, q_\ell)\in \Delta_\ell}\sum_{i=1}^\ell q_ig_i(y)=\min_{(q_1,\ldots, q_\ell)\in \Delta_\ell} f((q_1,\ldots, q_\ell), y).$$
Applying Theorem 1.1 to $f$ (in which we take $X=\Delta_\ell$) yields \eqref{e-2}.
\end{proof}

Next we rewrite \eqref{e-2} as
\begin{equation}
\label{e-3}
\min_{{\bf q}\in \Delta_\ell} \sup_{y\in Y}\; {\bf q}\cdot {\bf g}(y)=
\sup_{y\in Y} \min_{{\bf q}\in \Delta_\ell}\;  {\bf q}\cdot {\bf g}(y)=\sup_{y\in Y}\min\{g_1(y),\ldots, g_\ell(y)\},
\end{equation}
where ${\bf q}=(q_1,\ldots, q_\ell)$ and ${\bf g}(y):=(g_1(y), \ldots, g_\ell(y))$.
\begin{cor}
\label{cor-2}
Assume in addition that $g_1,\ldots, g_\ell$ are strictly concave and continuous (or upper semi-continuous) on $Y$. Then  the following properties hold:
\begin{itemize}
\item[(i)] For any ${\bf q}\in \Delta_\ell$, there is a unique $y=y_{\bf q}\in Y$ which attains the supremum in the variational principle
\begin{equation}
\label{e-3}P({\bf q})=\sup_{y\in Y}\; {\bf q}\cdot {\bf g}(y).
\end{equation}
\item[(ii)] Write $L=\min_{{\bf q}\in \Delta_\ell}P({\bf q})$. Then there is a unique $y\in Y$ such that
\begin{equation}
\label{e-4} \min\{g_1(y),\ldots, g_\ell(y)\}=L.
\end{equation}
Moreover if $P$ takes this minimum at a point ${\bf q}^*=(q_{1}^*,\ldots, q_{\ell}^*)\in\Delta_\ell$, then $y_{\bf q^*}=y$ and furthermore, $q_j^*=0$ if $g_j(y)>L$.
\end{itemize}
\end{cor}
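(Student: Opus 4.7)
The plan is to derive both parts from the min-max identity \eqref{e-3} of Corollary~\ref{cor-1} combined with standard uniqueness arguments for strictly concave functions on compact convex sets.

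For part (i), I would observe that the map $y\mapsto {\bf q}\cdot {\bf g}(y) = \sum_{j=1}^\ell q_jg_j(y)$ is upper semi-continuous on $Y$ as a non-negative linear combination of u.s.c.\ functions, so it attains its supremum on the compact set $Y$. For uniqueness, since ${\bf q}\in\Delta_\ell$ implies at least one $q_j>0$, the corresponding term $q_jg_j$ is strictly concave while the zero terms contribute $0$; hence ${\bf q}\cdot{\bf g}$ is strictly concave on the convex set $Y$ and its maximizer $y_{{\bf q}}$ is unique.

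For part (ii), set $h(y):=\min\{g_1(y),\ldots,g_\ell(y)\}$. As a minimum of finitely many u.s.c.\ functions, $h$ is u.s.c., hence attains its supremum on $Y$; by Corollary~\ref{cor-1} that supremum is exactly $L$. For uniqueness, suppose $y_1\neq y_2$ both satisfy $h(y_i)=L$; for any $t\in(0,1)$ and $y_t=(1-t)y_1+ty_2$, strict concavity of every $g_j$ yields $g_j(y_t)>(1-t)g_j(y_1)+tg_j(y_2)\geq L$, whence $h(y_t)>L$, contradicting the maximality of $L$. Denote by $\tilde y$ the unique maximizer. Now assume $P({\bf q}^*)=L$ and let $y^*=y_{{\bf q}^*}$. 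The chain
\[
L=\sum_{j}q_j^*g_j(y^*)\;\geq\;\sum_{j}q_j^*g_j(\tilde y)\;\geq\;L\sum_{j}q_j^*\;=\;L
\]
(using that $y^*$ maximizes ${\bf q}^*\cdot{\bf g}$ and that $g_j(\tilde y)\geq h(\tilde y)=L$) must collapse to equalities. Equality on the left combined with the uniqueness in (i) forces $y^*=\tilde y$; equality on the right, together with $q_j^*\geq 0$ and $g_j(\tilde y)-L\geq 0$, gives $q_j^*(g_j(\tilde y)-L)=0$ for every $j$, so $q_j^*=0$ as soon as $g_j(\tilde y)>L$.

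The main delicate point is to correctly exploit the strict concavity when some $q_j$ in ${\bf q}^*$ vanish: the constraint $\sum_j q_j=1$ guarantees at least one strictly concave term in ${\bf q}\cdot{\bf g}$, which is precisely what powers both uniqueness statements (for $y_{\bf q}$ in (i) and for $\tilde y$ in (ii)). The rest is a careful tracking of the equality cases inside the two-step chain of inequalities above, with the upper semi-continuity of the $g_j$ and the compactness of $Y$ ensuring all relevant extrema are attained.
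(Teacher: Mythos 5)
Your proof is correct and follows essentially the same route as the paper's: existence from upper semi-continuity on the compact convex set $Y$, uniqueness from strict concavity, and the identification $L=\sup_{y\in Y}\min\{g_1(y),\ldots,g_\ell(y)\}$ from Corollary~\ref{cor-1}. The only cosmetic difference is that where the paper rules out $y\neq y_{{\bf q}^*}$ by a midpoint strict-concavity contradiction, you close the two-step chain of inequalities and invoke the uniqueness already established in part (i); both arguments are equally valid.
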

\begin{proof}
Part (i) follows from the assumptions that $g_i$ are upper semi-continuous and strictly concave. Specifically, the first assumption ensures that the supermum in \eqref{e-3}
is attainable. The second one ensures that it is uniquely attained.

To see (ii), first notice that the uniqueness of  $y$ follows from the fact that $L$ equals $\sup_{y\in Y}\min\{g_1(y),\ldots, g_\ell(y)\}$, and from the strict concavity of $\min\{g_1,\ldots,g_\ell\}$, which follows from the strict concavity of  $g_i$.  To prove the remaining statements of part (ii),
suppose that $L=P({\bf q^*})$ for some ${\bf q}^*=(q_{1}^*,\ldots, q_{\ell}^*)\in\Delta_\ell$.  By (i), there is a unique $y_{\bf q^*}\in Y$ so that $P(\bf q^*)={\bf q^*}\cdot {\bf g}(y_{\bf q^*})$.  Recall that $y\in Y$ is the unique element so that \eqref{e-4} holds. Clearly ${\bf q^*}\cdot {\bf g}(y)\geq L$. We prove that $y_{{\bf q^*}}=y$ by contradiction.  If $y\neq y_{\bf q^*}$, then $g_i((y+y_{\bf q^*})/2)>(g_i(y)+g_i(y_{\bf q^*}))/2$ for all $i$ so
$$
P({\bf q^*})\geq {\bf q^*}\cdot {\bf g}((y+y_{\bf q^*})/2)> ({\bf q^*}\cdot {\bf g}(y) +{\bf q^*}\cdot {\bf g}(y_{\bf q^*}))/2\geq L,
$$
leading to a contradiction. Hence $ y=y_{\bf q^*}$. Finally, since $${\bf q^*}\cdot {\bf g}(y)=L=\min  \{g_1(y),\ldots, g_\ell(y)\},$$
it follows that  $q_j^*=0$ if $g_j(y)>L$.
\end{proof}

\subsubsection{\bf Proof of Theorem~\ref{optim}}

For $1\le i\le k$, we identify ${\mathcal B}_i$, the set of Bernoulli product measures on $X_i$ which are supported on $\widetilde X_i$,  with ${\mathcal P}(\widetilde{A}_i)$, the compact convex set of probability measures on $\widetilde{A}_i$.  An element $\nu_i\in  {\mathcal B}_i$ is said to be {\em fully supported on $\widetilde{A}_i$} if $\nu_i(b)\neq 0$ for every $b\in \widetilde{A}_i$.  The projection $\pi_i$ originally defined from $\mathcal A_i$ to $\mathcal A_{i+1}$ maps $\widetilde{A}_i$ to $\widetilde{A}_{i+1}$ ($i=1,\ldots, k-1$). For $\nu_i\in {\mathcal B_i}$ let $H(\nu_i)$ denote the Shannon entropy of $\nu_i$, which coincides with $h_{\nu_i}(T_i)$ but the notation $H(\nu_i)$ will be lighter.

Let  $\widetilde{\phi}:\; \widetilde{A}_1\to \R$ be  defined by
$$
\widetilde{\phi}(a)=\log {\Bbb E}(N_a^{(1)}),\quad q\in \widetilde{A}_1.
$$
We identify $\widetilde \phi$ with the potential defined on $\widetilde X_1$ by $x\mapsto \widetilde \phi(x_1)$.

Write $\mathcal B_1^*:=\{\nu\in {\mathcal B}_1:\; H(\nu)+\nu(\widetilde{\phi})>0\}$. The proof of Theorem \ref{optim} is based on Remarks~\ref{rem0}  to~\ref{rem2}, Corollary~\ref{cor-2}
as well as the following two facts:

\begin{lem}
\label{lem-2}
\begin{eqnarray}
M^{\vec{\boldsymbol\gamma}}&=&\max \left\{ \gamma_1(H(\nu)+\nu(\widetilde{\phi}))+\sum_{i=2}^k \gamma_i \min\{ H(\nu)+\nu(\widetilde{\phi}), H(\Pi_i\nu)\}: \; \nu\in {\mathcal B}_1^*\right\} \nonumber\\
 &=&\max \left\{ \gamma_1(H(\nu)+\nu(\widetilde{\phi}))+\sum_{i=2}^k \gamma_i \min\{ H(\nu)+\nu(\widetilde{\phi}), H(\Pi_i\nu)\}: \; \nu\in {\mathcal B}_1\right\} \label{e-3.1}.
\end{eqnarray}
\end{lem}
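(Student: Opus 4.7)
The plan is to reduce the maximisation defining $M^{\vec{\boldsymbol\gamma}}$ to a variational problem on the compact simplex $\mathcal B_1\simeq \mathcal P(\widetilde{\mathcal A}_1)$, by pairing each positive Mandelbrot measure $\mu$ with its expectation $\nu:=\mathbb E(\mu)\in \mathcal B_1$, and conversely, associating to each $\nu\in \mathcal B_1^*$ the skewed Mandelbrot measure $\mu_\nu$ built as in Section~\ref{skewed}. Throughout, write
$$F(\nu):=\gamma_1(H(\nu)+\nu(\widetilde\phi))+\sum_{i=2}^k \gamma_i\min\bigl\{H(\nu)+\nu(\widetilde\phi),\,H(\Pi_i\nu)\bigr\}$$
for the functional appearing in \eqref{e-3.1}.

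For the upper bound $\dim^{\vec{\boldsymbol\gamma}}_e(\mu)\le F(\nu)$, I would start from a non-degenerate Mandelbrot measure $\mu$ generated by a random weight $W$. The assumption $\{W_a>0\}\subset\{c_a=1\}$ forces $\nu\in \mathcal B_1$. Applying Remark~\ref{rem0} with $i=1$ yields $\dim_e(\mu)=T'_W(1)\le T'_{\widetilde W}(1)=H(\nu)+\nu(\widetilde\phi)$, where the last equality follows from the entropy decomposition \eqref{decomph} together with \eqref{relatentr}. Since $\mathbb E(\Pi_i\mu)=\Pi_i\nu$ by construction, one has $h_{\nu_i}(T_i)=H(\Pi_i\nu)$, and by monotonicity of $s\mapsto \min(s,H(\Pi_i\nu))$ we obtain $\min(\dim_e(\mu),h_{\nu_i}(T_i))\le \min(H(\nu)+\nu(\widetilde\phi),H(\Pi_i\nu))$ for every $i$. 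Summing with the weights $\gamma_i$ gives $\dim^{\vec{\boldsymbol\gamma}}_e(\mu)\le F(\nu)$.

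Conversely, for $\nu\in \mathcal B_1^*$, Remark~\ref{rem2} supplies the measure $\mu_\nu$; it is non-degenerate since its defining martingale satisfies $T'_{\widetilde W}(1)=H(\nu)+\nu(\widetilde\phi)>0$, it is supported on $K$ because $\widetilde W_a>0$ requires $[a]\cap K\ne\emptyset$ hence $c_a=1$, and by Remark~\ref{rem1} it satisfies $\mathbb E(\Pi_i\mu_\nu)=\Pi_i\nu$ together with $\dim_e(\mu_\nu)=H(\nu)+\nu(\widetilde\phi)$, so that $\dim^{\vec{\boldsymbol\gamma}}_e(\mu_\nu)=F(\nu)$. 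Combining both directions yields $M^{\vec{\boldsymbol\gamma}}=\sup_{\nu\in \mathcal B_1^*}F(\nu)$. To upgrade the supremum to a maximum and to pass from $\mathcal B_1^*$ to $\mathcal B_1$, I would use that $F$ is continuous on the compact simplex $\mathcal B_1$; that the branching measure (with $W_a=\mathbf 1_A(a)/\mathbb E(\#A)$) has positive weighted entropy dimension, so applying the upper bound to it exhibits a $\nu\in \mathcal B_1^*$ with $F(\nu)>0$; and that $\nu\in \mathcal B_1\setminus \mathcal B_1^*$ forces $H(\nu)+\nu(\widetilde\phi)\le 0$ and hence each $\min$ in $F(\nu)$ to be non-positive, so $F(\nu)\le 0$. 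Consequently every maximiser of $F$ on $\mathcal B_1$ lies in $\mathcal B_1^*$, and both equalities in \eqref{e-3.1} follow.

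The main obstacle is the upper bound, whose backbone is the comparison $T'_W(1)\le T'_{\widetilde W}(1)$ between an arbitrary Mandelbrot weight $W$ and the corresponding skewed weight $\widetilde W$; once this is imported from Remark~\ref{rem0}, the remainder of the argument reduces to bookkeeping with Bernoulli product measures, continuity of the Shannon entropy on the finite-dimensional simplex, and a sign check used to trade the supremum over $\mathcal B_1^*$ for the maximum over the compact $\mathcal B_1$.
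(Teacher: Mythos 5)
Your argument is correct and is essentially the paper's own: the paper likewise reduces the maximisation over Mandelbrot measures to the skewed measures $\mu_\nu$ via Remarks~\ref{rem0}--\ref{rem2}, uses the branching measure to get $M^{\vec{\boldsymbol\gamma}}>0$, and uses the sign of $H(\nu)+\nu(\widetilde{\phi})$ to discard $\mathcal B_1\setminus\mathcal B_1^*$. The only difference is presentational: where the paper compresses the two-sided comparison into the statement $\dim_e(\mu)=H(\nu)+\nu(\widetilde{\phi})$, you correctly split it into the inequality $\dim_e(\mu)\le H(\nu)+\nu(\widetilde{\phi})$ from Remark~\ref{rem0} together with its realisation by $\mu_\nu$ for $\nu\in\mathcal B_1^*$, and you add the (routine) continuity-compactness remark ensuring the supremum is attained.
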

\begin{proof}
At first, note that for any non-degenerate non-atomic Mandelbrot measure $\mu$, $\dim_e(\mu)=H(\nu)+\nu(\widetilde{\phi})>0$, where $\nu={\Bbb E}(\mu)$. Then, notice that if $\nu\in {\mathcal B}_1\backslash {\mathcal B}_1^*$, we have $H(\nu)+\nu(\widetilde{\phi})\leq 0$ and so
$$\gamma_1(H(\nu)+\nu(\widetilde{\phi}))+\sum_{i=2}^k \gamma_i \min\{ H(\nu)+\nu(\widetilde{\phi}), H(\Pi_i\nu)\}\leq 0.$$
However we know that $M^{\vec{\boldsymbol\gamma}}>0$ (which follows from our assumption  $\mathbb{E} (\# A)>1$ and by considering the branching measure, i.e. the Mandelbrot measure on $K$ associated with the random vector  $W=(\mathbf{1}_{A}(a) /\mathbb E(\# A))_{a\in\mathcal A_1}$),  so the maximum must be taken over~${\mathcal B}_1^*$.
\end{proof}

Now we are ready to prove Theorem \ref{optim}.
\begin{proof}[Proof of Theorem \ref{optim}]
For $i=1,\ldots, k$, define $g_i:\; \mathcal B_1\to \R$ by
\begin{equation}\label{gi}
g_i(\nu)=(\gamma_1+\ldots+\gamma_i) (H(\nu)+\nu(\widetilde{\phi}))+\sum_{j=i+1}^k \gamma_j H(\Pi_j \nu).
\end{equation}
Then $g_i$ are strictly concave functions on $\mathcal B_1$  (identified with ${\mathcal P}(\widetilde{A}_1)$).   Clearly for each $\nu\in \mathcal B_1$,
$$\gamma_1(H(\nu)+\nu(\widetilde{\phi}))+\sum_{i=2}^k \gamma_i \min\{ H(\nu)+\nu(\widetilde{\phi}), H(\Pi_i\nu)\}=\min_{1\leq i\leq k} g_i(\nu).$$
Hence by \eqref{e-3.1}, we obtain
\begin{equation}
\label{e-3.2}
M^{\vec{\boldsymbol\gamma}}=\sup_{\nu\in \mathcal B_1} \min\{g_1(\nu),\ldots, g_k(\nu)\}.
\end{equation}
Define $P:\; \Delta_k\to \R$ by
\begin{equation}
\label{e-3.3}
P(q_1,\ldots, q_k)=\sup_{\nu\in {\mathcal B}_1} \sum_{i=1}^k q_i g_i(\nu).
\end{equation}
Applying Corollaries \ref{cor-1}-\ref{cor-2}, we see that
$$M^{\vec{\boldsymbol\gamma}}=\min_{(q_1,\ldots, q_k)\in \Delta_k} P(q_1,\ldots, q_k)$$
and that the supremum in \eqref{e-3.2} is uniquely attained at some element of $\mathcal B_1$, say  $\nu$; moreover,   $\nu$ is the unique equilibrium state
for $P({\bf q}^*)$, provided that $P$ takes its minimum at  ${\bf q}^*=(q_1^*,\ldots, q_k^*)$.

Fix such a ${\bf q}^*$.  Since $\nu$ is the equilibrium state  for $P({\bf q}^*)$, it must be fully supported on $\widetilde{A}_1$ (see \cite{BF2012}).
By the strong concavity of the entropy function $H$, it follows that
\begin{equation}
\label{e-3.4}
H(\Pi_i\nu)\ge H(\Pi_{i+1}\nu)\quad \mbox{ for }i=1,\ldots, k-1;
 \end{equation}
and the equality holds only if $\pi_i: \widetilde{\A}_i\to \widetilde{\A}_{i+1}$ is injective.
So either one of the following 3 cases occurs: (i) $H(\nu)+\nu(\widetilde{\phi})\geq H(\Pi_2 \nu)$; (ii) there exists a unique $i\in \{2,\ldots, k-1\}$ so that
$H(\Pi_{i+1}\nu)\leq H(\nu)+\nu(\widetilde{\phi})< H(\Pi_i\nu)$; (iii) $H(\nu)+\nu(\widetilde{\phi})< H(\Pi_k \nu)$.

If Case (i) occurs, then $g_1(\nu)\leq g_2(\nu)\le g_3(\nu)\le \cdots\le g_k(\nu)$, and by Corollary \ref{cor-2}(ii), we have $q_j^*=\ldots=q_{k}^*=0$ where $j$ is the smallest index $i$ such that $g_i(\nu)>g_2(\nu)$, if any; if there is no such an index, set  $j=k+1$.
So if $j=3$, then  $P({\bf q}^*)=P(q_1^*, q_2^*,0,\ldots,0)$. Below we assume that $j>3$. Since $g_2(\nu)=\ldots=g_{j-1}(\nu)$, it follows that
\begin{equation}
\label{e-F-1}
H(\Pi_2\nu)=H(\Pi_3\nu)=\ldots=H(\Pi_{j-1}\nu),
\end{equation}
and
\begin{equation}
P({\bf q}^*)=P(q_1^*, q_2^*,\ldots, q_{j-1}^*,0\ldots,0).
\end{equation}
By \eqref{e-F-1} we see that the mappings $\pi_2:\; \widetilde{\A}_2\to \widetilde{\A}_{3}$, \ldots, $\pi_{j-2}: \widetilde{\A}_{j-2}\to \widetilde{\A}_{j-1}$ are injective, which implies that
\begin{equation}
\label{e-F-2}
H(\Pi_2\mu)=H(\Pi_3\mu)=\cdots=H(\Pi_{j-1}\mu) \quad \mbox{ for all } \mu\in \mathcal B_1.
\end{equation}
Now $P(q_1^*,\cdots,q_{j-1}^*,0,\ldots,0)=\sup_{\mu\in \mathcal B_1} (q_1^*g_1(\mu)+\cdots+q_{j-1}^*g_{j-1}(\mu))$.
According to \eqref{e-F-2}, we have
\begin{align*}
g_1(\mu)&=\gamma_1(H(\mu)+\mu(\widetilde{\phi}))+(\gamma_2+\cdots+\gamma_{j-1})H(\Pi_2\mu)+\sum_{p=j}^k \gamma_pH(\Pi_p\mu),\\
g_2(\mu)&=(\gamma_1+\gamma_2)(H(\mu)+\mu(\widetilde{\phi}))+(\gamma_3+\cdots+\gamma_{j-1})H(\Pi_2\mu)+\sum_{p=j}^k \gamma_pH(\Pi_p\mu),\\
&\cdots\\
g_{j-1}(\mu)&=(\gamma_1+\cdots+\gamma_{j-1})(H(\mu)+\mu(\widetilde{\phi}))+\sum_{p=j}^k \gamma_pH(\Pi_p\mu).
\end{align*}
It follows that for $(q_1,\ldots, q_{j-1})\in \Delta_{j-1}$,
\begin{align*}
\sup_{\mu\in \mathcal B_1} \sum_{i=1}^{j-1} q_ig_i(\mu)&=\sup_{\mu\in \mathcal B_1} \sum_{t=1}^{j-1}q_t(\gamma_1+\cdots+\gamma_t)(H(\mu)+\mu(\widetilde\phi))+\sum_{s=1}^{j-2}q_s(\gamma_{s+1}+\cdots+\gamma_{j-1})H(\Pi_2\mu)\\
&\qquad\quad+\sum_{p=j}^k\gamma_pH(\Pi_p\mu)\\
&=\sup_{\eta\in \mathcal B_2} \sum_{t=1}^{j-1}q_t(\gamma_1+\cdots+\gamma_t)(H(\eta)+\eta(\widetilde\phi_2))+\sum_{s=1}^{j-2}q_s(\gamma_{s+1}+\cdots+\gamma_{j-1})H(\eta)\\
&\qquad\quad+\sum_{p=j}^k\gamma_pH(\Pi_{2,p}\eta)\qquad \mbox{(where $\Pi_{2,p}:=\pi_{p-1}\circ\cdots\circ  \pi_{2}$)}\\
&=\sup_{\eta\in \mathcal B_2} \sum_{t=1}^{j-1}q_t(\gamma_1+\cdots+\gamma_t)\eta(\widetilde\phi_2)+(\gamma_1+\cdots+\gamma_{j-1})H(\eta)+
\sum_{p=j}^k\gamma_pH(\Pi_{2,p}\eta),
\end{align*}
where in the second equality, we used the fact that $$\sup\{H(\mu)+\mu(\widetilde{\phi}):\; \mu\in \mathcal B_1,\; \Pi_2\mu=\eta\}=H(\eta)+\eta(\widetilde{\phi}_2)$$ for each $\eta\in \mathcal B_2$ (see Remark \ref{rem2}).
Hence $P(q_1,\ldots, q_{j-1},0,\ldots,0)$  depends only on $\sum_{t=1}^{j-1}q_t(\gamma_1+\cdots+\gamma_t)$. Clearly, there exists $2<j'\leq j$ such that
$$
\gamma_1+\cdots+\gamma_{j'-2}\leq \sum_{t=1}^{j-1}q_t(\gamma_1+\cdots+\gamma_t)\leq \gamma_1+\cdots+\gamma_{j'-1}
$$
so there exists  $(p_{j'-2}, p_{j'-1})\in \Delta_2$ such that $$\sum_{t=1}^{j-1}q_t(\gamma_1+\cdots+\gamma_t)=p_{j'-2}(\gamma_1+\cdots+\gamma_{j'-2})+p_{j'-1}(\gamma_1+\cdots+\gamma_{j'-1})$$
and so
$$
P(q_1,\ldots, q_{j-1},0,\ldots,0)=P(\underbrace{0,\ldots,0,}_{(j'-3)\text{ times}} p_{j'-2}, p_{j'-1},0,\ldots,0).
$$
Applying this fact to the case when $(q_1,\ldots, q_{j-1})=(q_1^*,\ldots, q_{j-1}^*)$, we see that there exist $2<j'\leq j$ and  $(p_{j'-2}, p_{j'-1})\in \Delta_2$ so that
$$
P({\bf q}^*)=P(q_1^*,\ldots, q_{j-1}^*,0,\ldots, 0)=P(\underbrace{0,\ldots,0,}_{(j'-3)\text{ times}} p_{j'-2}, p_{j'-1},0,\ldots,0).
$$

 If Case (ii) occurs, then either (a) $H(\Pi_{i+1}\nu)< H(\nu)+\nu(\widetilde{\phi})< H(\Pi_i\nu)$, or (b)
 $H(\Pi_{i+p+1}\nu)<H(\Pi_{i+p}\nu)=\ldots =H(\Pi_{i+1}\nu)=H(\nu)+\nu(\widetilde{\phi})<H(\Pi_i\nu)$ for some $p\geq 1$.
 In the subcase (a),  $g_1(\nu)>\ldots>g_i(\nu)<g_{i+1}(\nu)\leq \ldots \leq g_k(\nu)$,
so by Corollary \ref{cor-2}(ii), $q_{j}^*=0$ for $j\neq i$, i.e. ${\bf q}^*=(0,\ldots,0, q_i^*, 0,\ldots,0)$ with $q_i^*=1$.
In the subcase (b), we have  $$q_1(\nu)>\ldots>g_i(\nu)=g_{i+1}(\nu)=\cdots=g_{i+p}(\nu)<g_{i+p+1}(\nu)\leq \ldots \leq g_k(\nu),$$
so by Corollary \ref{cor-2}(ii), $q_{j}^*=0$ if $j<i$ or $j>i+p$, i.e.
 $$
 P({\bf q}^*)=P(\underbrace{0,\ldots, 0}_{(i-1)\text{ times}}, q_{i}^*,\ldots, q_{i+p}^*,0,\ldots,0).
 $$
Following an argument similar as in the first case, we can show that there exist $1\leq p'\leq p$ and $(q_{i+p'-1}, q_{i+p'})\in \Delta_2$ so that
$$
 P({\bf q}^*)=P(0,\ldots,0, q_{i}^*,\ldots, q_{i+p}^*,0,\ldots,0)=P(\underbrace{0,\ldots, 0}_{(i+p'-2)\text{ times}}, q_{i+p'-1}, q_{i+p'},0,\ldots,0).
$$

Finally if Case (iii) occurs, then $g_1(\nu)>g_2(\nu)>\cdots>g_k(\nu)$.  By Corollary \ref{cor-2}(ii), $q_{j}^*=0$ for $j<k$.
So $P({\bf q}^*)=P(0,\ldots, 0,1)$.

The discussions in the above paragraph describe the possible forms of $P({\bf q}^*)$. As a direct consequence, we have
\begin{equation}
\label{e-3.0}
M^{\vec{\boldsymbol\gamma}}=\min \{R_1,\ldots, R_{k-1}, P(0,\ldots,0,1)\},
\end{equation}
where
\begin{equation}
\label{e-3.5}
R_i:=\min\{P(q_1,\ldots, q_k):\; (q_1,\ldots, q_k)\in \Delta_k,\; q_j=0 \mbox{  for } j\in \{1,\ldots, k\}\backslash \{i, i+1\}\}
\end{equation}
for $i=1,\ldots, k-1$. In what follows, we further investigate the values of $R_i$.

Fix $i\in \{1,\ldots, k-1\}$. By definition,
\begin{equation}
\label{e-3.6}
R_i=\inf_{t\in [0,1]}P(\underbrace{0,\ldots, 0}_{(i-1)\text{ times}}, 1-t, t, 0,\ldots, 0).
\end{equation}
For each $\nu_{i+1}\in {\mathcal B}_{i+1}$,
$$
\sup\{H(\nu)+\nu(\widetilde{\phi}):\;  \nu\in \mathcal B_1,\  \Pi_{i+1}\nu=\nu_{i+1}\}=H(\nu_{i+1})+\nu_{i+1}(\widetilde{\phi}_{i+1}),
$$
where  $\widetilde{\phi}_{i+1}$ is the potential on $\widetilde{X}_{i+1}$ defined by  $\widetilde{\phi}_{i+1}(x)=\log {\Bbb E}(N_{x_1}^{(i+1)})$ (see Remark~\ref{rem2}). This, combined with the definition of $P$, yields (noting that $\widetilde{\phi}_{i+1}={\phi}_{i+1}/(\gamma_1+\cdots+\gamma_{i+1})$, where ${\phi}_{i+1}$ was defined in \eqref{phiktheta})
\begin{align*}
P&(\underbrace{0,\ldots, 0}_{(i-1)\text{ times}}, 1-t, t, 0,\ldots, 0)\\
&=\sup_{\nu\in \mathcal B_1}\left\{(1-t)\left[(\gamma_1+\ldots+\gamma_i)(H(\nu)+\nu(\widetilde{\phi}))+\sum_{j=i+1}^k\gamma_jH(\Pi_j\nu)\right]\right.\\
&\qquad\qquad + \left. t \left[(\gamma_1+\ldots+\gamma_{i+1})(H(\nu)+\nu(\widetilde{\phi}))+\sum_{j=i+2}^k\gamma_jH(\Pi_j\nu)\right]\right\}\\
&=\sup_{\nu_{i+1}\in {\mathcal B}_{i+1}} \Big[(\gamma_1+\ldots+\gamma_{i}+t\gamma_{i+1})\nu_{i+1}(\widetilde{\phi}_{i+1})+ (\gamma_1+\ldots+\gamma_{i+1})H(\nu_{i+1})\\
& \qquad\qquad\qquad  +\sum_{j=i+2}^k \gamma_j H(\Pi_{i+1, j} \nu_{i+1})\Big]\\
&=P_{i+1} \left( \frac{\gamma_1+\ldots+\gamma_{i}+t\gamma_{i+1}}{\gamma_1+\ldots+\gamma_{i+1}} \right),
\end{align*}
where $\Pi_{i+1,j}=\pi_{j-1}\circ\cdots\circ  \pi_{i+1}$, and $P_{i+1}$ is defined as in~\eqref{e-p-i}. So by \eqref{e-3.6} we have
$$R_i=\min\{P_{i+1}(t):\; (\gamma_1+\ldots+\gamma_i)/(\gamma_1+\ldots+\gamma_{i+1})\leq  t\leq 1\},\quad i=1,\ldots, k-1.$$
Notice that $P(0,\ldots, 0, 1)=P_{k}(1)$. Recalling that $\widetilde\theta_{i}= (\gamma_1+\ldots+\gamma_{i-1})/(\gamma_1+\ldots+\gamma_{i})$ for $2\le i\le k$, the previous discussion yields
\begin{equation}\label{Mgamma}
M^{\vec{\boldsymbol\gamma}}=\min\Big (\min_{2\leq i\leq k} \inf  \{ P_{i}(\theta): \theta\in [\widetilde\theta_{i},1]\}, P_k(1)\Big ).
\end{equation}

Finally we prove that
$$M^{\vec{\boldsymbol\gamma}}=\left\{\begin{array}{ll}
P_{i_0}(\theta_{i_0}) & \mbox{  if } i_0\leq k\\
P_{k}(1) & \mbox{  if } i_0= k+1
\end{array}
\right.,
$$
where we recall that $i_0$ and $\theta_{i_0}$ where defined just before the statement of Theorem~\ref{dim K}.

Below we divide our discussions into 3 steps :

{\sl Step 1}.
First assume that $i_0=2$, i.e., $P_2'(1)\geq 0$. By the convexity of $P_2$ we see that $P_2'(\theta)$ is non-decreasing, so there are only two possibilities:  (1) $P_2'(\widetilde \theta_2)\geq 0$ (in which case, $\theta_{i_0}=\widetilde \theta_2$) ; or (2) $P_2'(\theta_{i_0})=0$.

Recall $P_2'(\theta)=(\gamma_1+\gamma_2) \cdot \nu_{\theta \phi_2}(\phi_2)$, where $\nu_{\theta \phi_2}\in \mathcal B_2$ denotes the equilibrium state of the function $P_2$ at $\theta$.

If (1) occurs, then by the above derivative formula, we have
$$
\nu_{\widetilde\theta_2 \phi_2}(\phi_2)=(\gamma_1+\gamma_2)\nu_{\widetilde\theta_2 \phi_2}(\widetilde{\phi}_2)\geq 0.
$$
To simplify the notation, we write $\nu_2:=\nu_{\widetilde\theta_2 \phi_2}$.  Let $\nu$ be the unique element in $\mathcal B_1$ such that $\Pi_2\nu=\nu_2$ and
$H(\nu)+\nu(\widetilde{\phi})=H(\nu_2)+\nu_2(\widetilde{\phi}_2)$. Since $\nu_2(\widetilde{\phi}_2)\geq 0$,  we have
\begin{equation}
\label{e-3.8}H(\nu)+\nu(\widetilde{\phi})\geq H(\nu_2)\ge H(\Pi_3\nu)\ge \ldots\ge H(\Pi_k\nu).
\end{equation}
In the meantime, \begin{align*}
P_2(\widetilde \theta_2)&=\gamma_1\nu_2(\widetilde{\phi}_2)+(\gamma_1+\gamma_2)H(\nu_2)+\sum_{j=3}^k \gamma_j H(\Pi_{2,j} \nu_2)\\
&=\gamma_1(H(\nu)+\nu(\widetilde{\phi}))+\sum_{j=2}^k\gamma_jH(\Pi_j\nu))=g_1(\nu).
\end{align*}
By \eqref{e-3.8}, we have $g_1(\nu)\leq g_2(\nu)\le \cdots\le g_k(\nu)$. So by \eqref{e-3.2},
$$P_2(\widetilde \theta_2)=g_1(\nu)=\min\{g_1(\nu), \ldots, g_k(\nu)\}\leq M^{\vec{\boldsymbol\gamma}}.$$
But $P_2(\widetilde \theta_2)\geq \inf_{{\bf q}\in \Delta_k}P({\bf q})=M^{\vec{\boldsymbol\gamma}}$. So we get $P_2(\widetilde \theta_2)=M^{\vec{\boldsymbol\gamma}}$.

Next assume that Case (2) occurs.  We still let $\nu_2$ denote the equilibrium state of $P_2$ at~$\theta_{i_0}$. The derivative formula gives that $\nu_2(\widetilde{\phi}_2)=0$ and so \begin{equation}
\label{e-3.9}
P_2(\theta_{i_0})=(\gamma_1+\gamma_2)H(\nu_2)+\sum_{j=3}^k \gamma_j H(\Pi_{2,j} \nu_2).
\end{equation}
Let $\nu$ be the unique element in $\mathcal B_1$ such that $\Pi_2\nu=\nu_2$ and
$H(\nu)+\nu(\widetilde{\phi})=H(\nu_2)+\nu_2(\widetilde{\phi}_2)=H(\nu_2)$. Then  $g_1(\nu)= g_2(\nu)\le \cdots\le g_k(\nu)$. By \eqref{e-3.9} and the fact that $\nu_2(\widetilde{\phi}_2)=0$,
$$P_2(\theta_{i_0})=\gamma_1\nu_2(\widetilde{\phi}_2)+(\gamma_1+\gamma_2)H(\nu_2)+\sum_{j=3}^k \gamma_j H(\Pi_{2,j} \nu_2)=g_1(\nu).$$
Similar to the discussion for Case (1), we also obtain $P_2(\theta_{i_0})=M^{\vec{\boldsymbol\gamma}}$.

It follows from Remark~\ref{rem0}, as well as Corollary~\ref{cor-2}(ii)
that in any of the cases discussed above, $\mu_{\nu_2}$ is the unique Mandelbrot measure $\mu$ supported on $K$ such that $\dim_H(\mu)=M^{\vec{\boldsymbol\gamma}}$,  conditional on $K\neq\emptyset$.

{\sl Step 2}. Assume that $2<i_0\leq k$. Then $P_{i_0-1}'(1)<0$ and $P_{i_0}'(1)\geq 0$.  However, notice that $P_{i_0-1}(1)=P_{i_0}(\widetilde\theta_{i_0})$ (both of them are equal to  $$P(\underbrace{0,\ldots, 0}_{(i_0-2)\text{ times}}, 1, 0, 0,\ldots, 0).$$
Let $\nu_{i_0-1}$ and $\nu_{i_0}$ denote the equilibrium states of $P_{i_0-1}$ at $1$ and $P_{i_0}$ at $\widetilde\theta_{i_0}$, respectively. Assume at the moment that $\pi_{i_0-1}\nu_{i_0-1}=\nu_{i_0}$; we will check it at the end of the proof.  Now from $P_{i_0-1}'(1)<0$ it follows that $\nu_{i_0-1}(\widetilde{\phi}_{i_0-1})<0$. Next we consider two possible cases:  (a) $P_{i_0}'(\widetilde\theta_{i_0})>0$; (b) $P_{i_0}'(\widetilde\theta_{i_0})\leq 0$.
If Case (a) occurs, then $\nu_{i_0}(\widetilde{\phi}_{i_0})>0$ and $\theta_{i_0}=\widetilde\theta_{i_0}$.  Let $\nu$ be the unique element in $\mathcal B_1$ such that $\Pi_{i_0}\nu=\nu_{i_0}$ and
$H(\nu)+\nu(\widetilde{\phi})=H(\nu_{i_0})+\nu_{i_0}(\widetilde{\phi}_{i_0})$. We will check that  $H(\nu)+\nu(\widetilde{\phi})=H(\nu_{i_0-1})+\nu_{i_0-1}(\widetilde{\phi}_{i_0-1})$ as well.
Hence $H(\nu_{i_0})<H(\nu)+\nu(\widetilde{\phi})<H(\nu_{i_0-1})$. It follows that
$$g_1(\nu)\ge \ldots\ge g_{i_0-1}(\nu)>g_{i_0}(\nu)\le \ldots\le g_k(\nu).$$
However $P_{i_0}(\widetilde\theta_{i_0})=g_{i_0}(\nu)$ so $P_{i_0}(\widetilde\theta_{i_0})=\min\{g_i(\nu):\; 1\leq i\leq k\}\leq M^{\vec{\boldsymbol\gamma}}$. Hence we have $P_{i_0}(\theta_{i_0})=P_{i_0}(\widetilde\theta_{i_0})= M^{\vec{\boldsymbol\gamma}}$.

Suppose Case (b) holds. Then $P_{i_0}'(\theta_{i_0})=0$. Following the similar argument as in Case (2) of Step 1, we see that $P_{i_0}(\theta_{i_0})=M^{\vec{\boldsymbol\gamma}}$ by considering $\nu$ defined as in Case (a). Note that $\nu_{\theta_{i_0}\phi_{i_0}}(\widetilde\phi_{i_0})=0$. We have $P_{i_0}(\theta_{i_0})=g_{i_0}(\nu) $ and $g_1(\nu)\ge g_2(\nu)\ge \cdots\ge  g_{i_0-1}(\nu)=g_{i_0}(\nu)\le g_{i_0+1}(\nu)\le \cdots\le g_k(\nu)$. So $P_{i_0}(\theta_{i_0})= M^{\vec{\boldsymbol\gamma}}$.

Like in the first step, in any case, $\mu_{\nu_{i_0}}$ is the unique Mandelbrot measure $\mu$ supported on $K$ such that $\dim_H(\mu)=M^{\vec{\boldsymbol\gamma}}$, conditional on $K\neq\emptyset$.

{\sl Step 3}. Assume that $i_0= k+1$. Then $P_i'(1)<0$ for every $i\in \{2,\ldots, k\}$.  It follows that for each $i\in \{1,\ldots, k-1\}$,
 $P(\underbrace{0,\ldots, 0}_{(i-1)\text{ times}}, 1-t, t, 0,\ldots, 0)$ is strictly decreasing in $t$. Hence by \eqref{e-3.0}-\eqref{e-3.6},
 $M^{\vec{\boldsymbol\gamma}}=P(0,\ldots, 0, 1)=P_{k}(1)$. Let $\nu_k$ be the equilibrium state of $P_k$ at $1$. and $\nu$ be the unique element of $\mathcal B_1$ such that $\Pi_k\nu=\nu_k$ and  $H(\nu)+\nu(\widetilde\phi)=H(\nu_k)+\nu_k(\widetilde\phi_k)$. Then $P_{k}(1)=g_k(\nu)$. We conclude as in the two first steps.

The proof of Theorem \ref{optim} will be complete after we have checked that in Step 2 we have $\pi_{i_0-1}\nu_{i_0-1}=\nu_{i_0}$ and  $H(\nu)+\nu(\widetilde{\phi})=H(\nu_{i_0-1})+\nu_{i_0-1}(\widetilde{\phi}_{i_0-1})$.

To see that these equalities do hold, we look at the optimisation problem consisting in determining the equilibrium state $\nu_{i_0-1}$ of $P_{i_0-1}$ at $1$, by conditioning on the knowledge of  $\pi_{i_0-1}\nu_{i_0-1}$: using the relation $\phi_{i_0-1}= \widetilde \gamma_{i_0-1} \widetilde\phi_{i_0-1}$, we have
\begin{align*}
&P_{i_0-1}(1)\\
&=\sup_{\rho_{i_0-1}\in\mathcal B_{i_0-1}} \widetilde \gamma_{i_0-1} \big (\rho_{i_0-1}(\widetilde \phi_{i_0-1})+H(\rho_{i_0-1})\big )+\sum_{i=i_0}^k \gamma_i H(\Pi_{i_0-1,i}\rho_{i_0-1})\\
&= \sup_{\rho_{i_0}\in\mathcal B_{i_0}} \widetilde \gamma_{i_0}H(\rho_{i_0})+\sum_{i=i_0+1}^k \gamma_i H(\Pi_{i_0,i}\rho_{i_0})\\
&\quad +\sup_{(p(\cdot|\hat b))_{\hat b\in \widetilde{\mathcal A}_{i_0}}}\sum_{\widehat b\in\mathcal A_{i_0}} \rho_{i_0}([\widehat b]) \cdot   \widetilde \gamma_{i_0-1}\sum_{\substack{ b\in \widetilde{\mathcal A}_{i_0-1}\\ \pi_{i_0-1}(b)=\widehat b}} -p(b|\widehat b)\log p(b|\widehat b)+ p(b|\widehat b) \log \mathbb{E}(N^{(i_0-1)}_b),
\end{align*}
where $(p(b|\widehat b))_{b\in \mathcal A_{i_0-1}, \pi_{i_0-1}(b)=\widehat b}$ is the probability vector such that  $p(b|\widehat b)=\rho_{i_0-1}(b)/\rho_{i_0}(\widehat b)$ when $\rho_{i_0}(\widehat b)>0$, and any probability vector otherwise. Optimising over the families $(p(b|\widehat b))_{b\in \mathcal A_{i_0-1}, \pi_{i_0-1}(b)=\widehat b}$ yields
$$
p(b|\widehat b)=\frac{\mathbb{E}(N^{(i_0-1)}_b)}{\mathbb{E}(N^{(i_0)}_{\widehat b})}
$$
when $\rho_{i_0}(\widehat b)>0$, and
\begin{align*}
P_{i_0-1}(1)&=\sup_{\rho_{i_0}\in\mathcal B_{i_0}} \widetilde \gamma_{i_0-1} \rho_{i_0}(\widetilde \phi_{i_0})+ \widetilde \gamma_{i_0}H(\rho_{i_0})+\sum_{i=i_0+1}^k \gamma_i H(\Pi_{i_0,i}\rho_{i_0})\\
&= \sup_{\rho_{i_0}\in\mathcal B_{i_0}}  \rho_{i_0}( \widetilde \theta_{i_0}\phi_{i_0})+ \widetilde \gamma_{i_0}H(\rho_{i_0})+\sum_{i=i_0+1}^k \gamma_i H(\Pi_{i_0,i}\rho_{i_0}) (=P_{i_0}(\widetilde \theta_{i_0})),
\end{align*}
from which the desired equalities follow.
\end{proof}

\subsection{Lower bound for the Hausdorff dimension of $K$}\label{LB}

The sharp lower bound comes from the optimisation problem solved  in Section~\ref{OPT}. Consider the unique Mandelbrot measure $\mu=\mu_{\nu_{\theta_{i_0}\phi_{i_0}}}$  obtained there. By construction the measure $\mu$ is fully supported on $K$ conditional on $K\neq\emptyset$, because $\nu_{\theta_{i_0}\phi_{i_0}}$ is fully supported on $\widetilde X_{i_0}$ if $i_0\le k$ and $\widetilde X_{k}$ otherwise, and due to the last claim of Remark~\ref{rem1}. Also, the assumptions of Theorem~\ref{thmdim} are fulfilled for $\mu$, and the Hausdorff dimension of $\mu$ provides the desired  lower bound  for $\dim_H K$.

\subsection{Upper bound for the Hausdorff dimension of $K$}\label{UB}

Let us start by discussing a first possible attempt to show that $\dim_H K\le M^{\vec{\boldsymbol\gamma}}$. We could expect to use the measure $\mu=\mu_{\nu_{\theta_{i_0}\phi_{i_0}}}$ of maximal Hausdorff dimension $M^{\vec{\boldsymbol\gamma}}$ and show that $\underline\dim_{\rm loc}(\mu,x)\le M^{\vec{\boldsymbol\gamma}}$ everywhere on $K$; this is the approach used by McMullen  \cite{McMullen}  as well as Kenyon and Peres \cite{KePe96} in the deterministic case;  that would make it possible to conclude quite quickly. In the random situation, we can show that this approach via the lower local dimension works  in the case when $N^{(2)}_b\ge 2$ almost surely for all $b\in \widetilde {\mathcal{A}}_2$; say in this case that $K$ is of type I.  This requires quite involved  moments estimates for martingales in varying environments. Notice that in this case  we have $i_0=2$ and  $\theta_2=\gamma_1/(\gamma_1+\gamma_2)$. The type I makes it possible to treat the case of a slightly more general type of examples, still quite close to the deterministic case:   $i_0=2$, $\theta_2=\gamma_1/(\gamma_1+\gamma_2)$,  and it is possible to approximate $K$ by a sequence $(K^{(p)})_{p\in\mathbb N}$ of random Sierpinski sponges of type I in the sense that $K\subset K^{(p)}$ for all $p\in\mathbb N$, $\bigcap_{p\in\mathbb N}K^{(p)}=K$, and  $\lim_{p\to\infty} \dim_H K^{(p)}=M^{\vec{\boldsymbol\gamma}}$. A sufficient condition to be in this situation is that $\Psi_2(0)<\Psi_2( \gamma_1/(\gamma_1+\gamma_2))$, where $\Psi_2(\theta)=\sum_{b\in \mathcal A_2} \mathbb E (N^{(2)}_b)^\theta$ (this condition obviously holds for examples of type I).

Thus, regarding the lower local dimension approach, it remains open whether or not  in general it holds that  $\underline\dim_{\rm loc}(\mu,x)\le M^{\vec{\boldsymbol\gamma}}$ everywhere on $K$; moreover,  the sufficient condition just stated to get the sharp upper bound for $\dim_H K$ is not at all satisfactory.

The alternative is to examine the strategy that Gatzouras and Lalley adopted for the two dimensional case. Their approach is inspired by Bedford's treatment of the deterministic two dimensional case, and it uses effective coverings of the set $K$ to find the sharp upper bound for $\dim_H K$. These coverings are closely related to a combinatoric argument due to Bedford. But this argument turns out to be hard to extend to higher dimensional cases. Below, we use a different, though related, combinatoric argument, which yields  nice effective coverings as well,  but works in any dimension. Also, in the deterministic case and in any dimension, it provides an alternative to the argument  using a uniform bound for the lower local dimension of~$\mu$. However, and interestingly, our  argument uses a slight generalisation of a key combinatoric lemma  established by Kenyon and Peres to get this uniform bound.


We now provide a general upper bound for  $\dim_H K$, expressed through a variational principle.
\begin{thm}\label{VPUB} With probability 1, conditional on $K\neq\emptyset$,
$$
\dim_H K\le \inf\Big \{P_i(\theta): \;  i\in I, \; \widetilde\theta_i\le \theta\le 1\Big \}.
$$
\end{thm}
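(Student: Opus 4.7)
The plan is to establish, for each fixed $i \in I$ and $\theta \in [\widetilde\theta_i,1]$, the almost sure inequality $\dim_H K \le P_i(\theta)$ conditional on $K \neq \emptyset$. Since $\theta \mapsto P_i(\theta)$ is continuous and $I$ is finite, running the argument over a countable dense subset of $(i,\theta)$ and taking the infimum yields the stated bound on a full-probability event.

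Fix such a pair $(i,\theta)$ and $s > P_i(\theta)$. For each large $n$, the aim is to construct an explicit covering $\mathcal C_n$ of $K$ by cylinder-like sets whose diameters vanish with $n$ and such that the expected Hausdorff $s$-sum decays geometrically; this will force $\mathcal H^s(K)=0$ almost surely by Fatou/Borel--Cantelli, and letting $s \downarrow P_i(\theta)$ will conclude.

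\textbf{Step 1 --- Construction of the interpolated covering.} Each set $C \in \mathcal C_n$ will be determined by an $X_1$-prefix $u$ of some intermediate length $p = p(n,\theta)$ together with compatible $\Pi_j$-prefixes $v_j$ of length $\ell_j(n)$ for $j \in \{i, i+1,\ldots, k\}$. As $\theta$ varies from $\widetilde\theta_i$ to $1$, the length $p$ interpolates between $\ell_{i-1}(n)$ and $\ell_i(n)$, so that the covering interpolates between a coarse one (where only the projections onto $X_j$, $j \ge i$, are fixed) and the natural ball partition $\mathcal F_{\cdot}$ at the finer scale. A direct inspection using \eqref{e-da} shows that each such $C$ has $\mathrm{diam}(C) \le e^{-p/\gamma_1}$.

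\textbf{Step 2 --- Cardinality via the weighted pressure.} By the independence of the percolation across generations and the multiplicative structure of the $N^{(i)}_b$, the expected cardinality of $\mathcal C_n$ factors into: (i) a count of admissible $X_i$-cylinders at length $\ell_i(n)$, weighted by the entropies of the further projections $\Pi_{i,j}$ for $j > i$ (matching $h^{\vec{\boldsymbol\gamma}^i}_\nu(T_i)$); and (ii) a branching factor $\prod \mathbb E(N^{(i)}_\cdot)$ raised to a power proportional to $\theta$ (matching $\theta\,\nu(\phi_i)$). A slight generalisation of the combinatoric counting lemma of Kenyon and Peres, adapted to the random environment and the weighted metric, yields
\begin{equation*}
\mathbb E\bigl(\#\mathcal C_n\bigr) \;=\; \exp\!\bigl(p\cdot P_i(\theta)/\gamma_1 + o(p)\bigr),
\end{equation*}
where the exponent is identified through the thermodynamic formalism for weighted pressures established in \cite{BF2012}, using the definitions \eqref{e-h-beta}--\eqref{e-p-i}.

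\textbf{Step 3 --- Expected Hausdorff sum and conclusion.} Multiplying the bound of Step 2 by $(\mathrm{diam}\, C)^s \le e^{-ps/\gamma_1}$ gives
\begin{equation*}
\mathbb E\!\Bigl(\sum_{C \in \mathcal C_n}\mathrm{diam}(C)^s\Bigr) \;\le\; \exp\!\bigl(p(P_i(\theta)-s)/\gamma_1 + o(p)\bigr),
\end{equation*}
which is summable in $n$ since $s > P_i(\theta)$. A Borel--Cantelli argument (with a standard reduction handling the conditioning on $K\neq\emptyset$ through the associated Galton--Watson martingale) gives $\mathcal H^s(K)=0$ almost surely conditional on $K\neq\emptyset$, and hence $\dim_H K \le s$. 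The bound $P_i(\theta)$ follows by letting $s \downarrow P_i(\theta)$, and the infimum over $(i,\theta)$ is handled as indicated above.

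\textbf{Main obstacle.} The principal difficulty lies in Step 2: matching the expected cardinality of the interpolated cover to $\exp(pP_i(\theta)/\gamma_1)$. In the deterministic setting Kenyon and Peres bypass covering arguments altogether by controlling uniformly the lower local dimension of the Bernoulli measure of maximal dimension on $K$; as explained in the text this route breaks down in the random case, since branching factors $\mathbb E(N^{(i)}_b)<1$ may occur and destroy the uniform local-dimension estimate. A genuinely variational covering-based argument is therefore required, and the generalised Kenyon--Peres combinatoric lemma --- handling simultaneously the random environment, the anisotropic metric, and the interpolation parameter $\theta$ --- is the technical heart of the proof.
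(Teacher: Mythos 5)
Your scaffolding (coverings indexed by an interpolation parameter, expected $s$-sums, Borel--Cantelli) has the right flavour, but Step 2 contains a gap that the architecture of Steps 1--3 cannot survive. As written, $\mathcal C_n$ is, for each $n$, a covering of \emph{all} of $K$ by sets living at a single scale $\delta_n$. In an ultrametric space every set of diameter at most $\delta_n$ is contained in one ball of radius $\delta_n$, so $\#\mathcal C_n\ge N_{\delta_n}(K)$, the covering number of $K$; conditional on $K\neq\emptyset$ this grows at the rate $\dim_B K$ (Theorem~\ref{thmdimB}), and hence $\sum_{C\in\mathcal C_n}\mathrm{diam}(C)^s\gtrsim N_{\delta_n}(K)\delta_n^s\to\infty$ almost surely on $\{K\neq\emptyset\}$ for every $s<\dim_B K$; by Fatou its expectation blows up as well. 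Since typically $\inf_{i,\theta}P_i(\theta)=\dim_H K<\dim_B K$ (cf.\ Corollary~\ref{cor1}), your claimed identity $\mathbb E(\#\mathcal C_n)=\exp(pP_i(\theta)/\gamma_1+o(p))$ and the diameter bound of Step 1 cannot both hold in exactly the regime the theorem is designed for: a sequence of single-scale coverings of the whole set can never certify an upper bound below the box dimension, no matter how the counting lemma is implemented. (Step 1 is also off on its own terms: a set determined by an $X_1$-prefix of length $p$ and $\Pi_j$-prefixes of length $\ell_j(n)$ for $j\ge i$ has diameter about $\max\bigl(e^{-p/(\gamma_1+\cdots+\gamma_{i-1})},e^{-n/\gamma_1}\bigr)$, not $e^{-p/\gamma_1}$.)

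What is missing is the mechanism the paper uses to beat the box-counting rate. One keeps the covering sets as genuine balls of radius $e^{-n/\gamma_1}$; the parameter $\theta$ enters only through the counting bound $n_U\le N^{(i)}_{{U_i}_{|\lceil\theta\ell_i(n)\rceil}}$ (valid for any generation between $\ell_{i-1}(n)$ and $\ell_i(n)$, whence the restriction $\theta\ge\widetilde\theta_i$), whose expectation factorises as $\prod_\ell\mathbb E(N^{(i)}_{(U_i)_\ell})$. Crucially, at scale $n$ one retains only those index words $U$ whose empirical distributions lie within $\epsilon$ of the compact set $\mathcal R_i=\{\rho:\widetilde D_{i,\theta}(\rho)\le P_i(\theta)\}$; for such a type class the word count is governed by the entropies of the approximating $\rho^{(m)}$ and the branching factor by $\theta\sum_b\widetilde\rho^{(m)}_i([b])\log\mathbb E(N^{(i)}_b)$, so that $\widetilde D_{i,\theta}(\rho^{(m)})\le P_i(\theta)$ yields an expected $s$-sum at scale $n$ of order $e^{-\frac n{\gamma_1}(s-P_i(\theta)-O(\epsilon\log(1/\epsilon)))}$. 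This only covers part of $K$ at each scale; the actual covering is the union over all $n\ge N$, and the fact that it covers $K$ is exactly Lemma~\ref{comblem}: for every $x$, the empirical tuple $\rho(x,n)$ approaches $\mathcal R_i$ along a subsequence of scales. That pointwise statement --- proved via the generalisation \cite[Lemma 5.4]{FH} of the Kenyon--Peres combinatorial lemma applied to slowly varying sequences of entropies --- is the true role of the combinatorial ingredient; it is not a bound on expected cardinalities, which is how your Step 2 casts it. Without the type decomposition and this ``infinitely many good scales per point'' lemma, Steps 2--3 do not go through.
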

The fact that this is a sharp upper bound for $\dim_H K$ follows from \eqref{Mgamma} and the inequality $\dim_H K\ge M^{\vec{\boldsymbol\gamma}}$ obtained in the previous section.

\medskip

Before proving Theorem~\ref{VPUB}, we need to introduce some new definitions, and to make some preliminary observations.

Let $2\le i\le k$ and $ \widetilde\theta_i\le \theta\le 1$. For  $\rho_i \in\mathcal B_i$ set
\begin{equation}\label{Dit}
D_{i,\theta}(\rho_i)=\widetilde \gamma_{i} \theta \sum_{b\in\mathcal A_i} \rho_i([b])\log \mathbb E(N^{(i)}_b) +  \widetilde \gamma_{i}h_{\rho_i}(T_i)+ \sum_{j=i+1}^k \gamma_j  h_{{\Pi_{i,j}}\rho_i}(T_j)
\end{equation}
(again with the convention $0\times (-\infty)=0$.

For $\rho=(\widetilde \rho_i,\rho_{i},\ldots,\rho_k)\in\mathcal B_i\times \prod_{j=i}^k\mathcal B_j$, set
\begin{equation}\label{Dit'}
\widetilde D_{i,\theta}(\rho)=\widetilde \gamma_{i} \theta \sum_{b\in\mathcal A_i} \widetilde \rho_i([b])\log \mathbb E(N^{(i)}_b) +  \widetilde \gamma_{i}h_{\rho_i}(T_i)+ \sum_{j=i+1}^k \gamma_j  h_{\rho_j}(T_j).
\end{equation}
%


For each $2\le i\le k$. We endow the set $\mathcal B_i\times \prod_{j=i}^k\mathcal B_j$ with the distance
$$
d_i(\rho,\rho')=\max\left (\max_{b\in\mathcal A_i} |\widetilde \rho_i([b])-\widetilde \rho'_i([b])|,\;  \max_{i\le j\le k} \max_{b\in\mathcal A_j} |\rho_j([b])-\rho'_j([b])|\right ),
$$
which makes it a compact set.  Let
\begin{equation}\label{Ri}
\mathcal R_i=\left\{\rho\in  \mathcal B_i\times \prod_{j=i}^k\mathcal B_j:\;  \widetilde D_{i,\theta}(\rho)\le P_i(\theta)\right\}.
\end{equation}
The set $\mathcal R_i$ is compact. For any $\epsilon\in(0,1)$,  $\mathcal R_i$ can be covered by a finite collection of open balls $\{\widering B(\rho^{(m)},\epsilon)\}_{1\le m\le M(\epsilon)}$. Moreover, if $\epsilon\le \min \{(\#\mathcal A_j)^{-1}:\; 1\le j\le k\}$, we can assume that for all $m$ the components of each probability vector $\rho^{(m)}$ are not smaller than $\epsilon/2$.

For $x\in X_1$, $2\le j\le k$, and $n\in\mathbb N^*$ we define  $\rho_j(x, n)$ to be  the Bernoulli product measure on $X_j$ associated with the probability vector whose components are the frequences of occurrence of the different elements of $\mathcal A_j$ in $\Pi_j(x_{|n})$, namely the vector $$\left(n^{-1}\#\{1\le m\le n:\, \Pi_j(x_m)=b\right)_{b\in \mathcal A_j}.$$
 Also, let
\begin{equation}\label{rhoxn}
\rho(x,n)= (\widetilde \rho_i,\rho_{i},\ldots,\rho_k),
\end{equation}
where
\begin{align*}
\widetilde \rho_i&=\rho_i\left(x, \lceil \theta \ell_i(n)\rceil\right ),\ \rho_i=\rho_i(x,\ell_{i}(n)),\\
\rho_j&=\rho_{j}\left(T_1^{\ell_{j-1}(n)}x,\ell_{j}(n)-\ell_{j-1}(n)\right), \quad j=i+1, \ldots,  k.
\end{align*}

Now, for any $n\in\mathbb N^*$ and $U=(U_i,\ldots,U_k)\in \mathcal A_i^{\ell_i(n)}\times \prod_{j=i+1}^k \mathcal A_j^{\ell_j(n)-\ell_{j-1}(n)}$, we can define $\rho (U)= (\widetilde \rho_i(U),\rho_{i}(U),\ldots,\rho_k(U))$ as equal to $\rho(x,n)$, for any $x\in X_1$ such that $\Pi_i(x_1\cdots x_{\ell_i(n)})=U_i$ and $\Pi_j(x_{\ell_{j-1}(n)+1}\cdots x_{\ell_{j}(n)})=U_j$ for all $i+1\le j\le k$. Note that $\rho_i(U)$ depends on $U_i$ only, so we also denote  it by $\rho_i(U_i)$.

Then, for each $1\le m\le M(\epsilon)$ and $n\in\mathbb N^*$ we set
$$
\mathcal R_i(\epsilon, m, n)=\left\{U\in   \mathcal A_i^{\ell_i(n)}\times \prod_{j=i+1}^k \mathcal A_j^{\ell_j(n)-\ell_{j-1}(n)}: \rho (U)\in \widering B(\rho^{(m)},\epsilon)\right \}.
$$
It is standard to observe that if $U_i\in  \mathcal A_i^{\ell_i(n)}$ is such that $|\rho_i(U_i)([b])-\rho^{(m)}_i([b])|\le \epsilon$ for all $b\in \mathcal A_i$ then
\begin{align*}
\rho_i^{(m)}([U_i])=\prod_{b\in \mathcal A_i} \rho_i^{(m)}([b])^{\ell_i(n) \rho_i(U)([b])}
&\ge \prod_{b\in \mathcal A_i} \rho_i^{(m)}([b])^{\ell_i(n) \rho^{(m)}_i([b])} \prod_{b\in \mathcal A_i} \rho_i^{(m)}([b])^{\ell_i(n) \epsilon}\\
&\ge \exp \left (- \ell_i(n) \left(h_{\rho_i^{(m)}}(T_i)+   \epsilon\log(2/\epsilon)\right)\right ),
\end{align*}
Consequently, the cardinality of the set $\mathcal U_{i,\epsilon, m, n}$ of such $U_i$ is bounded from above by $\exp \big ( \ell_i(n) \big(h_{\rho_i^{(m)}}(T_i)+  \epsilon\log(1/\epsilon)\big)\big )$.

Similarly, for each $i+1\le j\le k$,  the cardinality of the set $\mathcal U_{j,\epsilon, m, n}$ of  those $U_j\in \mathcal A_j^{\ell_j(n)-\ell_{j-1}(n)}$ such that $|\rho_j(U_j)([b])-\rho^{(m)}_j([b])|\le \epsilon$ for all $b\in \mathcal A_j$ is bounded from above  by  $\exp \big ( (\ell_j(n)-\ell_{j-1}(n)) (h_{\rho_j^{(m)}}(T_j)+\epsilon \log(2/\epsilon))\big )$. Since by definition of $\mathcal R_i(\epsilon, m, n)$ we have $\mathcal R_i(\epsilon, m, n)\subset \prod_{i=j}^k \mathcal U_{j,\epsilon, m, n}$, the previous observations yield
\begin{align}\nonumber &\#\mathcal R_i(\epsilon, m, n)\\
\nonumber&\le \prod_{j=i}^{k}(\#\mathcal U_{j,\epsilon, m, n})\\
\label{cardRim}&\le \exp \big (\ell_k(n)\epsilon  \log(2/\epsilon)\big ) \exp \Big (\ell_i(n) h_{\rho_i^{(m)}}(T_i)+\sum_{j=i+1}^k (\ell_j(n)-\ell_{j-1}(n))h_{\rho_j^{(m)}}(T_j)\Big ).
\end{align}

We also notice that if we endow $X_i$ with the metric
\begin{equation*}
d_{\vec{\boldsymbol\gamma},i}(x,y)=\max\left(e^{-\frac{|\Pi_{i,j}(x)\land \Pi_{i,j}(y)|}{\gamma_1+\cdots+\gamma_{j}}}:\; j=i, \ldots,  k\right),
\end{equation*}
the balls of radius $e^{-\frac{n}{\gamma_1}}$ in $X_1$ project to the balls of the same radius in $X_i$, which are  parametrized by the elements of $\mathcal A_i^{\ell_i(n)}\times \prod_{j=i+1}^k \mathcal A_j^{\ell_j(n)-\ell_{j-1}(n)}$, in the sense that such a ball takes the form
$$
B_U=\left\{y\in X_i: \; y_1\cdots y_{\ell_i(n)}=U_i, \; \Pi_{i,j}(y_{\ell_{j-1}(n)+1}\cdots y_{\ell_j(n)}) =U_j \mbox{ for } j= i+1,\ldots, k\right\}
$$
for some $U$ in $\mathcal A_i^{\ell_i(n)}\times \prod_{j=i+1}^k \mathcal A_j^{\ell_j(n)-\ell_{j-1}(n)}$. Moreover, given such a ball $B_U$, $\Pi_i^{-1}(B_U)\cap K$ is covered by, say, a family $\mathcal B(U)$ of $n_U$ balls of radius $e^{-\frac{n}{\gamma_1}}$ which intersect $K$. Each of the $N^{(i)}_{{U_i}_{|\ell_{i-1}(n)}}$ cylinders of generation $\ell_{i-1}(n)$ in $X_1$ which intersects $K$ and project to $[{U_i}_{|\ell_{i-1}(n)}]$ in $X_i$ via $\Pi_i$ intersects only one  such ball.  Indeed, for such a cylinder $[V_1\cdots V_{\ell_{i-1}}(n)]$, the data $\Pi_j([V_{\ell_{j-1}(n)+1}\cdots V_{\ell_{j}(n)}])$, $1\le j\le i-1$, and $B_U$ determine a unique ball $B$ of $X_1$ such that $\Pi_i(B)=B_U$.  This implies $n_U\le N^{(i)}_{{U_i}_{|\ell_{i-1}(n)}}$. Consequently, for every integer $\ell$ between $\ell_{i-1}(n)$ and $\ell_i(n)$, we also have $n_U\le N^{(i)}_{{U_i}_{|\ell}}$. In particular,
\begin{equation}\label{nU}
n_U\le N^{(i)}_{{U_i}_{|\lceil \theta \ell_i(n)\rceil}}.
\end{equation}

The following lemma, whose proof we postpone  to the end of this section, will play an essential role to find effective coverings of $\Pi_i(\widetilde X_1)$, and then of $K$. Let us mention at the moment that in this lemma $(1)\Rightarrow (2)\Rightarrow (3)$. ALso, recall the definition \eqref{Ri} of $\mathcal R_i$.
\begin{lem}\label{comblem} For all $x\in \widetilde X_1$:
\begin{enumerate}
\item $\liminf_{n\to\infty} \widetilde D_{i,\theta} (\rho(x,n))-D_{i,\theta}(\rho_i(x,n))\le 0$;

\item  $\liminf_{n\to\infty} \widetilde D_{i,\theta} (\rho(x,n))\le P_i(\theta)$;

\item  there exists $\rho\in \mathcal R_i$ and an increasing sequence of integers $(n_j)_{j\in \mathbb N}$ such that $\rho(x,n_j)$ converges to $\rho$ as $j\to\infty$.
\end{enumerate}
\end{lem}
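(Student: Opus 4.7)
The implications $(1)\Rightarrow (2)$ and $(2)\Rightarrow (3)$ are the easy ones, and the bulk of the work lies in establishing (1) by a combinatorial Kenyon--Peres type extraction. Since each $\rho_i(x,n)$ is a Bernoulli product measure on $\widetilde X_i$, the variational characterisation of $P_i(\theta)=P^{\vec{\boldsymbol\gamma}^i}(\theta\phi_i,T_i)$ gives $D_{i,\theta}(\rho_i(x,n))\le P_i(\theta)$; combined with~(1) this yields~(2). For~(3), the space $\mathcal B_i\times\prod_{j=i}^k\mathcal B_j$ endowed with $d_i$ is compact and $\widetilde D_{i,\theta}$ is continuous on it (Shannon entropy and integration against $\log \mathbb E(N^{(i)}_\cdot)$ are continuous functions of the probability vectors), so extracting a subsequence along which $\widetilde D_{i,\theta}(\rho(x,n_j))$ approaches its liminf and $\rho(x,n_j)\to \rho$ provides an element $\rho\in\mathcal R_i$.

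To prove (1), I would first cancel the common term $\widetilde\gamma_i h_{\rho_i(x,n)}(T_i)$ and write $\widetilde D_{i,\theta}(\rho(x,n))-D_{i,\theta}(\rho_i(x,n))=\alpha_n+\sum_{j=i+1}^k\beta_{j,n}$, where $\alpha_n=\widetilde\gamma_i\theta\sum_{b\in\widetilde{\mathcal A}_i}(\widetilde\rho_i([b])-\rho_i(x,n)([b]))\log\mathbb E(N^{(i)}_b)$ and $\beta_{j,n}=\gamma_j(h_{\rho_j}(T_j)-h_{\Pi_{i,j}\rho_i(x,n)}(T_j))$. Setting $y=\Pi_i(x)\in\widetilde X_i$ and $S_N=\sum_{m=1}^N\log\mathbb E(N^{(i)}_{y_m})$, the term $\alpha_n$ equals $\widetilde\gamma_i\theta\bigl(S_{\lceil\theta\ell_i(n)\rceil}/\lceil\theta\ell_i(n)\rceil-S_{\ell_i(n)}/\ell_i(n)\bigr)$, a difference of values of a bounded function of $N$ at the nested scales $\lceil\theta\ell_i(n)\rceil$ and $\ell_i(n)$. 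Each $\beta_{j,n}$ is a difference of Shannon entropies of empirical distributions of nested portions of $z_j=\Pi_{i,j}(y)$; applying the concavity of $H$ to the decomposition of the empirical of $z_j[1,\ell_j(n)]$ as the convex combination of $\Pi_{i,j}\rho_i(x,n)$, the intermediate window empiricals $\Pi_{j',j}\rho_{j'}$ for $i<j'<j$, and $\rho_j$, with weights $\ell_i(n)/\ell_j(n)$, $(\ell_{j'}(n)-\ell_{j'-1}(n))/\ell_j(n)$, and $(\ell_j(n)-\ell_{j-1}(n))/\ell_j(n)$ respectively, one rewrites it as a combination of differences of bounded sequences evaluated at nested scales.

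The main ingredient is then a slight generalisation of the Kenyon--Peres combinatorial lemma: for any bounded sequence $(a_N)_{N\ge 1}$ and any $\theta\in (0,1]$ one has $\liminf_N(a_{\lceil\theta N\rceil}-a_N)\le 0$, since otherwise iterating $k$ times along the map $N\mapsto\lceil\theta N\rceil$ would force $a$ to diverge. A joint application of this principle to $\alpha_n$ and to each $\beta_{j,n}$, via the bounded sequences coming from the entropy decompositions above, together with a diagonal extraction over $j=i+1,\ldots,k$, produces a subsequence $(n_m)$ along which every contribution is $\le o(1)$, yielding~(1). The main obstacle is that the Shannon entropy is not a Birkhoff sum, so handling the $\beta_{j,n}$ requires the detour through the concavity identity together with a careful bookkeeping of the various scales $\ell_i(n),\ldots,\ell_k(n)$, all of which diverge at controlled linear rates in $n$; once this reduction is in place, the iterative Kenyon--Peres argument applies uniformly.
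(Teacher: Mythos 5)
Your handling of $(1)\Rightarrow(2)$ (via $D_{i,\theta}(\rho_i(x,n))\le P_i(\theta)$ for Bernoulli measures) and of $(2)\Rightarrow(3)$ (compactness of $\mathcal B_i\times\prod_{j=i}^k\mathcal B_j$ and continuity of $\widetilde D_{i,\theta}$) is fine and matches the paper. The genuine gap is in step (1), at the combinatorial heart of the argument. The single-sequence principle $\liminf_N\bigl(a_{\lceil\theta N\rceil}-a_N\bigr)\le 0$ is correct, but what is needed is the \emph{joint} statement: for bounded sequences $u_1,\dots,u_J$ with $u_j(n+1)-u_j(n)\to 0$ and scale pairs $(\alpha_j,\beta_j)$, one has $\liminf_{n\to\infty}\sum_{j=1}^J\bigl(u_j(\lceil\beta_j n\rceil)-u_j(\lceil\alpha_j n\rceil)\bigr)\le 0$. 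This is exactly \cite[Lemma 5.4]{FH}, extending \cite[Lemma 4.1]{KP}, and it does \emph{not} follow from the one-term statement applied to each $j$ together with ``diagonal extraction'': the liminf of a sum is not the sum of the liminfs, and the subsequences along which the individual differences are small may be pairwise disjoint. Your stronger claim --- a common subsequence along which \emph{every} contribution is $\le o(1)$ --- is more than is true in general (the lemma only controls the sum; individual terms can stay bounded away from $0$ with compensating signs). The proof of the joint lemma is an averaging/telescoping argument over ranges of $n$, not an iteration of $N\mapsto\lceil\theta N\rceil$ performed term by term; without it, (1) is not established.

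There are also bookkeeping errors in your reduction. There is no common term $\widetilde\gamma_i h_{\rho_i(x,n)}(T_i)$ to cancel: in $\rho(x,n)$ the $i$-th component is $\rho_i(x,\ell_i(n))$, not $\rho_i(x,n)$, so the level-$i$ entropies enter at the two scales $\ell_i(n)$ and $n$, and the difference $\widetilde\gamma_i\bigl(h_{\rho_i(x,\ell_i(n))}(T_i)-h_{\rho_i(x,n)}(T_i)\bigr)$ must be kept; it is precisely one of the scale-difference terms fed into the combinatorial lemma. Similarly, the convex decomposition of the level-$j$ empirical measure over $[1,\ell_j(n)]$ has first chunk $\Pi_{i,j}\rho_i(x,\ell_i(n))$ with weight $\ell_i(n)/\ell_j(n)$ (not $\Pi_{i,j}\rho_i(x,n)$), and concavity compares the windows against $h_{\rho_j(x,\ell_j(n))}(T_j)$, not against $h_{\rho_j(x,n)}(T_j)$; the leftover scale changes $h_{\rho_j(x,\ell_j(n))}(T_j)-h_{\rho_j(x,n)}(T_j)$ must again be carried into the lemma, exactly as in \eqref{rewriting}. (Your $\alpha_n$ is also internally inconsistent: with your own definition it equals $\widetilde\gamma_i\theta\bigl(S_{\lceil\theta\ell_i(n)\rceil}/\lceil\theta\ell_i(n)\rceil-S_n/n\bigr)$, not $\cdots-S_{\ell_i(n)}/\ell_i(n)$.) These slips are repairable and your overall route is the paper's route (reduce to a sum of differences of bounded, slowly varying sequences at linearly growing scales), but the decisive ingredient --- the joint Kenyon--Peres/Feng--Huang lemma --- is misquoted and replaced by an argument that does not work.
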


\begin{proof}[Proof of Theorem~\ref{VPUB}] It follows from Lemma~\ref{comblem}(3) that given $\epsilon>0$, for all $x\in \widetilde X_1$, there exists $1\le m\le M(\epsilon)$ such that  $\Pi_i(x)$ belongs to $\bigcup_{U\in\mathcal R_i(\epsilon, m, n)}B_U$ for infinitely many integers $n$. As a result,  for all $N\in\mathbb N^*$,  we get the following  covering of $K$:
$$
K\subset \bigcup_{n\ge N} \bigcup_{m=1}^{M(\epsilon)} \bigcup_{U\in\mathcal R_i(\epsilon, m, n)}\bigcup_{B\in \mathcal B(U)} B.
$$
Thus, given $s>0$, the pre-Hausdorff measure $\mathcal H^s_{e^{-\frac{n}{\gamma_1}}}$ of $K$ is bounded as follows:
$$
 \mathcal H^s_{e^{-\frac{N}{\gamma_1}}}(K) \le \sum_{n\ge N} \sum_{m=1}^{M(\epsilon)}\sum_{U\in\mathcal R_i(\epsilon, m, n)} N^{(i)}_{{U_i}_{|\lceil \theta \ell_i(n)\rceil}} e^{-\frac{n}{\gamma_1}s}.
 $$
Consequently, denoting by $(U_i)_\ell$ the $\ell$-th letter of $U_i$,
\begin{align*}
\mathbb{E}\Big ( \mathcal H^s_{e^{-\frac{N}{\gamma_1}}}(K)\Big )& \le \sum_{n\ge N} e^{-\frac{n}{\gamma_1}s} \sum_{m=1}^{M(\epsilon)}\sum_{U\in\mathcal R_i(\epsilon, m, n)} \mathbb{E}(N^{(i)}_{{U_i}_{|\lceil \theta \ell_i(n)\rceil}}) \\
&=\sum_{n\ge N} e^{-\frac{n}{\gamma_1}s} \sum_{m=1}^{M(\epsilon)}\sum_{U\in\mathcal R_i(\epsilon, m, n)}\prod_{\ell=1}^{\lceil \theta \ell_i(n)\rceil} \mathbb{E} (N^{(i)}_{(U_i)_\ell})
\end{align*}
and using the definition of $\widetilde\rho_i(U)$ to re-express the right hand side of the last inequality, we obtain
$$
\mathbb{E}\Big ( \mathcal H^s_{e^{-\frac{N}{\gamma_1}}}(K)\Big )\le \sum_{n\ge N} e^{-\frac{n}{\gamma_1}s} \sum_{m=1}^{M(\epsilon)}\sum_{U\in\mathcal R_i(\epsilon, m, n)}\exp \Big (\lceil \theta \ell_i(n)\rceil \sum_{b\in\mathcal A_i} \widetilde \rho_i(U)([b])\log \mathbb E(N^{(i)}_b) \Big ).
$$
Now we use the fact that $U\in \mathcal R_i(\epsilon, m, n)$ means that $d(\rho(U), \rho^{(m)})\le \epsilon$, to get a constant $C_i$ independent of $m$, $U$ and $n$ such that
$$
\sum_{b\in\mathcal A_i} \widetilde \rho_i(U)([b])\log \mathbb E(N^{(i)}_b)\le  C_i\epsilon+ \sum_{b\in\mathcal A_i} \widetilde \rho_i^{(m)}([b])\log \mathbb E(N^{(i)}_b).
$$
We then obtain:
\begin{align*}
\mathbb{E}\Big ( \mathcal H^s_{e^{-\frac{N}{\gamma_1}}}(K)\Big )&\le \sum_{n\ge N} e^{-\frac{n}{\gamma_1}s} e^{\lceil \theta \ell_i(n)\rceil C_i\epsilon}\\&\quad \quad\quad\quad\quad\quad\quad\cdot   \sum_{m=1}^{M(\epsilon)} (\#\mathcal R_i(\epsilon, m, n)) \exp \big (\lceil \theta \ell_i(n)\rceil \sum_{b\in\mathcal A_i} \widetilde \rho^{(m)}_i([b])\log \mathbb E(N^{(i)}_b) \big ).
\end{align*}
Using \eqref{cardRim}, the fact that $|\ell_j(n)-\frac{\gamma_1+\ldots+\gamma_j}{\gamma_1}n|\le 1$ for all $1\le j\le k$, as well as the definition of $\widetilde D_{i,\theta}(\rho^{(m)})$, we deduce that there exists a constant $\widehat C_i$ such that for all $1\le m\le M(\epsilon)$:
\begin{align*}
&(\#\mathcal R_i(\epsilon, m, n)) \exp \big (\lceil \theta \ell_i(n)\rceil \sum_{b\in\mathcal A_i} \widetilde \rho^{(m)}_i([b])\log \mathbb E(N^{(i)}_b) \big )  \\&\le \widehat C_i \exp \big (\ell_k(n)\epsilon \log(2/\epsilon)\big ) \exp \Big (\frac{n}{\gamma_1} \widetilde D_{i,\theta}(\rho^{(m)})\Big )\\
&\le \widehat C_i \exp \big (\ell_k(n)\epsilon  \log(2/\epsilon)\big ) \exp \Big (\frac{n}{\gamma_1}P_i(\theta)\Big ) \quad(\text{recall that $\rho^{(m)}\in\mathcal R_i$}).
\end{align*}
Upon taking $C_i=\widehat C_i$ big enough, we conclude that
$$
\mathbb{E}\Big ( \mathcal H^s_{e^{-\frac{N}{\gamma_1}}}(K)\Big )\le C_iM(\epsilon)\sum_{n\ge N} \exp \Big (-\frac{n}{\gamma_1}\big (s- P_i(\theta)-C_i\epsilon \log(2/\epsilon)\big)\Big ).
$$
If $s>P_i(\theta)+C_i\epsilon \log(1/\epsilon)$, this yields
$
\mathbb{E}\Big (\sum_{N\ge 1} \mathcal H^s_{e^{-\frac{N}{\gamma_1}}}(K)\Big )<\infty$, so
 $\lim_{N\to\infty} \mathcal H^s_{e^{-\frac{N}{\gamma_1}}}(K)=0$ and $\dim_H K\le s$ almost surely. Since this holds for any  given small enough $\epsilon>0$ , we get $\dim_H K\le P_i(\theta)$ almost surely.

The previous upper bound is easily seen to hold simultaneously for all $2\le i\le k$ and $\widetilde\theta_i\le  \theta\le 1$ since its holds simultaneously for all $2\le i\le k$ and rational $\widetilde\theta_i\le \theta\le 1$, and the mappings $\theta\mapsto P_i(\theta)$ are continuous. This yields Theorem~\ref{VPUB}.\end{proof}

\begin{proof}[Proof of Lemma~\ref{comblem}] That $(1)\Rightarrow (2) $ follows from the fact that $P_i(\theta)=\max\{D_{i,\theta}(\nu_i):\nu_i\in\mathcal B_i\}$, and $(2)\Rightarrow (3) $ is immediate.

Let $x\in X_1$. To prove (1), we are going to show that there exists $J\in\mathbb N^*$, as well as $J$ bounded sequences $u_j:\mathbb N^*\to \R$  such that $\lim_{n\to\infty}u_j(n+1)- u_j(n)=0$, and $J$ couples $(\alpha_j,\beta_j)\in \mathbb R_+^*$ such that for all $n\in\mathbb N^*$,
\begin{equation}\label{rewriting}
\widetilde D_{i,\theta} (\rho(x,n))-D_{i,\theta}(\rho_i(x,n))\le \epsilon_n+\sum_{j=1}^J u_j(\lceil \beta_j n\rceil)-u_j(\lceil \alpha_j n\rceil),
\end{equation}
with $\lim_{n\to\infty} \varepsilon_n=0$. The desired conclusion is then a direct application of \cite[Lemma 5.4]{FH}, which is a slight extension of the combinatorial lemma used by Kenyon and Peres \cite[Lemma 4.1]{KP}.

To prove \eqref{rewriting},  noting that ${\Pi_{i,j}}\rho_i(x,n)= \rho_j(x,n)$ for all $i\le j\le k$, and using the respective definitions of $\widetilde D_{i,\theta}$ and $D_{i,\theta}$, we can write, after defining the sequences $v_i(n)= \widetilde \gamma_{i} \theta \sum_{b\in\mathcal A_i} \rho(x,n)([b])\log \mathbb E(N^{(i)}_b)$ and $w_j(n)={\gamma}_{j} h_{\rho_j (x,n)}(T_j)$:
\begin{align*}
\widetilde D_{i,\theta} (\rho(x,n))-D_{i,\theta}(\rho_i(x,n))
&=v_i(\lceil \theta \ell_i(n)\rceil)-v_i(n)+\sum_{j=i}^k w_j(\ell_j(n))-w_j(n)\\
&\quad+\sum_{j=i+1}^k {\gamma}_{j}(h_{\rho_j (T_1^{\ell_{j-1}(n)}x,\ell_j(n)-\ell_{j-1}(n))}(T_j)-h_{\rho_j (x,\ell_j(n))}(T_j)).
\end{align*}
Note that each $u\in \{v_i, w_i,\ldots,w_k\}$ is bounded and  does satisfy  $\lim_{n\to\infty}u(n+1)- u(n)=0$. Also, we have
\begin{align}
\nonumber& h_{\rho_j (T_1^{\ell_{j-1}(n)}x,\ell_j(n)-\ell_{j-1}(n))}(T_j)-h_{\rho_j (x,\ell_j(n))}(T_j)\\
\label{comph}&\le \frac{\ell_{j-1}(n)}{\ell_j(n)-\ell_{j-1}(n)} (h_{\rho_j (x,\ell_j(n))}(T_j)-h_{\rho_j (x,\ell_{j-1}(n))}(T_j)).
\end{align}
To see this, note that denoting $\rho_j (x,\ell_j(n))$, $\rho_j (x,\ell_{j-1}(n))$ and $\rho_j (T_1^{\ell_{j-1}(n)}x,\ell_j(n)-\ell_{j-1}(n))$ by $\nu_j$, $\nu'_j$ and $\nu_j''$ respectively,  \eqref{comph} is equivalent to
\begin{equation}\label{comph2}
(\ell_j(n)-\ell_{j-1}(n))\, h_{\nu_j''}(T_j)+\ell_{j-1}(n) \,h_{\nu'_j}(T_j)\le \ell_j(n) \,h_{\nu_j}(T_j).
\end{equation}
However, by definition, for all $b\in\mathcal A_j$ we have
$$
\nu_j([b])= \frac{\ell_{j-1}(n)}{\ell_{j}(n)} \nu'_j ([b])+\frac{\ell_{j}(n)-\ell_{j-1}(n)}{\ell_{j}(n)} \nu''_j ([b]).
$$
Thus, \eqref{comph2}, and  then \eqref{comph}, follow from the concavity of $x\ge 0\mapsto -x\log(x)$ and the fact that   $h_\nu(T_j)=-\sum_{b\in \mathcal A_j} \nu([b])\log([b])$ for $\nu\in\{\nu_j, \nu'_j,\nu_j''\}$.

Setting $\alpha_j= \frac{\widetilde \gamma_{j}}{\gamma_1}$,  \eqref{comph} implies that
\begin{align*}
&{\gamma}_{j}(h_{\rho_j (T_1^{\ell_{j-1}(n)}x,\ell_j(n)-\ell_{j-1}(n))}(T_j)-h_{\rho_j (x,\ell_j(n))}(T_j))\\
&\le \frac{\widetilde \gamma_{j-1}}{\gamma_j} (w_j(\lceil \alpha_j n\rceil)-w_j(\lceil \alpha_{j-1} n\rceil)) +o(1).
\end{align*}
Moreover, $v_i(\lceil \theta \ell_i(n)\rceil)-v_i(n)= v_i(\lceil \theta \alpha_i n\rceil)-v_i(n)+o(1)$. Finally \eqref{rewriting} holds.
\end{proof}

\section{The box counting dimension of $K$. Proofs of Theorem~\ref{thmdimB} and Corollary~\ref{cor1}}

Recall that for all $u\in\N^*$ and  $u\in \mathcal A_1^*$, $(K^u,K^u_n)$ denotes the copie of $(K,K_n)$ generated by the random sets $(A(u\cdot a))_{a\in\mathcal A_1^*}$ (see \eqref{Ka}).

\begin{proof}[Proof of Theorem~\ref{thmdimB}] Here again, without loss of generality we assume that all the $\gamma_i$ are positive.

We will use in an essential way the result established in \cite[Section 4]{GL94}, which deals with the case where $k=2$, $m_1=e^{-\gamma_1}$ and $m_2=e^{-(\gamma_1+\gamma_2)}$ are integers, and with the Euclidean realisation of $K$.  It is worth noting that this result is strongly based on a result by Dekking on the asymptotic behaviour of the survival probability of a branching process in a random environment \cite{Dek}.

We first need to describe the balls of radius $e^{-\frac{n}{\gamma_1}}$ which intersect~$K$. For $n\in\mathbb N^*$, we saw that the set $\mathcal F_n$ of balls in $X_1$ of radius $e^{-\frac{n}{\gamma_1}}$ equals the set $\{B_U:U=(U_1,\ldots, U_k)\in \prod_{i=1}^k\mathcal A_i^{\ell_i(n)-\ell_{i-1}(n)}\}$, where
$$
B_U=\{y\in X_1:\, \Pi_i(T_1^{\ell_{i-1}(n)}(y))_{|\ell_i(n)-\ell_{i-1}(n)}=U_i, \ \forall\, 1\le i\le k\}.
$$
Thus $B_U\cap K\neq\emptyset$ if and only if the  event
$$
E_U=\left\{\exists\, (u_i)_{1\le i\le k}\in \prod_{i=1}^k \mathcal A_1^{\ell_i(n)-\ell_{i-1}(n)}: \ \text{both }F^{U}_k(u_1,\ldots, u_k)\text{ and } K^{u_1u_2\cdots u_k}\neq\emptyset\text{ hold}\right\}
$$
holds, where for all $1\le i\le k$
$$
F^{(U_1,U_2,\ldots,U_i)}_i(u_1,\ldots, u_i)=\left\{\begin{array}{cl}
 &\Pi_j(u_j)=U_j, \ \forall\, 1\le j\le i,\\
 &[u_j]\cap K_{\ell_j(n)-\ell_{j-1}(n)}^{u_1u_2\cdots u_{j-1}}\neq\emptyset , \ \forall\,  1\le j\le i\\
 \end{array}\right\}
$$
(note that necessarily $u_1=U_1$).

For $2\le i\le k$ and $(U_1,\ldots, U_{i})\in \prod_{j=1}^{i}\mathcal A_j^{\ell_j(n)-\ell_{j-1}(n)}$, we set
$$
E_i(U_1,\ldots, U_i)=\left \{\exists\, (u_j)_{1\le j\le i}\in \prod_{j=1}^i \mathcal A_j^{\ell_j(n)-\ell_{j-1}(n)}: \ F^{(U_1,U_2,\ldots,U_i)}_i(u_1,\cdots, u_i)\text{ holds }\right \}
$$
(note that $E_1(U_1)$ is simply the event $\{[U_1]\cap K_n\neq\emptyset\}$). We deduce from \cite[Section 4]{GL94} that conditional on $K\neq\emptyset$, we have
$$
\lim_{n\to\infty}\frac{ \log\#\{(U_1,U_2)\in \mathcal A_1^n\times \mathcal A_2^{\ell_2(n)-n}:\, E_2(U_1,U_2)\text{ holds}\}}{n}= \log (\mathbb{E}(\# A))+\frac{\gamma_2}{\gamma_1}\psi_2(\widehat \theta_2).
$$
This result mainly comes from the fact that $\lim_{n\to\infty}\frac{ \log\#\{U_1\in \mathcal A_1^n:\,  E_1(U_1)\text{ holds}\}}{n}= \log (\mathbb{E}(\# A))>0$, and given $U_1\in \mathcal A_1^n$ such that $[U_1]\cap K_n\neq\emptyset$, the number of those $U_2\in  \mathcal A_2^{\ell_2(n)-n}$ such that $E_2(U_1,U_2)$ holds is a random variable  $Z_{2,\ell_2(n)-n}(U_1)$,  so that the random variables $Z_{2,\ell_2(n)-n} (U_1)$ are independent and identically distributed,  $\lim_{n\to\infty}\frac{ \log\mathbb{E}(Z_{2,\ell_2(n)-n})(U_1)}{n}= \frac{\gamma_2}{\gamma_1}\psi_2(\widehat \theta_2)>0$ and, conditional on $K^{U_1}\neq\emptyset$, $\lim_{n\to\infty}\frac{ \log Z_{2,\ell_2(n)-n}(U_1)}{n}= \frac{\gamma_2}{\gamma_1}\psi_2(\widehat \theta_2)$ almost surely.

Now for $2\le i\le k$ set
$$
s_i= \log (\mathbb{E}(\# A))+\sum_{j=2}^{i}\frac{\gamma_i}{\gamma_1}\psi_j(\widehat \theta_j).
$$
Suppose that $k\ge 3$, and for some $3\le i\le k$ we have proven that conditional on $K\neq\emptyset$, it holds that
\begin{equation}\label{box}
\lim_{n\to\infty} \frac{\displaystyle \log \#\{(U_1,\ldots, U_{i-1})\in \prod_{j=1}^{i-1}\mathcal A_j^{\ell_j(n)-\ell_{j-1}(n)}: \, E_{i-1}(U_1,\ldots, U_{i-1}) \text{ holds}\}}{n}= s_{i-1}.
\end{equation}
Given $(U_1,\ldots, U_{i-1})\in \prod_{j=1}^{i-1}\mathcal A_j^{\ell_j(n)-\ell_{j-1}(n)}$, and fixed  associated $(u_1,\ldots,u_{i-1})$ such that $F^{(U_1,U_2,\ldots,U_{i-1})}_{i-1}(u_1,\cdots, u_{i-1})$ holds, following the arguments of \cite{GL94}, the cardinality of the set of those words $U_i\in  \mathcal A_i^{\ell_i(n)-\ell_{i-1}(n)}$ such that there exists $u_i\in  \mathcal A_1^{\ell_i(n)-\ell_{i-1}(n)}$ such that $F^{(U_1,U_2,\ldots,U_i)}_i(u_1,\cdots, u_i)$ holds, is a random variable $Z_{i,\ell_i(n)-\ell_{i-1}(n)}(U_1,\ldots,U_{i-1})$ so that the  $Z_{i,\ell_i(n)-\ell_{i-1}(n)}(U_1,\ldots,U_{i-1})$ are independent, and identically distributed. Moreover, setting $\widetilde\ell_i(n)=\ell_i(n)-\ell_{i-1}(n)$, one has both $\lim_{n\to\infty}\frac{ \log\mathbb{E}(Z_{i,\tilde\ell_i(n)})(U_1,\ldots,U_{i-1})}{n}= \frac{\gamma_i}{\gamma_1}\psi_i(\widehat \theta_i)>0$ and,  conditional on $K^{u_1\cdots u_{i-1}}\neq\emptyset$,  $\lim_{n\to\infty}\frac{ \log Z_{i,\tilde \ell_i(n)}(U_1,\ldots,U_{i-1})}{n}= \frac{\gamma_i}{\gamma_1}\psi_i(\widehat \theta_i)$ almost surely.  Then, again the same reasoning as in~\cite{GL94} for the case $k=2$ with the roles of $\mathcal A_1^n$ and $\mathcal A_2^{\ell_2(n)-n}$ now respectively played by $ \prod_{j=1}^{i-1}\mathcal A_j^{\ell_j(n)-\ell_{j-1}(n)}$ and $\mathcal A_i^{\ell_i(n)-\ell_{i-1}(n)}$ shows that \eqref{box} holds for $i$ as well. Consequently, applying this to $i=k$, conditional on $K\neq\emptyset$, we get for all $n\ge 1$ an integer  $\mathcal N_n$ such that $\lim_{n\to\infty}\frac{\log \mathcal N_n} {n}= s_k$,  as well as  $ \mathcal N_n$ elements $U=(U_1,\ldots,U_k)\in \prod_{i=1}^{k}\mathcal A_i^{\ell_i(n)-\ell_{i-1}(n)}$ and associated $(u_1=U_1,u_2,\ldots,u_k)\in  \prod_{i=1}^{k}\mathcal A_1^{\ell_i(n)-\ell_{i-1}(n)}$ such that $F^{(U_1,\ldots,U_k)}_k(u_1,\ldots,u_k)$ hold. The events $\{K^{u_1u_2\cdots u_k}\neq\emptyset\}$ being independent, with the same probability $\mathbb P(K\neq\emptyset)$, and independent of the events $F^{(U_1,U_2,\ldots,U_k)}_k(u_1,u_2,\ldots,u_k)$, using \cite[Section 4]{GL94} again yields
$$
\lim_{n\to\infty} \frac{\log\#\{U\in \prod_{i=1}^k\mathcal A_i^{\ell_i(n)-\ell_{i-1}(n)}: \, E_U\text{ holds}\}}{n}=\log (\mathbb{E}(\# A))+\sum_{i=2}^{k}\frac{\gamma_i}{\gamma_1}\psi_i(\widehat \theta_i),
$$
which, after dividing  by $\gamma_1^{-1}$, is precisely $\lim_{n\to\infty} \frac{\log \#\{B\in\mathcal F_n: B\cap K\neq\emptyset\}}{-\log(e^{-\frac{n}{\gamma_1}})}$, i.e. $\dim_B K$.
\end{proof}
Next we state, using  our notation, a fact established in the proof of \cite[Corollary 3.5]{BF2016}, which is a variational approach to the dimension of projections of fractal percolation sets in a symbolic space $X_1\times X_1$ to one of its two natural factors.

\begin{pro}\label{prop4.1} Let $2\le i\le k$. With probability 1, conditional on $K\neq\emptyset$,
$$
\max\Big\{ \min (\dim_e(\mu),h_{\nu_i}(T_i)) : \mu\text{ is a Mandelbrot measure supported on $K$}\Big\}= \psi_i(\widehat\theta_i),
$$
where $\nu_i$ stands for the expectation of ${\Pi_i}(\mu)$. Moreover the maximum is uniquely reached if and only if $\widehat \theta_i >0$ or $\widehat \theta_i =0$ and $\psi_i'(\widehat \theta_i)=0$. In any case, when the maximum is reached, one has $\nu_i=\nu_{i,\widehat\theta_i}$, where
$$
\nu_{i,\theta}([b])=\mathbb{E}(N_b^{(i)})^\theta/\sum_{b'\in\mathcal A_i}\mathbb{E}(N_{b'}^{(i)})^\theta .
$$
Also, if $\widehat\theta_i>0$, or $\widehat \theta_i =0$ and $\psi_i'(\widehat \theta_i)=0$, then $\dim_e(\mu)=h_{\nu_i}(T_i)$ for the unique $\mu$ at which the maximum is reached, and if $\widehat \theta_i =0$ and $\psi_i'(\widehat \theta_i)>0$, then  $\dim_e(\mu)>h_{\nu_i}(T_i)$ for $\mu$ at which the maximum is reached.
\end{pro}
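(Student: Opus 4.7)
My plan is to reduce the maximisation over Mandelbrot measures to an optimisation over the Bernoulli product measures $\mathcal B_i$ on $\widetilde X_i$, and then to solve this via the min-max machinery of Corollary~\ref{cor-1}--\ref{cor-2}. First, Theorem~\ref{projmu} rewrites $\min(\dim_e(\mu),h_{\nu_i}(T_i))$ as $\dim_e({\Pi_i}\mu)$. For any fixed $\nu_i=\mathbb E({\Pi_i}\mu)\in \mathcal B_i$, Remark~\ref{rem0} asserts that $\dim_e(\mu)\le \dim_e(\mu_{\nu_i})$, with equality if and only if $\mu=\mu_{\nu_i}$, where $\mu_{\nu_i}$ is the canonical Mandelbrot measure built from $\nu_i$ via the construction of Section~\ref{Mandmeas}; the identity \eqref{relatentr} gives $\dim_e(\mu_{\nu_i})=h_{\nu_i}(T_i)+\nu_i(\widetilde\phi_i)$, where $\widetilde\phi_i(x)=\log \mathbb E(N^{(i)}_{x_1})$. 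The problem thus reduces to
\begin{equation*}
\sup_{\nu_i\in\mathcal B_i}\min\bigl(g_1(\nu_i),g_2(\nu_i)\bigr),\qquad g_1(\nu_i):=h_{\nu_i}(T_i)+\nu_i(\widetilde\phi_i),\quad g_2(\nu_i):=h_{\nu_i}(T_i).
\end{equation*}

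Both $g_1,g_2$ are strictly concave and continuous on the compact convex set $\mathcal B_i\cong \mathcal P(\widetilde{\mathcal A}_i)$. Corollary~\ref{cor-1}, combined with the standard variational characterisation of $\psi_i(t)$ as the pressure of the locally constant potential $t\widetilde\phi_i$ on $\widetilde X_i$, yields
\begin{equation*}
\sup_{\nu_i\in\mathcal B_i}\min(g_1,g_2)=\inf_{t\in[0,1]}\sup_{\nu_i\in\mathcal B_i}\bigl(h_{\nu_i}(T_i)+t\,\nu_i(\widetilde\phi_i)\bigr)=\inf_{t\in[0,1]}\psi_i(t)=\psi_i(\widehat\theta_i),
\end{equation*}
the inner supremum being uniquely attained at the Bernoulli product measure $\nu_{i,t}$. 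Since the $g_j$ are strictly concave, Corollary~\ref{cor-2}(ii) identifies $\nu_i^\star:=\nu_{i,\widehat\theta_i}$ as the unique element of $\mathcal B_i$ realising $\sup\min(g_1,g_2)$.

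Explicit computation from the definition of $\nu_{i,t}$ gives $h_{\nu_{i,t}}(T_i)=\psi_i(t)-t\psi_i'(t)$ and $\dim_e(\mu_{\nu_{i,t}})=\psi_i(t)+(1-t)\psi_i'(t)$, so $\dim_e(\mu_{\nu_{i,t}})-h_{\nu_{i,t}}(T_i)=\psi_i'(t)$ governs which branch of $\min(g_1,g_2)$ is active at $\nu_i^\star$. When $\widehat\theta_i>0$, or $\widehat\theta_i=0$ with $\psi_i'(0)=0$, the minimality of $\widehat\theta_i$ forces $\psi_i'(\widehat\theta_i)\le 0$, so $\dim_e(\mu_{\nu_i^\star})\le h_{\nu_i^\star}(T_i)$ and the maximum value $\psi_i(\widehat\theta_i)$ equals $\dim_e(\mu_{\nu_i^\star})$; any competing maximiser $\mu$ must satisfy $\mathbb E({\Pi_i}\mu)=\nu_i^\star$ and $\dim_e(\mu)=\dim_e(\mu_{\nu_i^\star})$, and the equality case of Remark~\ref{rem0} forces $\mu=\mu_{\nu_i^\star}$, with $\dim_e(\mu)=h_{\nu_i^\star}(T_i)$ holding exactly when $\psi_i'(\widehat\theta_i)=0$. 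In the remaining case $\widehat\theta_i=0$ and $\psi_i'(0)>0$, one has $\dim_e(\mu_{\nu_i^\star})>h_{\nu_i^\star}(T_i)=\psi_i(0)$, so every Mandelbrot measure $\mu$ with $\mathbb E({\Pi_i}\mu)=\nu_i^\star$ and $\dim_e(\mu)\ge h_{\nu_i^\star}(T_i)$ attains the maximum, yielding non-uniqueness together with $\dim_e(\mu)>h_{\nu_i^\star}(T_i)$.

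The principal obstacle I anticipate is ensuring that $\mu_{\nu_i^\star}$ is non-degenerate and supported on $K$: this requires $\psi_i(\widehat\theta_i)>0$ (which holds whenever the claimed positive maximum exists, since otherwise the branching measure already exhibits a strictly positive competitor) and the full support of $\nu_i^\star$ on $\widetilde{\mathcal A}_i$, from which $K_{\mu_{\nu_i^\star}}=K$ almost surely on $\{K\ne\emptyset\}$ follows via the last claim of Remark~\ref{rem1}. A secondary technical point is verifying that Corollary~\ref{cor-2}(ii) can be applied in the two-dimensional setting $\ell=2$, where the strict concavity of the entropy function on $\mathcal P(\widetilde{\mathcal A}_i)$ is exactly what is needed to propagate uniqueness of $\nu_i^\star$ from the inner variational problem to the outer min-max.
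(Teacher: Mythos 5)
The paper gives no internal proof of Proposition~\ref{prop4.1}: it is imported from the proof of \cite[Corollary 3.5]{BF2016}. Your argument is therefore necessarily a different, self-contained route, and it is sound in substance: you specialise the machinery the paper builds for Theorem~\ref{optim} --- the fibre-wise entropy maximisation of Remark~\ref{rem0} (so that for fixed $\nu_i=\mathbb{E}({\Pi_i}\mu)$ the best value of $\dim_e(\mu)$ is $h_{\nu_i}(T_i)+\nu_i(\widetilde\phi_i)$, attained only by $\mu_{\nu_i}$) together with the Ky Fan Corollaries~\ref{cor-1} and~\ref{cor-2} --- to the two strictly concave potentials $g_1(\nu)=h_\nu(T_i)+\nu(\widetilde\phi_i)$ and $g_2(\nu)=h_\nu(T_i)$ on $\mathcal P(\widetilde{\mathcal A}_i)$; the identity $P(t,1-t)=\psi_i(t)$ with unique equilibrium $\nu_{i,t}$ then yields both the value $\psi_i(\widehat\theta_i)$ and the identification $\nu_i=\nu_{i,\widehat\theta_i}$ at any maximiser. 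This has the merit of exhibiting the Proposition as the $\ell=2$ shadow of the paper's own optimisation framework rather than an external citation. One cosmetic point: the opening appeal to Theorem~\ref{projmu} is unnecessary and would impose the moment hypothesis $T(q)>-\infty$ for some $q>1$ on every competitor, which the statement does not assume; since the quantity maximised is literally $\min(\dim_e(\mu),h_{\nu_i}(T_i))$ and you never use the identification afterwards, drop it.

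Two places need tightening. First, in the non-uniqueness case $\widehat\theta_i=0$, $\psi_i'(0)>0$, it is not enough to observe that every Mandelbrot measure $\mu$ with $\mathbb{E}({\Pi_i}\mu)=\nu_{i,0}$ and $\dim_e(\mu)\ge h_{\nu_{i,0}}(T_i)$ attains the maximum; you must exhibit a second such measure besides $\mu_{\nu_{i,0}}$. This is easy --- $\psi_i'(0)>0$ forces $\mathbb{E}(N_b^{(i)})>1$ for some $b\in\widetilde{\mathcal A}_i$, so the fibre over that $b$ contains at least two usable letters and one can slightly perturb the conditional vector $\widetilde V^{(i)}_b$ while preserving $\mathbb{E}({\Pi_i}\mu)=\nu_{i,0}$ and, by continuity of $T_W'(1)$, keeping $\dim_e(\mu)>h_{\nu_{i,0}}(T_i)$, exactly as the paper does in the proof of Theorem~\ref{dim PiK} --- but it should be said. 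Second, your own formula $\dim_e(\mu_{\nu_{i,t}})-h_{\nu_{i,t}}(T_i)=\psi_i'(t)$ shows that at the endpoint $\widehat\theta_i=1$ with $\psi_i'(1)<0$ (e.g.\ all $\mathbb{E}(N_b^{(i)})<1$) the unique maximiser satisfies $\dim_e(\mu)<h_{\nu_i}(T_i)$; thus your conclusion ``equality exactly when $\psi_i'(\widehat\theta_i)=0$'' is the computationally correct one but does not coincide with the Proposition's literal last sentence when $\widehat\theta_i=1$. This is a looseness of the statement (harmless for its uses in Corollaries~\ref{cor1} and~\ref{cor2}, which only invoke that sentence when $\widehat\theta_i<1$), but it should be flagged explicitly rather than passed over, since as written your proof does not establish the claim in that endpoint case.
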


\begin{proof}[Proof of Corollary~\ref{cor1}] It follows from \eqref{maj dime mu} and Proposition~\ref{prop4.1} Êthat  for $\dim_H K=\dim_B(K)$ to hold almost surely, conditional on $K\neq\emptyset$, the Mandelbrot measure $\mu$ of maximal dimension supported on $K$ must satisfy $\dim_e(\mu)=\log \mathbb E(\# A)$ and $\min (\dim_e(\mu),h_{\nu_i}(T_i))= \psi_i(\widehat \theta_i)$ for all $i\in I$.

The condition  $\dim_e(\mu)=\log \mathbb E(\# A)$ implies that $\mu$ is the so called branching measure, i.e. it is obtained from the random vector $(\mathbf{1}_{A}(a) /\mathbb E(\# A))_{a\in\mathcal A_1}$. The other condition implies that for all $2\le i\le k$, we have $\nu_i=\nu_{i,\widehat\theta_i}$. Since $\mu$ is the branching measure, this implies that $\mathbb E(N_b^{(i)})^{\widehat \theta_i}/ e^{\psi_i(\widehat \theta_i)}=\mathbb E(N_b^{(i)})/\mathbb E(\# A)$ for all $b\in \widetilde{\mathcal A}_i$, hence $\mathbb E(N_b^{(i)})^{\widehat \theta_i -1}$ does not depend on $b\in \widetilde{\mathcal A}_i$. This is a non trivial condition only if $\widehat\theta_i<1$. This proves the necessity of the condition given in the statement.

Now assume that $\mathbb E(N_b^{(i)})$ does not depend on $b\in \widetilde{\mathcal A}_i$ for all $i\in I$ such that $\widehat\theta_i<1$. Suppose first that there is no $i\in I$ such that $\widehat\theta_i<1$, i.e. $\widehat\theta_i=1$ for all $i\in I$. By the remark made above, the branching measure $\mu$ does satisfy $\dim_e^{\vec{\boldsymbol\gamma}}(\mu)=(\gamma_1+\cdots+\gamma_k)\dim_e(\mu)= \dim_B(K)$.  Next, suppose that $\widehat\theta_i<1$ for some $i\in I$. Again, consider the branching measure $\mu$. Since $\mathbb E(N_b^{(i)})$ does not depend on $b\in \widetilde{\mathcal A}_i$ we do have  $\nu_i=\nu_{i,\widehat\theta_i}$, so that $\min (\dim_e(\mu),h_{\nu_i}(T_i))=h_{\nu_i}(T_i)=h_{\nu_{i,\widehat \theta_i}}(T_i)=\psi_i(\widehat\theta_i)$. This yields again $\dim^{\vec{\boldsymbol\gamma}}_e (\mu)=\dim_B K$.
\end{proof}

\section{Projections of $K$ and $\mu$ to factors of $X_1$. Proofs of Theorems~\ref{thmdim2},~\ref{dim PiK} and~\ref{dimB PiK}, and Corollary~\ref{cor2}}

The proofs will be sketched.

\begin{proof}[Sketch of the proof of  Theorem~\ref{thmdim2}] For all $n\ge 1$, denote by $\mathcal F_n^i$ the set of balls of $X_i$ of radius $e^{-\frac{n}{\gamma_1}}$. Let $j_0=\max\{i\le j\le k: T'(1)\le \mathcal T'_{\nu_j}(1)\}$, with the convention $\max (\emptyset)=i-1$. Computations similar to those used to prove Theorem~\ref{thmdim} yield  $q_0>1$ and $c_0\ge 0$ such that for all $q\in (0,q_0]$ we have
\begin{equation*}
\mathbb{E}\Big (\sum_{B\in\mathcal F_n^i} {\Pi_i}\mu(B)^q\Big )=O\big (\exp  (-t(j_0,q,n)) \big )\quad \text{as $n\to\infty$,}
\end{equation*}
where
$$
t(j_0,q,n)=
\begin{cases}
\ell_{j_0}(n)(T(q)-c_0(q-1)^2)+\sum_{j=j_0+1}^k (\ell_{j}(n)-\ell_{j-1}(n))\mathcal T_{\nu_j}(q)&\text{if } j_0\ge i\\
\ell_{i}(n) \mathcal T_{\nu_i}(q)+\sum_{j=i+1}^k (\ell_{j}(n)-\ell_{j-1}(n))\mathcal T_{\nu_j}(q)&\text{otherwise}.
\end{cases}
$$
This is enough to get the differentiability of $\tau_{{\Pi_i}\mu}$ at $1$ with  $\tau_{{\Pi_i}\mu}'(1)$ equal to $\dim^{{{\vec{\boldsymbol\gamma}}^i}}_e({\Pi_i}\mu)$, and conclude.
\end{proof}

\begin{proof}[Sketch of the proof of Theorem~\ref{dim PiK}] We start with the lower bound.

If we proceed as in Section~\ref{optim}, we have to consider
$$
M^{\vec{\boldsymbol\gamma}^i}=\max \left\{ \sum_{j=i}^k {\boldsymbol\gamma}^i_j\min\{ H(\nu)+\nu(\widetilde{\phi}), H(\Pi_i\nu)\}: \; \nu\in {\mathcal B}_1\right\}.
$$
Define $\widetilde g_i:\; \mathcal B_1\to \R$ ($\mathcal B_1$ being still identified with $\mathcal P(\widetilde{\mathcal A}_1)$) by
\begin{equation*}
\widetilde g_i(\nu)=(\gamma_1+\ldots+\gamma_i) H(\Pi_i\nu)+\sum_{j=i+1}^k \gamma_j H(\Pi_j \nu)=h^{\vec{\boldsymbol\gamma}^i}_{\Pi_i\nu}(T_i).
\end{equation*}
Note that $\widetilde g_i$ is strictly concave only when considered as a function of $\Pi_i\nu$.

We have, remembering the definition \eqref{gi} of the mappings $g_j$,
$$
M^{\vec{\boldsymbol\gamma}^i}=\sup_{\nu\in \mathcal B_1} \min (\widetilde g_i(\nu),g_i (\nu),\ldots, g_k(\nu)).
$$
To express this supremum in terms of the pressure functions $P_j(\cdot)$, $i\le j\le k$,  we have to consider the mapping  $\widetilde P:\Delta_{k-i+2}\to\R$ defined by
$$
\widetilde P(\widetilde q_i,q_i,\ldots, q_k)=\sup_{\nu\in \mathcal B_1}\widetilde q_i \widetilde g_i(\nu)+q_ig_i(\nu)+\ldots+q_k g_k(\nu).
$$
Corollary \ref{cor-1} yields
$$M^{\vec{\boldsymbol\gamma}^i}=\min_{(\widetilde q_i,q_i,\ldots, q_k)\in \Delta_{k-i+2}} \widetilde P(\widetilde q_i,q_i,\ldots, q_k).
$$
Let $q^*=(\widetilde q_i^*,q_i^*,\ldots, q_k^*)$ at which the minimum is attained, and $\nu\in\mathcal B_1$ such that $\widetilde P(q^*)=\widetilde q_i^* \widetilde g_i(\nu)+q_i^*g_i(\nu)+\ldots+q_k ^*g_k(\nu)$. Due to Remark~\ref{rem2}, one has
$$\widetilde P(q^*)=\sup_{\nu_i\mathcal B_i} \widetilde q_i ^*\widetilde f_i(\nu_i)+q_i^*f_i(\nu_i)+\ldots+q_k^* {\color{red} f}_k(\nu_i),$$
where $\widetilde f_i(\nu_i)=h^{\vec{\boldsymbol\gamma}^i}_{\nu_i}(T_i)$ and
$$
f_p(\nu_i)= (\gamma_1+\cdots+\gamma_p)(H(\nu_i)+\nu_i(\widetilde \phi_i))+\sum_{j=p+1}^k \gamma_j H(\Pi_{i,j}\nu_i),\quad p=i,i+1,\ldots, k.
$$

Moreover, the supremum is reached at a unique element $\nu_i$ of $\mathcal B_i$, which is fully supported. As in the minimization of $P(q_1,\ldots,q_k)$ in the proof of Theorem~\ref{optim}, either one of the following 3 cases occurs: (i) $H(\nu_i)+\nu_i(\widetilde{\phi}_i)\geq H(\nu_i)$; (ii) there exists a unique $j\in \{i,\ldots, k-1\}$ so that $H(\Pi_{i,j+1}\nu_i)\leq H(\nu_i)+\nu_i(\widetilde{\phi}_i)< H(\Pi_{i,j}\nu_i)$; (iii) $H(\nu_i)+\nu_i(\widetilde{\phi}_i)< H(\Pi_{i,k}\nu_i)$. In cases $(ii)$ and $(iii)$, we necessarily have $\widetilde q_i^*=0$, and we conclude by using the same approach as in the study of $P$ that either there exists $j'\in \{j,\ldots, k-1\}$ and $(q_{j'}, q_{j'+1})\in\Delta_2$ such that $\widetilde P(\widetilde q_i^*,q_i^*,\ldots, q_k^*)=\widetilde P(\underbrace{0,\ldots, 0}_{ (j'-i+1)\text{ times}}, q_{j'}, q_{j'+1},0,\ldots,0)$, or $q^*=(0,\ldots,0,1)$. Also, if $j'>i$, then 
$$
\widetilde P(\underbrace{0,\ldots, 0}_{ (j'-i+1)\text{ times}}, q_{j'}, q_{j'+1},0,\ldots,0)=P(\underbrace{0,\ldots, 0}_{(j'-i+1)\text{ times}}, q_{j'}, q_{j'+1},0,\ldots,0).
$$ 
Now, suppose that we are in case $(i)$. Let $j$ be the smallest index $j'$ in $\{i,\ldots, k\}$ such that $f_{j'}(\nu_i)>\widetilde P(\widetilde q_i^*,q_i^*,\ldots, q_k^*)$, and let $j=k+1$ otherwise. We have $q^*_{j'}=0$ for all $j\le j'\le k$. If $j\le i+1$, we get $\widetilde P(\widetilde q_i^*,q_i^*,\ldots, q_k^*)=\widetilde P(\widetilde q_i^*,q_i^*, 0,\ldots,0)$. If $j>i+1$, then $\pi_{j'}$ is injective for all $i\le j'\le j-2$. Set $\widetilde\gamma_s=\gamma_1+\cdot+\gamma_s$.  For each $(\widetilde q_i,q_i,\ldots,q_{j-1})\in \Delta_{j-i+1}$, we have
$$
\widetilde P(\widetilde q_i,q_i,\ldots, q_{j-1},0,\ldots, 0)
=\sup_{\mu_i\in \mathcal B_i} \big(\sum_{ t=i}^{j-1}q_t\widetilde\gamma_t\big) \mu_i(\phi_i)+ \widetilde \gamma_iH(\mu_i)+\sum_{s=j}^k\gamma_s H(\Pi_{i,s}\mu_i),
$$
where we have used that $H(\mu_i)=H(\Pi_{i,j'}\mu_i)$ for all $i< j'\le j-1$ since $\pi_{j'}$ is injective for all $i\le j'\le j-2$. Thus $\widetilde P(\widetilde q_i,q_i,\ldots, q_{j-1},0,\ldots, 0)$ depends only on $\sum_{t=i}^{j-1}q_t\widetilde\gamma_t$. Set  $\beta:=\sum_{ t=i}^{j-1}q_t^*\widetilde\gamma_t\in [0, \widetilde\gamma_{j-1}]$. Either $\beta\in [0, \widetilde\gamma_i]$ or $\beta\in [\widetilde\gamma_{j'}, \widetilde\gamma_{j'+1}]$ for some $i\leq j'\leq j-2$.  If the first case occurs, then there exists $(\widetilde q_i,q_i)\in \Delta_2$ such that $q_i\widetilde\gamma_i=\beta$, so
$\widetilde P(\widetilde q_i^*,q_i^*,\ldots, q_k^*)=\widetilde P(\widetilde q_i,q_i, 0,\ldots,0)$. If the second case occurs, then there exists $(q_{j'}, q_{j'+1})\in \Delta_2$ such that $\beta=q_{j'}\widetilde\gamma_{j'}+q_{j'+1}\widetilde\gamma_{j'+1}$, so
$$
\widetilde P(\widetilde q_i^*,q_i^*,\ldots, q_k^*)=\widetilde P(\underbrace{0,\ldots, 0}_{(j'-i+1) \text{ times}}, q_{j'},q_{j'+1}, 0,\ldots,0).
$$

Then, noting that $\inf_{t\in[0,1]}\widetilde P(t,1-t,0,\ldots,0)=\inf_{t\in[0,1]}P_i(t)$, we get
$$
M^{\vec{\boldsymbol\gamma}^i}=\min\Big (\inf  \{ P_{i}(\theta): \theta\in [0,1]\},\min_{i+1\leq j\leq k} \inf  \{ P_{j}(\theta): \theta\in [\widetilde\theta_{i},1]\}, P_k(1)\Big ).
$$
Moreover, by adapting the discussion of the proof of Corollary~\ref{cor-2}, we can get that  either $M^{\vec{\boldsymbol\gamma}^i}$ equals $P_{i}(\theta)$ with $\theta\in [0,1]$ and in this case $M^{\vec{\boldsymbol\gamma}^i}$ is necessarily attained by $\nu\in\mathcal B_1$ such that $\Pi_i\nu$ is the equilibrium state of $P_i$ at $\theta$, or there is a unique $\nu\in\mathcal B_1$ at which $M^{\vec{\boldsymbol\gamma}^i}$ is attained; in the later case, the unique maximizing measure $\nu$ is the same as the maximizing measure for $M^{\vec{\boldsymbol\gamma}}$  in Theorem~\ref{optim} in case  $i_0\ge i+1$.

If $P_i'(1)\ge 0$, then, using the notations of the statement of Theorem~\ref{dim PiK}, we have $j_0=i$, and we denote by $\nu_i$ the equilibrium state of $P_i$ at $\theta_i^i$. Suppose that $P_i'(\theta_i^i)=0$, which holds automatically if $\theta_i^i>0$. This means that $\nu_i(\widetilde\phi_i)=0$.   Let $\nu$ be the unique element of $\mathcal B_1$ such that $\Pi_i\nu=\nu_{i}$ and $H(\nu)+\nu(\widetilde\phi)=H(\nu_i)+\nu_i(\widetilde\phi_i)=H(\nu_i)$. We have $P_i(\theta_i^i)=\widetilde g_i(\nu)= g_i(\nu)\le g_{i+1}(\nu)\le \cdots\le g_k(\nu)$, which implies $M^{\vec{\boldsymbol\gamma}^i}=P_i(\theta_i^i)$, and due to the observation made in the previous paragraph and Remark~\ref{rem0}, the measure
$\mu_\nu=\mu_{\nu_{i_0}}$ is the unique Mandelbrot measure $\mu$ such that the dimension of $\Pi_i\mu$ equals $M^{\vec{\boldsymbol\gamma}^i}$, conditional on $K\neq\emptyset$; moreover, one has $\dim_H(\mu_\nu)=\dim_H(\Pi_i\mu_\nu)$. If $\theta_i^i=0$ and $P_i'(\theta_i^i)>0$, we still have $M^{\vec{\boldsymbol\gamma}^i}=P_i(\theta_i^i)=\widetilde g_i(\nu)= g_i(\nu)\le g_{i+1}(\nu)\le \cdots\le g_k(\nu)$, but for any Mandelbrot measure~$\mu$ supported on $K$,  if $\rho=\mathbb{E}(\mu)$ satisfies $H(\rho)+\rho(\widetilde\phi)> H(\nu_i)$, then $\dim_H(\Pi_i\mu)=M^{\vec{\boldsymbol\gamma}^i}$. There are infinitely many such measures. Indeed, $\mu_{\nu_i}$ itself satisfies this property since $P_i'(0)=\nu_i(\widetilde\phi_i)=0>0$. Then one gets other examples of measures by considering, as defined in Section~\ref{skewed}, random vectors $W$ written as the ``skewed '' product of $(\nu_i([b]))_{b\in\mathcal A_i}$ with random vectors $(V_b)_{b\in \mathcal A_i}$ obtained as slight perturbions of  the vector $(\widetilde V_b)_{b\in \mathcal A_i}$ used to construct the measure $\mu_{\nu_i}$.  If $P_i'(1)<0$, the discussion ends like in the proof of Theorem~\ref{optim}.

\medskip

For the upper bound for the Hausdorff dimension, we prove that
$$
\dim_H \Pi_i(K)\le \inf\{P_j(\theta): \theta\in [0,1] \text{ if }j=i, \text{ and $\theta\in[\widetilde \theta_j,1]$ if $i<j\le k$}\},
$$
which in view of the lower bound is enough to conclude. To show the previous inequality, we extend the definitions of $D_{i,\theta}$ and $\widetilde D_{i,\theta}$ (see \eqref{Dit} and \eqref{Dit'}) to $\theta\in[0,1]$ and for $j=i$ we redefine the vector  $\rho(x,n)$ of \eqref{rhoxn} by taking $\widetilde \rho_i=\rho_i(x,\ell_i(n))$. It is readily seen from the proof of Lemma~\ref{comblem} that the conclusions of this lemma is still valid with these new definitions of $\widetilde D_{i,\theta}$ and $\rho(x,n)$.


Now, arguing similarly as in the proof of Theorem~\ref{VPUB}, for each $j\in \{i,\ldots,k\}$, for each $U=(U_j,\cdots,U_k)$ in $\mathcal A_j^{\ell_j(n)}\times \prod_{j'=j+1}^k \mathcal A_{j'}^{\ell_{j'}(n)-\ell_{j'-1}(n)}$, $\Pi_{i,j}^{-1}(B_U)\cap \Pi_i(K)$ is covered by, say, a family $\mathcal B(U)$ of $n_U$ balls of radius $e^{-\frac{n}{\gamma_1}}\subset X_i$ which intersect $\Pi_i(K)$.

Suppose $j=i$ and fix $\theta\in [0,1]$. In this case $n_U= 1$ and we can bound this number by $(N_{U_i}^{(i)})^\theta$. Noting that $\mathbb{E}\big ((N^{(i)}_{U_i})^\theta\big )\le \mathbb{E}\big (N^{(i)}_{U_i}\big )^\theta$, we can use similar estimates as in the proof of Theorem~\ref{VPUB} to now estimate $\mathcal H^s_{e^{-\frac{N}{\gamma_1}}}(\Pi_i(K)))$, and this yields $\dim_H \Pi_i(K)\le P_i(\theta)$ (here we followed the same idea as that used in \cite{Fal} to deal with projections of planar statistically self-similar limit sets of fractal  percolation).

Next, suppose $j\in\{i+1,\ldots, k\}$ and fix $\theta\in [\widetilde \theta_j,1]$. Denote by $C^{(i,j)}_{{U_j}}$ the set of cylinders of generation $\ell_{j-1}(n)$ in $X_i$ which intersects $\Pi_i(K)$, and project  to $[{U_j}_{|\ell_{j-1}(n)}]$ in $X_j$ via $\Pi_{i,j}$. Also denote by $N^{(i,j)}_{U_j}$ the cardinality of this set. Each cylinder in $C^{(i,j)}_{{U_j}}$ intersects at most one of the elements of $B_U$. Thus $n_U\le N^{(i,j)}_{U_j}$, so that:
$$
n_U\le \sum_{b\in C^{(i,j)}_{{U_j}}} 1\le  \sum_{b\in C^{(i,j)}_{{U_j}}}N^{(i)}_b= N^{(j)}_{U_{j-1}}.
$$
Then, the same lines as in the proof of Theorem~\ref{VPUB} yield $\dim_H \Pi_i(K)\le P_i(\theta)$ for all $\theta\in [\widetilde\theta_j,1]$.
\end{proof}

\begin{proof}[Sketch of the proof of  Theorem~\ref{dimB PiK}] This is similar to the proof of Theorem~\ref{thmdimB}, except that one must evaluate the cardinality of those $B\in \mathcal F_n^i$ such that $B\cap \Pi_i(K)\neq\emptyset$, and this time we exploit results known for the box dimension of projections of statistically self-similar fractal Euclidean percolation sets from dimension 2 to dimension 1.

We have to estimate the cardinality of those $U\in \mathcal A_{i}^{\ell_i(n)}\times\prod_{j=i+1}^k\mathcal A_j^{\ell_j(n)-\ell_{j-1}(n)}$ such that  $E^i(U)$ holds, with
\begin{align*}
E^i(U)&=\Big \{\exists\, (u_j)_{i\le j\le k}\in \mathcal A_{1}^{\ell_i(n)}\times\prod_{j=i+1}^k\mathcal A_1^{\ell_j(n)-\ell_{j-1}(n)}: \\ &\quad\quad\quad\quad\quad\quad \text{both }F_k^{i,(U_i,\ldots,U_k)}(u_i,\ldots, u_k) \text{ and }K^{u_iu_2\cdots u_k}\neq\emptyset\text{ hold}\},
\end{align*}
and where
$$
F_j^{i,(U_i,\ldots,U_j)}(u_i,\ldots, u_j)=\left\{\begin{array}{cl}
 &\Pi_{j'}(u_{j'})=U_{j'}, \ \forall\, i\le j'\le j,\\
 &[u_{i}]\cap K_{\ell_i(n)}\neq\emptyset\\
 &[u_{j'}]\cap K_{\ell_{j'}(n)-\ell_{j'-1}(n)}^{u_i\cdots u_{j'-1}}\neq\emptyset , \ \forall\,  i+1\le j'\le j\\
 \end{array}\right\}.
$$
One deduces easily from  \cite{DekGri} (see alternatively \cite{Fal} or \cite{BF2016}), which deal with the case $k=2$, that
$$
\lim_{n\to\infty}\frac{\log\#\{U_i\in  \mathcal A_{i}^{\ell_i(n)}:\, \exists u_i\in \mathcal A_{1}^{\ell_i(n)}, \  F_i^{i,U_i}(u_i)\text{ holds}\}}{n}=\frac{\widetilde \gamma_i}{\gamma_1}\psi_i(\widehat \theta_i).
$$
Then, a recursion similar to that used in the proof of Theorem~\ref{thmdimB} yields the desired result
$$
\lim_{n\to\infty}\frac{\log\#\{U\in \mathcal A_{i}^{\ell_i(n)}\times\prod_{j=i+1}^k\mathcal A_j^{\ell_j(n)-\ell_{j-1}(n)}:\, E^i(U)\text{ holds}\}}{n}=\frac{\widetilde \gamma_i}{\gamma_1}\psi_i(\widehat \theta_i)+\sum_{j=i+1}^k \frac{\gamma_j}{\gamma_1}\psi_j(\widehat\theta_j),
$$
i.e. $\dim_B\Pi_i(K)= \widetilde \gamma_i\psi_i(\widehat \theta_i)+\sum_{j=i+1}^k \gamma_j\psi_j(\widehat\theta_j)$ after normalizing by $\gamma_1^{-1}$.
\end{proof}

\begin{proof}[Proof of Corollary~\ref{cor2}] If $\widehat \theta_i=1$, using Proposition~\ref{prop4.1} we see that the equality between $\dim_H \Pi_i(K)$ and $\dim_B \Pi_i(K)$ imposes that $\dim_H \Pi_i(K)$ is attained by the branching measure, and the situation boils down to that of  Corollary~\ref{cor1}. This gives point (1) of the statement.

Suppose now that $\dim_H \Pi_i(K)=\dim_B \Pi_i(K)$,  $\widehat \theta_i<1$ and $\psi_i'(\widehat\theta_i)=0$ (which is automatically true if $0<\widehat \theta_i<1$).  The equality between $\dim_H \Pi_i(K)$ and $\dim_B \Pi_i(K)$ imposes that if~$\mu$ stands for the unique Mandelbrot measure supported on $K$ such that $\dim_e^{\vec{\boldsymbol\gamma}_i}(\mu)=\dim_H \Pi_i(K)$, then $\dim_e(\mu)=h_{\nu_i}(T_i)= \psi_i(\widehat \theta_i)$, where $\nu_i=\mathbb{E}( {\Pi_i}\mu)=\nu_{i,\widehat\theta_i}$. Also,  for  $j\in I_i(=\{i,\ldots,k\})$ such that $j\le j'_0$, we must have ${\Pi_{i,j}}{\nu_i}= \nu_{j,\widehat\theta_j}$. Using that  for all $b\in\widetilde A_j$, we have $\sum_{b'\in\Pi_{i,j}^{-1}(b)} \nu_{i,\widehat\theta_i}([b'])=\nu_j([b])$ and the fact that $\psi_i'(\widehat\theta_i)=\psi_j'(\widehat\theta_j)$ we can write
\begin{align*}
0=\psi_i'(\widehat\theta_i)-\psi_j'(\widehat\theta_j)&=\sum_{b'\in \widetilde {\mathcal A}_i}\nu_{i,\widehat\theta_i}([b'])\log \mathbb E(N_{b'}^{(i)})-\sum_{b\in \widetilde {\mathcal A}_j}\nu_{j,\widehat\theta_j}([b])\log \mathbb E(N_{b}^{(j)})\\
&=\sum_{b\in \widetilde {\mathcal A}_j}\sum_{b'\in\Pi_{i,j}^{-1}(b)}\nu_{i,\widehat\theta_i}([b'])\log \frac{\mathbb E(N_{b'}^{(i)})}{\mathbb{E}(N_b^{(j)})}.
\end{align*}
This implies that for all $b\in \widetilde {\mathcal A}_j$, the set $\Pi_{i,j}^{-1}(b)\cap \widetilde{\mathcal A}_i$ is a singleton $\{b'\}$ such that $\mathbb E(N_{b'}^{(i)})= \mathbb E(N_{b}^{(j)})$, hence $\psi_i=\psi_j$ and $\widehat \theta_j=\widehat\theta_i$. Let us now examine those $j>j'_0$ in $I_i$. The previous argument shows that  $\widehat \theta_j=0$ and $\psi_j'(0)>0$ (for otherwise $j'_0$ would be at least equal to the smallest of those $j$), and ${\Pi_{i,j}}{\nu_i}=\nu_{j,0}$,  so that ${\Pi_{i,j}}{\nu_i}$ is uniformly distributed, i.e.   $\sum_{b'\in \Pi_{i,j}^{-1}(b)}\mathbb{E} (N_{b'}^{(i)})^{\widehat \theta_i}$ does not depend on $b\in\widetilde{\mathcal A}_j$.  We conclude that  the conditions of point (2) are necessary. Conversely, if these conditions hold, one easily checks using Theorems~\ref{dim PiK} and~\ref{dimB PiK} that $\dim_H \Pi_i(K)=\dim_B \Pi_i(K)$.

The last case easily follows from the previous discussion.
\end{proof}

\section{Dimension of conditional measures. Proof of Theorem~\ref{thmfibers}} \label{pfcondmeas}

Since the proof of Theorem~\ref{thmfibers2} is very similar to that of Theorem~\ref{thmfibers}, we leave it to the reader.

 Point (3) of the statement simply follows from points (1) and (2) as well as the dimensions formula provided by Theorems~\ref{thmdim} and~\ref{thmdim2} for $\dim (\mu)$ and $\dim ({\Pi_i}\mu)$.

To get point (1) we notice that for any $x\in X_1$ and $n\ge 1$ we have $[x_{|\ell_k(n)}]\subset B(x, e^{-n/\gamma_1})\subset [x_{|n}]$, so  since   $\dim_e(\mu)\le h_{\nu_i}(T_i)$ we find that Theorem~\ref{dimentr}  implies that $\mu^z$ is exact dimensional with Hausdorff dimension equal to 0.

Now we prove point (2). The following lines  do not depend on ${\Pi_i}\mu$ being absolutely continuous with respect to $\nu_i$ or not.


When $\mu_\omega=\mu\neq 0$, for ${\Pi_i}\mu_\omega$-almost every $z$, the conditional measure $\mu^z_\omega $ is supported on $K^z=\pi^{-1} (\{z\})\cap K$, obtained as the weak-star limit, as $n\to\infty$, of the measures $\mu_{\omega,n}^z$  obtained on $K$  by assigning uniformly the mass $\frac{\mu_\omega([J]\cap \Pi_i^{-1}([z_{|n}]))}{{\Pi_i}\mu_\omega([z_{|n}])}$ to each cylinder $[J]$ of generation~$n$ in $X_1$.  {To be more specific, for any cylinder $[J]$, almost surely, the measurable set
$$
A_{J}=\left\{(\omega,z)\in\Omega\times X_i: \lim_{n\to\infty}\frac{\mu_\omega([J]\cap \Pi_i^{-1}([z_{|n}]))}{{\Pi_i}\mu_\omega([z_{|n}])} \text{ exists}\right \}$$
is of full $\widehat {\mathbb Q}$-probability, where  we define $\widehat {\mathbb Q }({\rm d}\omega,{\rm d}z)=\mathbb{P}({\rm d}\omega) {\Pi_i}\mu_\omega(\mathrm{d}z)$, and for all $(\omega,z)$ in a subset $A'_{J}$ of $A_{J}$ of full $\widehat {\mathbb Q}$-probability, we have $ \mu^z_\omega ([J])=\lim_{n\to\infty}\frac{\mu_\omega([J]\cap \Pi_i^{-1}([z_{|n}]))}{{\Pi_i}\mu_\omega([z_{|n}])}$. }

Suppose now that conditional on $\mu\neq 0$, ${\Pi_i}\mu_\omega$ is  absolutely continuous with respect to $\nu_i$.  There exists a measurable set $A'$ of full $\widehat {\mathbb Q}$-probability such that for all $(\omega,z)\in A'$, the limit
$$
\lim_{n\to\infty} \left(f_{\omega,n}(z):=\frac{{\Pi_i}\mu_\omega ([z_{|n}])}{\nu_i([z_{|n}])}\right)
$$
exists and is positive. We denote it by $f_{\omega}(z)$.

Set $A=A'\cap \bigcap_{J\in \Sigma}A'_{J}$. For all $(\omega,z)\in A$,   the sequence of measures $\widetilde\mu_{\omega,n}^z=  f_{\omega,n}(z) \mu_{\omega,n}^z $ weakly converges to the measure $\widetilde\mu_\omega^z$ defined as $ f_\omega(z) \mu_\omega^z$.

 Let
\begin{align*}
\Omega_A&=\{\omega:\  (\omega,z)\in A\text{ for some $z\in X_i$}\},\\
F^\omega&=\{z\in X_i: \ (\omega,x)\in A\}, \quad  \forall\,\omega\in\Omega_A.
\end{align*}
If $(\omega,x)\not\in  A$, set $\mu_\omega^x=\widetilde\mu_\omega^x=0$.

For $z\in  F^\omega$, $n\ge 1$ and $J\in\mathcal A_1^n$, we have
\begin{eqnarray*}
\widetilde \mu^z([J])=\lim_{p\to\infty} \widetilde \mu_p^z([J])= \lim_{p\to\infty} \frac{\mu([J]\cap \Pi_i^{-1}([z_{|n+p}])}{\nu_i ([z_{|n+p}])}.
\end{eqnarray*}

If $U=(U_1,\cdots, U_k)\in\prod_{i=1}^k\mathcal A_i^{\ell_i(n)-\ell_{i-1}(n)}$, the ball $B_U$ intersects $\Pi_i^{-1}(\{z\})$ if and only if  $\Pi_{j,i}(U_j)=T^{\ell_{j-1}(n)}_i(z)_{|\ell_j(n)-\ell_{j-1}(n)}$ for all $1\le j\le i-1$ and $U_j= \Pi_{i,j}(T^{\ell_{j-1}(n)}_i(z)_{|\ell_j(n)-\ell_{j-1}(n)})$ for $i\le j\le k$. Recalling \eqref{decomp}, we also have
\begin{align*}
\widetilde \mu^z(B_U)&=\sum_{(J_1,\ldots,J_k)\in\mathcal J_U} \widetilde \mu^z([J_1\cdots J_k]])\\
&=\sum_{(J_1,\ldots,J_k)\in\mathcal J_U}  \lim_{p\to\infty} \frac{\mu([J_1\cdots J_k]\cap \Pi_i^{-1}([z_{|n+p}]))}{\nu_i ([z_{|n+p}])}.
\end{align*}
Fix $q\ge 0$. For all $n\ge 1$, we are going to estimate the expectation of the partition function $\sum_{B\in\mathcal F_n}\widetilde \mu^z(B_U)^q$ with respect to the measure $\mathbb{P}\otimes \nu_i$.

Let $j_0=\min\{2\le j\le i-1:\dim_e(\mu)> h_{\nu_j}(T_j)\}$, with $\min(\emptyset)=i$, and  $$D=\widetilde\gamma_{j_0-1}(\dim_e(\mu)-h_{\nu_i}(T_i))+\sum_{j=j_0}^{i-1}\gamma_j(h_{\nu_j}(T_j)-h_{\nu_i}(T_i)),$$ which is precisely the value given by \eqref{dimfib} due to our choice of $j_0$. We will show that there exists $c>0$ such that for all $q$ in a neighbourhood of $1$, there exists $C_q>0$ such that we have
\begin{align*}\label{controlfiber}
\mathbb{E}_{\mathbb{P}\otimes \nu_i}\Big (\sum_{B_U\in\mathcal F_n}\widetilde \mu^z(B_U)^q\Big )\le  C_q\exp\Big (-\frac{n}{\gamma_1}(q-1) D+ O((q-1)^2))\Big ).
\end{align*}
This is enough to conclude that with probability 1, conditional on $\mu\neq 0$, for ${\Pi_i}{\mu}$-almost every $z$ (remember that ${\Pi_i}{\mu}$ is absolutely continuous with respect to $\nu_i$), one has $\tau_{\widetilde \mu^z}(q))\ge (q-1) D- c(q-1)^2$  in some neighbourhood of $1$. But since $\mu^z$ is a multiple of $\widetilde \mu^z$, the same holds for $\mu^z$. This implies that the concave functions $\tau_{\mu^z}$ and $q\mapsto (q-1) D- c(q-1)^2$ share the same derivative at $1$, namely $D$. Consequently,  $\mu^z$ is exact dimensional with dimension $D$.

Now we prove \eqref{controlfiber}. Recall that outside the set $A$, the measure $\widetilde \mu_\omega^z$ has been defined equal to 0. By Fatou's lemma, we have
\begin{align*}
\mathbb{E}&_{\mathbb{P}\otimes \nu_i} \sum_{B_U\in\mathcal F_n}\widetilde \mu^z(B_U)^q\\
&\le \liminf_{p\to\infty}\mathbb{E}_{\mathbb{P}\otimes \nu_i}\sum_{B_U\in\mathcal F_n}\Big (\sum_{(J_1,\ldots,J_k)\in\mathcal J_U} \frac{\mu([J_1\cdots J_k]\cap \Pi_i^{-1}([z_{|\ell_k(n)+p}]))}{\nu_i ([z_{|\ell_k(n)+p}])}\Big )^q\\
&= \liminf_{p\to\infty}\mathbb{E} \sum_{L\in\widetilde {\mathcal A_i}^{\ell_k(n)+p}} \sum_{B_U\in\mathcal F_n}\Big (\sum_{(J_1,\ldots,J_k)\in\mathcal J_U} \frac{\mu([J_1\cdots J_k]\cap \Pi_i^{-1}([L]))}{\nu_i ([L])}\Big )^q \nu_i([L])\\
&= \liminf_{p\to\infty}\mathbb{E} \sum_{L\in\widetilde {\mathcal A_i}^{\ell_k(n)+p}} \sum_{B_U\in\mathcal F_n}\Big (\sum_{(J_1,\ldots,J_k)\in\mathcal J_U} \sum_{J'_p\in \Pi_i^{-1}(T_i^{\ell_k(n)}L)}\frac{\mu([J_1\cdots J_kJ'_p])}{\nu_i ([L])}\Big )^q \nu_i([L]).
\end{align*}

Denote by $S$ the expectation in the right hand side of the previous inequality. Due to the remark made above about the condition for $B_U$ to intersects $\Pi_i^{-1}([u])$, and the multiplicativity property of the measure $\nu_i$, we can rewrite $S$ as follows:
$$
S=\mathbb{E} \sum_{L=(L_1,\ldots,L_{i-1})}\sum_{(U_1,\ldots,U_{i-1})_L}\sum_{L'} m(U_1,\ldots,U_{i-1},L,L')^q \nu_i([L_1\cdots L_{i-1}L']),
$$
where $L\in\prod_{j=1}^{i-1} \widetilde {\mathcal A}_i^{\ell_{i-1}(n)}$, $(U_1,\ldots,U_{i-1})_L\in \prod_{j=1}^{i-1}\widetilde {\mathcal A}_j^{\ell_j(n)-\ell_{j-1}(n)}$ is such that $\Pi_{j,i}(U_j)=L_j$ for each $1\le j\le i-1$, $L'\in \widetilde {\mathcal A}_i^{p+\ell_k(n)-\ell_{i-1}(n)}$, and taking the conventions that the words involved below whose writing uses the symbol $J$ belong to $\widetilde {\mathcal A}_1^*$,
\begin{align*}
m(U_1,\ldots,U_{i-1},L,L')
=\sum_{\substack{(J_1,\ldots ,J_{i-1}):\, \Pi_{j,i}(J_j)=U_j,\\
J=J_i\cdots J_k:\, \Pi_{i}(J_j)=L_j}}\sum_{J':\, \Pi_i(J')=L'} \frac{\mu([J_1\cdots J_{i-1}J'])}{\nu_i([L_1\cdots L_{i-1}L'])}.
\end{align*}

Suppose that $q\ge 1$. Using the same idea as in the proof of Theorem~\ref{pro1}, but rewriting $S$ as an expectation with respect to $\mathbb{P}\otimes \nu_i$ instead of $\mathbb P$, yields
\begin{align*}
\mathbb E(S)&\le 2^{3q(i-1)}\left(\prod_{j=1}^{i-1} S_{j,n}\right) \cdot R_{n,p},
\end{align*}
where
$$
S_{j,n}= \mathbb{E}\Big (\sum_{L_j}\nu_i([L_j]) \sum_{U_j:\, \Pi_{j,i}(U_j)=L_j} \Big (\sum_{J_j:\, \Pi_j(J_j)=U_j}\frac{\mu_{\ell_{j}(n)-\ell_{j-1}(n)}([J_j])}{\nu_i(L_j)}\Big )^q\Big )
$$
and
$$
R_{n,p}=\mathbb{E} \Big (\sum_{L'}\Big (\sum_{J':\, \Pi_i(J')=L'} \frac{\mu([J'])}{\nu_i([L'])}\Big )^q \nu_i([L'])\Big ).
$$
Note that $R_{n,p}=\mathbb{E}_{\mathbb P\otimes \nu_i}(X_{p+\ell_k(n)-\ell_{i-1}(n)}^q)$, where
$$
X_{n}(\omega,z)=\sum_{J\in \widetilde{\mathcal A}_1^{n}:\, \Pi_i(J)=z_{|n}} \frac{\mu_\omega([J])}{\nu_i([z_{|n}])}
$$
is a perturbation of the martingale in random environment
$$
\widetilde X_{n}(\omega,z)=\sum_{J\in \widetilde{\mathcal A}_i^n:\, \Pi_i(J)=z_{|n}} \frac{\mu_{\omega,n}([J])}{\nu_i([z_{|n}])}.
$$
Now, recalling the definition of the vectors $V^{(i)}_b$ in Section~\ref{skewed} and setting
$$
\varphi(q)= \log \sum_{b\in\widetilde {\mathcal A}_1}\nu_i([b])e^{-T_{V^{(i)}_b}(q)},
$$
our assumption that $\dim_e(\mu)>h_{\nu_i}(T_i)$ is equivalent to saying that at point 1 the function  $\varphi$ has a negative derivative, since  $\varphi'(1)=h_{\nu_i}(T_i)-T'(1)=h_{\nu_i}(T_i)-\dim_e(\mu)$. We can then apply \cite[Proposition 5.1]{BF2016} to $X_{p+\ell_k(n)-\ell_{i-1}(n)}$ and for $q$ close enough to $1+$, get a constant $C_q>0$ such that $R_{p,n}\le C_q$ independently of $n$ and $p$.

Next we estimate the terms $S_{j,n}$, for $1\le j\le i-1$.  For $j=1$, we simply have
\begin{equation*}
S_{1,n}=\mathbb{E}\Big (\sum_{L_1}\nu_i([L_1]) \sum_{U_1:\, \Pi_{i}(U_1)=L_1} \Big (\frac{\mu_{n}([U_1])}{\nu_i([L_1])}\Big )^q\Big )=e^{n \varphi(q)}=e^{n (q-1) (h_{\nu_i}(T_i)-\dim_e(\mu))+O((q-1)^2))}.
\end{equation*}
For $2\le j\le i-1$, we rewrite $S_{j,n}$  as (recall that $\nu_j$ stands for the expectation of ${\Pi_j}\mu$)
\begin{equation*}
S_{j,n}=\mathbb{E}\sum_{U_j} \phi_j(U_j)  \Big (\sum_{J_j:\, \Pi_j(J_j)=U_j}\frac{\mu_{\ell_{j}(n)-\ell_{j-1}(n)}([J_j])}{\nu_j([U_j])}\Big )^q,
\end{equation*}
where
$$
\phi_j(U_j)=\nu_j([U_j])^q\nu_i(\Pi_{j,i}([U_j]))^{1-q}.
$$
Let $\nu_{q,j}$ be the Bernoulli product measure on $\widetilde X_j$ associated with the probability vector $\nu_{q,j}([b])=\displaystyle\frac{\phi_j([b])}{\sum_{b'\in \widetilde{\mathcal A}_j}\phi_j([b'])}$, and define
$$
\varphi_j(q)=\log \sum_{b\in \widetilde{\mathcal A}_j} \phi_j([b]) \quad{and}\quad \widetilde X^{(j)}_{n}(\omega,z)=\sum_{J\in\widetilde{\mathcal A}_1^{n}:\, \Pi_j(J)=z_{|n}}\frac{\mu_{n}([J])}{\nu_j([z_{|n}])}, \ z\in \widetilde X_j.
$$
With these definitions, $S_{j,n}$ rewrites
$$
S_{j,n}=e^{(\ell_j(n)-\ell_{j-1}(n))\varphi_j(q)}\mathbb{E}_{\mathbb P\otimes\nu_{q,j}}((\widetilde X^{(j)}_{n})^q),
$$
Again, we can use \cite[Proposition 5.1]{BF2016}, and get a constant $C_{q,j}>0$ such that
$$
S_{j,n}\le C_{q,j}e^{(\ell_j(n)-\ell_{j-1}(n))\varphi_j(q)}\max \Big (1, \sum_{b\in \widetilde{\mathcal A}_j} \nu_{q,j}([b])e^{-T_{V^{(j)}_b}(q)}\Big )^{\ell_j(n)-\ell_{j-1}(n)}.
$$
A computations shows that the derivative at $1$ of the function $q\mapsto \sum_{b\in \widetilde{\mathcal A}_j} \nu_{q,j}([b])e^{-T_{V^{(j)}_b}(q)}$ is equal to $h_{\nu_j}(T_j)-T'(1)=h_{\nu_j}(T_j)-\dim_e(\mu)$.

Recall that $j_0=\min\{2\le j\le i-1:\dim_e(\mu)> h_{\nu_j}(T_j)\}$, with $\min(\emptyset)=i$. If~$j_0\le j\le i-1$ and $q$ is close enough to 1, we thus have $ \sum_{b\in \widetilde{\mathcal A}_j} \nu_{q,j}([b])e^{-T_{V^{(j)}_b}(q)}<1$, hence $S_{j,n}\le  C_{q,j}e^{(\ell_j(n)-\ell_{j-1}(n))\varphi_j(q)}$. If $2\le j< j_0$, using a Taylor expansion of order 2 we get  $\sum_{b\in \widetilde{\mathcal A}_j} \nu_{q,j}([b])e^{-T_{V^{(j)}_b}(q)}\le \exp ((q-1)(h_{\nu_j}(T_j)-\dim_e(\mu))+O((q-1)^2))$. Moreover, for any~$j$, $e^{\varphi_j(q)}=\exp ((q-1) (h_{\nu_i}(T_i)-h_{\nu_j}(T_j))+O((q-1)^2))$. So for  $2\le j< j_0$, $S_{j,n}\le  C_{q,j}\exp((\ell_j(n)-\ell_{j-1}(n))((q-1)(h_{\nu_i}(T_i)-\dim_e(\mu))+O((q-1)^2))$. Finally, for $q$ close enough to $1+$, there exists $C_q>0$ such that
\begin{align}
\nonumber\mathbb{E}_{\mathbb{P}\otimes \nu_i}\sum_{B_U\in\mathcal F_n}\widetilde \mu^z(B_U)^q&\le C_q\exp\Big  (q-1)(\ell_{j_0-1}(n) (h_{\nu_i}(T_i)-\dim_e(\mu))\\
\nonumber &\quad +(q-1)\sum_{j=j_0}^{i-1} (\ell_{j}(n)-\ell_{j-1}(n)) (h_{\nu_i}(T_i)-h_{\nu_j}(T_j))  +O((q-1)^2) n\Big )\\
\label{controlfiber}&=C_q\exp\Big (-\frac{n}{\gamma_1}(q-1) D+ O((q-1)^2))\Big ),
\end{align}
hence \eqref{controlfiber} holds.
\medskip

Suppose now that $q\in (0,1)$. Using the same idea as in the proof of Theorem~\ref{pro1} yields
\begin{align*}
\mathbb E(S)&\le \prod_{j=1}^{i-1} \widetilde S_{j,n},
\end{align*}
where
$$
\widetilde S_{j,n}= \begin{cases}\displaystyle\mathbb{E}\Big (\sum_{L_j}\nu_i([L_j]) \sum_{U_j:\, \Pi_{j,i}(U_j)=L_j} \Big (\sum_{J_j:\, \Pi_j(J_j)=U_j}\frac{\mu_{\ell_{j}(n)-\ell_{j-1}(n)}([J_j])^q}{\nu_i(L_j)^q}\Big )\Big )&\text{if }1\le j\le j_0-1,
\\
\displaystyle\mathbb{E}\Big (\sum_{L_j}\nu_i([L_j])^{1-q} \sum_{U_j:\, \Pi_{j,i}(U_j)=L_j} \nu_j(U_j)^q\Big )&\text{if }j_0\le j\le i-1.
\end{cases}
$$
With the notations introduced in the case $q\ge 1$, this rewrites
$$
\widetilde S_{j,n}= \begin{cases}\displaystyle\Big (\sum_{b\in\widetilde{\mathcal A}_i}\nu_i([b])e^{-T_{V^{(i)}_b(q)}}\Big )^{\ell_j(n)-\ell_{j-1}(n)} &\text{if }1\le j\le j_0-1,
\\
\displaystyle e^{\varphi_j(q)(\ell_j(n)-\ell_{j-1}(n))}&\text{if }j_0\le j\le i-1.
\end{cases}
$$
Using Taylor expansions we can get that  \eqref{controlfiber} holds for $q$ close to $1-$ as well.

\section{The case when $\{2\le i\le k:\gamma_i\neq 0\}=\emptyset$} \label{lastsection}

In our main statements about the Hausdorff and box-counting dimension of $K$ and its projections for simplicity we assumed all the $\gamma_i$, $2\le i\le k$ to be positive, which in the Euclidean realisation of Section~\ref{euclidean} corresponds to $m_1>\cdots >m_k\ge 2$. It turns out that up to slight modifications in the statement and proofs, our results  cover the general configuration $m_1\ge \cdots \ge m_k\ge 2$, for which the diagonal endomorphism ${\rm diag}(m_1,\ldots,m_k)$ may have eigenspaces of dimension at least 2 over which it is a similarity. In this case, in the expressions giving the dimensions of $K$ and its projections, when $m_{i}=m_{i-1}$, i.e. $\gamma_i=\frac{1}{\log(m_{i-1})}-\frac{1}{\log(m_{i})}=0$, the index $i$ has no contribution, and geometrically for any $1\le i<j\le k$, $x\in X_i$ and $n\ge 1$, for the induced metric by $d_{\vec{\boldsymbol\gamma}}$ on $X_i$, if $y\in B(x,e^{-n/\gamma_1})$, nothing is required on $T_i^{\ell_{j-1}(n)}(y)_{| \ell_{j}(n)-\ell_{j-1}(n)}$.

For all the statements of Section~\ref{dimsK} and Theorem~\ref{VPUB}, the only change to make to cover the case $\gamma_i\ge 0$ for all $2\le i\le k$ is to set $I=\{2\le i\le k: \gamma_i>0$ and replace $k$ by $\sup (I)$  in \eqref{formula1}. The proofs adapt readily.

For the statements of Section~\ref{Projections},  one has to replace $I_i$ by $\{i\}\cup\{i< j\le k: \gamma_j>0\}$ and replace $k$ by $\sup (I_i)$  in \eqref{formula2}. Again, the modifications in the proofs are left to the reader.

\appendix

\section{Main notation and conventions}
\label{B}
For the reader's convenience, we summarize in Table \ref{table-1} the main notation and typographical conventions used in this paper.
\begin{table}
\centering
\caption{Main notation and conventions}
\vspace{0.05 in}
\begin{footnotesize}
\begin{raggedright}
\begin{tabular}{p{1.2 in} p{4.3 in} }
\hline \rule{0pt}{3ex}
$\N$, $\N^*$, $\R_+$, $\R_+^*$ & sets of non-negative (resp. positive) integers,  non-negative (resp. positive) real numbers\\
$(X_i,T_i)$ &  one-sided full shift space over a finite alphabet $\A_i$, $i=1,\ldots, k$\\
$\M(X_i,T_i)$ & set of $T_i$-invariant  Borel probability measures on $X_i$, $i=1,\ldots, k$\\
$\pi_0:\; X_1\to X_1$ &  identity map on $X_1$\\
$\pi_i:\; X_i\to X_{i+1}$ &  one-block factor map, $i=1,\ldots, k-1$ \\
$\Pi_i:\; X_1\to X_i$ & $\Pi_i=\pi_{i-1}\circ\cdots \circ \pi_0$\\
$\Pi_{i,j}:\; X_i\to X_j$ & $\Pi_{i,j}=\pi_{j-1}\circ\cdots \circ \pi_i$ if $j>i$, and the identity map if $j=i$\\
$\vec{\boldsymbol\gamma}=(\gamma_1,\ldots, \gamma_k)$ & a fixed vector in $\R_+^*\times (\R_+)^{k-1}$\\
$\vec{\boldsymbol\gamma}^i=(\vec{\boldsymbol\gamma}^i_j)_{i\leq j\leq k}$ &  the vector  $(\gamma_1+\ldots+\gamma_i, \gamma_{i+1}, \ldots, \gamma_k)$\\
$d_{\vec{\boldsymbol\gamma}}$ & an ultrametric distance defined on $X_1$ (cf. \eqref{e-da}) \\
$(X_1, d_{\vec{\boldsymbol\gamma}})$  & self-affine symbolic space\\
$K=K(\omega)$ & symbolic random statistically self-affine sponge defined from a random subset $A$ of $\A_1$, where $A$ is alternatively written as $(c_a)_{a\in \A_1}$ (Section \ref{S-2.1})\\
$W=(W_a)_{a\in \A_1}$ & a non-negative random vector such that ${W_a>0}\subset \{c_a=1\}$ and $\E (\sum_{a\in \A_1}W_a)=1$ (Section \ref{Mandmeas}) \\
$(A(u), W(u))_{u\in \A_1^*}$ & independent copies of $(A, W)$ (Section \ref{Mandmeas}) \\
$\mu$ & Mandelbrot measure associated with $W$ (Section \ref{Mandmeas})\\
$K_\mu$ & topological support of a Mandelbrot measure $\mu$\\
$\dim_e(\nu)$ & entropy dimension of a finite Borel measure on $X_i$ (cf. \ref{e-entropy})\\
$h_\nu(T_i)$ & measure-theoretic entropy of  $T_i$ with respect to a $T_i$-invariant measure $\nu$\\
$\dim_e^{\vec{\boldsymbol\gamma}}(\mu)$ & $\sum_{i=1}^k \gamma_i \dim_e(\Pi_i \mu)=\gamma_1 \dim_e(\mu) +\sum_{i=2}^k\gamma_i\min(\dim_e(\mu), h_{\nu_i}(T_i))$
(cf. \eqref{e-LY})\\
$h_\nu^{\vec{\boldsymbol\beta}}(T_i)$ & $\sum_{j=i}^k \beta_j h_{\Pi_{i,j} \nu}(T_j)$, where  $\vec{\boldsymbol\beta}=(\beta_i,\ldots, \beta_k)$ and $\nu\in \M(X_i, T_i)$\\
$N_b^{(i)}$  & $\# \{a\in\mathcal A_1:\; [a]\subset  \Pi_i^{-1}([b]),\; [a]\cap K\neq\emptyset\}$, $b\in \A_i$\\
$\widetilde{\A}_i$, $\widetilde{X_i}$ & $\widetilde{\A}_i=\{b\in \A_i:\; \E(N_b^{(i)})>0\}$, and $\widetilde{X_i}$ is the one-sided full shift space over $\widetilde{\A}_i$\\
$P^{\vec{\boldsymbol\beta}}(\phi, T_i)$ & weighted topological pressure of $\phi\in C(\widetilde{X_i})$ (cf. \eqref{e-weighted-pressure})\\
$P_i(\theta)$ & $P^{\vec{\boldsymbol\gamma}^i}(\theta\phi_i, T_i)$, where $\phi_i(x)=(\gamma_1+\cdots+\gamma_i) \log \E (N_{x_1}^{(i)})$, $i=2,\ldots, k$\\
$i_0$, $\theta_{i_0}$ & (cf. \eqref{e-i_0} and \eqref{e-theta_0}) \\
$\psi_i$ & $\log \sum_{b\in {\widetilde \A}_i} \E (N_b^{(i)})^\theta$, $i=2, \ldots, k$\\
$\widehat{\theta}_i$  & (cf. the definition right after \eqref{e-psi_i})\\

\hline
\end{tabular}
\label{table-1}
\end{raggedright}
\end{footnotesize}
\end{table}

\noindent{\bf Acknowledgements}. The research of both authors was supported in part by University of Paris 13, the
HKRGC GRF grants (projects  CUHK401013, CUHK14302415),  and the
France/Hong Kong joint research scheme PROCORE (33160RE,   F-CUHK402/14).

\end{document}